\titleformat{\subsection}[runin]{\normalfont\bfseries}{\thesubsection.}{.5em}{}[.]\titlespacing{\subsection}{0pt}{2ex plus .1ex minus .2ex}{.8em}
\titleformat{\subsubsection}[runin]{\normalfont\itshape}{\thesubsubsection.}{.3em}{}[.]\titlespacing{\subsubsection}{0pt}{1ex plus .1ex minus .2ex}{.5em}
\titleformat{\paragraph}[runin]{\normalfont\itshape}{\theparagraph.}{.3em}{}[.]\titlespacing{\paragraph}{0pt}{1ex plus .1ex minus .2ex}{.5em}
\definecolor{darkred}{rgb}{0.9,0,0.3}
\definecolor{darkblue}{rgb}{0,0.3,0.9}
\definecolor{darkgreen}{rgb}{0,0.8,0.2}
\definecolor{vdarkred}{rgb}{0.6,0,0.2}
\definecolor{vdarkblue}{rgb}{0,0.2,0.6}
\numberwithin{equation}{section}
\numberwithin{figure}{section}
\theoremstyle{plain} 
\newtheorem{theorem}{Theorem}[section]
\newtheorem*{theorem*}{Theorem}
\newtheorem{lemma}[theorem]{Lemma}
\newtheorem*{lemma*}{Lemma}
\newtheorem{corollary}[theorem]{Corollary}
\newtheorem*{corollary*}{Corollary}
\newtheorem{proposition}[theorem]{Proposition}
\newtheorem*{proposition*}{Proposition}
\newtheorem*{conjecture*}{Conjecture}
\theoremstyle{definition} 
\newtheorem{definition}[theorem]{Definition}
\newtheorem*{definition*}{Definition}
\newtheorem*{example*}{Example}
\newtheorem{remark}[theorem]{Remark}
\newtheorem*{remark*}{Remark}
\newtheorem*{assumption*}{Assumption}
\renewcommand{\cal}{\mathcal}
\newcommand{\ee}{\mathrm{e}}
\newcommand{\ii}{\mathrm{i}}
\newcommand{\dd}{\mathrm{d}}
\newcommand{\col}{\mathrel{\vcenter{\baselineskip0.75ex \lineskiplimit0pt \hbox{.}\hbox{.}}}}
\newcommand*{\deq}{\mathrel{\vcenter{\baselineskip0.65ex \lineskiplimit0pt \hbox{.}\hbox{.}}}=}
\renewcommand{\leq}{\leqslant}
\renewcommand{\geq}{\geqslant}
\renewcommand{\epsilon}{\varepsilon}
\DeclareMathOperator{\tr}{Tr}
\DeclareMathOperator{\re}{Re}
\DeclareMathOperator{\conn}{conn}
\date{}
\begin{document}
\title{A microscopic derivation of Gibbs measures for nonlinear Schr\"{o}dinger equations with unbounded interaction potentials}
\maketitle
\begin{center}
\author{
Vedran Sohinger \footnote{University of Warwick, Mathematics Institute, Zeeman Building, Coventry CV4 7AL, United Kingdom.
\\
Email: {\tt V.Sohinger@warwick.ac.uk}.}}
\end{center}
\begin{abstract}
We study the derivation of the Gibbs measure for the nonlinear Schr\"{o}dinger equation (NLS) from many-body quantum thermal states in the high-temperature limit. In this paper, we consider the nonlocal NLS with defocusing and unbounded $L^p$ interaction potentials on $\mathbb{T}^d$ for $d=1,2,3$. This extends the author's earlier joint work with Fr\"{o}hlich, Knowles, and Schlein \cite{FrKnScSo1}, where the regime of defocusing and bounded interaction potentials was considered. When $d=1$, we give an alternative proof of a result previously obtained by Lewin, Nam, and Rougerie \cite{Lewin_Nam_Rougerie}. 

Our proof is based on a perturbative expansion in the interaction. When $d=1$, the thermal state is the grand canonical ensemble. As in \cite{FrKnScSo1}, when $d=2,3$, the thermal state is a modified grand canonical ensemble, which allows us to estimate the remainder term in the expansion. The terms in the expansion are analysed using a graphical representation and are resummed by using Borel summation. By this method, we are able to prove the result for the optimal range of $p$ and obtain the full range of defocusing interaction potentials which were studied in the classical setting when $d=2,3$ in the work of Bourgain \cite{Bourgain_1997}.
\end{abstract}

\tableofcontents

\section{Introduction}

\subsection{Setup of problem}

We consider the domain $\Lambda=\mathbb{T}^d=\mathbb{R}^d/\mathbb{Z}^d$, where $d=1,2,3$ with the standard operations of addition and subtraction.
The \emph{one-body Hamiltonian} is given by
\begin{equation}
\label{one_body_Hamiltonian}
h\;=\;-\Delta+\kappa\,,
\end{equation}
for a fixed chemical potential $\kappa>0$. This is a densely-defined positive operator on $\mathfrak{H} \deq L^2(\Lambda)$.
The eigenvalues of $h$ are 
\begin{equation}
\label{lambda_k}
\lambda_k\;=\;4\pi^2 |k|^2+\kappa\,,\quad k \in \mathbb{Z}^d\,,
\end{equation}
with corresponding $L^2$-normalised eigenvectors 
\begin{equation}
\label{e_k}
e_k(x)\;=\;\ee^{2 \pi \ii k \cdot x}\,.
\end{equation}

We study the \emph{nonlinear Schr\"{o}dinger equation (NLS)}
\begin{equation}
\label{NLS_introduction}
\ii \partial_t u + (\Delta-\kappa) u = (w*|u|^2)\,u\,,
\end{equation}
where $w \in L^p(\Lambda)$ for some $1 \leq p \leq \infty$ is either \emph{pointwise nonnegative} or of \emph{positive type}, i.e.\ $\hat{w} \geq 0$ pointwise. The equation \eqref{NLS_introduction} is sometimes referred to as the \emph{nonlocal NLS} or the \emph{Hartree equation}. 
Furthermore $w$ is referred to as the \emph{interaction potential}.
The NLS \eqref{NLS_introduction} corresponds to the Hamiltonian equations of motion associated with 
the Hamiltonian
\begin{equation}
\label{classical_Hamiltonian_introduction}
H(u)\;=\;\int_{\Lambda}\,\dd x\,\big(|\nabla u|^2+\kappa |u|^2\big)+\frac{1}{2}\int_{\Lambda}\,\dd x\,\int_{\Lambda}\,\dd y\, |u(x)|^2\,w(x-y)\,|u(y)|^2\,
\end{equation}
acting on the space of fields $u: \Lambda \rightarrow \mathbb{C}$, where the Poisson bracket is given by
\begin{equation*}
\{u(x),\bar{u}(y)\}\;=\; \ii \delta(x-y)\,, \quad \{u(x),u(y)\}\;=\;\{\bar{u}(x), \bar{u}(y)\}\,.
\end{equation*}

The \emph{Gibbs measure} associated with the Hamiltonian \eqref{classical_Hamiltonian_introduction}
is the probability measure $\mathbf{P}$ on the space of fields $u:\Lambda \rightarrow \mathbb{C}$ formally given by
\begin{equation}
\label{Gibbs_measure_introduction}
\dd \mathbf{P} (u)\;\deq\; \frac{1}{Z}\ee^{-H(u)} \,\dd u\,,
\end{equation}
for $Z$ a (positive) normalisation constant and $\dd u$ the formally defined Lebesgue measure on the space of fields.
The problem of the rigorous construction of probability measures as in \eqref{Gibbs_measure_introduction} was first addressed in the constructive field theory literature in the 1970s, see \cite{Glimm_Jaffe,Nelson1,Simon74} and the references therein. We also refer the reader to the subsequent references \cite{LRS,McKean_Vaninsky1,McKean_Vaninsky2}.
The invariance of \eqref{Gibbs_measure_introduction} under the flow of \eqref{NLS_introduction} was rigorously established in the work of Bourgain \cite{B,Bourgain_ZS,B1,B3} and Zhidkov \cite{Zhidkov}. Subsequently, this led to the study of global solutions of NLS-type equations with random initial data of low regularity, see
\cite{BourgainBulut,BourgainBulut2,BourgainBulut4,BrydgesSlade,BurqThomannTzvetkov,Cacciafesta_deSuzzoni1,DengTzvetkovVisciglia,GLV1,GLV2,NORBS,NRBSS,OQ,Tz1,Tz}. In this context, the invariance of \eqref{Gibbs_measure_introduction} serves as a substitute of a conservation law at low regularity.

The NLS \eqref{NLS_introduction} can be viewed as \emph{classical limit} of many-body quantum dynamics.
More precisely, given $n \in \mathbb{N}$, we consider the $n$-body Hamiltonian 
\begin{equation} 
\label{n-body_Hamiltonian_introduction}
H^{(n)}\;\deq\;\sum_{i = 1}^n\big(-\Delta_{x_i}+\kappa\big)+\lambda \sum_{1 \leq i < j\leq n}w(x_{i}-x_{j})\,,
\end{equation}
which acts on the bosonic Hilbert space $L^2_{\mathrm{sym}}(\Lambda^n)$. This is defined to be the subspace of elements of $L^2(\Lambda^n)$ which are invariant under permutation of the arguments $x_1,\ldots,x_n$. The interaction strength $\lambda>0$ is 
taken to be of order $1/n$, thus implying that both terms in \eqref{n-body_Hamiltonian_introduction} are of comparable size. 
Given \eqref{n-body_Hamiltonian_introduction}, the \emph{$n$-body Schr\"{o}dinger equation} is 
\begin{equation}
\label{n-body_Schrodinger_equation_introduction}
\ii \partial_t \Psi_{n,t}\;=\;H^{(n)}\,\Psi_{n,t}\,.
\end{equation}
One is interested in studying the limit as $n \rightarrow \infty$ and comparing the limiting dynamics in \eqref{n-body_Schrodinger_equation_introduction} with suitably chosen initial data to that in \eqref{NLS_introduction}. The first rigorous result of this type was proved by Hepp \cite{H} and it was extended by Ginibre and Velo to more singular interactions \cite{GV}. Subsequently, this problem was studied in various different contexts. For further results, we refer the reader to \cite{AFP,AN,CLS,CP,CH,CH2,CH_2018,ESY1,ESY2,ESY3,ESY4,ESY5,EESY,ES,EY,FKP,FKS,FKS,HS,KP,RS,S2,S,2S} and the references therein. The problem of quantum fluctuations around the classical dynamics has been studied in \cite{BCS,BKS,BNNS,BSS,XC,GM,GMM1,GMM2,LNS,NN}.

In this paper, we study Gibbs states associated with \eqref{n-body_Hamiltonian_introduction}. These are equilibrium states of \eqref{n-body_Hamiltonian_introduction} at a given temperature $\tau>0$. More precisely, the Gibbs state at $\tau>0$ is the operator on $L^2_{\mathrm{sym}}(\Lambda^n)$ given by
\begin{equation}
\label{Gibbs_state_introduction}
\frac{1}{Z_{\tau}^{(n)}}\,\ee^{-H^{(n)}/\tau}\,,
\end{equation}
where $Z_{\tau}^{(n)}\;\deq\;\tr\,\ee^{-H^{(n)}/\tau}$. Note that then the operator \eqref{Gibbs_state_introduction} has trace equal to $1$.

Our goal is to relate the Gibbs states \eqref{Gibbs_state_introduction} to Gibbs measures \eqref{Gibbs_measure_introduction} when the temperature $\tau$ tends to infinity. 
The first result in this direction was obtained by Lewin, Nam, and Rougerie \cite{Lewin_Nam_Rougerie}. The precise notion of convergence is that of the corresponding ($r$-particle) correlation functions, which we henceforth refer to as the \emph{microscopic derivation of the Gibbs measure}. For precise statements, see Section \ref{Statement of the main results} below. 
In \cite{Lewin_Nam_Rougerie}, the authors treat the $d=1$ problem as well as the problem in higher dimensions with non-local and non-translation invariant interactions. The approach in \cite{Lewin_Nam_Rougerie} is based on a variational method and the de Finetti theorem. In the author's joint work with Fr\"{o}hlich, Knowles, and Schlein \cite{FrKnScSo1}, the result is obtained for bounded, translation-invariant interactions when $d=2,3$. Here, the approach is based on a perturbative expansion in the interaction and a suitable resummation of the obtained terms. For technical reasons, in \cite{FrKnScSo1}, it is necessary to modify the grand canonical ensemble defined in \eqref{GCE} below and work with its modification \eqref{modified_GCE}. For $d=2$ and for suitably regular interaction potentials, the result was obtained for the unmodified grand canonical ensemble in \cite{Lewin_Nam_Rougerie3}.
When $d=1$, the regime of subharmonic trapping (including the harmonic oscillator) was studied in \cite{Lewin_Nam_Rougerie2}.
The time-dependent problem when $d=1$ was studied in \cite{FrKnScSo2}. The analogous problem was previously analysed on the lattice in \cite[Chapter 3]{Knowles_Thesis}.
In all of the aforementioned works on the continuum, one assumes a suitable positivity on the interaction, i.e.\ one works in the \emph{defocusing} regime.
An expository account of \cite{Lewin_Nam_Rougerie}, is given in \cite{Lewin_Nam_Rougerie_Survey1}. Furthermore, an expository account of \cite{Lewin_Nam_Rougerie3} is given in \cite{Lewin_Nam_Rougerie_Survey2,Lewin_Nam_Rougerie_Survey3}.

It is possible to study related problems in different regimes. When working with zero temperature, the system is in the ground state of \eqref{n-body_Hamiltonian_introduction}. In this case, one is interested in proving convergence of the ground state energy of \eqref{n-body_Hamiltonian_introduction} towards the ground state of \eqref{classical_Hamiltonian_introduction}. This has been studied in \cite{Bach,BL,BBCS1,Brennecke_Schlein,GS,K,LNR0,LNSS,LY,LS,FSV,RW,LSY,S}.
The regime of fixed temperature was studied in \cite{LNR0,LNSS}. For a more detailed discussion on the classical limit and equilibrium states, we refer the reader to the introduction of \cite{FrKnScSo1} and to the expository texts \cite{BenPorSch_Review,Golse_Review,Sch_Review}.

Throughout this paper, we consider interaction potentials $w$ which are either \emph{$d$-admissible} in the sense of Definition \ref{interaction_potential_w} or \emph{endpoint-admissible} in the sense of Definition \ref{endpoint_admissible_w} below. 
Note that the nonnegativity properties (ii)-(iii) of Definition \ref{interaction_potential_w} and property (ii) of Definition \ref{endpoint_admissible_w} correspond to the assumption that the nonlinearity is \emph{defocusing} \footnote{The condition (iii) in Definition \ref{endpoint_admissible_w} is needed for technical reasons and should not be thought of as part of the defocusing assumption, see Remark \ref{absolute_value_unbounded_negative_Sobolev}. It is possible that the condition can be relaxed, but we do not address this issue here.}. 
The case $p=\infty$ has already been considered in \cite{Lewin_Nam_Rougerie,FrKnScSo1}. The main contribution of this work is to obtain a microscopic derivation of the Gibbs measure for the NLS when $w$ does not  belong to $L^\infty(\Lambda)$. 
Hence in Definitions \ref{interaction_potential_w} and \ref{endpoint_admissible_w}, we always assume that $w \notin L^{\infty}(\Lambda)$. Our goal is to obtain the result for the \emph{optimal} range of integrability on $w$, as in \cite{Bourgain_1997}.

The results of this paper can be viewed as a step in the direction of studying more singular interaction potentials. In order to motivate this, we note that in the classical setting, the Gibbs measure is well-defined for a wide range of interactions, including very singular ones. The known results on the microscopic derivation of the Gibbs measure typically apply for sufficiently regular interaction potentials. In the long run, one would be interested in eliminating this discrepancy in the choice of interaction potentials.


\subsubsection*{Notation and conventions}
We use the convention that $\mathbb{N}=\{0,1,2,3,\ldots\}$. Throughout the paper, $C>0$ denotes a finite positive constant, which can vary from line to line. Given quantities $a_1,a_2,\ldots$, we write $C(a_1,a_2,\ldots)$ for a finite positive constant that depends only on these quantities.
We sometimes also write $X \lesssim_{a_1,a_2,\ldots} Y$ if $X \leq C(a_1,a_2,\ldots) Y$. Likewise, we write $X \gtrsim_{a_1,a_2,\ldots} Y$ if $Y \lesssim_{a_1,a_2,\ldots} X$. We write $X \lesssim Y$ and $X \gtrsim Y$ if we do not need to keep track of the parameters. If $X \lesssim Y$ and $Y \lesssim X$, we write $X \sim Y$.
We use the convention that positive constants with indices $C_0,C_1,C_2>0$ depend on the dimension $d$ and the chemical potential $\kappa$ in \eqref{one_body_Hamiltonian}. In this case, we will suppress the dependence on these quantities in the notation. 

In the sequel, we omit the integration domain $\Lambda$ if it is clear from context that this is the set over which we are integrating. In other words, we use $\int \dd x \equiv \int_{\Lambda} \dd x$.
We use the convention that inner products are linear in the second variable.

For a statement $A$ we denote by
\begin{equation*}
\mathbf{1}_A \;\deq\;
\begin{cases}
1\,&\mbox{if $A$ is true}
\\
0\,&\mbox{if $A$ is false}
\end{cases}
\end{equation*}
the corresponding indicator function.

\subsection{The classical system and Gibbs measures}
Let us consider the probability space $(\mathbb{C}^{\mathbb{Z}^d},\mathcal{G},\mu)$, where $\mathcal{G}$ denotes the product sigma-algebra, and $\mu \deq \bigotimes_{k \in \mathbb{Z}^d} \mu_k$, where for all $k \in \mathbb{Z}^d$, we have $\mu_k=\frac{1}{\pi}\ee^{-|z|^2}\dd z$ ($\dd z$ denotes Lebesgue measure on $\mathbb{C}$). In other words, the $\mu_k$ are independent standard complex Gaussians. The points of the probability space are denoted by $\omega = (\omega_k)_{k \in \mathbb{Z}^d} \in \mathbb{C}^{\mathbb{Z}^d}$. The \emph{classical free field} is defined by
\begin{equation}
\label{classical_free_field}
\phi\equiv\;\phi^{\omega}\;\deq\;\sum_{k \in \mathbb{Z}^d} \frac{\omega_k}{\sqrt{\lambda_k}}\ee^{2 \pi \ii k \cdot x}\,.
\end{equation}
One obtains that
\begin{equation}
\label{classical_free_field_regularity}
\phi \in L^2(\mu;H^s(\Lambda))\,\,\,\mbox{for}\,\, s<1-\frac{d}{2}\,.
\end{equation}
Here, $H^s(\Lambda)$ denotes the $L^2$-based inhomogeneous Sobolev space on $\Lambda$ of order $s$. In order to deduce \eqref{classical_free_field_regularity}, one uses \eqref{lambda_k} to see that $\tr h^{s-1}<\infty$.
We refer the reader to \cite[Section 1.2]{FrKnScSo1}
for further details on the construction of the classical free field if one takes more general one-body Hamiltonians $h$ in \eqref{one_body_Hamiltonian}. 

We now state the precise assumptions on the interaction potentials $w$ that we consider throughout the paper. There are two possibilities. The first type of interaction potential is defined for all $d=1,2,3$.
\begin{definition}($d$-admissible interaction potentials)
\label{interaction_potential_w}
\\
We say that $w : \Lambda \rightarrow \mathbb{C}$ is \textbf{$d$-admissible} if $w$ is an even function that satisfies the following properties.
\begin{itemize}
\item[(i)]  $w \in L^p(\Lambda)$ for $p \in \mathcal{P}_d$, where
\begin{equation}
\label{P_d}
\mathcal{P}_d \;\deq\;
\begin{cases}
[1,\infty)\,\,&\mbox{if}\,\,\,d=1
\\
(1,\infty) \,\,&\mbox{if}\,\,\,d=2
\\
(3,\infty) \,\,&\mbox{if}\,\,\,d=3.
\end{cases}
\end{equation}
\item[(ii)] If $d=1$, $w \geq 0$ \emph{pointwise}.
\item[(iii)] If $d=2,3$, then $w$ is of \emph{positive type}.
\item[(iv)] $w \notin L^\infty(\Lambda)$.
\end{itemize}
\end{definition}

When $d=2$, we also consider the case when $w \in L^1(\Lambda)$, provided that we add further assumptions. 
Throughout the sequel, $\langle x \rangle \deq (1+|x|^2)^{1/2}$ denotes the Japanese bracket. 

\begin{definition}(Endpoint-admissible interaction potentials)
\label{endpoint_admissible_w}
\\
Let $d=2$. We say that $w: \Lambda \rightarrow \mathbb{C}$ is \textbf{endpoint-admissible} if $w$ is an even function that satisfies the following properties.
\begin{itemize}
\item[(i)] $w \in L^1(\Lambda)$.
\item[(ii)] $w$ is of \emph{positive type}. 
\item[(iii)] $w \geq 0$ \emph{pointwise}.
\item[(iv)] There exist $\epsilon>0$ and $L>0$ such that for all $k \in \mathbb{Z}^2$ we have $\hat{w}(k) \leq L \langle k \rangle^{-\epsilon}$.
\item[(v)] $w \notin L^{\infty}(\Lambda)$. 
\end{itemize}
\end{definition}
We note the the classes of interaction potentials given in Definitions \ref{interaction_potential_w} and \ref{endpoint_admissible_w} are indeed non-empty.
\begin{lemma}
\label{admissible_w_nonempty}
\begin{itemize}
Let $d=1,2,3$ be given.
\item[(i)] The class of $d$-admissible interaction potentials given by Definition \ref{interaction_potential_w} is non-empty.
\item[(ii)] For $d=2$ and $\epsilon>0$ small, the class of endpoint admissible interaction potentials given by Definition \ref{endpoint_admissible_w} is non-empty.
\end{itemize}
\end{lemma}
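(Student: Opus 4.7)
My plan is to construct explicit examples in both cases as periodic Bessel-type potentials. For a parameter $s > 0$ to be chosen, define
\[
w_s(x) \deq \sum_{k \in \mathbb{Z}^d} \lambda_k^{-s/2}\, e_k(x).
\]
By construction $w_s$ is even and real with Fourier coefficients $\hat{w}_s(k) = \lambda_k^{-s/2} > 0$, so it is automatically of positive type. To obtain pointwise nonnegativity (needed for Definition \ref{interaction_potential_w}(ii) when $d=1$ and for Definition \ref{endpoint_admissible_w}(iii)), I would rewrite
\[
\lambda_k^{-s/2}\;=\;\frac{1}{\Gamma(s/2)}\int_0^\infty t^{s/2-1}\,\ee^{-t\lambda_k}\,\dd t
\]
and exchange the sum with the $t$-integral to express $w_s$ as a positive multiple of an integral of the periodic heat kernel (pointwise positive by Poisson summation) against the positive measure $t^{s/2-1}\ee^{-t\kappa}\,\dd t$.

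The remaining step is to read off the regularity of $w_s$ from $s$. The same heat-kernel representation shows that $w_s$ differs from the Euclidean Bessel kernel $G_s$ on $\mathbb{R}^d$ by a smooth function on $\Lambda$, and hence behaves like $c_{d,s}|x|^{s-d}$ as $|x| \to 0$ for $0 < s < d$. Consequently $w_s \notin L^\infty(\Lambda)$ exactly when $s < d$, while $w_s \in L^p(\Lambda)$ exactly when $(d-s)p < d$, i.e.\ when $s > d(1-1/p)$. For part (i) it therefore suffices to pick
\[
s \;\in\; \bigl(d(1-1/p),\, d\bigr),
\]
and this interval is non-empty in each of the three regimes: for $d=1$ and any $p \in [1,\infty)$ it contains $s = (1-1/p + 1)/2$; for $d=2$ and any $p \in (1,\infty)$ it is non-degenerate; for $d=3$ and any $p \in (3,\infty)$ the constraint gives $s \in (3-3/p, 3) \subset (2,3)$, again non-empty. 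For part (ii), any choice $s \in (0,2)$ works simultaneously: $w_s \in L^1(\Lambda)$ since the $|x|^{s-2}$ singularity is integrable in two dimensions, $w_s \notin L^\infty(\Lambda)$ because $s < 2$, and the bound $\hat{w}_s(k) = \lambda_k^{-s/2} \leq C\langle k\rangle^{-s}$ verifies Definition \ref{endpoint_admissible_w}(iv) with $\epsilon = s$.

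The only step with any real content beyond bookkeeping is the identification of the near-origin singular behaviour of $w_s$. I would handle this via the heat-kernel representation above by splitting the $t$-integral at $t = 1$: the large-$t$ piece yields a bounded smooth function on $\Lambda$, while in the small-$t$ piece the sum $\sum_{n \in \mathbb{Z}^d} (4\pi t)^{-d/2}\ee^{-|x-n|^2/(4t)}$ defining the periodic heat kernel is dominated by the single term $n=0$ up to exponentially small corrections. This reduces the problem to the standard asymptotics of the Euclidean Bessel kernel, so no genuine obstacle arises; the verification amounts to organising well-known facts about Bessel potentials on the torus.
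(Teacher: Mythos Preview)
Your proof is correct and takes a genuinely different, more unified route than the paper's. The paper treats the three settings separately: for part~(i) with $d=2,3$ it splits into $p\geq 2$ (using $\hat w(k)=\langle k\rangle^{-d/q}$ together with Hausdorff--Young for the $L^p$ bound and the transference principle to rule out $L^\infty$) and $1\leq p<2$ (building $w=f*_\Lambda f$ from a truncated power $f(x)=|x|^{-d/q}\chi(x)$ so that positive type comes from $\hat w=\hat f^{\,2}$), and for part~(ii) it sets $w=f^2$ with $\hat f(k)=\langle k\rangle^{-1-\epsilon}$, then computes the Fourier convolution sum to extract both the upper and lower bounds on $\hat w(k)$, the latter giving $w\notin L^2$ hence $w\notin L^\infty$. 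Your single family $w_s$ covers all cases at once because the heat-kernel subordination formula delivers pointwise positivity and positive type simultaneously, making the convolution and squaring tricks unnecessary, and the explicit near-origin asymptotics $w_s(x)\sim c_{d,s}|x|^{s-d}$ settle both the $L^p$ membership and the failure of $L^\infty$ in one stroke. The trade-off is that your argument leans on a standard but slightly more analytic fact (the singularity structure of periodic Bessel kernels via heat-kernel comparison), whereas the paper's constructions are more hands-on and self-contained within elementary Fourier analysis.
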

The proof of Lemma \ref{admissible_w_nonempty} is given in Appendix \ref{The_interaction_potential}.

In the sequel, we assume that the interaction potential $w$ is either $d$-admissible or endpoint-admissible as given by Definitions \ref{interaction_potential_w} and \ref{endpoint_admissible_w}. When $d=1$, the \emph{classical interaction} is defined as
\begin{equation}
\label{W_1D_unbounded}
W\;\deq\;\frac{1}{2} \int \dd x\,\dd y\, |\phi(x)|^2\,w(x-y)\,|\phi(y)|^2\,.
\end{equation}
Note that $W \geq 0$ almost surely by Definition \ref{interaction_potential_w} (ii). Furthermore, $W<\infty$ almost surely. Namely, by \eqref{classical_free_field_regularity} and Sobolev embedding, we have that $\phi \in L^4(\Lambda)$ almost surely. Since $w \in L^1(\Lambda)$ by Definition \ref{interaction_potential_w} (i), we use Young's and H\"{o}lder's inequality to conclude that 
\begin{equation}
\label{W_1D_unbounded2}
W\;\leq\;\|\phi\|_{L^4(\Lambda)}^4 \,\|w\|_{L^1(\Lambda)} \;<\;\infty\,
\end{equation}
almost surely.

When $d=2,3$, we need to perform a renormalisation of the interaction by using \emph{Wick ordering}.
More precisely, given $K \in \mathbb{N}$, we define the \emph{truncated classical free field}
\begin{equation}
\label{phi_K}
\phi_{[K]}\equiv\;\phi_{[K]}^{\omega}\;\deq\;\sum_{|k| \leq K} \frac{\omega_k}{\sqrt{\lambda_k}}\ee^{2 \pi \ii k \cdot x}\,.
\end{equation} 
and associated density\footnote{Note that since we are working on the torus with one-body Hamiltonian \eqref{one_body_Hamiltonian}, the quantity $\int \dd \mu \,|\phi_{[K]}(x)|^2$ is constant by translation invariance. Hence, we can take $\varrho_{[K]}$ to be a constant. In the setting of more general $\Lambda$ and $h$, this is a function of $x \in \Lambda$, see \cite[Section 1.6]{FrKnScSo1} for a detailed explanation.}
\begin{equation}
\label{varrho_K}
\varrho_{[K]}\;\deq\;\int \dd \mu \,|\phi_{[K]}(0)|^2\,.
\end{equation}
The \emph{truncated Wick-ordered classical interaction} is given by
\begin{equation}
\label{W_K_unbounded}
W_{[K]} \;\deq\; \frac{1}{2} \int \dd x\,\dd y\,\big(|\phi_{[K]}(x)|^2-\varrho_{[K]}\big)\,w(x-y)\,\big(|\phi_{[K]}(y)|^2-\varrho_{[K]}\big)\,.
\end{equation}
The Wick-ordered interaction is obtained as an appropriate limit of the $W_{[K]}$.
\begin{lemma}(Definition of $W$ for $d=2,3)$
\label{W_Wick-ordered}
\begin{itemize}
\item[(i)]
Let $d=2,3$ and let $w$ be $d$-admissible as in Definition \ref{interaction_potential_w}.
Then $(W_{[K]})_{K \in \mathbb{N}}$ is a Cauchy sequence in $\bigcap_{m \geq 1}L^m(\mu)$.
\item[(ii)]
Let $d=2$ and let $w$ be endpoint-admissible as in Definition \ref{endpoint_admissible_w}. Then the conclusion in part (i) also holds.
\end{itemize}
In both cases, we denote the corresponding limit by $W$. 
\end{lemma}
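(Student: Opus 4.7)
The plan is to reduce the Cauchy claim to an $L^2(\mu)$ convergence statement. Since each $W_{[K]}$ is a polynomial of degree at most four in the complex Gaussians $(\omega_k)_{k \in \Z^d}$, it lies in the direct sum of the Wiener chaoses of orders $0$, $2$, and $4$. Nelson's hypercontractivity implies that on this finite-degree subspace all $L^m(\mu)$ norms for $m \geq 2$ are equivalent, with constants depending only on $m$. It therefore suffices to prove that $(W_{[K]})_{K \in \N}$ is Cauchy in $L^2(\mu)$.

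Expanding $\phi_{[K]}$ in the basis $(e_k)$ gives the Fourier representation
\[
W_{[K]} \;=\; \frac{1}{2} \sum_{\substack{|j_i|,\,|k_i|\leq K \\ j_1+j_2=k_1+k_2}} \frac{\hat w(k_1-j_1)}{\sqrt{\lambda_{j_1}\lambda_{k_1}\lambda_{j_2}\lambda_{k_2}}}\,\pb{\omega_{j_1}\bar\omega_{k_1}-\delta_{j_1 k_1}}\pb{\omega_{j_2}\bar\omega_{k_2}-\delta_{j_2 k_2}}.
\]
Applying the Wick product formula to the two bilinear Wick monomials orthogonally decomposes $W_{[K]} = W_{[K]}^{(0)} + W_{[K]}^{(2)} + W_{[K]}^{(4)}$ into its components in the Wiener chaoses of orders $0$, $2$, and $4$, respectively. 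A direct computation identifies $W_{[K]}^{(0)} = \tfrac{1}{2}\int w(z)\,G_K(z)^2\,\dd z$ with $G_K(z) := \sum_{|k|\leq K}\lambda_k^{-1}\ee^{2\pi\ii k\cdot z}$ the truncated Green's function of $h$; $W_{[K]}^{(2)}$ is a linear combination of the orthonormal second-chaos vectors $|\omega_j|^2 - 1$; and $W_{[K]}^{(4)}$ is the fully Wick-ordered quartic sum obtained from the right-hand side.

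By orthogonality it suffices to prove $L^2$-Cauchy convergence of each chaos piece separately. Convergence of $W_{[K]}^{(0)}$ follows from $L^1$-convergence of $w G_K^2 \to w G^2$ on $\Lambda$, which holds by H\"older's inequality combined with the integrability $G \in \bigcap_{r<\infty}L^r(\Lambda)$ for $d=2$ and $G \in \bigcap_{r<3}L^r(\Lambda)$ for $d=3$. For $W_{[K]}^{(2)}$ and $W_{[K]}^{(4)}$, the $L^2$ norms reduce by orthogonality to explicit Fourier sums involving the weight $F(m) := \sum_{j \in \Z^d}(\lambda_j \lambda_{j+m})^{-1}$, which satisfies $F(m) \lesssim \log\langle m\rangle/\langle m\rangle^2$ for $d=2$ and $F(m) \lesssim \langle m\rangle^{-1}$ for $d=3$. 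Combined with H\"older's inequality, the Hausdorff--Young estimate $\|\hat w\|_{\ell^{p'}} \lesssim \|w\|_{L^p}$ for $p \in [1,2]$, and the trivial bound $\|\hat w\|_{\ell^\infty} \leq \|w\|_{L^1}$, this yields convergence of the relevant series.

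The most delicate case is the fourth chaos for $d=3$: controlling $\sum_m |\hat w(m)|^2 F(m)^2 \lesssim \sum_m |\hat w(m)|^2 \langle m\rangle^{-2}$ requires more than $\hat w \in \ell^\infty$ because $\sum_m \langle m\rangle^{-2} = \infty$ in dimension three. Here one interpolates $\hat w \in \ell^2 \cap \ell^\infty$ to obtain $\hat w \in \ell^4(\Z^3)$, and then applies Cauchy--Schwarz together with the convergent sum $\sum_m \langle m\rangle^{-4} < \infty$. The $\ell^2$ inclusion uses $L^p(\Lambda) \subset L^2(\Lambda)$ on the torus, which holds precisely because $p > 3 > 2$, making the range $p > 3$ of Definition~\ref{interaction_potential_w} well-matched to the problem and consistent with the classical integrability threshold of~\cite{Bourgain_1997}. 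In the $d=2$ endpoint case, the polynomial decay $|\hat w(k)| \leq L\langle k\rangle^{-\epsilon}$ of Definition~\ref{endpoint_admissible_w}(iv) plays the analogous role of rendering the marginal logarithmic $d=2$ sums absolutely convergent.
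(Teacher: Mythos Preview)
Your approach is correct and genuinely different from the paper's. The paper does not invoke hypercontractivity at all: instead, for each even $m$ it computes the mixed moment $\int W_{[K_1]}\cdots W_{[K_m]}\,\dd\mu$ directly by the classical Wick theorem, organises the resulting pairings via the graph structure $\mathfrak{R}(m,0)$ already built for the quantum problem, and shows that each graph value $\mathcal{I}_{\mathbf{K},\Pi}$ converges to the untruncated $\mathcal{I}_\Pi$ by a telescoping argument in the Green functions $G_{[K]}\to G$ in $L^q(\Lambda)$, $q\in\mathcal{Q}_d$. Finiteness of the limit $\mathcal{I}_\Pi$ is inherited from the quantum explicit-term bounds (Proposition~\ref{Product of subgraphs} and Proposition~\ref{Approximation_result_2_unbounded}), so the lemma is essentially a corollary of the machinery in Section~\ref{Analysis of the quantum system}.

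Your route---reduce to $L^2(\mu)$ by hypercontractivity, decompose into chaoses of order $0$, $2$, $4$, and estimate each chaos by explicit Fourier sums---is more self-contained and avoids the graphical apparatus entirely. The $F(m)$ bound you state is exactly what controls the fourth chaos; the second chaos is slightly less cleanly expressed in terms of $F(m)$ than your wording suggests (the coefficients are $\lambda_j^{-1}(\hat w * \widehat{G_{[K]}})(j)$, and one bounds them via Young's inequality on $\Z^d$ combined with Hausdorff--Young, or equivalently by recognising $\sum_j|c_j|^2$ as $\|wG_{[K]}\|_{H^{-2}}^2$), but the ingredients you list are the right ones and the argument closes. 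What the paper's approach buys is uniformity: the same telescoping/graph argument reappears verbatim in the proof of Lemma~\ref{a^xi_m_identity} to identify $a_m^\xi$ with $a_{\infty,m}^\xi$, so proving Lemma~\ref{W_Wick-ordered} this way gives that later step for free. Your approach is more elementary as a standalone proof, but would not feed into the rest of the paper as directly.
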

The proof of Lemma \ref{W_Wick-ordered} (i) is given in Section \ref{Expansion_classical} and the proof of Lemma \ref{W_Wick-ordered} (ii) is given in Section \ref{Endpoint-admissible interaction potentials_3}.

The \emph{classical state} $\rho(\cdot)$ associated with the one-body Hamiltonian $h$ and a $d$-admissible or endpoint-admissible interaction potential $w$ is defined as 
\begin{equation}
\label{rho_unbounded}
\rho(X)\;\deq\;\frac{\int X\,\ee^{-W}\,\dd \mu}{\int \ee^{-W}\,\dd \mu}\,.
\end{equation}
In \eqref{rho_unbounded}, $X$ is a random variable. Given $r \in \mathbb{N}$, the \emph{classical $r$-particle correlation} $\gamma_r$ is defined to be the operator on $\mathfrak{H}^{(r)}$ with kernel given by
\begin{equation}
\label{gamma_r_unbounded}
\gamma_r(x_1,\ldots,x_r;y_1,\ldots,y_r)\;\deq\;\rho\big(\bar{\phi}(y_1)\,\cdots\,\bar{\phi}(y_r)\,\phi(x_1)\,\cdots\,\phi(x_r)\big)\,.
\end{equation}
It can be shown that the family $(\gamma_r)_{r \in \mathbb{N}}$ determines the moments of the classical Gibbs state $\rho(\cdot)$, see \cite[Remark 1.2]{FrKnScSo1}.

\subsection{The quantum system and Gibbs states}
Given $n \in \mathbb{N}$, we denote by $\mathfrak{H}^{(n)}$ the \emph{$n$-particle space}, i.e.\ the bosonic subspace $L^2_{\mathrm{sym}}(\Lambda^n)$ of $L^2(\Lambda^n)$. The \emph{bosonic Fock space} is then defined as
\begin{equation*}
\mathcal{F}\;\equiv\;\mathcal{F}(\mathcal{\mathfrak{H}})\;\deq\;\bigoplus_{n \in \mathbb{N}}\mathfrak{H}^{(n)}\,.
\end{equation*}
On the $n$-particle space $\mathfrak{H}^{(n)}$, we work with $n$-body Hamiltonians of the form
\begin{equation}
\label{H^{(n}_unbounded}
H^{(n)}\;\deq\;\sum_{i=1}^{n}h_i+\lambda \sum_{1 \leq i <j \leq n} \tilde{w}(x_i-x_j)\,.
\end{equation}
In \eqref{H^{(n}_unbounded}, $h_i$ denotes the one-body Hamiltonian $h$ acting in the $x_i$-variable, $\lambda>0$ is the interaction strength and $\tilde{w} \in L^{\infty}(\Lambda)$ is an interaction potential. In particular, $\tilde{w}$ is not $d$-admissible\footnote{Here, one can consider unbounded interaction potentials. The methods in the sequel require bounded interactions. Hence we add this restriction.}. Note that $H^{(n)}$ is a densely-defined self-adjoint operator on $\mathfrak{H}^{(n)}$. 

As in \cite{FrKnScSo1,FrKnScSo2}, we introduce the large parameter $\tau>0$, which has the interpretation of the temperature. Our goal is to show that the classical state \eqref{rho_unbounded} is obtained as the $\tau \rightarrow \infty$ limit of thermal states of suitably chosen $\tau$-dependent $n$-body Hamiltonians of the form \eqref{H^{(n}_unbounded}. We now make this precise. Since we are interested in the limit $\tau \rightarrow \infty$, we always consider $\tau \geq 1$.

Let us first consider the case $d=1$ for a fixed temperature $\tau$.
We choose the coupling constant $\lambda=\frac{1}{\tau}$ in \eqref{H^{(n}_unbounded} (for the justification of this choice, see \cite[Section 1.4]{FrKnScSo1}). We first approximate a $1$-admissible interaction potential by bounded potentials. 

\begin{lemma}(Approximation of the interaction potential: $d=1$)
\label{w_1D_approximation}
\\
Let $d=1$. Let $1 \leq p <\infty$ and let $w \in L^p(\Lambda)$ be pointwise nonnegative. Given $\beta>0$ and $\tau \geq 1$, there exists $w_\tau \in L^p(\Lambda)$ with the following properties.
\begin{itemize}
\item[(i)] $w_\tau \geq 0$ pointwise.
\item[(ii)] $w_\tau \in L^{\infty}(\Lambda)$ and $\|w_\tau\|_{L^\infty(\Lambda)} \leq \tau^{\beta}$.
\item[(iii)] $\|w_{\tau}\|_{L^p(\Lambda)} \leq \|w\|_{L^p}$. 
\item[(iv)] $w_\tau \rightarrow w$ in $L^p(\Lambda)$ as $\tau \rightarrow \infty$.
\end{itemize}
\end{lemma}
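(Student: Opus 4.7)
The plan is to define $w_\tau$ via a simple truncation at height $\tau^\beta$. Specifically, I would set
\begin{equation*}
w_\tau(x) \;\deq\; \min\{w(x), \tau^{\beta}\}\,.
\end{equation*}
This candidate is manifestly pointwise defined (since $w$ is real-valued and nonnegative), and I would verify properties (i)--(iv) in order.

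Properties (i)--(iii) are essentially immediate. For (i), note that $w \geq 0$ and $\tau^\beta > 0$ force $w_\tau \geq 0$. For (ii), the defining inequality $w_\tau(x) \leq \tau^\beta$ is built into the truncation, so $w_\tau \in L^\infty(\Lambda)$ with $\|w_\tau\|_{L^\infty(\Lambda)} \leq \tau^\beta$. For (iii), the pointwise bound $0 \leq w_\tau \leq w$ gives $|w_\tau|^p \leq |w|^p$ almost everywhere, and integration yields $\|w_\tau\|_{L^p(\Lambda)} \leq \|w\|_{L^p(\Lambda)}$.

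The only nontrivial step is the $L^p$-convergence in (iv). The plan is a standard dominated convergence argument. Writing $w_\tau - w = -(w - \tau^\beta)\,\mathbf{1}_{\{w > \tau^\beta\}}$, I would bound
\begin{equation*}
|w_\tau(x) - w(x)|^p \;\leq\; w(x)^p \,\mathbf{1}_{\{w(x) > \tau^{\beta}\}}\,.
\end{equation*}
Since $w \in L^p(\Lambda)$ with $p < \infty$, we have $w(x) < \infty$ for almost every $x$, and hence $\mathbf{1}_{\{w(x) > \tau^{\beta}\}} \to 0$ pointwise almost everywhere as $\tau \to \infty$ (using $\beta > 0$). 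The dominating function $w^p \in L^1(\Lambda)$ is integrable, so Lebesgue's dominated convergence theorem gives $\|w_\tau - w\|_{L^p(\Lambda)} \to 0$.

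No step is a serious obstacle; the lemma is essentially a packaging of the truncation construction, and the only point that requires any care is the invocation of dominated convergence, which uses the hypothesis $p < \infty$ in an essential way (if $p = \infty$ the convergence in (iv) would generally fail, but the assumption $w \notin L^\infty$ in Definition \ref{interaction_potential_w} does not affect the argument here since we only use $w \in L^p$ with $p < \infty$).
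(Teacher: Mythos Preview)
Your proof is correct and follows essentially the same approach as the paper: the paper defines $w_\tau \deq w\,\mathbf{1}_{0 \leq w \leq \tau^\beta}$ (zeroing out above the threshold rather than capping at it) and simply asserts the properties hold, whereas you choose the variant $w_\tau = \min\{w,\tau^\beta\}$ and spell out the dominated convergence argument for (iv). Both truncations work identically for (i)--(iv), so the difference is cosmetic.
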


\begin{proof}[Proof of Lemma \ref{w_1D_approximation}]
Let us take $w_\tau \;\deq\; w \, \mathbf{1}_{\,0 \leq w \leq \tau^{\beta}}$. Then $w_\tau$ satisfies the wanted properties.
\end{proof}
We hence take $\tilde{w}=w_\tau$ by applying Lemma \ref{w_1D_approximation} for $w$ a $1$-admissible interaction potential. In other words, in \eqref{H^{(n}_unbounded} we consider
\begin{equation}
\label{H^{(n}_unbounded_tau}
H^{(n)} \;\equiv\;H^{(n)}_{[\tau]}\;\deq\;\sum_{i=1}^{n}h_i+\frac{1}{\tau} \sum_{1 \leq i <j \leq n} w_{\tau}(x_i-x_j)\,,
\end{equation}
which we rescale by $\frac{1}{\tau}$ and extend to all of Fock space $\mathcal{F}$ to obtain the \emph{quantum Hamiltonian}
\begin{equation}
\label{H_tau1}
H_{\tau}\;\deq\;\frac{1}{\tau} \bigoplus_{n \in \mathbb{N}} H^{(n)}_{[\tau]}\,.
\end{equation}

We rewrite $H_\tau$ using second-quantisation. In order to do this, we first recall the relevant definitions.
Given $f \in \mathfrak{H}$, we define the \emph{bosonic annihilation and creation operators} $b(f)$ and $b^*(f)$ acting on $\mathcal{F}$. They act on $\Psi=(\Psi^{(n)})_{n \in \mathbb{N}} \in \mathcal{F}$ as
\begin{align}
\label{b(f)}
\big(b(f)\Psi\big)^{(n)}(x_1,\ldots,x_n)&\;\deq\; \sqrt{n+1} \int_{\Lambda} \,\dd x\, \bar{f}(x)\,\Psi^{(n+1)}(x,x_1,\ldots,x_n)\,,
\\
\label{b^*(f)}
\big(b^*(f)\Psi\big)^{(n)}(x_1,\ldots,x_n)&\;\deq\;\frac{1}{\sqrt{n}} \sum_{i=1}^{n} f(x_i)\,\Psi^{(n-1)}(x_1,\ldots,x_{i-1},x_{i+1},\ldots,x_n)\,.
\end{align}
The operators $b(f)$ and $b^*(f)$ defined in \eqref{b(f)}--\eqref{b^*(f)} are unbounded closed operators on $\mathcal{F}$, which are each other's adjoints. They satisfy the \emph{canonical commutation relations}, i.e.\ for all $f,g \in \mathfrak{H}$ we have
\begin{equation}
\label{CCR}
[b(f),b^{*}(g)]\;=\;\langle f, g \rangle\,,\quad [b(f),b(g)]\;=\;[b^{*}(f),b^{*}(g)]\;=\;0\,.
\end{equation}
In \eqref{CCR} $[\cdot,\cdot]$ denotes the commutator, namely $[A,B]=AB-BA$.

As in \cite{FrKnScSo1,FrKnScSo2}, we work with \emph{rescaled creation and annihilation operators}. Given $f \in \mathfrak{H}$, we define
\begin{equation}
\label{quantum_fields}
\phi_{\tau}(f)\;\deq\;\frac{1}{\sqrt{\tau}}\,b(f)\,,\quad \phi^{*}_{\tau}(f)\;\deq\;\frac{1}{\sqrt{\tau}}\,b^*(f)\,.
\end{equation}
By construction, $\phi_\tau$ and $\phi^*_\tau$ are operator-valued distributions and we can write
\begin{equation}
\label{distribution_kernels}
\phi_{\tau}(f)\;=\; \int \dd x\,\bar{f}(x)\,\phi_{\tau}(x)\,,\quad \phi^{*}_{\tau}(f)\;=\;\int \dd x\, f(x)\,\phi^{*}_{\tau}(x)\,.
\end{equation}
By \eqref{b(f)}--\eqref{quantum_fields}, the distribution kernels $\phi_{\tau}(x), \phi^{*}_{\tau}(x)$ given by \eqref{distribution_kernels} satisfy the commutation relations
\begin{equation}
\label{CCR_tau}
[\phi_{\tau}(x),\phi^{*}_{\tau}(y)]\;=\;\frac{1}{\tau}\delta(x-y)\,,\quad [\phi_{\tau}(x),\phi_{\tau}(y)]\;=\;[\phi^{*}_{\tau}(x),\phi^{*}_{\tau}(y)]\;=\;0\,.
\end{equation}
With the above notation, we can rewrite \eqref{H_tau1} as
\begin{equation} 
\label{H_tau2}
H_\tau \;=\;\int \dd x\,\dd y\,\phi^{*}_{\tau}(x)\,h(x;y)\,\phi_{\tau}(y)+\frac{1}{2} \int \dd x\,\dd y\, \phi^{*}_{\tau}(x)\,\phi^{*}_{\tau}(y)\,w_\tau (x-y)\,\phi_{\tau}(x)\,\phi_{\tau}(y)\,,
\end{equation}
where $h(x;y)=\sum_{k \in \mathbb{Z}^d}\lambda_k\,\ee^{2\pi \ii k \cdot(x-y)}$ is the operator kernel of \eqref{one_body_Hamiltonian}.

Let us now consider the case when $d=2,3$. We first note an approximation result that allows us to approximate the $d$-admissible interaction interaction potential by suitably chosen bounded interaction potentials.

\begin{lemma}(Approximation of a $d$-admissible interaction potential: $d=2,3$)
\label{w_positive_approximation}
\\
Let $d=2,3$. Let $1 \leq p<\infty$ and let $w \in L^p(\Lambda)$ be of positive type. Given $\beta>0$ and $\tau \geq 1$, there exists $w_\tau \in L^p(\Lambda)$ with the following properties.
\begin{itemize}
\item[(i)] $w_\tau$ is of positive type.
\item[(ii)] $w_\tau \in L^{\infty}(\Lambda)$ and $\|w_\tau\|_{L^\infty(\Lambda)} \leq \tau^{\beta}$.
\item[(iii)] $\|w_\tau\|_{L^p(\Lambda)} \leq C \|w\|_{L^p(\Lambda)}$, for some $C \equiv C(d)>0$.
\item[(iv)] $w_\tau \rightarrow w$ in $L^p(\Lambda)$ as $\tau \rightarrow \infty$.
\end{itemize}
\end{lemma}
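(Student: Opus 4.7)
The plan is to construct $w_\tau$ as a convolution $w_\tau := w * g_\tau$, where $g_\tau$ is a carefully chosen regularizing kernel on $\mathbb{T}^d$. Crucially, unlike the pointwise truncation used in Lemma \ref{w_1D_approximation} for $d=1$, pointwise truncation of $w$ here would destroy the positive-type property required in (i). The regularization must therefore be performed in a way that preserves positivity in Fourier space.

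I would fix a family of kernels $(g_\tau)_{\tau \geq 1}$ on $\mathbb{T}^d$ enjoying the following properties: $g_\tau \geq 0$ pointwise (so that $\|g_\tau\|_{L^1} = \int g_\tau = 1$), $\hat{g}_\tau(k) \geq 0$ for all $k \in \mathbb{Z}^d$ (i.e., $g_\tau$ is itself of positive type), and $g_\tau$ forms an approximate identity as $\tau \to \infty$. A concrete choice is the $d$-fold tensor product of the one-dimensional Fej\'{e}r kernel of order $K = K(\tau)$, which is nonnegative pointwise and has nonnegative Fourier coefficients $\prod_{i=1}^d \max\{0, 1 - |k_i|/(K+1)\}$. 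A direct computation (via a scaling heuristic, $g_\tau$ being a spike of height $\sim K^d$ and width $\sim K^{-d}$) yields $\|g_\tau\|_{L^{p'}} \lesssim_d K^{d/p}$ where $p'$ is the H\"{o}lder conjugate of $p$. The heat kernel on $\mathbb{T}^d$ at a small time $t = t(\tau)$ would serve equally well.

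Set $w_\tau := w * g_\tau$. Properties (i), (iii), and (iv) follow quickly. The Fourier coefficients satisfy $\hat{w}_\tau(k) = \hat{w}(k)\hat{g}_\tau(k) \geq 0$, giving (i). Young's inequality $L^p * L^1 \hookrightarrow L^p$ gives $\|w_\tau\|_{L^p} \leq \|w\|_{L^p}\|g_\tau\|_{L^1} = \|w\|_{L^p}$, so (iii) holds with $C = 1$. Property (iv) is the classical fact that the Ces\`{a}ro means of the Fourier series of an $L^p$-function converge to it in $L^p$ for $1 \leq p < \infty$. For (ii), Young's inequality $L^p * L^{p'} \hookrightarrow L^\infty$ gives
\begin{equation*}
\|w_\tau\|_{L^\infty} \;\leq\; \|w\|_{L^p}\,\|g_\tau\|_{L^{p'}} \;\lesssim_d\; K(\tau)^{d/p}\,\|w\|_{L^p}\,,
\end{equation*}
so it suffices to choose $K(\tau) = \lfloor c_0 \tau^{\beta p/d} \rfloor$ with $c_0 = c_0(d,\beta,\|w\|_{L^p})$ sufficiently small. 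For the finitely many small values of $\tau$ where $K(\tau) = 0$, one simply takes $w_\tau = 0$; this does not affect (iv), which is a statement about $\tau \to \infty$.

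The main subtlety, and the step that distinguishes this lemma from its $d=1$ counterpart, is reconciling (i) with (ii): a pointwise cutoff $w\,\mathbf{1}_{|w| \leq \tau^\beta}$ respects all $L^p$ bounds but breaks positivity of $\hat{w}$, while a sharp Fourier cutoff $\hat{w}\,\mathbf{1}_{|k| \leq K}$ respects positivity of $\hat{w}$ but is not a uniformly bounded multiplier on $L^p$ for $p \neq 2$. Convolution with the Fej\'{e}r (or heat) kernel sidesteps both obstructions: it is a positive-type smoothing in Fourier space and simultaneously a pointwise-nonnegative $L^1$-normalized convolution in physical space, so Young's inequality delivers both (ii) and (iii) directly.
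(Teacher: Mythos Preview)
Your proof is correct and takes a genuinely different route from the paper. The paper defines $w_\tau$ via a smooth compactly supported Fourier multiplier $\chi(\cdot/M)$ lifted from $\mathbb{R}^d$ to $\mathbb{T}^d$ by the transference principle (Proposition \ref{transference_principle}): property (iii) comes from transference, (ii) from the crude bound $\sum_{|k|\leq M}|\hat w(k)|\leq C M^d\|w\|_{L^1}$, and (iv) from density of trigonometric polynomials together with $\chi(k/M)=1$ on low modes. Your construction instead convolves with the tensor-product Fej\'er kernel, whose key advantage is \emph{double positivity}: $g_\tau\geq 0$ pointwise and $\hat g_\tau\geq 0$. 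The first gives (iii) with $C=1$ directly from Young's inequality $L^p*L^1\to L^p$, with no transference needed; the second gives (i). Your bound for (ii) via $L^p*L^{p'}\to L^\infty$ and the standard estimate $\|F_K\|_{L^{p'}}\sim K^{1/p}$ is also cleaner than the paper's lattice-point count. The paper's smooth multiplier has the conceptual advantage that it equals $1$ on a full neighbourhood of the origin (which simplifies (iv) slightly), but overall your argument is more elementary and self-contained for the torus setting.
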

When $d=2$ and the interaction potential is endpoint-admissible, we use a different approximation argument.
\begin{lemma}(Approximation of an endpoint-admissible interaction potential: $d=2$)
\\
\label{w_endpoint_admissible_approximation}
Let $d=2$ and let $w \in L^1(\Lambda)$ be endpoint-admissible as in Definition \ref{endpoint_admissible_w} above. Let $\delta \deq \epsilon/2$ for $\epsilon$ as in Definition \ref{endpoint_admissible_w} (iv). Let $\beta>0$ and $\tau \geq 1$ be given. Then, there exists $w_\tau \in H^{-1+\delta}(\Lambda) \cap L^1(\Lambda)$ with the following properties. 
\begin{itemize}
\item[(i)] $\|w_\tau\|_{H^{-1+\delta}(\Lambda)} \leq \|w\|_{H^{-1+\delta}(\Lambda)} \leq C L$.
\item[(ii)] $w_\tau$ is of positive type.
\item[(iii)] $\hat{w}_\tau \leq L$ for $L$ as in Definition \ref{endpoint_admissible_w} (iv).
\item[(iv)] $w_\tau \geq 0$ pointwise.
\item[(v)] $w_\tau \in L^{\infty}(\Lambda)$ and $\|w_\tau\|_{L^\infty(\Lambda)} \leq \tau^{\beta}$.
\item[(vi)] $\|w_\tau\|_{L^1(\Lambda)} \leq C \|w\|_{L^1(\Lambda)}$.
\item[(vii)] $w_\tau \rightarrow w$ in $H^{-1+\delta}(\Lambda)$ as $\tau \rightarrow \infty$.
\end{itemize}
\end{lemma}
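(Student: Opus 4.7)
The plan is to take $w_\tau$ to be a mollification of $w$ by the heat semigroup. The heat kernel on $\Lambda = \mathbb{T}^2$ is the natural choice because it is simultaneously \emph{pointwise nonnegative} and \emph{of positive type}, so convolution with it preserves both nonnegativity requirements (iii)--(iv) of Definition~\ref{endpoint_admissible_w} at once. A sharp Fourier cutoff would preserve positive type but could destroy pointwise nonnegativity, while the pointwise truncation used in Lemma~\ref{w_1D_approximation} would destroy positive type.

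Concretely, let $P_t \deq \ee^{t\Delta}$ denote the heat semigroup on $\Lambda$, with even kernel $p_t$ satisfying $p_t \geq 0$ pointwise, $\|p_t\|_{L^1(\Lambda)} = 1$, $\|p_t\|_{L^\infty(\Lambda)} \leq C\, t^{-1}$ for $t \in (0,1]$, and Fourier multiplier $(\widehat{P_t f})(k) = \ee^{-4\pi^2 |k|^2 t}\,\hat{f}(k)$. I would set $w_\tau \deq P_{t_\tau} w$ with $t_\tau \deq c\,\tau^{-\beta}$, where $c = c(\|w\|_{L^1(\Lambda)}) \in (0,1]$ is chosen so that (v) holds. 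Evenness of $w_\tau$ is inherited from that of $p_{t_\tau}$ and $w$. Properties (ii)--(iv) are then immediate: (ii) and (iii) follow from $0 \leq \hat{w}_\tau(k) = \ee^{-4\pi^2|k|^2 t_\tau}\,\hat{w}(k) \leq \hat{w}(k) \leq L$, and (iv) holds because $p_{t_\tau} * w$ is a convolution of pointwise nonnegative functions. Young's inequality yields (vi), $\|w_\tau\|_{L^1(\Lambda)} \leq \|p_{t_\tau}\|_{L^1(\Lambda)}\,\|w\|_{L^1(\Lambda)} = \|w\|_{L^1(\Lambda)}$, and (v), $\|w_\tau\|_{L^\infty(\Lambda)} \leq \|p_{t_\tau}\|_{L^\infty(\Lambda)}\,\|w\|_{L^1(\Lambda)} \leq C\,t_\tau^{-1}\,\|w\|_{L^1(\Lambda)} \leq \tau^\beta$, by the choice of $c$.

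For (i), I would bound $\|w_\tau\|_{H^{-1+\delta}(\Lambda)}^2 \leq \|w\|_{H^{-1+\delta}(\Lambda)}^2$ since the multiplier $\ee^{-4\pi^2|k|^2 t_\tau} \leq 1$ only shrinks Fourier modes, and then apply the Fourier decay $\hat{w}(k) \leq L\,\langle k\rangle^{-\epsilon}$ together with $\delta = \epsilon/2$ to obtain
\[
\|w\|_{H^{-1+\delta}(\Lambda)}^2 \;\leq\; L^2 \sum_{k \in \mathbb{Z}^2} \langle k\rangle^{-2+2\delta-2\epsilon} \;=\; L^2 \sum_{k \in \mathbb{Z}^2} \langle k\rangle^{-2-\epsilon} \;\leq\; C(\epsilon)\,L^2\,,
\]
which converges since $\epsilon > 0$ and $d = 2$. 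Property (vii) follows from dominated convergence applied to
\[
\|w_\tau - w\|_{H^{-1+\delta}(\Lambda)}^2 \;=\; \sum_{k \in \mathbb{Z}^2} \langle k\rangle^{-2+2\delta}\,\bigl(1 - \ee^{-4\pi^2|k|^2 t_\tau}\bigr)^2\,|\hat{w}(k)|^2\,,
\]
whose summand tends to $0$ pointwise in $k$ as $t_\tau \to 0^+$ and is dominated by the summable sequence $\langle k\rangle^{-2+2\delta}\,|\hat{w}(k)|^2$ from (i). There is no real analytical obstacle; the only genuine design point is selecting a mollifier that simultaneously respects both positivity structures, for which the heat semigroup is the cleanest option.
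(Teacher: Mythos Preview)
Your proposal is correct and is essentially the same as the paper's proof: the paper defines $\hat{w}_\tau(k) = \ee^{-\pi |k|^2/\tau^{\beta}}\,\hat{w}(k)$, i.e.\ a Gaussian Fourier multiplier, which is precisely the heat semigroup up to the choice of constant in $t_\tau$. The verifications of (i)--(vii) coincide with yours (the paper phrases (iv) via Poisson summation and (vi) via the transference principle, whereas you invoke standard heat-kernel properties and Young's inequality directly, but these are the same arguments in different clothing); one minor point is that your constraint $c\in(0,1]$ may conflict with the requirement $c \geq C\|w\|_{L^1}$ needed for (v), but this is harmless since $\|p_t\|_{L^\infty}$ is bounded for $t\geq 1$ as well.
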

We prove Lemma \ref{w_positive_approximation} and Lemma \ref{w_endpoint_admissible_approximation} in Appendix \ref{The_interaction_potential}.

For $d=2,3$, we need to Wick-order the quantum Hamiltonian in the same spirit as in the classical system. In order to do this, we first consider the \emph{free quantum Hamiltonian} 
\begin{equation} 
\label{H_{tau,0}_unbounded}
H_{\tau,0}\;\deq\; \int  \dd x \, \dd y\,\phi^{*}_{\tau}(x)\,h(x;y)\,\phi_{\tau}(y)\,.
\end{equation}
The \emph{free quantum state} $\rho_{\tau,0}(\cdot)$ associated with \eqref{H_{tau,0}_unbounded} is defined as
\begin{equation}
\label{rho_{tau,0}_unbounded}
\rho_{\tau,0}(\mathcal{A})\;\deq\;\frac{\tr(\mathcal{A}\,\ee^{- H_{\tau,0}})}{\tr(\ee^{-H_{\tau, 0}})}\,,
\end{equation}
for $\mathcal{A}$ a closed operator on $\mathcal{F}$. The \emph{quantum density} $\varrho_\tau$ is given by
\begin{equation} 
\label{def_varrho_tau_unbounded}
\varrho_\tau\;\deq\;\rho_{\tau,0}\big(\phi^*_\tau(0) \phi_\tau(0)\big)\,.
\end{equation}
When $d=2,3$, we work with the \emph{renormalised quantum Hamiltonian}, which is given by
\begin{equation}
\label{H_tau_renormalised_unbounded} 
H_{\tau}\;\deq\;H_{\tau,0}+W_{\tau}\,,
\end{equation}
where $W_\tau$ denotes the \emph{renormalised quantum interaction}
\begin{equation} 
\label{W_renormalised_unbounded}
W_{\tau}\;\deq\;\frac{1}{2} \int \dd x \, \dd y\,\big(\phi_{\tau}^{*} (x)\phi_\tau (x) - \varrho_{\tau} \big)\,w_{\tau}(x-y)\,\big(\phi^{*}_{\tau} (y) \phi_{\tau} (y)-\varrho_{\tau}\big) \,.
\end{equation}
Here, $w_\tau$ is obtained by applying Lemma \ref{w_positive_approximation} for $w$ a $d$-admissible interaction potential and Lemma \ref{w_endpoint_admissible_approximation} for $w$ an endpoint-admissible interaction potential.

Note that the decomposition \eqref{H_tau_renormalised_unbounded} is still valid for the $1D$ quantum Hamiltonian \eqref{H_tau2}, if we again take the free quantum Hamiltonian $H_{\tau,0}$ to be given by \eqref{H_{tau,0}_unbounded}, and if the quantum interaction $W_\tau$ is defined to be the second term on the right-hand side of \eqref{H_tau2}. We use these definitions in the sequel.

\begin{remark}
\label{quantum_density_remark}
One can show (see for instance \cite[(1.31) and (1.33)]{FrKnScSo1}) that the quantum density \eqref{def_varrho_tau_unbounded} diverges as $\tau \rightarrow \infty$. More precisely, it diverges like $\log \tau$ when $d=2$ and like $\sqrt{\tau}$ when $d=3$. In order to deduce this, we are again using the fact that we are working on the torus with one-body Hamiltonian \eqref{one_body_Hamiltonian}. This allows us to deduce that the quantity $\rho_{\tau,0}\big(\phi^*_\tau(x) \phi_\tau(x)\big)$ is independent of $x \in \Lambda$.
\end{remark}

Having defined the many-body quantum Hamiltonian $H_\tau=H_{\tau,0}+W_\tau$, we define the \emph{grand canonical ensemble} 
\begin{equation}
\label{GCE}
P_\tau \;\deq\;\ee^{-H_\tau}\,.
\end{equation} 
This is an (unnormalised) density operator on $\mathcal{F}$. As in \cite{FrKnScSo1}, we introduce a modification of the grand canonical ensemble when $d=2,3$. We fix a parameter $\eta \in \big[0,\frac{1}{4}\big]$ and let
\begin{equation}
\label{modified_GCE}
P_\tau^{\eta} \;\deq\; \ee^{-\eta H_{\tau,0}} \,\ee^{-(1-2\eta)H_{\tau,0}-W_\tau}\,\ee^{-\eta H_{\tau,0}} 
\end{equation} 
be a \emph{modified grand canonical ensemble}.
Throughout the paper, we let $\eta=0$ when $d=1$ and for $d=2,3$ we work with $\eta>0$. In particular, when $d=1$, the definitions \eqref{GCE} and \eqref{modified_GCE} coincide. For the motivation to study \eqref{modified_GCE} as a modification of the grand canonical ensemble when $d=2,3$, we refer the reader to \cite[Sections 1.6 and 2.7]{FrKnScSo1}.

The quantum state $\rho_{\tau}^{\eta}(\cdot)$ associated with \eqref{modified_GCE} is defined as
\begin{equation}
\label{quantum_state}
\rho_{\tau}^{\eta}(\mathcal{A})\;\deq\; \frac{\tr(\mathcal{A} \,P_{\tau}^{\eta})}{\tr(P_{\tau}^{\eta})}\,,
\end{equation}
for $\mathcal{A}$ a closed operator on $\mathcal{F}$. Our goal is to compare \eqref{quantum_state} and \eqref{rho_unbounded}. Analogously to \eqref{gamma_r_unbounded}, given $r \in \mathbb{N}$, we define the \emph{quantum $r$-particle correlation function} $\gamma_{\tau,r}^{\eta}$ as the operator on $\mathfrak{H}^{(r)}$ with kernel given by
\begin{equation}
\label{gamma_{tau,r}_unbounded}
\gamma_{\tau,r}^{\eta}(x_1,\ldots,x_r;y_1,\ldots,y_r)\;\deq\;\rho_{\tau}^{\eta}\big(\phi_{\tau}^{*}(y_1)\cdots \phi_{\tau}^{*}(y_r)\,\phi_{\tau}(x_1)\cdots \phi_{\tau}(x_r)\big)\,.
\end{equation}
As in the classical setting, the family $(\gamma_{\tau,r}^{\eta})_{r \in \mathbb{N}}$ determines the quantum Gibbs state $\rho_{\tau}^{\eta}(\cdot)$, see \cite[Remark 1.4]{FrKnScSo1}\footnote{Here, the claim is shown only in the case when $\eta=0$, but the proof carries over to the case when $\eta \neq 0$ since the rescaled particle operator commutes with $H_{\tau,0}$ and $W_{\tau}$.}. When $\eta=0$ (which we only consider when $d=1$), we write
\begin{align}
\notag
\gamma_{\tau,r}(x_1,\ldots,x_r;y_1,\ldots,y_r)\;\deq\;\gamma_{\tau,r}^{0}(x_1,\ldots,x_r;y_1,\ldots,y_r)
\\
\label{gamma_{tau,r}_unbounded2}
\;=\;\frac{\tr\big(\phi_{\tau}^{*}(y_1)\cdots \phi_{\tau}^{*}(y_r)\,\phi_{\tau}(x_1)\cdots \phi_{\tau}(x_r) \,P_{\tau}\big)}{\tr(P_{\tau})}\,.
\end{align}

We emphasise that, since we are working on the torus with one-body Hamiltonian \eqref{one_body_Hamiltonian}, the \emph{counterterm problem} studied in \cite{FrKnScSo1,Lewin_Nam_Rougerie3} does not occur. More precisely, it is trivial in the sense that it is obtained by a shift of the chemical potential, see \cite[Case (i) in Section 1.6]{FrKnScSo1} for a detailed explanation. We do not address this issue further in the remainder of the paper.
For a more detailed discussion of the quantum system, we refer the reader to \cite[Sections 1.4-1.6]{FrKnScSo1}.

\subsection{Statement of the main results}
\label{Statement of the main results}

Let us now state our main results. We first consider the case when $w$ is a $d$-admissible interaction potential.

\begin{theorem}[Convergence for $d$-admissible interaction potentials]
\label{main_result} 
Consider $\Lambda=\mathbb{T}^d$ for $d=1,2,3$.
Let $h$ be defined as in \eqref{one_body_Hamiltonian} for fixed $\kappa>0$. Let the interaction potential $w$ be $d$-admissible as in Definition \ref{interaction_potential_w}. 
Let the classical interaction $W$ be defined as in \eqref{W_1D_unbounded}
when $d=1$, and as in Lemma \ref{W_Wick-ordered} (i) when $d=2,3$. Let the classical $r$-particle correlation function $\gamma_r$ be defined as in \eqref{gamma_r_unbounded} and \eqref{rho_unbounded}. 
\\
For $\tau \geq 1$, let $w_\tau \in L^{\infty}(\Lambda)$ be obtained by Lemma \ref{w_1D_approximation} when $d=1$ and by Lemma \ref{w_positive_approximation} when $d=2,3$. In each case, assume that 
\begin{equation}
\label{B_d}
\beta \in \mathcal{B}_d\;\deq\;
\begin{cases}
(0,1)\,\,&\mbox{if}\,\,\,d=1,2
\\
(0,\frac{1}{2})\,\,&\mbox{if}\,\,\,d=3\,.
\end{cases}
\end{equation}
When $d=1$, let the quantum $r$-particle correlation function $\gamma_{\tau,r}$ be defined as in \eqref{gamma_{tau,r}_unbounded2}, \eqref{GCE}, and \eqref{H_tau2}. When $d=2,3$, let the quantum $r$-particle correlation function $\gamma_{\tau,r}^{\eta}$ be defined as in \eqref{gamma_{tau,r}_unbounded}, \eqref{quantum_state}, \eqref{modified_GCE}, \eqref{W_renormalised_unbounded}, and \eqref{H_{tau,0}_unbounded}.
\\ 
We then have the following convergence results for all $r \in \mathbb{N}$.
\begin{itemize}
\item[(i)] When $d=1$,
\begin{equation}
\label{Theorem_1D}
\lim_{\tau \rightarrow \infty}\|\gamma_{\tau,r}-\gamma_r\|_{\mathfrak{S}^1(\mathfrak{H}^{(r)})}\;=\;0\,.
\end{equation}
\item[(ii)] When $d=2,3$, 
\begin{equation}
\label{Theorem_2D_and_3D}
\lim_{\tau \rightarrow \infty}\|\gamma_{\tau,r}^{\eta}-\gamma_{r}\|_{\mathfrak{S}^2(\mathfrak{H}^{(r)})}\;=\;0\,,
\end{equation}
whenever $\eta \in (0,\frac{1}{4}]$.
\end{itemize}
\end{theorem}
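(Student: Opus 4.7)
The plan is to follow the perturbative strategy pioneered in \cite{FrKnScSo1}, adapted to unbounded interaction potentials via the approximation results of Lemma \ref{w_1D_approximation} and Lemma \ref{w_positive_approximation}. First replace $w$ by its bounded surrogate $w_\tau$, and form the corresponding quantum interaction $W_\tau$ as in \eqref{W_renormalised_unbounded}. Next, expand the interacting factor $e^{-(1-2\eta)H_{\tau,0}-W_\tau}$ of the modified grand canonical ensemble \eqref{modified_GCE} via the Duhamel/Dyson series in powers of $W_\tau$, truncated at order $M$ with a remainder $R_M^{(\tau)}$. An analogous expansion is performed for the classical partition function and correlation functions, with $W$ (as given by \eqref{W_1D_unbounded} or Lemma \ref{W_Wick-ordered}) in place of $W_\tau$ and the Gaussian measure $\mu$ in place of the free quantum state $\rho_{\tau,0}$.

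For each fixed $m$, after similarly expanding the denominator and forming the ratio in \eqref{gamma_{tau,r}_unbounded}, the $m$-th term is a free-state expectation of a product of $2(m+r)$ rescaled creation/annihilation operators. Since $\rho_{\tau,0}$ is quasi-free, Wick's theorem rewrites this as a sum over pairings, encoded in a graphical (Feynman-diagram) expansion whose edges are propagators $\rho_{\tau,0}\big(\phi_\tau^{*}(y)\,\phi_\tau(x)\big)$ and whose interaction vertices carry a factor $w_\tau$. I would then show term-by-term convergence to the corresponding classical graph, using that the quantum propagator converges as $\tau \to \infty$ to the Gaussian covariance $h^{-1}(x,y)$, together with $w_\tau \to w$ in $L^p$ and Schatten-norm bounds on the resulting kernel operators. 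The Wick-ordering built into $W_\tau$ for $d=2,3$ cancels the divergent tadpole subgraphs, so that each fixed diagram is finite uniformly in $\tau$.

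The expansion is not in general absolutely convergent, so the passage $M \to \infty$ must be effected by Borel summation as in \cite{FrKnScSo1}. The core quantitative task is therefore an estimate on the remainder $R_M^{(\tau)}$ and on the tail of the sum in a Schatten norm, namely $\mathfrak{S}^1$ for $d=1$ and $\mathfrak{S}^2$ for $d=2,3$, where the flanking factors $e^{-\eta H_{\tau,0}}$ in \eqref{modified_GCE} supply the smoothing that makes such a bound possible; the target is a bound of the form $C^M\,(M!)^{\alpha}$, with an overall $\tau$-dependent prefactor that vanishes as $\tau \to \infty$. The hard part will be obtaining these uniform bounds when $w$ sits only in $L^p$ with $p$ close to the critical endpoint of $\mathcal{P}_d$: the substitution of $w$ by the $L^\infty$ surrogate $w_\tau$ introduces losses of size $\tau^{\beta}$ through Lemma \ref{w_1D_approximation}(ii) and Lemma \ref{w_positive_approximation}(ii), which must be absorbed by gains coming from the heat-kernel smoothing. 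Balancing these two effects is precisely what forces $\beta \in \mathcal{B}_d$ and, for $d=3$, the restriction $p>3$; carrying through the combinatorial bookkeeping of connected components in each Wick-ordered diagram under this tight budget is the principal technical obstacle.
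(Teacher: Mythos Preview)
Your high-level architecture is right and matches the paper: perturbative Duhamel expansion of $A_\tau^\xi(z)=\hat\rho_{\tau,z}^\eta(\Theta_\tau(\xi))$ and of its classical counterpart $A^\xi(z)$, Wick/graphical representation of each term, and Borel summation (Proposition~\ref{Borel_summation_unbounded}) to pass from coefficient convergence to convergence at $z=1$. However, two points need correction.

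First, your description of the remainder is off. The remainder $R^\xi_{\tau,M}(z)$ does \emph{not} carry a $\tau$-dependent prefactor that vanishes as $\tau\to\infty$; rather, one needs a bound of the form $|R^\xi_{\tau,M}(z)|\le \nu\,\sigma^M M!\,|z|^M$ \emph{uniform in $\tau$} (see \eqref{Borel_summation_unbounded3}), together with uniform bounds $|a^\xi_{\tau,m}|\le \nu\,\sigma^m m!$ on the explicit terms and coefficientwise convergence $a^\xi_{\tau,m}\to a^\xi_m$. Sokal's version of Borel summation then yields $A^\xi_\tau(1)\to A^\xi(1)$. So the remainder is not shown to be small; it is controlled uniformly so that the analytic function is determined by its asymptotic expansion.

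Second, and more substantively, you have not identified the mechanism that makes the explicit-term bounds go through when $w\notin L^\infty$. The approach of \cite{FrKnScSo1} for bounded $w$ collapses here because it uses $\|w\|_{L^\infty}$ to strip the interaction and then cancels the time evolutions; replacing $w$ by $w_\tau$ with $\|w_\tau\|_{L^\infty}\le\tau^\beta$ does not, by itself, close. The paper's key device is the splitting $Q_{\tau,t}=Q^{(1)}_{\tau,t}+\tfrac{1}{\tau}Q^{(2)}_{\tau,t}$ of the time-evolved quantum Green function (Section~\ref{The splitting of the time-evolved quantum Green functions}): the $Q^{(1)}$ piece obeys $L^q$ bounds uniform in $t$ and $\tau$ for $q\in\mathcal{Q}_d$ (Proposition~\ref{G_{tau,0}_bound}), so it can be paired with $w_\tau$ in $L^p$ via H\"older with $2p'\in\mathcal{Q}_d$; the $Q^{(2)}$ piece carries the explicit $1/\tau$ and is handled with the crude $\|w_\tau\|_{L^\infty}\le\tau^\beta$ bound, which is where $\beta\in\mathcal{B}_d$ enters. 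This splitting, combined with a careful distribution of the $w_\tau$ factors across the paths of each graph (the rule \eqref{cal W_unbounded}) and an induction on path length (Proposition~\ref{Product of subgraphs}), is what produces the required uniform $C^m m!$ bounds. Your proposal gestures at ``heat-kernel smoothing'' absorbing $\tau^\beta$, but without the $Q^{(1)}/Q^{(2)}$ decomposition you will not be able to avoid taking Hilbert--Schmidt norms of operators bounded only in operator norm.

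Finally, for $d=1$ the $\mathfrak{S}^1$ convergence is not obtained directly. The duality argument gives $\mathfrak{S}^2$ convergence; one then separately proves $\tr\gamma_{\tau,r}\to\tr\gamma_r$ by running the same machinery with $\xi=\mathrm{Id}_r$ (requiring the modified graph structure of Definition~\ref{collapsed_graph_1D_delta_unbounded}), and upgrades to $\mathfrak{S}^1$ via the standard lemma that weak convergence plus trace convergence implies trace-norm convergence for positive operators.
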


We also consider the case when $d=2$ and when $w$ is an endpoint-admissible interaction potential.

\begin{theorem}[Convergence for endpoint-admissible interaction potentials]
\label{endpoint_admissible_result} 
Consider $\Lambda=\mathbb{T}^2$ and $h$ as in \eqref{one_body_Hamiltonian} for fixed $\kappa>0$. Let the interaction potential $w$ be endpoint-admissible as in Definition \ref{endpoint_admissible_w}. Let the classical interaction $W$ be defined as in Lemma \ref{W_Wick-ordered} (ii). Let $\gamma_r$ be given by \eqref{gamma_r_unbounded}. Given $\tau \geq 1$, let $w_\tau \in L^{\infty}(\Lambda)$ be obtained by Lemma \ref{w_endpoint_admissible_approximation}  for $\beta \in \mathcal{B}_2 = (0,1)$ as in \eqref{B_d}. Let $\gamma_{\tau,r}$ be given as in \eqref{gamma_{tau,r}_unbounded} with $\eta \in (0,\frac{1}{4}]$. Then for all $r \in \mathbb{N}$, we have
\begin{equation}
\label{Theorem_endpoint_admissible}
\lim_{\tau \rightarrow \infty}\|\gamma_{\tau,r}^{\eta}-\gamma_{r}\|_{\mathfrak{S}^2(\mathfrak{H}^{(r)})}\;=\;0\,.
\end{equation}
\end{theorem}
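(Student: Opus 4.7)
The plan is to adapt the perturbative expansion and resummation strategy used to prove the $d=2$ case of Theorem \ref{main_result}(ii), modifying each step at which an $L^p$ bound on $w$ with $p>1$ was essential. As in that proof, I would write the modified ensemble $P_\tau^\eta$ via a Duhamel expansion of $\ee^{-(1-2\eta)H_{\tau,0}-W_\tau}$ in powers of the renormalised quantum interaction $W_\tau$, represent each term of the expansion graphically (with vertices carrying factors of $w_\tau$ and edges carrying free quantum propagators), and Borel-resum the resulting series after obtaining uniform control on each order together with a quantitative remainder bound. The graphs are then compared term-by-term with their classical analogues built from $W$ and the classical free field $\phi$.

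The central replacement is to switch from $L^p$-based to $H^{-1+\delta}$-based estimates on the interaction potential, using Lemma \ref{w_endpoint_admissible_approximation}(i), which provides a uniform bound $\|w_\tau\|_{H^{-1+\delta}(\Lambda)} \leq CL$ independent of $\tau$. Condition (iv) of Definition \ref{endpoint_admissible_w} is tailored precisely so that $w \in H^{-1+\delta}(\Lambda)$ for $\delta = \epsilon/2$, and Lemma \ref{w_endpoint_admissible_approximation}(vii) gives $w_\tau \to w$ in $H^{-1+\delta}(\Lambda)$, which transfers to convergence of the classical and quantum graphical values. Having both the positive-type property and pointwise nonnegativity of $w_\tau$ (Lemma \ref{w_endpoint_admissible_approximation}(ii)--(iv)) is what lets one simultaneously run the upper bound on $\tr P_\tau^\eta$ and the lower bound needed to normalise the state in this borderline regime. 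At the classical level, the Cauchyness of $W_{[K]}$ in $\bigcap_m L^m(\mu)$ is already granted by Lemma \ref{W_Wick-ordered}(ii); the corresponding statement for the quantum interactions is obtained by the same graphical bookkeeping after re-expressing the $L^p$-based contractions in terms of Fourier multipliers weighted by $\langle k \rangle^{2(-1+\delta)}$.

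The hard step is controlling the truncation error of the expansion as $\tau \to \infty$. In the $d$-admissible regime, polynomial $L^p$ bounds on $w_\tau$ combined with negative powers of $\tau$ coming from the Duhamel integrations drove the remainder to zero; here one has only Sobolev control with a possibly very small exponent $\delta$. The strategy is to interpolate the uniform $H^{-1+\delta}$ bound with the growth estimate $\|w_\tau\|_{L^\infty(\Lambda)} \leq \tau^\beta$ from Lemma \ref{w_endpoint_admissible_approximation}(v), choosing $\beta \in (0,1)$ as in \eqref{B_d} small enough that the accumulated $\tau^\beta$ factors are strictly dominated by the negative powers of $\tau$ produced by the Wick ordering of $W_\tau$. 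Making this interpolation consistent with the graphical sum, and in particular tracking Sobolev exponents across arbitrarily high-order vertices without losing the Borel summability established in the $d$-admissible case, is the technical heart of the adaptation; once it is in place, convergence of the $r$-particle correlation functions in $\mathfrak{S}^2(\mathfrak{H}^{(r)})$ follows exactly as in the proof of Theorem \ref{main_result}(ii).
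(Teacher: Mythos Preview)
Your high-level plan—Duhamel expansion, graphical bookkeeping, Borel resummation, with $L^p$ control on $w_\tau$ replaced by $H^{-1+\delta}$ control—is the same as the paper's. But your description of the mechanism driving the estimates is off in two respects, and you omit the ingredient the paper singles out as essential.

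First, there is no interpolation between $\|w_\tau\|_{H^{-1+\delta}}$ and $\|w_\tau\|_{L^\infty}$, and the negative powers of $\tau$ do not come from Wick ordering. The paper splits each time-evolved propagator as $Q_{\tau,t}=Q^{(1)}_{\tau,t}+\tfrac{1}{\tau}Q^{(2)}_{\tau,t}$ (see \eqref{Q_splitting_bold}); the $Q^{(1)}$-only contributions are the leading terms and are bounded using only the $H^{-1+\delta}$ norm of $w_\tau$, while every contribution containing at least one $Q^{(2)}$ or delta factor already carries a prefactor $\tau^{-1}$ and is handled crudely with the $L^\infty$ bound $\|w_\tau\|_{L^\infty}\le\tau^\beta$. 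This yields error terms of size $\tau^{-1+\beta}\log\tau$ (cf.\ \eqref{l.o.t.1_endpoint}, \eqref{Induction_Step_unbounded_2_endpoint_Term23_bound}), so any $\beta\in(0,1)$ suffices; there is no need to take $\beta$ small, and Wick ordering plays no role in generating the gain.

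Second, and more seriously, you do not mention the tool that actually makes the $H^{-1+\delta}$ estimate close for the leading terms: translation invariance of the Green functions on $\mathbb{T}^2$, combined with a Sobolev product estimate and $H^{1-\delta}\!-\!H^{-1+\delta}$ duality. Writing $Q^{(1)}_{\tau,t}(x;y)=\mathbf{Q}^{(1)}_{\tau,t}(x-y)$, the inductive step integrates out a vertex via
\[
\int \dd y_{a_2}\,\mathcal{W}^{a_2}_\tau(y_{a_2}-y_{a_2^*})\,\mathbf{Q}^{(1)}_{\tau,\zeta_1}(y_{a_1}-y_{a_2})\,\mathbf{Q}^{(1)}_{\tau,\zeta_2}(y_{a_2}-y_{a_3})
\;\le\;\|\mathcal{W}^{a_2}_\tau\|_{H^{-1+\delta}}\,\big\|\mathbf{Q}^{(1)}_{\tau,\zeta_1}\mathbf{Q}^{(1)}_{\tau,\zeta_2}\big\|_{H^{1-\delta}},
\]
and the second factor is controlled by Lemma~\ref{Sobolev_product_estimate} together with $\|\mathbf{Q}^{(1)}_{\tau,\zeta}\|_{H^{1-\delta/2}}\le C$ (Lemma~\ref{Q_bounds_endpoint}(ii)). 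In the base case $n=2$ with $a_2\ne a_1^*$ one also needs the pointwise Fourier bound $\widehat{\mathcal{W}}^a_\tau\le 1+L$ to pass through a convolution (see \eqref{Induction_Base_Case_2B_bold_term1B}). None of this works without reducing the two-variable kernels to one-variable functions via translation invariance; the paper explicitly flags this as ``crucial'' (Section~\ref{Endpoint-admissible interaction potentials_1}). Your proposal of ``tracking Sobolev exponents across arbitrarily high-order vertices'' suggests an accumulating loss, but in fact a single fixed $\delta$ is used at every step and the induction \eqref{I(P)_induction_unbounded_endpoint} has exactly the same form as in the $d$-admissible case.
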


Before proceeding with the ideas of the proofs, let us make some comments on Theorem \ref{main_result} and Theorem \ref{endpoint_admissible_result}. We first note that the topologies in which one has convergence in \eqref{Theorem_1D}, \eqref{Theorem_2D_and_3D}, and \eqref{Theorem_endpoint_admissible} heuristically come from the fact that, for $h$ given by \eqref{one_body_Hamiltonian}, we have $\tr h^{-1}<\infty$ when $d=1$ and $\tr h^{-1}=\infty\,,\,\tr h^{-2}<\infty$ when $d=2,3$. For a more detailed explanation of this point, we refer the reader to \cite[Section 1.6]{FrKnScSo1}.

A variant of the $d=1$ result in Theorem \ref{main_result} (i) can be deduced from the work of Lewin, Nam, and Rougerie \cite[Theorem 5.3]{Lewin_Nam_Rougerie}.
Note that, in the latter approach, one can take $w_\tau=w$ in \eqref{H_tau2}.
More precisely, this follows since the condition \cite[(5.1)]{Lewin_Nam_Rougerie} is satisfied if $w \in L^1(\Lambda)$. This is verified by arguing as in \eqref{W_1D_unbounded} and using the fact that in 1D, by Wick's theorem, 
\begin{equation*}
\mathbb{E}_{\mu}\big(\|\phi\|_{L^4(\mathbb{T})}^4\big)=2\Big[\mathbb{E}_{\mu}\big(\|\phi\|_{L^2(\mathbb{T})}^2\big)\Big]^2<\infty\,. 
\end{equation*}
 We omit the details.
In Theorem \ref{main_result} (i), we give an alternative proof of this type of result. As we will see, the proofs of Theorem \ref{main_result} (i) and Theorem \ref{main_result} (ii) can both be done within a unified framework. We hence present the result in the $1D$ setting for completeness, and emphasise that the main contribution of Theorem \ref{main_result} is the result in $2D$ and $3D$.

A version of \eqref{Theorem_2D_and_3D} when $d=2$ was recently shown with $\eta=0$ by Lewin, Nam, and Rougerie \cite{Lewin_Nam_Rougerie3}. In this case, the authors consider an interaction potential $w \in L^1(\mathbb{T}^2)$ of positive type satisfying the assumption that 
\begin{equation*}
\sum_{k \in \mathbb{Z}^2} \hat{w}(k) (1+|k|^{\alpha})\;<\;\infty\,,
\end{equation*} 
for some $\alpha>0$ (see \cite[(3.3)]{Lewin_Nam_Rougerie3}). 


The only previously known version of the convergence \eqref{Theorem_2D_and_3D} when $d=3$ is the result from the author's earlier joint work with Fr\"{o}hlich, Knowles, and Schlein \cite[Theorem 1.6]{FrKnScSo1}, which was done for $w \in L^{\infty}(\Lambda)$. This assumption was crucially used in the proof. Theorem \ref{main_result} (ii) for $d=3$ is an extension of this result to unbounded $w$. 

Note that the integrability assumptions on $w$ in Theorem \ref{main_result} when $d=3$ correspond to those for the interaction potentials considered in the classical setting by Bourgain \cite{Bourgain_1997}. 
More precisely, in this work, it is assumed that $w \in L^1(\mathbb{T}^3)$ satisfies 
\begin{equation}
\label{Bourgain_1997_w_assumption}
|\hat{w}(k)|\;\leq\;C \langle k \rangle^{-2-\epsilon}\,,
\end{equation}
for some $\epsilon>0$ and for all $k \in \mathbb{Z}^3$, (see \cite[(17)]{Bourgain_1997}). Observe that \eqref{Bourgain_1997_w_assumption} implies 
\begin{equation}
\label{Bourgain_1997_w_assumption_2}
w \in L^{3+}(\mathbb{T}^3)\,,
\end{equation}
(see also \cite[(29)]{Bourgain_1997}). Namely, by \eqref{Bourgain_1997_w_assumption}, we have $\hat{w} \in \ell^{3/2+}(\mathbb{Z}^3)$ and we deduce \eqref{Bourgain_1997_w_assumption_2} by the Hausdorff-Young inequality.

We note that the full analysis of \cite{Bourgain_1997} applies without any sign assumption on the interaction, whereas in this paper we are always considering the \emph{defocusing} regime. 
In particular, in the case when $w$ is of positive type, the construction in \cite{Bourgain_1997} follows without the truncation of the Wick-ordered mass. We do not present the details here. For the setup of the truncation of Wick-ordered mass in the focusing regime, we refer the reader to \cite[(12) and Proposition 1]{Bourgain_1997}.
Let us note that the assumption \eqref{Bourgain_1997_w_assumption} would formally (up to a factor of $\langle k \rangle^{-\epsilon}$) correspond to a Coulomb potential. It is expected to be optimal in terms of control of the Fourier coefficients (see the discussion following \cite[Proposition 3]{Bourgain_1997}).


One can also see the optimality of the assumption that $p>3$ when $d=3$ as follows. 
We expand the exponential  $\int \ee^{-z W} \,\dd \mu$ and obtain the classical perturbative expansion corresponding to $\xi= \emptyset$ in \eqref{A^xi} defined below. Applying the classical Wick theorem in the formula \eqref{a^xi_m} for the terms of the expansion, one can show that the first term $a_1^{\emptyset}$ satisfies 
\begin{align}
\notag
&a_1^{\emptyset} \sim \int \dd x\, \int \dd y \, w(x-y)\, G^2(x;y) 
\\
\label{optimality}
&\;=\; \int \dd x\, \int \dd y \, w(x-y)\, G^2(x-y;0) \;=\; \int \dd x \,w(x)\, G^2(x;0)\,,
\end{align}
where $G \;\deq\; h^{-1}$ is the classical Green function. One has that $G \in L^{q}(\mathbb{T}^3 \times \mathbb{T}^3)$ for $q \in [1,3)$. Hence, by duality, finiteness of \eqref{optimality} in general requires that we consider $w \in L^{3+}(\mathbb{T}^3)$.

Furthermore, we note that the integrability and regularity assumptions on $w$ in Theorem \ref{endpoint_admissible_result} correspond to those considered when $d=2$ in \cite{Bourgain_1997}. In particular, Definition \ref{endpoint_admissible_w} (iv) corresponds to \cite[(16)]{Bourgain_1997}, see also \cite[(35)]{Bourgain_1997}. As in the $d=3$ setting, we do not need to truncate the Wick-ordered mass since we are working in the defocusing regime.
When $d=2$, we have that the classical Green function satisfies $G \in L^{q}(\mathbb{T}^2 \times \mathbb{T}^2)$ for $q \in [1,\infty)$. Hence, using \eqref{optimality} we deduce that the assumption that $p>1$ is optimal in terms of integrability and one cannot work with general $w \in L^1(\mathbb{T}^2)$ unless one makes additional assumptions (such as Definition \ref{endpoint_admissible_w} (iv) above). For a related discussion on \eqref{optimality} and the optimality of $p$, we refer the reader to \cite[Remark 5.4]{Lewin_Nam_Rougerie3}.

We conjecture that there exist interaction potentials $w \in L^1(\mathbb{T}^2)$ which are endpoint-admissible, but which are not $2$-admissible. In particular, this would imply that one cannot deduce Theorem \ref{endpoint_admissible_result} from Theorem \ref{main_result}. Our main motivation for showing Theorem \ref{endpoint_admissible_result} was to obtain the $2D$ defocusing variant of the result from \cite{Bourgain_1997} under the same integrability and regularity assumptions.

We note that the earlier results hold on more general subsets of $\mathbb{R}^d$ \cite{Lewin_Nam_Rougerie} or when $\Lambda=\mathbb{R}^d$ \cite{FrKnScSo1,FrKnScSo2,Lewin_Nam_Rougerie2}. In all of these cases, it is necessary to make appropriate assumptions on the spectral properties of the one-body Hamiltonian. In this work, we consider exclusively the case when the domain is the torus and when the one-body Hamiltonian is given by \eqref{one_body_Hamiltonian}. This allows us to use Fourier analysis and construct the interaction potential $w_\tau$ when $d=2,3$ as in Lemmas \ref{w_positive_approximation}--\ref{w_endpoint_admissible_approximation} above. Furthermore, using Fourier analysis lets us analyse more closely the quantum and classical Green functions (e.g.\ we use their translation invariance in Section \ref{Endpoint-admissible interaction potentials}). In this paper, we do not address the problem for more general domains or one-body Hamiltonians. 
\subsection{Strategy of proof}

Our strategy is to apply a perturbative expansion in the interaction, as in \cite{FrKnScSo1}. This expansion is applied both in the quantum and in the classical setting. We organise the terms in the expansion by means of a graphical representation. We then resum this expansion by means of Borel summation. This is possible to do provided that we have appropriate bounds on the explicit terms and on the remainder term. 

Let us now state the precise form of Borel summation that we apply. The goal is to deduce the convergence of a family of analytic functions from the convergence of their coefficients in a series expansion around zero. In our context, the functions considered also depend on a parameter $\xi$. Given $R \geq 1$, we let 
\begin{equation} 
\label{cal_C_R_unbounded}
\mathcal{C}_{R}\;\deq\;\{z \in \mathbb{C}\,,\re z^{-1}>R^{-1}\}\,.
\end{equation}

\begin{proposition}
\label{Borel_summation_unbounded}
Suppose that $(A^{\xi})_{\xi}$ and $(A_{\tau}^{\xi})_{\xi,\tau}$ are families of analytic functions in $\mathcal{C}_R$.
Both families are indexed by a parameter $\xi$ from an arbitrary index set. The second family is furthermore indexed by an additional parameter $\tau \geq 1$. Suppose that for $M \in \mathbb{N}$, we have the asymptotic expansions in $\mathcal{C}_{R}$
\begin{equation}
\label{Borel_summation_unbounded1}
A^\xi(z)\;=\;\sum_{m=0}^{M-1}a^\xi_{m} z^m + R^\xi_M(z)\,,\,\,\, A^\xi_\tau(z)\;=\;\sum_{m=0}^{M-1} a^\xi_{\tau,m} z^m + R^\xi_{\tau,M}(z)\,,
\end{equation}
where the explicit terms in \eqref{Borel_summation_unbounded1} satisfy
\begin{equation}
\label{Borel_summation_unbounded2}
\sup_{\xi,\tau} |a^\xi_{\tau,m}|+ \sup_\xi |a^\xi_m|
 \;\leq\; \nu \sigma^{m}m!\,,
\end{equation}
and the remainder terms satisfy
\begin{equation}
\label{Borel_summation_unbounded3}
\sup_{\xi,\tau} |R^\xi_{\tau,M}(z)|+ \sup_{\xi} |R^\xi_{M}(z)|
 \;\leq\; \nu \sigma^{M}  M!  |z|^M\,,
\end{equation}
for all $z \in \mathcal{C}_{R}$ and for some $\nu>0$ and $\sigma \geq 1$, both which are independent of $m$ and $M$.
\\
Furthermore, suppose that we have
\begin{equation}
\label{Borel_summation_unbounded4}
\lim_{\tau \rightarrow \infty}\sup_{\xi} |a^\xi_{\tau,m}-a^\xi_{m}|\;=\;0\,.
\end{equation}
Then, we have for all $z \in \mathcal{C}_{R}$ that 
\begin{equation} 
\label{Borel_summation_unbounded5}
\lim_{\tau \rightarrow \infty} \sup_{\xi} |A^\xi_{\tau}(z) - A^{\xi}(z)| \;=\;0\,.
\end{equation}
\end{proposition}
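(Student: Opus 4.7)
The plan is to apply a Nevanlinna--Sokal Borel resummation argument, uniformly in the parameter $\xi$. First I would introduce the formal Borel transforms
\begin{equation*}
B^\xi(t) \;\deq\; \sum_{m \geq 0}\,\frac{a^\xi_m}{m!}\,t^m\,,\qquad B^\xi_\tau(t) \;\deq\; \sum_{m \geq 0}\,\frac{a^\xi_{\tau,m}}{m!}\,t^m\,.
\end{equation*}
By the coefficient bound \eqref{Borel_summation_unbounded2}, both series define analytic functions on the disc $\h{|t|<1/\sigma}$, dominated uniformly in $\xi$ and $\tau$ by $\nu/(1-\sigma|t|)$. The pointwise-in-$m$ convergence \eqref{Borel_summation_unbounded4} together with this geometric domination then yields
\begin{equation*}
\lim_{\tau \rightarrow \infty}\,\sup_{\xi}\,\sup_{|t|\leq r}\,\absb{B^\xi_\tau(t) - B^\xi(t)} \;=\; 0
\end{equation*}
for every $r<1/\sigma$: truncate the series at degree $N$, apply \eqref{Borel_summation_unbounded4} to the finitely many low-degree terms, and bound the tail uniformly in $\xi,\tau$ by $2\nu \sum_{m \geq N} (\sigma r)^m$, which is as small as desired for large $N$.

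The second and most delicate step is to extend $B^\xi$ and $B^\xi_\tau$ analytically to a complex neighbourhood of $[0,\infty)$ and to establish the Laplace--Borel representation
\begin{equation*}
A^\xi(z) \;=\; \frac{1}{z}\,\int_0^\infty\,\ee^{-t/z}\,B^\xi(t)\,\dd t\,,\qquad z \in \mathcal{C}_R
\end{equation*}
(and analogously for $A^\xi_\tau$), together with the growth estimate $|B^\xi(t)| \leq K\,\ee^{t/R}$ on this extension, with $K$ depending only on $\nu,\sigma,R$. Here the remainder bound \eqref{Borel_summation_unbounded3} is essential: one represents $B^\xi(t) - \sum_{m < M}(a^\xi_m/m!)\,t^m$ as a Cauchy integral of the truncated function $A^\xi(z) - \sum_{m<M} a^\xi_m\,z^m$ over a small circle in $\mathcal{C}_R$, and then chooses the truncation order $M$ and the contour radius optimally against the factor $\sigma^M M!\,|z|^M$. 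This is in essence the Nevanlinna--Sokal theorem, which I would invoke separately for $A^\xi$ and for $A^\xi_\tau$; crucially, its constants depend only on $\nu,\sigma,R$ and are therefore automatically uniform in $\xi$ and $\tau$.

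With these two ingredients in hand, one finishes as follows. The uniform-in-$\xi$ convergence of the Borel transforms on $\h{|t|\leq r}$ propagates to locally uniform convergence (still uniform in $\xi$) on the entire extended domain, by applying Vitali's theorem to arbitrary sequences $(B^{\xi_n}_{\tau_n} - B^{\xi_n})_n$ drawn from the uniformly bounded analytic family $\h{B^\xi_\tau - B^\xi}_{\xi,\tau}$. For $z \in \mathcal{C}_R$ one has $\re(1/z) > 1/R$, and consequently both $|\ee^{-t/z}\,B^\xi_\tau(t)|$ and $|\ee^{-t/z}\,B^\xi(t)|$ are dominated on $[0,\infty)$ by $K\,\ee^{-\delta(z)\,t}$ for some $\delta(z)>0$ independent of $\xi,\tau$. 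Dominated convergence in the Laplace--Borel integral then delivers \eqref{Borel_summation_unbounded5}. The main obstacle in this plan is unquestionably the analytic-continuation-plus-growth-bound step: setting up the Nevanlinna--Sokal extension with constants uniform in $\xi$. Once that is secured, the conclusion follows by a routine dominated-convergence argument.
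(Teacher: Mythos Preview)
Your outline is correct and matches the approach the paper points to: the paper does not prove this proposition itself but cites \cite[Theorem A.1 and Appendix A]{FrKnScSo1}, whose proof is explicitly said to be ``based on the formulation of Borel summation given by Sokal in \cite{Sokal}''. Your plan --- define the Borel transforms, use the coefficient bounds \eqref{Borel_summation_unbounded2} for convergence on the small disc, invoke Nevanlinna--Sokal with the remainder bounds \eqref{Borel_summation_unbounded3} to analytically continue and recover the Laplace--Borel representation with growth $K\ee^{t/R}$, then propagate the $\xi$-uniform convergence via a Montel/Vitali normal-families argument and finish with dominated convergence --- is exactly this route, with the key observation that all constants in Sokal's theorem depend only on $\nu,\sigma,R$ and hence are automatically uniform in $\xi,\tau$.
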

Proposition \ref{Borel_summation_unbounded} corresponds to \cite[Theorem A.1]{FrKnScSo1} and is proved in \cite[Appendix A]{FrKnScSo1}. Its proof is based on the formulation of Borel summation given by Sokal in \cite{Sokal} (we refer the reader to \cite[Theorem 136]{Hardy},\cite{Nevanlinna}, and \cite{Watson} for earlier versions of Borel summation). 

In order to prove \eqref{Borel_summation_unbounded2}, we would like to further develop the ideas based on the graphical analysis from \cite[Section 2.4]{FrKnScSo1}. Note that this method in its original form relied crucially on the boundedness of $w$. 
In particular, this made it possible to reduce the case $w=1$ at the price multiplying by appropriate powers of the finite constant $\|w\|_{L^{\infty}(\Lambda)}$. The latter was then used in order to cancel the time evolutions applied to the quantum Green function and close the estimate. For details, we refer the reader to the proof of \cite[Lemma 2.18]{FrKnScSo1}.

Instead of directly applying the methods from \cite[Section 2.4]{FrKnScSo1}, we will apply a \emph{splitting} of the time-evolved quantum Green function, see Section \ref{The splitting of the time-evolved quantum Green functions} below. The idea is to appropriately split the time-evolved quantum Green function which avoids taking Hilbert-Schmidt norms of objects which are uniformly bounded only in the operator norm. A variant of this idea was used when $d=1$ and $w=\delta$ in \cite[Section 4.4]{FrKnScSo1} (thus giving an alternative proof of the corresponding result for the $1D$ local NLS in \cite{Lewin_Nam_Rougerie}). The analysis in the case $d=1$ is easier in the sense that the quantum Green function is bounded. When $d=2,3$, this is no longer true and we have to keep careful track of the integrability parameters.

In the quantum setting, we work with \emph{bounded, $\tau$-dependent} interaction potentials $w_{\tau}$ given by Lemmas \ref{w_1D_approximation}--\ref{w_endpoint_admissible_approximation} above. These interaction potentials satisfy the appropriate positivity assumptions, have a controlled growth in $\tau$ and converge to the original interaction potential $w$ as $\tau \rightarrow \infty$. 
In order to construct such $w_\tau$ when $d=2,3$, we work on the Fourier domain and apply the \emph{transference principle}  \cite[Theorem VII.3.8]{Stein_Weiss}.
Since $\|w_\tau\|_{L^{\infty}(\Lambda)}$ is not uniformly bounded in $\tau$, we have to be careful about how we distribute these factors among the connected components in the graphical representation. 
This issue was already present in \cite[Section 4.4]{FrKnScSo1} and its solution carries over to our setting, with appropriate modifications.
For details, see \eqref{cal W_unbounded}--\eqref{product w tau_unbounded} below.

Throughout the work, we repeatedly apply variants of Sobolev embedding on the torus. We henceforth apply Sobolev embedding without further comment that we are working on the periodic setting.
For a self-contained proof of this fact, we refer the reader to \cite[Corollary 1.2]{Benyi_Oh}.

\subsection{Structure of the paper}
In Section \ref{Analysis of the quantum system}, we analyse the quantum problem when $w$ is a $d$-admissible interaction potential. In particular, in Sections \ref{General framework and setup of the perturbative expansion}--\ref{The graphical representation}, we set up the perturbative expansion and the graphical representation. This is similar to \cite[Sections 2.1--2.4]{FrKnScSo1}, with appropriate modifications. For the convenience of the reader and for completeness, we give an overview of the construction. For of the proofs, we refer the reader to \cite{FrKnScSo1}. In Section \ref{The splitting of the time-evolved quantum Green functions}, we give the details of the splitting of the time-evolved quantum Green functions. In Section \ref{Bounds on the explicit terms}, we prove the bounds on the explicit terms and in Section \ref{Convergence of the explicit terms}, we analyse their convergence. In Section \ref{Bounds on the remainder term}, we prove the bounds on the remainder term.
Section \ref{Analysis of the classical system} is devoted to the analysis of the classical system when $w$ is a $d$-admissible interaction potential. In Section \ref{Analysis of the classical system: General framework}, we recall the main definitions and rigorously justify the setup the classical problem. In Section \ref{Expansion_classical}, we set up the perturbative expansion. In Section \ref{Proof of Theorem 1}, we give the proof of Theorem \ref{main_result}. Section \ref{Endpoint-admissible interaction potentials} is devoted to the analysis of the problem when $d=2$ and when $w$ is an endpoint-admissible interaction potential.
The general framework is set up in Section \ref{Endpoint-admissible interaction potentials_1}. The quantum system is analysed in
Section \ref{Endpoint-admissible interaction potentials_2}. The classical system is analysed in Section \ref{Endpoint-admissible interaction potentials_3}. In Section \ref{Endpoint-admissible interaction potentials_4}, we give the proof of Theorem \ref{endpoint_admissible_result}.
Throughout Section \ref{Endpoint-admissible interaction potentials}, one explicitly uses the translation invariance of the quantum and classical Green functions. In Appendix \ref{The_interaction_potential}, we prove Lemma \ref{admissible_w_nonempty}, Lemma \ref{w_positive_approximation}, and Lemma \ref{w_endpoint_admissible_approximation} stated above.

\subsubsection*{Acknowledgements}
The author would like to thank Zied Ammari, J\"{u}rg Fr\"{o}hlich, Sebastian Herr, Antti Knowles, Mathieu Lewin, Benjamin Schlein, and Daniel Ueltschi for helpful discussions and comments.

\section{Analysis of the quantum system}
\label{Analysis of the quantum system}

\subsection{General framework and setup of the perturbative expansion}
\label{General framework and setup of the perturbative expansion}
Throughout this section, we assume that $w$ is a $d$-admissible interaction potential as in Definition \ref{interaction_potential_w}.
Before setting up the perturbative expansion, we define some more notation.
Given $r \in \mathbb{N}$, we denote by 
\begin{equation*}
\mathbf{B}_r \;\deq\; \{\xi \in \mathfrak{S}^2(\mathfrak{H}^{(r)})\,, \|\xi\|_{\mathfrak{S}^2(\mathfrak{H}^{(r)})} \leq 1 \,, \xi \,\,\mbox{is self-adjoint}\}\,.
\end{equation*}
In the sequel, we work with operators $\xi \in \mathbf{C}_r$, where
\begin{equation}
\label{C_r}
\mathbf{C}_r\;\deq\;
\begin{cases}
\mathbf{B}_r \cup \{\mathrm{Id}_r\}\,\,&\mbox{if}\,\,d=1
\\
\mathbf{B}_r\,\,&\mbox{if}\,\,d=2,3\,.
\end{cases}
\end{equation}

Given $\xi \in \mathbf{C}_r$, we lift it to an operator on $\mathcal{F}$ by
\begin{equation}
\label{theta_tau_unbounded}
\Theta_\tau(\xi)\;\deq\;\int \dd x_1 \cdots \dd x_r\,\dd y_1 \cdots \dd y_r\,\xi(x_1, \dots, x_r; y_1, \dots, y_r)\,\phi_{\tau}^*(x_1) \cdots \phi_{\tau}^*(x_r)\, \phi_{\tau}(y_1) \cdots \phi_{\tau}(y_r)\,,
\end{equation}
where we recall \eqref{distribution_kernels} for the definition of $\phi_{\tau}(x)$ and $\phi_{\tau}^*(y)$.
We would like to compute the quantity
$\rho_\tau^{\eta}(\Theta_\tau(\xi))$ for $\xi \in \mathbf{C}_r$ and for $\rho_\tau$ given as in \eqref{quantum_state}. Note that, when $d=1$, we are always setting $\eta=0$.
In order to set up the Borel summation argument, we introduce a complex coupling constant $z$ in front of the interaction. 
More precisely, we write
\begin{equation}
\label{rho_tau_fraction_unbounded}
\rho_{\tau}^{\eta}(\Theta_{\tau}(\xi))\;=\;\frac{\tr (\Theta_{\tau}(\xi)\,P^{\eta}_{\tau})}{\tr (P^{\eta}_{\tau})}\;=\;\frac{\hat{\rho}_{\tau,1}^{\eta} (\Theta_{\tau}(\xi))}{\hat{\rho}_{\tau,1}^{\eta}(\mathrm{Id})}\,,
\end{equation}
where for $z \in \mathbb{C}$ with $\re z \geq 0$ and $\mathcal{A}$ a closed operator on $\mathcal{F}$ we define
\begin{equation} 
\label{rho_tau_hat_unbounded}
\hat{\rho}_{\tau,z}^{\eta}(\mathcal{A})\;\deq\;\frac{\tr\big(\mathcal{A} \, \ee^{-\eta H_{\tau,0}} \, \ee^{-(1 - 2\eta) H_{\tau,0}  - z W_{\tau}} \,\ee^{-\eta H_{\tau,0}}\big)
}{\tr (\ee^{-H_{\tau,0}})}\,.
\end{equation}
Note that in \eqref{rho_tau_fraction_unbounded}, $\mathrm{Id}=\Theta_{\tau}(\emptyset)$.
We set $\mathcal{A}=\Theta_{\tau}(\xi)$ in \eqref{rho_tau_hat_unbounded} and we expand in the parameter $z$ to a fixed finite order.
\begin{lemma}
\label{Duhamel_expansion_unbounded}
Let $r \in \mathbb{N}$ and $\xi \in \mathbf{C}_r$ be given. Then, for $z \in \mathbb{C}$ with $\re z \geq 0$ and all $M \in \mathbb{N}$ we have
\begin{equation}
\label{Duhamel_expansion_unbounded_A}
A_{\tau}^{\xi}(z)\;\deq\; \hat{\rho}_{\tau,z} (\Theta_{\tau}(\xi))\;=\;\sum_{m=0}^{M-1}a_{\tau,m}^{\xi}\,z^m+R_{\tau,M}^{\xi}(z)\,,
\end{equation}
where the explicit terms are given by
\begin{align}
\notag
&a_{\tau,m}^{\xi} \;\deq\; 
\tr \bigg((-1)^m \frac{1}{(1-2\eta)^m} \int_\eta^{1-\eta} \dd t_1 \,\int_\eta^{t_1} \dd t_2 \cdots \int_\eta^{t_{m-1}} \,\dd t_m\\
\label{Explicit_term_a_unbounded}
&\times \Theta_{\tau}(\xi)\,\ee^{-(1-t_1)H_{\tau,0}}\,W_{\tau}\,\ee^{-(t_1-t_2)H_{\tau,0}}\,W_\tau \cdots \ee^{-(t_{m-1}-t_m)H_{\tau,0}} \, W_\tau\, \ee^{-t_m H_{\tau,0}} \bigg) \bigg/ \tr \big(\ee^{-H_{\tau,0}}\big)\,, 
\end{align}
for $m=0,1,\ldots, m-1$, and the remainder term is given by
\begin{align}
\notag
&R_{\tau,M}^{\xi}(z)\;\deq\; 
\tr \Bigg((-1)^M\frac{z^M}{(1-2\eta)^{M}} \int_0^{1-2\eta} \,\dd t_1\, \int_0^{t_1} \,\dd t_2  \cdots \int_0^{t_{M-1}} \,\dd t_M \,
\Theta_\tau(\xi)\,\ee^{-(1-\eta-t_1) H_{\tau,0}}\,W_\tau 
\\
\label{Remainder_term_R_unbounded}
&\times 
\ee^{-(t_1-t_2)H_{\tau,0}}\,W_\tau \ee^{-(t_2 - t_3) H_{\tau,0}}\cdots 
\, W_\tau  \, \ee^{-t_M (H_{\tau,0}+\frac{z}{1-2\eta}W_\tau)}\,\ee^{-\eta H_{\tau,0}}
\Bigg) \bigg/ \tr \big(\ee^{-H_{\tau,0}}\big)\,. 
\end{align}
\end{lemma}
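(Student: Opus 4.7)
The plan is to derive the stated identity by a direct iterative application of the Duhamel formula to the middle factor $\ee^{-(1-2\eta)H_{\tau,0} - zW_\tau}$ appearing in $P_\tau^\eta$ from \eqref{modified_GCE}, followed by two different changes of variables, one for the explicit terms and a second for the remainder. The underlying identity is purely algebraic; the content of the lemma is the bookkeeping of how the $\eta$-factors reshape the integration domains.

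First, I would recall the Dyson-Duhamel formula: for self-adjoint $X \geq 0$ and $Y$ in a suitable perturbation class, iteration of
\[
\ee^{-(X+Y)} \;=\; \ee^{-X} - \int_0^1 \ee^{-sX}\,Y\,\ee^{-(1-s)(X+Y)}\,\dd s
\]
yields
\[
\ee^{-(X+Y)} \;=\; \sum_{m=0}^{M-1} (-1)^m \int_{1 \geq s_1 \geq \cdots \geq s_m \geq 0} \ee^{-(1-s_1)X}\,Y\,\ee^{-(s_1-s_2)X}\,Y \cdots Y\,\ee^{-s_m X}\,\dd^m s + \tilde R_M\,,
\]
with
\[
\tilde R_M \;=\; (-1)^M\! \int_{1 \geq s_1 \geq \cdots \geq s_M \geq 0} \!\ee^{-(1-s_1)X}\,Y \cdots Y\,\ee^{-s_M(X+Y)}\,\dd^M s\,.
\]
I apply this with $X = (1-2\eta)H_{\tau,0}$ and $Y = zW_\tau$, pulling out the factors $(-z)^m$ and $(-z)^M$.

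Next, I would insert the resulting expansion into $\ee^{-\eta H_{\tau,0}}(\cdot)\,\ee^{-\eta H_{\tau,0}}$ and perform two distinct substitutions. For the $m$-th explicit term, the change of variables $t_i = \eta + (1-2\eta)s_i$ maps the simplex in $s$ to $\{1-\eta \geq t_1 \geq \cdots \geq t_m \geq \eta\}$, contributes a Jacobian $(1-2\eta)^m$ that produces the prefactor $1/(1-2\eta)^m$ in \eqref{Explicit_term_a_unbounded}, turns each interior gap $(s_{i-1}-s_i)(1-2\eta)$ into $t_{i-1}-t_i$, and lets the flanking factors be absorbed via $\ee^{-\eta H_{\tau,0}}\ee^{-(1-s_1)(1-2\eta)H_{\tau,0}} = \ee^{-(1-t_1)H_{\tau,0}}$ and $\ee^{-s_m(1-2\eta)H_{\tau,0}}\ee^{-\eta H_{\tau,0}} = \ee^{-t_m H_{\tau,0}}$. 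For the remainder, the rightmost exponential $\ee^{-s_M(X+Y)}$ contains $W_\tau$ and therefore does not commute with $\ee^{-\eta H_{\tau,0}}$; I would instead use $t_i = (1-2\eta)s_i$, which gives the prefactor $1/(1-2\eta)^M$, absorbs only the left $\ee^{-\eta H_{\tau,0}}$ into the outermost exponential to produce $\ee^{-(1-\eta-t_1)H_{\tau,0}}$, leaves the right $\ee^{-\eta H_{\tau,0}}$ as an external factor, and rewrites
\[
s_M (X+Y) \;=\; t_M\,H_{\tau,0} + \tfrac{t_M}{1-2\eta}\,z W_\tau \;=\; t_M\pb{H_{\tau,0} + \tfrac{z}{1-2\eta} W_\tau}\,,
\]
matching \eqref{Remainder_term_R_unbounded} term by term. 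Finally, multiplying by $\Theta_\tau(\xi)$, taking the trace and dividing by $\tr(\ee^{-H_{\tau,0}})$ yields \eqref{Duhamel_expansion_unbounded_A}.

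The only substantive point requiring care is the justification of the Duhamel iteration on $\mathcal{F}$. Here, $W_\tau$ is an unbounded operator, but $w_\tau \in L^\infty(\Lambda)$ with $\|w_\tau\|_{L^\infty}\leq \tau^\beta$ by Lemmas \ref{w_1D_approximation}, \ref{w_positive_approximation}, \ref{w_endpoint_admissible_approximation}, and the defocusing hypothesis together with the Wick ordering yields $H_\tau \geq 0$ (up to a constant), so that $\ee^{-c H_\tau}$ and $\ee^{-c H_{\tau,0}}$ are well-defined and bounded for every $c \in [0,1]$; the trace-class character of the endpoint semigroups at strictly positive times (via $\tr\,\ee^{-c H_{\tau,0}}<\infty$, which follows from the spectrum of $h$) lets me interchange trace and integral. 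I do not expect any substantial obstacle here: the lemma is an identity, the real technical work of the paper lying in the later bounds on $a_{\tau,m}^\xi$ and $R_{\tau,M}^\xi(z)$ required to feed Proposition \ref{Borel_summation_unbounded}.
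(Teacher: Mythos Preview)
Your proposal is correct and takes essentially the same approach as the paper, which simply cites \cite[Lemma 2.1]{FrKnScSo1} and notes that the result follows from a Duhamel expansion. You have reconstructed precisely that argument, including the two distinct changes of variables (one absorbing both $\eta$-factors for the explicit terms, one absorbing only the left $\eta$-factor for the remainder) that produce the stated integration domains and prefactors.
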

\begin{proof}
This was proved in \cite[Lemma 2.1]{FrKnScSo1} by using a Duhamel expansion.
\end{proof}

\subsection{The graphical representation}
\label{The graphical representation}
We now set up the graphical representation which allows us to rewrite and systematically analyse the explicit terms given in \eqref{Explicit_term_a_unbounded} above.
An analogous framework was already set up in detail in \cite[Sections 2.3-2.4]{FrKnScSo1} when $d=2,3$, and in \cite[Sections 4.1-4.2]{FrKnScSo1} when $d=1$.
We just recall the main definitions and results and we refer the reader to \cite{FrKnScSo1} for their proofs. We also refer the reader to the aforementioned sections in \cite{FrKnScSo1} for the motivation of the definitions and for further examples.

Let us first consider the case when $d=2,3$. Later, we explain the necessary modifications in the case when $d=1$.
Before giving the precise definitions, we briefly recall the motivation for the construction of the graphs.

Given operators $\mathcal{A}$ and $\mathcal{B}$, both of which are linear in $\phi_{\tau}$ and $\phi_{\tau}^{*}$, we denote their renormalised product as
\begin{equation} \label{Wick ordering quadratic}
: \mathcal{A}\,\mathcal{B}:\;=\;\mathcal{A}\,\mathcal{B}-\rho_{\tau,0}(\mathcal{A}\,\mathcal{B})\,,
\end{equation}
for $\rho_{\tau,0}$ given by \eqref{rho_{tau,0}_unbounded} above.
With this notation, arguing as in \cite[(2.18)]{FrKnScSo1}, we get that for all $m \in \mathbb{N}$ the quantity $a_{\tau,m}^{\xi}$ given by \eqref{Explicit_term_a_unbounded} can be rewritten as
\begin{equation} 
\label{def_a_tau_unbounded}
a_{\tau,m}^{\xi}\;=\;(-1)^m\,\frac{1}{(1-2\eta)^m\,2^m}\,\int_{\eta}^{1-\eta}\dd t_1\int_{\eta}^{t_1}\dd t_2\cdots\int_{\eta}^{t_{m-1}} \,\dd t_m\,f_{\tau,m}^{\xi}(t_1,\ldots,t_m)\,,
\end{equation}
for
\begin{align}
\notag 
&f^{\xi}_{\tau, m}(t_1,\ldots,t_m)\;\deq\;\int\dd x_1\,\ldots\,\dd x_{m+r}\,\dd y_1\,\ldots\, \dd y_{m+r}
\\
\notag 
&\Bigg[\prod_{i=1}^{m} \,w_{\tau}(x_i-y_i)\Bigg]\,\cdot\,\xi(x_{m+1}, \dots, x_{m+r}; y_{m+1}, \dots, y_{m+r})\\ 
\notag
&\quad \times \rho_{\tau,0} \Bigg(\,\prod_{i=1}^{m} \bigg\{\Big[:\big(\ee^{t_i h/\tau} \phi^{*}_{\tau}\big)(x_i) \, \big(\ee^{-t_ih/\tau}\phi_\tau\big)(x_i): \Big]\,\Big[:\big(\ee^{t_ih/\tau} \phi^{*}_{\tau}\big)(y_i) \, \big(\ee^{-t_ih/\tau}\phi_{\tau}\big)(y_i):\Big]\,\bigg\}
\\
\label{Definition of f_{tau,m}^{xi}_unbounded}
&\quad \times \prod_{i=1}^{r} \phi_{\tau}^{*}(x_{m+i})\,\prod_{i=1}^{r} \phi_\tau(y_{m+i}) \Bigg)\,. 
\end{align}
Here $\ee^{th/\tau}\phi_{\tau}$ and $\ee^{th/\tau}\phi_{\tau}^{*}$ are operator-valued distributions defined by Fourier series as
\begin{equation*} 
\big(\ee^{th/\tau}\phi_{\tau}\big)(x)\;\deq\;\sum_{k \in \mathbb{Z}^d}\,\ee^{t \lambda_k/\tau}\,\phi_{\tau}(e_k)\,e_k(x)\,,\quad
\big(\ee^{th/\tau}\phi_{\tau}^{*}\big)(x)\;\deq\;\sum_{k \in \mathbb{Z}^d} \ee^{t \lambda_k/\tau}\,\phi_{\tau}^{*}(e_k)\,\bar{e}_k(x)\,,
\end{equation*}
where we recall \eqref{lambda_k}--\eqref{e_k}. For a further discussion on time-evolved operators, see \cite[Section 2.3]{FrKnScSo1}.
In \eqref{Definition of f_{tau,m}^{xi}_unbounded} and in the sequel, $\prod$ refers to the product of the operators taken in fixed order of increasing indices from left to right.
Note that, in contrast to \cite[(2.18)]{FrKnScSo1}, in \eqref{Definition of f_{tau,m}^{xi}_unbounded} the interaction is $w_{\tau}$ and the one body Hamiltonian $h$ is independent of $\tau$.

The expression in \eqref{Definition of f_{tau,m}^{xi}_unbounded} can be written in terms of pairings given by the quantum Wick theorem (see \cite[Appendix B] {FrKnScSo1} for a self-contained summary). This motivates the introduction of a graph structure through the series of definitions given below. We first consider an abstract set of vertices $\mathcal{X}\equiv \mathcal{X}(m,r)$ consisting of $4m+2r$ elements, by which we encode the occurrences of the operators $\phi_{\tau}$ and $\phi_{\tau}^{*}$ in \eqref{Definition of f_{tau,m}^{xi}_unbounded}.

\begin{definition}(The vertex set $\mathcal{X}$; $d=2,3$)
\\
\label{cal X_unbounded}
Let $m,r\in\mathbb{N}$ be given. 
\begin{enumerate}
\item
The set $\mathcal{X}\equiv \mathcal{X}(m,r)$ consists of all triples $(i,\vartheta,\delta)$, where $i \in \{1,\ldots,m+1\}$. Furthermore, for $i \in \{1,\ldots,m\}$, we have $\vartheta \in \{1,2\}$ and for $i=m+1$, we have $\vartheta \in \{1,\ldots,r\}$. Finally, $\delta \in \{-1,1\}$.
\\
We also denote elements $(i,\vartheta,\delta)$ of $\mathcal{X}$ by $\alpha$. For $\alpha=(i,\vartheta,\delta) \in \mathcal{X}$,  we write its components as 
$i_{\alpha} \deq i,\vartheta_{\alpha} \deq \vartheta,\delta_{\alpha} \deq \delta$ respectively.
\item We give a linear order $\leq$ on $\mathcal{X}$ by ordering its elements in increasing order as
\begin{align*}
&(1,1,+1)\,,(1,1,-1),\,(1,2,+1),\,(1,2,-1),\,\ldots,
(m,1,+1),\,(m,1,-1),\,(m,2,+1),\,(m,2,-1)\,,
\\
&(m+1,1,+1)\,,\dots\,,(m+1, p,+1)\,,(m+1,1,-1)\,,\ldots\,,(m+1, p, -1)\,.
\end{align*}
For $\alpha,\beta \in \mathcal{X}$, we say that $\alpha<\beta$ when $\alpha \leq \beta$ and $\alpha \neq \beta$.
\end{enumerate}
\end{definition}

We want to rewrite \eqref{def_a_tau_unbounded}--\eqref{Definition of f_{tau,m}^{xi}_unbounded} in terms of variables labelled by the vertex set $\mathcal{X}$ defined above. To this end, we assign to each vertex $\alpha=(i,\vartheta,\delta) \in \mathcal{X}$ a spatial integration label $x_\alpha \equiv x_{i,\vartheta,\delta}$. Furthermore, to each $i \in \{1,\ldots,m\}$, we assign a time integration label $t_i$, and we set $t_{m+1}\deq 0$. In the sequel, we also use the convention $t_{\alpha}\equiv t_i$ if $\alpha=(i,\vartheta,\delta)$.
With the above notations, we let
\begin{equation} 
\label{x_t_unbounded}
\mathbf{x}\;\deq\;(x_{\alpha})_{\alpha \in \mathcal{X}} \in \Lambda^{\mathcal{X}}\,,\quad\mathbf{t}\;\deq\;(t_{\alpha})_{\alpha \in \mathcal {X}} \in \mathbb{R}^{\mathcal{X}}\,.
\end{equation}

In the sequel, we always work with $\mathbf{t} \in \mathfrak{A} \equiv \mathfrak{A}(m)$, for the simplex
\begin{equation} 
\label{def_fraA_unbounded}
\mathfrak{A}(m)\;\deq\;\Big\{\mathbf{t} \in \mathbb{R}^{\mathcal{X}} : t_{i,\vartheta,\delta} = t_i \mbox{ with } 0 = t_{m+1} \leq \eta < t_m < t_{m - 1} < \cdots < t_2 < t_1 < 1 - \eta\Big\}\,.
\end{equation}
In this case, we work with $(t_1,\ldots,t_m)$ in the support of the integral in \eqref{def_a_tau_unbounded} (up to measure zero). By Definition \ref{cal X_unbounded} and \eqref{def_fraA_unbounded}, it follows that that for all
$\alpha,\beta \in \mathcal{X}$, we have 
\begin{equation} 
\label{t_ordering_unbounded}
\alpha<\beta\,\,\implies\,\,0\leq t_\alpha-t_\beta < 1\,.
\end{equation}

\begin{definition} (The set of pairings $\mathfrak{R}$)
\label{def_pairing_unbounded}
\\
Let $m,r \in \mathbb{N}$ be given.
Let $\Pi$ be a pairing of $\mathcal{X} \equiv \mathcal{X}(m,r)$. In other words, $\Pi$ is a one-regular graph on $\mathcal{X}$. 
The edges of $\Pi$ are then ordered pairs $(\alpha,\beta) \in \mathcal{X}^2$ with $\alpha<\beta$.
We denote by $\mathfrak{R} \equiv \mathfrak{R}(m,r)$ the set of pairings $\Pi$ of $\mathcal{X}$ satisfying the following properties.
\begin{itemize}
\item[(i)]
For each $(\alpha, \beta) \in \Pi$ we have $\delta_{\alpha} \, \delta_{\beta} = -1$.
\item[(ii)]
For each $i =\{1,\ldots,m\}$ and $\vartheta \in \{1,2\}$, we have $((i,\vartheta,+1), (i,\vartheta,-1))\notin\Pi$.
\end{itemize}
\end{definition}
For an example of the pairing $\Pi \in \mathfrak{R}$ on the set of vertices $\mathcal{X}$, we refer the reader to \cite[Fig. 1]{FrKnScSo1}.
Note that condition (ii) in Definition \ref{def_pairing_unbounded} corresponds to Wick ordering.
With notation as in \eqref{x_t_unbounded}, we define a family of operator-valued distributions 
$(\mathcal{K}_{\alpha}(\mathbf{x},\mathbf{t}))_{\alpha \in \mathcal{X}}$ by
\begin{align*}
\mathcal{K}_{\alpha}(\mathbf{x},\mathbf{t})\;\deq\;
\begin{cases}
\big(\ee^{t_\alpha h/\tau} \phi_{\tau}^{*}\big)(x_\alpha) &\mbox{if }\delta_\alpha=1\\
\big(\ee^{-t_\alpha h/\tau} \phi_{\tau}\big)(x_\alpha) &\mbox{if }\delta_\alpha=-1\,.
\end{cases}
\end{align*}
\begin{definition} (The value of $\Pi \in \mathfrak{R}$)
\label{def:I_tau_PI_unbounded}
\\
Let $m,r \in \mathbb{N}$, $\Pi \in \mathfrak{R}(m,r)$ be given. We then define the \emph{value of $\Pi$} at $\mathbf{t} \in \mathfrak{A}(m)$ as
\begin{align} 
\notag
&\mathcal{I}_{\tau,\Pi}^{\xi} (\mathbf{t})\;\deq\;\int_{\Lambda^{\mathcal{X}(m,r)}} \dd \mathbf{x}\,\prod_{i=1}^{m} \bigg[ w_{\tau}(x_{i,1,1}-x_{i,2,1}) \prod_{\vartheta=1}^{2} \delta(x_{i,\vartheta,1}-x_{i,\vartheta,-1})\bigg]
\\
\label{def_cal_I_unbounded}
&\qquad \times \xi(x_{m+1,1,1},\ldots,x_{m+1,r,1}; x_{m+1,1,-1}, \dots, x_{m+1,r,-1})\prod_{(\alpha,\beta) \in \Pi}\rho_{\tau,0} \big(\mathcal{K}_{\alpha}(\mathbf{x},\mathbf{t})\,\mathcal{K}_{\beta}(\mathbf{x},\mathbf{t})
\big)\,.
\end{align}
\end{definition}
We can now rewrite \eqref{Definition of f_{tau,m}^{xi}_unbounded}.

\begin{lemma}
\label{Wick_application lemma_unbounded}
For each $m,r \in \mathbb{N}$ and $\mathbf{t} \in \mathfrak{A}(m)$, we have $f_{\tau,m}^\xi(\mathbf{t}) = \sum_{\Pi \in \mathfrak{R}(m,r)} \mathcal{I}_{\tau,\Pi}^{\xi}(\mathbf{t})$.
Furthermore, we have 
\begin{equation}
\label{Wick_application_identity_unbounded_2D_3D}
a^{\xi}_{\tau,m}\;=\;\frac{(-1)^m}{(1-2\eta)^m \,2^m}\int_{\mathfrak{A}(m)} \dd \mathbf{t}\,\sum_{\Pi \in \mathfrak{R}(m,r)} \mathcal{I}^{\xi}_{\tau,\Pi}(\mathbf{t})\,.
\end{equation}
\end{lemma}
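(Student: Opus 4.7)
The plan is to reduce the identity to the quantum Wick theorem, following the strategy of the proof of the analogous Lemma 2.3 of \cite{FrKnScSo1}. The only differences from that reference are notational—the interaction kernel here is $w_\tau$ rather than a fixed $w$, and the one-body Hamiltonian $h$ is independent of $\tau$—neither of which affects the combinatorial structure of the expansion, so the apparatus carries over verbatim.

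I would begin by relabelling the spatial integration variables in the definition of $f_{\tau,m}^{\xi}$ according to the vertex set $\mathcal{X}(m,r)$ of Definition \ref{cal X_unbounded}. For each interaction index $i \in \{1,\ldots,m\}$, the pair $(x_i,y_i)$ is split into the four labels $(x_{i,1,+1},x_{i,1,-1},x_{i,2,+1},x_{i,2,-1})$, with the identifications $x_{i,\vartheta,+1}=x_{i,\vartheta,-1}$ enforced via the delta factors appearing in \eqref{def_cal_I_unbounded}; the potential is then written as $w_\tau(x_{i,1,+1}-x_{i,2,+1})$. Similarly, the $2r$ spatial arguments of $\xi$ are relabelled as $(x_{m+1,\vartheta,\pm 1})_{\vartheta=1}^{r}$. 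Setting $t_\alpha \deq t_{i_\alpha}$ for $\alpha=(i,\vartheta,\delta)$ and $t_{m+1}\deq 0$, every field operator appearing in the integrand equals $\mathcal{K}_\alpha(\mathbf{x},\mathbf{t})$ for a unique $\alpha \in \mathcal{X}$.

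Next, I would apply the quantum Wick theorem (recalled in Appendix B of \cite{FrKnScSo1}) to the quasi-free expectation $\rho_{\tau,0}\bigl(\prod_{\alpha \in \mathcal{X}}\mathcal{K}_\alpha(\mathbf{x},\mathbf{t})\bigr)$, obtaining a sum over all pairings $\Pi$ of $\mathcal{X}$ of the product $\prod_{(\alpha,\beta)\in \Pi}\rho_{\tau,0}\bigl(\mathcal{K}_\alpha(\mathbf{x},\mathbf{t})\,\mathcal{K}_\beta(\mathbf{x},\mathbf{t})\bigr)$. The two-point function vanishes unless one operator is a creation and the other an annihilation operator, i.e.\ unless $\delta_\alpha\delta_\beta=-1$, giving condition (i) of Definition \ref{def_pairing_unbounded}. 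The Wick ordering within each bracket $:\!(\ee^{t_i h/\tau}\phi_\tau^{*})(x_{i,\vartheta,+1})\,(\ee^{-t_i h/\tau}\phi_\tau)(x_{i,\vartheta,-1})\!:$ explicitly removes the self-contraction between the vertices $(i,\vartheta,+1)$ and $(i,\vartheta,-1)$, giving condition (ii). Hence the surviving sum is precisely $\sum_{\Pi\in\mathfrak{R}(m,r)}\mathcal{I}_{\tau,\Pi}^{\xi}(\mathbf{t})$, establishing the first assertion. Substituting into \eqref{def_a_tau_unbounded} and noting that the time simplex appearing there coincides with $\mathfrak{A}(m)$ up to a null set yields \eqref{Wick_application_identity_unbounded_2D_3D}.

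The main obstacle is purely bookkeeping: one must check that the linear order placed on $\mathcal{X}$ in Definition \ref{cal X_unbounded}(2) is consistent with the order in which the field operators are written in the definition of $f_{\tau,m}^{\xi}$, so that the Wick pairings $(\alpha,\beta)$ with $\alpha<\beta$ produce the correct two-point functions $\rho_{\tau,0}(\mathcal{K}_\alpha \mathcal{K}_\beta)$. Since the fields are bosonic, no signs are introduced by reordering, so this amounts to verifying conventions in the identification of the $\mathcal{K}_\alpha$; once done, the proof reduces to the substitution described above.
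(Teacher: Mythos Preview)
Your proposal is correct and follows essentially the same approach as the paper: both reduce the identity to the quantum Wick theorem and defer the bookkeeping to the analogous lemma in \cite{FrKnScSo1} (the paper cites Lemma 2.8 there rather than Lemma 2.3, but the strategy you describe matches). Your write-up is in fact more detailed than the paper's own proof, which simply invokes \eqref{def_a_tau_unbounded}, the quantum Wick theorem, and the proof of \cite[Lemma 2.8]{FrKnScSo1}.
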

\begin{proof}
The claim follows by applying \eqref{def_a_tau_unbounded}, the quantum Wick theorem, and arguing as in the proof of \cite[Lemma 2.8]{FrKnScSo1}. 
\end{proof}

We encode the pairing $\Pi \in \mathfrak{R}(m,r)$ of the vertex set $\mathcal{X}(m,r)$ given in Definition \ref{def_pairing_unbounded} as a multigraph (i.e.\ a graph that can have multiple edges) on a collapsed set of vertices.
\begin{definition} (The edge-coloured multigraph associated with $\Pi \in \mathfrak{R}$)
\\
\label{def_collapsed_graph_unbounded}
Let $m,r \in \mathbb{N}$, $\Pi \in \mathfrak{R} \equiv \mathfrak{R}(m,r)$ be given. We define an edge-coloured undirected multigraph $(\mathcal{V},\mathcal{E},\sigma) \equiv (\mathcal{V}_\Pi,\mathcal{E}_\Pi,\sigma_\Pi)$, with a \emph{colouring} $\sigma:\mathcal{E} \rightarrow \{-1,1\}$ according to the following.
\begin{enumerate}
\item
On $\mathcal{X} \equiv \mathcal{X}(m,r)$, we give the equivalence relation $\sim$, where $\alpha \sim \beta$ if and only if $i_{\alpha} = i_{\beta} \leq m$ and $\vartheta_{\alpha} = \vartheta_{\beta}$. The collapsed vertex set $\mathcal{V} \equiv \mathcal{V}(m,r) \deq \{[\alpha] \,, \alpha \in \mathcal{X}\}$ is defined to be the set of equivalence classes of $\mathcal X$ under $\sim$. Furthermore, we write $\mathcal{V} = \mathcal{V}_2 \sqcup \mathcal{V}_1$,
for
\begin{align*}
&\mathcal{V}_2 \;\equiv\; \mathcal{V}_2(m,r) \;\deq\; \{(i,\vartheta)\,,1 \leq i \leq m\,, 1 \leq \vartheta \leq 2\}\,, 
\\
&\mathcal{V}_1 \;\equiv\; \mathcal{V}_2(m,r)\;\deq\; \{(m+1,\vartheta,\pm 1)\col 1 \leq \vartheta \leq r\}\,.
\end{align*}
\item
$\mathcal{V}$ carries a total order $\leq$ which is inherited from $\mathcal{X}$, i.e.\ we say that $[\alpha] \leq [\beta]$ whenever $\alpha \leq \beta$. 
This is independent of the choice of representatives $\alpha,\beta \in \mathcal{X}$.
\item
Given a pairing $\Pi \in \mathfrak{R}$, from each edge $(\alpha,\beta) \in \Pi$, we obtain an edge $e =\{[\alpha], [\beta]\} \in \mathcal{E}$. Furthermore, we define the associated colouring by $\sigma(e)\deq\delta_{\beta}$.
\item
By $\conn(\mathcal{E})$, we denote the set of connected components of $\mathcal{E}$. In other words, we have $\mathcal{E}= \bigsqcup_{\mathfrak{P} \in \conn(\mathcal{E})} \mathfrak{P}$. We refer to the connected components $\mathfrak{P}$ of $\mathcal{E}$ as its \emph{paths}.
\end{enumerate}
\end{definition}

In the sequel, we refer to the multigraph $(\mathcal{V},\mathcal{E})$ just as $\mathcal{E}$, since the vertex set $\mathcal{V}$ is uniquely determined for given $m,r \in \mathbb{N}$. For an example of such a multigraph, we refer the reader to \cite[Fig. 2]{FrKnScSo1}. We note the following properties of the multigraph $\mathcal{E}$ associated with any $\Pi \in \mathfrak{R}$ \cite[Lemma 2.12]{FrKnScSo1}.
\begin{itemize}
\item[(1)] Every vertex in $\mathcal{V}_2$ has degree 2.
\item[(2)] Every vertex in $\mathcal{V}_1$ has degree 1.
\item[(3)] $\mathcal{E}$ has no loops, i.e.\,cycles of length 1.
\end{itemize}
In particular, we can view each $\mathfrak{P} \in \conn(\mathcal{E})$ as a path of $\mathcal{E}$ in the standard graph-theoretic sense.

We adapt the space and time labels given by \eqref{x_t_unbounded} to the setting of the collapsed vertex set $\mathcal{V}$.
Given $\mathbf{x}=(x_\alpha)_{\alpha \in \mathcal{X}} \in \Lambda^{\mathcal{X}}, \mathbf{t}=(t_\alpha)_{\alpha \in \mathcal{X}} \in \mathbb{R}^{\mathcal{X}}$, we define $\mathbf{y}=(y_a)_{a \in \mathcal{V}} \in \Lambda^{\mathcal{V}}, \mathbf{s} \in (s_a)_{a \in \mathcal{V}} \in \mathbb{R}^{\mathcal{V}}$ according to 
\begin{equation} 
\label{def_ys_unbounded}
y_{[\alpha]}\;\deq\;x_{\alpha}\,,\quad
s_{[\alpha]}\;\deq\;t_{\alpha}\,,
\end{equation}
for $\alpha \in \mathcal{X}$, with $[\cdot]$ given as in Definition \ref{def_collapsed_graph_unbounded} (i) above. Note that \eqref{def_ys_unbounded} is independent of the choice of representative $\alpha \in \mathcal{X}$. Furthermore, by \eqref{t_ordering_unbounded}, we have 
\begin{equation} 
\label{s_ordered_unbounded}
0 \leq s_a - s_b < 1 \,\,\, \mbox{whenever} \,\, a<b\,.
\end{equation}
Also, 
\begin{equation}
\label{s_ordered equality_unbounded}
s_a=s_b \quad \iff \quad i_a = i_b\,,
\end{equation}
where we let $i_{[\alpha]} \deq i_{\alpha}$, 
which is independent of the choice of representative.
Given $a \in \mathcal{V}_2$, we write it as $a=(i_a,\vartheta_a)$.
In the sequel, given $\mathbf{t} \in \mathbb{R}^{\mathcal{X}}$, we sometimes write $\mathbf{s} \equiv \mathbf{s} (\mathbf{t})$ for the quantity defined in \eqref{def_ys_unbounded} above without further comment. 

We distinguish two possible types of connected components of $\mathcal{E}$. 
\begin{definition}(Open and closed paths)
\\
\label{Open_and_Closed_paths_unbounded}
Let $\mathfrak{P} \in \conn(\mathcal{E})$ be given. We say that $\mathfrak{P}$ is a \emph{closed path} if all of its vertices belong to $\mathcal{V}_2$. Otherwise, we say that it is an \emph{open path}.
\end{definition}

Given $\mathfrak{P} \in \conn(\mathcal{E})$, let 
$\mathcal{V}(\mathfrak{P}) \deq \bigcup_{e \in \mathfrak{P}} e$ denote the set of its vertices. Furthermore, for $i=1,2$, we write 
$\mathcal{V}_i(\mathfrak{P}) \deq \mathcal{V}(\mathfrak{P}) \cap \mathcal{V}_i$ denote the set of vertices of $\mathfrak{P}$ that belong to $\mathcal{V}_i$. In particular, we have a decomposition $\mathcal{V}(\mathfrak{P})= \mathcal{V}_2(\mathfrak{P}) \sqcup \mathcal{V}_1(\mathfrak{P})$. By construction, one deduces that any open path $\mathfrak{P} \in \conn(\mathcal{E})$ has two distinct endpoints in $\mathcal{V}_1$ and that its remaining $|\mathcal{V}(\mathfrak{P})|-2$ vertices belong to $\mathcal{V}_2$. 

We rewrite \eqref{def_cal_I_unbounded} in terms of \emph{time-evolved quantum Green functions}. We first recall several definitions (for a detailed discussion see \cite[Section 2.3]{FrKnScSo1}).
The \emph{quantum Green function} is given by
\begin{equation}
\label{G_tau}
G_{\tau} \;\deq\; \frac{1}{\tau(\ee^{h/\tau}-1)}\,.
\end{equation}
More generally, we consider \emph{time-evolved quantum Green function}
\begin{equation}
\label{G}
G_{\tau,t} \;\deq\; \frac{\ee^{-t h/\tau}}{\tau(\ee^{h/\tau}-1)}\,\,\,\mbox{for}\,\,t >-1\,.
\end{equation}
In particular, we have $G_{\tau,0}=G_\tau$.
\\
Furthermore, we work with the \emph{time-evolved delta function}
\begin{equation}
\label{S}
S_{\tau,t} \;\deq\; \ee^{-t h/\tau}\,\,\,\mbox{for}\,\,t \geq 0\,.
\end{equation}
In the sequel, the operator kernels of $G_{\tau,t}$ and $S_{\tau,t}$ will be denoted as $G_{\tau,t}(x;y)$ and $S_{\tau,t}(x;y)$ respectively. For the admissible values of $t$, these are both measures on $\Lambda^2$. Furthermore, these kernels are pointwise nonnegative and symmetric (for a proof of this, see \cite[Lemma 2.9]{FrKnScSo1}).
We state the result of \cite[Lemma 2.10]{FrKnScSo1}, which allows us to make the connection with the factors in \eqref{def_cal_I_unbounded}.
\begin{lemma}
\label{Correlation functions}
Let $\alpha,\beta \in \mathcal{X}$ with $\alpha<\beta$ be given. The following statements hold.
\begin{itemize}
\item[(i)]
If $\delta_\alpha= +1, \delta_\beta= -1$, and $t_{\alpha}-t_{\beta}<1$, then
\begin{equation*}
\rho_{\tau,0} \big(\mathcal{K}_{\alpha} (\mathbf{x},\mathbf{t})\,\mathcal{K}_{\beta} (\mathbf{x},\mathbf{t})\big)\;=\; 
G_{\tau,-(t_\alpha - t_\beta)}(x_\alpha; x_\beta)\;\geq\;0\,.
\end{equation*}
\item[(ii)]
If $\delta_\alpha=-1, \delta_{\beta}=+1$, and $t_\alpha-t_\beta \geq 0$, then
\begin{equation*}
\rho_{\tau,0} \big(\mathcal{K}_{\alpha}(\mathbf{x},\mathbf{t})\,\mathcal{K}_{\beta}(\mathbf{x},\mathbf{t})\big)\;=\; 
G_{\tau, t_\alpha - t_\beta}(x_\alpha;x_\beta)+\frac{1}{\tau}\,S_{\tau, t_\alpha-t_\beta}(x_\alpha; x_\beta)\;\geq\;0\,.
\end{equation*}
\end{itemize}
\end{lemma}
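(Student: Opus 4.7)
The plan is to compute $\rho_{\tau,0}(\mathcal{K}_\alpha \mathcal{K}_\beta)$ directly by diagonalising in the Fourier basis. Setting $b_k \deq b(e_k)$ and $b_k^* \deq b^*(e_k)$, the rescaled fields decompose as $\phi_\tau(x) = \tau^{-1/2}\sum_k b_k\, e_k(x)$ and $\phi_\tau^*(x) = \tau^{-1/2}\sum_k b_k^*\, \bar e_k(x)$, while $H_{\tau,0} = \tau^{-1}\sum_k \lambda_k\, b_k^* b_k$ is diagonal in this basis. Consequently $\rho_{\tau,0}$ factorises across Fourier modes, and the one-mode Bose--Einstein computation gives
\[
\rho_{\tau,0}(b_k^* b_l) \;=\; \frac{\delta_{kl}}{\ee^{\lambda_k/\tau}-1}\,, \qquad \rho_{\tau,0}(b_k b_l^*) \;=\; \frac{\delta_{kl}\,\ee^{\lambda_k/\tau}}{\ee^{\lambda_k/\tau}-1}\,,
\]
where the second follows from the first together with the CCR $[b_k, b_l^*] = \delta_{kl}$.

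For part (i), substituting the Fourier expansions of $\mathcal{K}_\alpha$ (with $\delta_\alpha=+1$) and $\mathcal{K}_\beta$ (with $\delta_\beta=-1$), applying $\rho_{\tau,0}$, and using the first identity above collapses the double sum to
\[
\rho_{\tau,0}(\mathcal{K}_\alpha \mathcal{K}_\beta) \;=\; \sum_k \frac{\ee^{(t_\alpha - t_\beta)\lambda_k/\tau}}{\tau(\ee^{\lambda_k/\tau}-1)}\,\bar e_k(x_\alpha)\, e_k(x_\beta)\,,
\]
which is precisely the Fourier series of the kernel of $\ee^{(t_\alpha-t_\beta)h/\tau}/(\tau(\ee^{h/\tau}-1)) = G_{\tau,-(t_\alpha-t_\beta)}$ at $(x_\beta;x_\alpha)$; this equals $G_{\tau,-(t_\alpha-t_\beta)}(x_\alpha;x_\beta)$ by symmetry of the Green kernel, which in turn follows from the $k\mapsto -k$ invariance of the summand $e_k(x)\bar e_k(y) = \ee^{2\pi\mathrm{i} k\cdot(x-y)}$. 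The hypothesis $t_\alpha - t_\beta < 1$ is exactly what is required for this series to converge absolutely, since $\lambda_k \to \infty$.

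Part (ii) runs in parallel, but now the expectation produces $\rho_{\tau,0}(b_k b_l^*)$. The algebraic splitting $\ee^{\lambda_k/\tau}/(\ee^{\lambda_k/\tau}-1) = 1 + 1/(\ee^{\lambda_k/\tau}-1)$ decomposes the resulting Fourier series into two pieces: the $1/(\ee^{\lambda_k/\tau}-1)$ part reproduces $G_{\tau,t_\alpha-t_\beta}(x_\alpha;x_\beta)$ just as above, while the isolated $1$ gives
\[
\frac{1}{\tau}\sum_k \ee^{-(t_\alpha-t_\beta)\lambda_k/\tau}\, e_k(x_\alpha)\,\bar e_k(x_\beta) \;=\; \frac{1}{\tau}\, S_{\tau,t_\alpha-t_\beta}(x_\alpha;x_\beta)\,.
\]
Equivalently, one could use the CCR \eqref{CCR_tau} to bring $\mathcal{K}_\alpha \mathcal{K}_\beta$ into normal order, with the commutator contribution manifestly producing the $S_{\tau,t_\alpha-t_\beta}/\tau$ term; both routes yield the same identity. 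The condition $t_\alpha - t_\beta \geq 0$ suffices here, and is automatic from $\alpha<\beta$ by \eqref{t_ordering_unbounded}.

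For pointwise nonnegativity it is cleanest to expand $1/(\ee^{\lambda_k/\tau}-1) = \sum_{n\geq 1} \ee^{-n\lambda_k/\tau}$, giving
\[
G_{\tau,t}(x;y) \;=\; \frac{1}{\tau}\sum_{n\geq 1} K_{(n+t)/\tau}(x,y)\,, \qquad S_{\tau,t}(x;y) \;=\; K_{t/\tau}(x,y)\,,
\]
where $K_s$ denotes the heat kernel of $h = -\Delta + \kappa$ at time $s$. Since $n+t>0$ for every $n\geq 1$ whenever $t>-1$, and $t\geq 0$ for $S_{\tau,t}$, every summand is a (strictly positive) heat kernel, so both objects are pointwise nonnegative. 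There is no real mathematical obstacle in this argument; the only care needed is bookkeeping of signs in the time-evolution factors and matching the two convergence constraints $t_\alpha - t_\beta < 1$ (case (i)) and $t_\alpha - t_\beta \geq 0$ (case (ii)) to their respective Fourier series.
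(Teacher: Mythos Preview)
Your proof is correct. The paper does not give its own proof of this lemma; it simply states it as the result of \cite[Lemma 2.10]{FrKnScSo1} (with the nonnegativity and symmetry of the kernels cited separately from \cite[Lemma 2.9]{FrKnScSo1}). Your direct computation via diagonalisation in the Fourier modes, together with the Bose--Einstein formula $\rho_{\tau,0}(b_k^* b_k) = (\ee^{\lambda_k/\tau}-1)^{-1}$ and the heat-kernel expansion for nonnegativity, is exactly the standard route and is almost certainly what the cited reference does as well.
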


\begin{definition}
\label{def_J_e_unbounded}
Let $\mathbf{y}= (y_a)_{a \in \mathcal{V}} \in \Lambda^{\mathcal{V}}$ and $\mathbf{s}=(s_a)_{a \in \mathcal{V}} \in \mathbb{R}^{\mathcal{V}}$ which satisfies conditions \eqref{s_ordered_unbounded}--\eqref{s_ordered equality_unbounded} be given. Furthermore, let $e =\{a,b\} \in \mathcal{E}$ with $a,b \in \mathcal{V}, a<b$ be given. We then define the labels $\mathbf{y}_e \deq (y_a, y_b) \in \Lambda^{e}$ as well as the integral kernel
\begin{equation}
\label{J_e_unbounded}
\mathcal{J}_{\tau,e}(\mathbf{y}_e,\mathbf{s})\;\deq\;G_{\tau,\sigma(e)(s_a-s_b)}(y_a; y_b) + \frac{\mathbf{1}_{\sigma(e) = +1}}{\tau} \, S_{\tau,s_a-s_b}(y_a;y_b)\,,
\end{equation}
\end{definition}
Note that the integral kernel $\mathcal{J}_{\tau,e}$ given by \eqref{J_e_unbounded} is Hilbert-Schmidt unless
$\sigma(e)=+1$ and $i_a=i_b$.
Furthermore, we note that
\begin{equation}
\label{J_nonnegative}
\mathcal J_{\tau, e}(\mathbf{y_e}, \mathbf{s}) \;\geq\;0\,.
\end{equation}
In the sequel, we use the splitting of the variable $\mathbf{y}$, which is given by
\begin{equation}
\label{unbounded_y1_y2}
\mathbf{y}\;=\; (\mathbf{y_1},\mathbf{y_2})\,,
\end{equation}
where $\mathbf{y_i} \deq (y_a)_{a \in \mathcal{V}_i}, i = 1,2$. We also write
\begin{equation} 
\label{xi_y_1_unbounded}
\xi(y_{m+1,1,+1},\ldots,y_{m+1,r,+1}; y_{m+1,1,-1}, \dots, y_{m+1,r,-1}) \;=\; \xi(\mathbf{y_1})\,.
\end{equation}

\begin{lemma} 
\label{lem:I_rep_unbounded}
With $\mathbf{s}$ as defined in \eqref{def_ys_unbounded} above, we have
\begin{equation}
\label{I_representation_unbounded}
\mathcal{I}_{\tau,\Pi}^{\xi}(\mathbf{t})\;=\;\int_{\Lambda^{\mathcal{V}}} \dd \mathbf{y}\,\Bigg[\prod_{i=1}^{m} w_{\tau}(y_{i,1}-y_{i,2})\Bigg] \xi(\mathbf{y_1})\,\prod_{e \in \mathcal{E}} \mathcal{J}_{\tau, e}(\mathbf{y}_e,\mathbf{s})\,.
\end{equation}
\end{lemma}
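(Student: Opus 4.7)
The plan is to start from Definition \ref{def:I_tau_PI_unbounded} and perform two reductions on the expression
\begin{equation*}
\mathcal{I}_{\tau,\Pi}^{\xi}(\mathbf{t})\;=\;\int_{\Lambda^{\mathcal{X}}} \dd \mathbf{x}\,\prod_{i=1}^{m} \bigg[ w_{\tau}(x_{i,1,1}-x_{i,2,1}) \prod_{\vartheta=1}^{2} \delta(x_{i,\vartheta,1}-x_{i,\vartheta,-1})\bigg] \, \xi(\cdot)\,\prod_{(\alpha,\beta) \in \Pi}\rho_{\tau,0} \big(\mathcal{K}_{\alpha}\,\mathcal{K}_{\beta}\big)\,.
\end{equation*}
The first reduction is to integrate out the delta functions, which collapses the integration from $\Lambda^{\mathcal{X}}$ to $\Lambda^{\mathcal{V}}$. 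Explicitly, for each $i \in \{1,\dots,m\}$ and each $\vartheta \in \{1,2\}$, the factor $\delta(x_{i,\vartheta,+1}-x_{i,\vartheta,-1})$ forces the two spatial labels associated to $(i,\vartheta,\pm 1)$ to be equal. By Definition \ref{def_collapsed_graph_unbounded}(i), these are exactly the elements of $\mathcal{X}$ which get identified under the equivalence relation $\sim$, so the common value can be relabelled as $y_{i,\vartheta} = y_{[i,\vartheta,\pm1]}$, in agreement with \eqref{def_ys_unbounded}. For $i = m+1$ there are no delta functions and no identification, which is consistent with $\mathcal{V}_1$ keeping the full $\pm 1$ label. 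Under this relabelling, $w_\tau(x_{i,1,1}-x_{i,2,1})$ becomes $w_\tau(y_{i,1}-y_{i,2})$, and the test function becomes $\xi(\mathbf{y_1})$ as in \eqref{xi_y_1_unbounded}.

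The second reduction is to evaluate each factor $\rho_{\tau,0}(\mathcal{K}_\alpha \mathcal{K}_\beta)$ for $(\alpha,\beta) \in \Pi$ with $\alpha < \beta$. By Definition \ref{def_pairing_unbounded}(i), we have $\delta_\alpha \delta_\beta = -1$, so either $(\delta_\alpha,\delta_\beta) = (+1,-1)$ or $(\delta_\alpha,\delta_\beta)=(-1,+1)$. In the first case, Lemma \ref{Correlation functions}(i) together with the time ordering \eqref{t_ordering_unbounded} gives the value $G_{\tau,-(t_\alpha - t_\beta)}(x_\alpha; x_\beta)$; in the second case, Lemma \ref{Correlation functions}(ii) gives $G_{\tau, t_\alpha - t_\beta}(x_\alpha; x_\beta) + \frac{1}{\tau} S_{\tau, t_\alpha - t_\beta}(x_\alpha;x_\beta)$. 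After the relabelling above and passing to $\mathbf{s}$ via \eqref{def_ys_unbounded}, both cases are uniformly written as
\begin{equation*}
G_{\tau, \sigma(e)(s_a - s_b)}(y_a; y_b) + \frac{\mathbf{1}_{\sigma(e)=+1}}{\tau}\,S_{\tau, s_a - s_b}(y_a; y_b)\,,
\end{equation*}
with $e = \{[\alpha],[\beta]\} = \{a,b\}$ and colouring $\sigma(e) = \delta_\beta$, exactly as prescribed by Definition \ref{def_collapsed_graph_unbounded}(iii) and Definition \ref{def_J_e_unbounded}. This is $\mathcal{J}_{\tau,e}(\mathbf{y}_e,\mathbf{s})$.

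Combining the two reductions and taking the product over $(\alpha,\beta) \in \Pi$, equivalently over $e \in \mathcal{E}$, yields the claimed expression \eqref{I_representation_unbounded}. The only subtle points are purely bookkeeping: one must check that when $\alpha < \beta$ the induced ordering $[\alpha] \leq [\beta]$ on $\mathcal{V}$ preserves the sign conventions (so that the direction of time evolution and the definition $\sigma(e) = \delta_\beta$ are consistent), and that Definition \ref{def_pairing_unbounded}(ii) ensures no factor corresponds to a forbidden self-pairing within a single Wick-ordered block. The latter is automatic since such pairs are excluded from $\mathfrak{R}$, while the former is immediate from the inheritance of the order on $\mathcal V$ from that on $\mathcal X$. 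Neither step presents a genuine obstacle; the lemma is essentially a translation between the vertex-based and edge-based encodings, and the proof mirrors that of the analogous identity in \cite{FrKnScSo1}.
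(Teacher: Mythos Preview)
Your proposal is correct and follows exactly the same approach as the paper, which itself simply invokes Lemma \ref{Correlation functions}, Definition \ref{def_J_e_unbounded}, \eqref{xi_y_1_unbounded}, and refers to the analogous computation in \cite[Lemma 2.15]{FrKnScSo1}. You have in fact spelled out more of the bookkeeping (the collapse of the integration via the delta functions, the case distinction on $\sigma(e)$, and the consistency of the inherited order on $\mathcal V$) than the paper does explicitly.
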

\begin{proof}
We rewrite \eqref{def_cal_I_unbounded} by using Lemma \ref{Correlation functions}, Definition \ref{def_J_e_unbounded}, \eqref{xi_y_1_unbounded}, and arguing analogously as in the proof of \cite[Lemma 2.15]{FrKnScSo1}.
\end{proof}

We now explain the necessary modifications of the graph structure when $d=1$. 
Recall that the quantum Hamiltonian is now given by \eqref{H_tau2} above. In particular, the quantum interaction is given by
\begin{equation}
\label{W_tau_1D}
W_{\tau}\;\deq\;\frac{1}{2} \int \dd x\,\dd y\, \phi^{*}_{\tau}(x)\,\phi^{*}_{\tau}(y)\,w_\tau (x-y)\,\phi_{\tau}(x)\,\phi_{\tau}(y)\,,
\end{equation}
which is normal-ordered, i.e.\ the factors of $\phi^{*}_{\tau}$ come before the factors of $\phi_{\tau}$.
Furthermore, in \eqref{W_tau_1D}, we do not renormalise the interaction, which allows for the presence of loops in the graph structure. We again perform a Taylor expansion to order $M$ of the quantity $A^{\xi}_{\tau}(z) \deq \hat{\rho}_{\tau,z}(\Theta_{\tau}(\xi))$ in the parameter $z \in \mathbb{C}, \,\re z \geq 0$. Here $\hat{\rho}_{\tau,z}(\cdot)$ is defined as in \eqref{rho_tau_hat_unbounded}, except that now we take $\eta=0$, and the quantum interaction $W_{\tau}$ as in \eqref{W_tau_1D}. After the necessary modifications, the explicit terms $a_{\tau,m}^{\xi}$ and the remainder term $R_{\tau,M}^{\xi}(z)$ in this expansion are given as in \eqref{Explicit_term_a_unbounded}--\eqref{Remainder_term_R_unbounded} above.

Recalling \eqref{C_r}, we need to consider the cases when $\xi \in \mathbf{B}_r$ and $\xi=\mathrm{Id}_r$ separately. Let us consider first the case when $\xi \in \mathbf{B}_r$.
In light of the normal-ordering of the interaction \eqref{W_tau_1D}, one needs to modify the order in Definition \ref{cal X_unbounded} (ii). In particular, we order the vertices $(i,\vartheta,\delta) \in \mathcal{X}(m,r)$ according to the lexicographical order of the corresponding string $i \delta \vartheta$, where for the $\delta$ component we define that $+1<-1$.
The new order is also denoted as $\leq$. Due to the lack of renormalisation in \eqref{W_tau_1D}, we also need to consider a larger class of pairings than the class $\mathfrak{R}$ given in Definition \ref{def_pairing_unbounded} above. 
\begin{definition}(The set of pairings $\mathfrak{Q}$)
\\
\label{def_pairing_1D_unbounded}
Let $m,r \in \mathbb{N}$ be given. We denote by $\mathfrak{Q} \equiv \mathfrak{Q}(m,r)$ the set of pairings $\Pi$ of $\mathcal{X}(m,r)$ such that for all
$(\alpha,\beta) \in \Pi$ we have $\delta_\alpha \delta_\beta=-1$.
\end{definition}

We accordingly modify Definition \ref{def_collapsed_graph_unbounded}, by which we assign to every pairing $\Pi \in \mathfrak{Q}$ a corresponding edge-coloured multigraph $(\mathcal{V}_{\Pi},\mathcal{E}_{\Pi},\sigma_{\Pi})
\equiv(\mathcal{V},\mathcal{E},\sigma)$. By Definition \ref{def_pairing_1D_unbounded}, the new graphs can have loops, which are of the form $e=\{a,a\}$ for $a \in \mathcal{V}_2$. Here, $\mathcal{V}_2$ is defined exactly as in Definition \ref{def_collapsed_graph_unbounded} (i). Given $\Pi \in \mathfrak{Q}(m,r)$, we define $\mathcal{I}_{\tau,\Pi}^{\xi} (\mathbf{t})$ as in Definition \ref{def:I_tau_PI_unbounded}. 
One can then show that (see \cite[(4.3)]{FrKnScSo1})
\begin{equation}
\label{Wick_application_identity_unbounded}
a^{\xi}_{\tau,m}\;=\;\frac{(-1)^m}{2^m}\int_{\mathfrak{A}(m)} \dd \mathbf{t}\,\sum_{\Pi \in \mathfrak{Q}(m,r)} \mathcal{I}^{\xi}_{\tau,\Pi}(\mathbf{t})\,.
\end{equation}
The quantity $\mathcal{J}_{\tau,e}(\mathbf{y}_e,\mathbf{s})$ is defined as in Definition \ref{def_J_e_unbounded}, but with the $d=1$ variant of the order $\leq$ (obtained from the lexicographic order as explained above). With these appropriately modified definitions, we obtain that Lemma \ref{lem:I_rep_unbounded} holds.

Let us now explain the modifications needed when $\xi=\mathrm{Id}_r$. The definition of the (uncollapsed) vertex set $\mathcal{X}(m,r)$ remains the same as $d=1$ setting with $\xi \in \mathbf{B}_r$. We need to modify the definition of the edge-coloured multigraph. 
\begin{definition}(The edge-coloured multigraph associated with $\Pi \in \mathfrak{Q}$ when $\xi=\mathrm{Id}_r$)
\\
\label{collapsed_graph_1D_delta_unbounded}
Let $m,r \in \mathbb{N}$, $\Pi \in \mathfrak{Q}(m,r) \equiv \mathfrak{Q}$ be given. We define an edge-coloured undirected multigraph
$(\tilde{\mathcal{V}}_{\Pi},\tilde{\mathcal{E}}_{\Pi},\tilde{\sigma}_{\Pi}) \equiv (\tilde{\mathcal{V}},\tilde{\mathcal{E}},\tilde{\sigma})$, with a colouring $\tilde{\sigma}: \mathcal{E} \rightarrow \{-1,1\}$ according to the following rules.

\begin{itemize}
\item[(i)] We introduce an equivalence relation $\sim'$ on $\mathcal{X} \equiv \mathcal{X}(m,r)$ under which  $\alpha \sim' \beta$ if and only if $i_{\alpha}=i_{\beta}$ and $\vartheta_{\alpha}=\vartheta_{\beta}$. The collapsed vertex set $\tilde{\mathcal{V}} \equiv \tilde{\mathcal{V}}(m,r) \deq \{[\alpha] \,, \alpha \in \mathcal{X}\}$ is defined to be the set of equivalence classes of $\mathcal X$ under $\sim'$. 
\\
Furthermore, we write
$\tilde{\mathcal{V}}=\tilde{\mathcal{V}}_{2} \sqcup \tilde{\mathcal{V}}_{1}$,
where we define
\begin{align*}
&\tilde{\mathcal{V}}_{2} \;\deq\; \{(i,\vartheta)\,, 1 \leq i \leq m\,, 1 \leq \vartheta \leq 2\}
\\
&\tilde{\mathcal{V}}_{1} \;\deq\; \{(m+1,\vartheta)\,, 1 \leq \vartheta \leq r\}\,.
\end{align*}

\item[(ii)] $\tilde{\mathcal{V}}$ carries a total order that is inherited from $\mathcal{X}$. Note that we are using the $d=1$ variant of the order $\leq$ on $\mathcal{X}$ here.
\item[(iii)] Given a pairing $\Pi \in \mathfrak{Q}$, from each edge $(\alpha, \beta) \in \Pi$, we obtain an edge $e =\{[\alpha], [\beta]\} \in \tilde{\mathcal{E}}$. Furthermore, we define the associated colouring by $\tilde{\sigma}(e) \deq \delta_{\beta}$. 
\item[(iv)] By $\conn(\tilde{\mathcal{E}})$, we denote the set of connected components of $\tilde{\mathcal{E}}$. In other words, we have $\tilde{\mathcal{E}}= \bigsqcup_{\mathfrak{P} \in \conn(\tilde{\mathcal{E}})} \mathfrak{P}$. We refer to the connected components $\mathfrak{P}$ of $\tilde{\mathcal{E}}$ as its \emph{paths}.
\end{itemize}
\end{definition}
By construction we have that all the paths $\mathfrak{P} \in \conn(\tilde{\mathcal{E}})$ as defined above are closed.
Similarly as in \eqref{def_ys_unbounded}, given $\mathbf{x}=(x_{\alpha})_{\alpha \in \mathcal{X}} \in \Lambda^{\mathcal{X}}, \mathbf{t}=(t_{\alpha})_{\alpha \in \mathcal{X}} \in \mathbb{R}^{\mathcal{X}}$, we define
$\mathbf{y}=(y_a)_{a \in \tilde{\mathcal{V}}} \in \Lambda^{\tilde{\mathcal{V}}}$, $\mathbf{s}=(s_a)_{a \in \tilde{\mathcal{V}}}\in \mathbb{R}^{\tilde{\mathcal{V}}}$. The only difference is that we are now considering the collapsed vertex set $\tilde{\mathcal{V}}$ and the equivalence relation $\sim'$ from Definition \ref{collapsed_graph_1D_delta_unbounded}.
We adapt \eqref{unbounded_y1_y2} by setting $\mathbf{y_i} \deq(y_a)_{a \in \tilde{\mathcal{V}}_i}$, $i=1,2$.
For $\mathfrak{P} \in \conn (\mathcal{E})$, $\tilde{\mathcal{V}}(\mathfrak{P})$ denotes its set of vertices. For $i=1,2$, we let 
$\tilde{\mathcal{V}}_i(\mathfrak{P}) \deq \tilde{\mathcal{V}}(\mathfrak{P}) \cap \tilde{\mathcal{V}}_i$. Furthermore, for an edge $e \in \tilde{\mathcal{E}}$, the quantity $\mathcal{J}_{\tau,e}$ is given as in Definition \ref{def_J_e_unbounded}, when we replace each of $\mathcal{V},\mathcal{E},\sigma$ by $\tilde{\mathcal{V}},\tilde{\mathcal{E}},\tilde{\sigma}$ respectively.

In this context, we also need to modify the definition of the value of $\Pi \in \mathfrak{Q}$ (see  Definition \ref{def:I_tau_PI_unbounded} and Lemma \ref{lem:I_rep_unbounded} above).
Let $m,r \in \mathbb{N}$ and a pairing $\Pi \in \mathfrak{Q}(m,r)$ be given. Let $(\tilde{\mathcal{V}},\tilde{\mathcal{E}},\tilde{\sigma})$ the associated graph from Definition \ref{collapsed_graph_1D_delta_unbounded}. For $\mathbf{t} \in \mathfrak{A} \equiv \mathfrak{A}(m)$, we let
\begin{equation}
\label{value_of_Pi_1D_delta}
\mathcal{I}^{\xi}_{\tau,\Pi}(\mathbf{t})\;\deq\;\int_{\Lambda^{\tilde{\mathcal{V}}}}\dd \mathbf{y}\,\Bigg[ \prod_{i=1}^{m} w_{\tau}(y_{i,1}-y_{i,2})\Bigg] \prod_{e \in \tilde{\mathcal{E}}} \mathcal{J}_{\tau, e}(\mathbf{y}_e,\mathbf{s})\,.
\end{equation}
Note that \eqref{Wick_application_identity_unbounded} then still holds.

Throughout the sequel, we streamline the notation for pairings in such a a way that we combine 
the notation for all dimensions and values of $\xi$ when $d=1$.

Let $m,r \in \mathbb{N}$ be given. Recalling Definitions \ref{def_pairing_unbounded} and \ref{def_pairing_1D_unbounded}, we define
\begin{equation}
\label{R(d,m,r)}
\mathfrak{R}(d,m,r)\;\deq\;
\begin{cases}
\mathfrak{R}(m,r)\,\,\,&\mbox{for}\,\, d=2,3
\\
\mathfrak{Q}(m,r)\,\,\,&\mbox{for}\,\, d=1\,.
\end{cases}
\end{equation}
In particular, for $\xi \in \mathbf{C}_r$, we can rewrite \eqref{Wick_application_identity_unbounded_2D_3D} and \eqref{Wick_application_identity_unbounded} as
\begin{equation}
\label{Wick_application_identity_unbounded_all_D}
a^{\xi}_{\tau,m}\;=\;\frac{(-1)^m}{(1-2\eta)^m \,2^m}\int_{\mathfrak{A}(m)} \dd \mathbf{t}\,\sum_{\Pi \in \mathfrak{R}(d,m,r)} \mathcal{I}^{\xi}_{\tau,\Pi}(\mathbf{t})\,.
\end{equation}
We also write 
\begin{equation}
\label{function_f_all_D}
f^{\xi}_{\tau,m}(\mathbf{t}) \;=\; \sum_{\Pi \in \mathfrak{R}(d,m,r)} \mathcal{I}^{\xi}_{\tau,\Pi}(\mathbf{t})\,.
\end{equation}
Note that, for $d=2,3$, this corresponds to \eqref{Definition of f_{tau,m}^{xi}_unbounded} above. When $d=1$, the latter expression is appropriately modified to take into account the different graph structure.
As a convention, when $\xi \in \mathbf{B}_r$, we henceforth always adopt the necessary modifications of the above definitions when $d=1$ without explicit mention. For instance, we use expressions such as \eqref{I_representation_unbounded} for all values of $d=1,2,3$.

\subsection{The splitting of the time-evolved quantum Green functions}
\label{The splitting of the time-evolved quantum Green functions}

For $t \in (-1,1)$ we consider the quantity
\begin{equation}
\label{Q_{tau,t}}
Q_{\tau,t} \;\deq\;
\begin{cases}
G_{\tau,t}+\frac{1}{\tau}S_{\tau,t}\,\,\,&\mbox{for}\,\, t \in (0,1)
\\
G_{\tau,t}\,\,\,&\mbox{for}\,\, t \in (-1,0]\,.
\end{cases}
\end{equation}
Note that this is well-defined by \eqref{S} and \eqref{G}. However, note that the Hilbert-Schmidt norm of $G_{\tau,t}$ is unbounded as $t \rightarrow -1$. Likewise, the Hilbert-Schmidt norm of $S_{\tau,t}$ is unbounded as $t \rightarrow 0+$.
The only norm in which $G_{\tau,t}$ and $S_{\tau,t}$ are bounded uniformly in the appropriate set of $t$ is the operator norm. This norm is too weak to apply our analysis.

As in \cite[Section 4.4]{FrKnScSo1}, we introduce a splitting of the operator $Q_{\tau,t}$ defined in \eqref{Q_{tau,t}} as
\begin{equation}
\label{Q_splitting}
Q_{\tau,t}\;=\; Q_{\tau,t}^{(1)}+\frac{1}{\tau} Q_{\tau,t}^{(2)}\,,
\end{equation}
where for $t \in (-1,1)$, we define
\begin{equation}
\label{Q^12}
Q^{(1)}_{\tau,t} \;\deq\; \frac{\ee^{-\{t\}h/\tau}}{\tau(\ee^{h/\tau}-1)}\,,\qquad Q^{(2)}_{\tau,t} \;\deq\; 
\begin{cases}
\ee^{-\{t\}h/\tau}\,\,\,&\mbox{for}\,\, t \in (-1,0) \cup (0,1)\\
0\,\,\,&\mbox{for}\,\, t=0\,.
\end{cases}
\end{equation}
In \eqref{Q^12} and in the sequel, $\{t\}$ denotes the fractional part of $t$, i.e.\ $\{t\} \deq t-\lfloor t \rfloor$, where $\lfloor \cdot \rfloor$ is the floor function.
By \cite[Lemma 2.9]{FrKnScSo1}, we have that for all $x,y \in \Lambda$ and $t \in (-1,1)$ 
\begin{equation}
\label{positivity}
Q^{(1)}_{\tau,t}(x;y) \;=\; Q^{(1)}_{\tau,t}(y;x) \;\geq\;0\,,\quad Q^{(2)}_{\tau,t}(x;y) \;=\; Q^{(2)}_{\tau,t}(y;x) \;\geq\;0\,.
\end{equation}
In particular, substituting this into \eqref{Q_splitting}, we have
\begin{equation}
\label{Q_positivity}
Q_{\tau,t}(x;y)\;=\;Q_{\tau,t}(y;x)\;\geq\;0\,.
\end{equation}
In what follows, we analyse the boundedness properties of the operators $Q^{(j)}_{\tau,t}$ defined in \eqref{Q^12}.

\begin{proposition}
\label{Q^12_bounds}
There exist constants $C_1,C_2>0$ such that for all $\tau \geq 1$ and $t \in (-1,1)$ the following properties hold.
\begin{itemize}
\item[(i)] $\|Q^{(1)}_{\tau,t}\|_{L^\infty(\Lambda^2)} \;\leq\; 
\begin{cases}
C_1 \,\,&\mbox{if}\,\,\,d=1
\\
C_1 \log \tau \,\,&\mbox{if}\,\,\,d=2
\\
C_1 \sqrt{\tau} \,\,&\mbox{if}\,\,\,d=3.
\end{cases}
$
\item[(ii)] For all $x \in \Lambda$ we have 
\begin{equation}
\label{Q12_bounds_ii}
\int \dd y \, Q^{(2)}_{\tau,t}(x;y) \;=\; \int \dd y \, Q^{(2)}_{\tau,t}(y;x) \;\leq\; 1\,.
\end{equation}
\item[(iii)] If $\{t\} \geq \frac{1}{2}$, then we have
\begin{equation}
\label{Q12_bounds_iii}
\|Q^{(2)}_{\tau,t}\|_{L^\infty(\Lambda^2)} \;\leq\; C_1 \tau^{d/2}\,.
\end{equation}
\item[(iv)] For all $x,y \in \Lambda$ we have
\begin{equation}
\label{Q12_bounds_iv}
Q_{\tau,t}(x;y)\;\geq\; Q^{(1)}_{\tau,t}(x;y) \;\geq\; C_2\,.
\end{equation}
\end{itemize}
\end{proposition}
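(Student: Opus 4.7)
The plan is to exploit translation invariance on $\mathbb{T}^d$ and work with the Fourier representations
\begin{equation*}
Q^{(1)}_{\tau,t}(x;y)\;=\;\sum_{k \in \mathbb{Z}^d} \frac{\ee^{-\{t\}\lambda_k/\tau}}{\tau(\ee^{\lambda_k/\tau}-1)}\,\ee^{2\pi \ii k \cdot (x-y)}\,,\qquad
Q^{(2)}_{\tau,t}(x;y)\;=\;\sum_{k \in \mathbb{Z}^d} \ee^{-\{t\}\lambda_k/\tau}\,\ee^{2\pi \ii k \cdot (x-y)}\,,
\end{equation*}
both of which are real, pointwise nonnegative by \eqref{positivity}, and have nonnegative Fourier coefficients. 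Consequently $\|Q^{(j)}_{\tau,t}\|_{L^\infty(\Lambda^2)}$ is attained at $x=y$ and equals the sum of the corresponding Fourier coefficients.

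For (i), dropping the factor $\ee^{-\{t\}\lambda_k/\tau}\le 1$ reduces the bound to $\sum_k[\tau(\ee^{\lambda_k/\tau}-1)]^{-1}$, which I split into the regimes $\lambda_k\le\tau$ (using $\tau(\ee^{\lambda_k/\tau}-1)\ge\lambda_k$) and $\lambda_k>\tau$ (using $\tau(\ee^{\lambda_k/\tau}-1)\ge\tfrac{\tau}{2}\ee^{\lambda_k/\tau}$); each piece is then estimated by Riemann-sum comparison in polar coordinates, giving $O(1)$, $O(\log\tau)$, $O(\tau^{1/2})$ in dimensions $d=1,2,3$. For (ii), integrating the Fourier series over $y$ extracts only the $k=0$ mode, yielding $\ee^{-\{t\}\kappa/\tau}\le 1$. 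For (iii), when $\{t\}\ge\tfrac12$ I bound $\sum_k \ee^{-\{t\}\lambda_k/\tau}\le \ee^{-\kappa/(2\tau)}\sum_k \ee^{-2\pi^2|k|^2/\tau}$ and apply the standard Gaussian Riemann-sum estimate $\sum_{k\in\mathbb{Z}^d} \ee^{-c|k|^2/\tau}\lesssim\tau^{d/2}$.

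The main obstacle is the uniform lower bound in (iv), since the trivial estimate $q(z)\ge c_0-\sum_{k\neq 0}c_k$ goes in the wrong direction (the $c_k$-sum dwarfs $c_0$ when $d=2,3$, so there is no reason for the pointwise infimum of the trigonometric series to be bounded below by a multiple of $c_0$). The inequality $Q_{\tau,t}\ge Q^{(1)}_{\tau,t}$ is immediate from \eqref{Q_splitting} and \eqref{positivity}. For the substantive bound $Q^{(1)}_{\tau,t}(x;y)\ge C_2$, my plan is to use the geometric series $(\ee^{h/\tau}-1)^{-1}=\sum_{n\ge 1}\ee^{-nh/\tau}$ to write
\begin{equation*}
Q^{(1)}_{\tau,t}(x;y)\;=\;\frac{1}{\tau}\sum_{n=1}^{\infty}\bigl[\ee^{-(\{t\}+n)h/\tau}\bigr](x;y)\,,
\end{equation*}
and then to represent each heat kernel by Poisson summation on $\mathbb{T}^d$ as
\begin{equation*}
\bigl[\ee^{-sh/\tau}\bigr](x;y)\;=\;\ee^{-s\kappa/\tau}\sum_{m \in \mathbb{Z}^d} \Bigl(\tfrac{\tau}{4\pi s}\Bigr)^{d/2}\,\ee^{-\tau|x-y+m|^2/(4s)}\,.
\end{equation*}
Keeping only the $m=0$ contribution in each heat kernel and restricting $n$ to $[\tau,2\tau]$, every retained summand is bounded below by $C/\tau$ uniformly in $x,y\in\Lambda$ and $\{t\}\in[0,1)$: the factor $\ee^{-(\{t\}+n)\kappa/\tau}\ge \ee^{-3\kappa}$; the prefactor $(\tau/(4\pi(\{t\}+n)))^{d/2}$ is bounded below by a dimensional constant since $(\{t\}+n)/\tau\in[1,3]$; and $\tau|x-y|^2/(4(\{t\}+n))\le d/16$ since $|x-y|\le\sqrt d/2$ on $\mathbb{T}^d$. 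Because $[\tau,2\tau]$ contains at least $\max(1,\tau/2)$ integers for $\tau\ge 1$, summing produces a total of at least $C_2>0$ independent of $\tau, t, x, y$, which closes the estimate.
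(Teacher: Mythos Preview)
Your proof is correct and follows essentially the same approach as the paper: Fourier-side triangle-inequality estimates for (i)--(iii), and for (iv) the geometric-series expansion of $(\ee^{h/\tau}-1)^{-1}$ combined with Poisson summation for the periodic heat kernel. The only difference is a minor technical simplification in (iv): the paper keeps the full sum over lattice images $n\in\mathbb{Z}^d$ and argues it is a Riemann sum $\geq\tfrac12$ once $\ell\geq\ell_0\sim C_*\tau$ (so the mesh is small), then sums the geometric tail in $\ell$, whereas you retain only the $m=0$ image and sum over the finite window $n\in[\tau,2\tau]$---both routes yield the same uniform lower bound.
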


\begin{remark}
We note that a version of Proposition \ref{Q^12_bounds} was proved when $d=1$ in \cite[Lemma 4.12]{FrKnScSo1}. In the analogue of \eqref{Q12_bounds_iii}, i.e. in \cite[Lemma 4.12 (iii)]{FrKnScSo1} the weaker bound of $C_1 \tau$ was given. This was sufficient for the proof of \cite[Theorem 1.9]{FrKnScSo1}, and would also be sufficient for the proof of the $d=1$ version of our result here.
\end{remark}

\begin{proof}[Proof of Proposition \ref{Q^12_bounds}]

\begin{itemize}
\item[(i)] By the triangle inequality we have
\begin{multline}
\label{Q1_bound1}
\|Q^{(1)}_{\tau,t}\|_{L^\infty(\Lambda^2)} \;=\; \Bigg\| \sum_{k \in \mathbb{Z}^d} \frac{\ee^{-\{t\}\lambda_k/\tau}}{\tau(\ee^{\lambda_k/\tau}-1)} \,\ee^{2\pi \ii k\cdot (x-y)} \Bigg\|_{L^{\infty}(\Lambda^2)}\;\leq\; \sum_{k \in \mathbb{Z}^d} \frac{1}{\tau(\ee^{\lambda_k/\tau}-1)}\,,
\\
\;\leq\; \mathop{\sum_{k \in \mathbb{Z}^d}}_{|k| \leq \sqrt{\tau}} \frac{1}{\lambda_k} +  \mathop{\sum_{k \in \mathbb{Z}^d}}_{|k| > \sqrt{\tau}} \frac{C}{\tau}\,\ee^{-|k|^2/\tau} \;\lesssim\;  \mathop{\sum_{k \in \mathbb{Z}^d}}_{|k| \leq \sqrt{\tau}} \frac{1}{|k|^2 + 1} + \frac{1}{\tau} \int_{|x| \geq \sqrt{\tau}} \ee^{-|x|^2/\tau} \,\dd x\,.
\end{multline}
We know that 
\begin{equation}
\label{Q1bound_2}
\mathop{\sum_{k \in \mathbb{Z}^d}}_{|k| \leq \sqrt{\tau}} \frac{1}{|k|^2 + 1} \;\leq\; 
\begin{cases}
C  \,\,&\mbox{if}\,\,\,d=1
\\
C\log \tau \,\,&\mbox{if}\,\,\,d=2
\\
C\sqrt{\tau} \,\,&\mbox{if}\,\,\,d=3.
\end{cases}
\end{equation}
Furthermore, by performing the substitution $y=x/\sqrt{\tau}$, it follows that 
\begin{equation}
\label{Q1bound_3}
\frac{1}{\tau} \int_{|x| \geq \sqrt{\tau}} \ee^{-|x|^2/\tau} \,\dd x \;\leq\; C\,\tau^{d/2-1}\,.\end{equation}
Substituting \eqref{Q1bound_2} and \eqref{Q1bound_3} into \eqref{Q1_bound1}, we deduce (i).
\item[(ii)] The claim \eqref{Q12_bounds_ii} was proved for $d=1$ in \cite[Lemma 4.12 (ii)]{FrKnScSo1}. The proof carries over to $d=2,3$.
\item[(iii)] Suppose that $\{t\} \geq \frac{1}{2}$. By the triangle inequality we have
\begin{equation*}
\|Q^{(2)}_{\tau,t}\|_{L^\infty(\Lambda^2)}=
\|\ee^{-\{t\}h/\tau}\|_{L^\infty (\Lambda^2)} \leq \sum_{k \in \mathbb{Z}^d} \ee^{-\frac{1}{2}\lambda_k/\tau}
\;\lesssim\; \int_{\mathbb{R}^d} \ee^{-C|x|^2/\tau} \,\dd x\,.
\end{equation*}
By performing the substitution $y=x/\sqrt{\tau}$, we deduce (iii).
\item[(iv)] 
We recall from \cite[Lemma 2.9]{FrKnScSo1} that $S_{\tau,t}(x;y) \geq 0$.
Hence, by \eqref{Q_{tau,t}}--\eqref{Q^12}, it suffices to show that 
\begin{equation}
\label{Q12_bounds_iv_1a}
G_{\tau,t}(x;y) \;\geq\; C_2\,.
\end{equation}
We present the details of the proof of \eqref{Q12_bounds_iv_1a} based on the Poisson summation formula (alternatively, one can use the Feynman-Kac formula, but we do not take this approach here).
We write, for $z \deq x-y$ and $\lambda_k$ as in \eqref{lambda_k}
\begin{align}
\notag
G_{\tau,t}(x;y) &\;=\; \sum_{k \in \mathbb{Z}^d} \frac{\ee^{-t \lambda_k/\tau}}{\tau(\ee^{\lambda_k/\tau}-1)} \,\ee^{2\pi \ii k \cdot z}\;=\;\frac{1}{\tau} \sum_{k \in \mathbb{Z}^d} \sum_{\ell=1}^{\infty} \ee^{-\ell \lambda_k/\tau} \,\ee^{-t \lambda_k/\tau}\,\ee^{2\pi \ii k \cdot z}
\\
\label{Q12_bounds_iv_1}
&\;=\;\frac{1}{\tau}  \sum_{\ell=1}^{\infty} \sum_{k \in \mathbb{Z}^d} \ee^{-\ell(4 \pi^2 |k|^2+\kappa)/\tau}\,\ee^{-t(4\pi^2 |k|^2+\kappa)/\tau} \,\ee^{2\pi \ii k\cdot z}\,.
\end{align}
Interchanging the orders of summation in the above calculation is justified by the exponential decay of the factors.
We rewrite the expression in \eqref{Q12_bounds_iv_1} as 
\begin{equation}
\label{Q12_bounds_iv_2}
\frac{\ee^{-\frac{\kappa t}{\tau}}}{\tau} \sum_{\ell=1}^{\infty} \ee^{-\frac{\ell \kappa}{\tau}} \Bigg\{\sum_{k \in \mathbb{Z}^d} \ee^{- \pi \big(\sqrt{\frac{4\pi(\ell+t)}{\tau}}k\big)^2}\,\ee^{2\pi \ii k \cdot z}\Bigg\}\,.
\end{equation}
Recalling that for $\xi \in \mathbb{R}^d$ and $\lambda>0$ we have
\begin{equation*}
\big(\lambda^{d/2} \ee^{-\pi[\lambda (x+z)]^2}\big){\,\widehat{}\,}(\xi)\;=\;\ee^{-\pi (\xi/\lambda)^2}\,\ee^{2 \pi \ii \xi \cdot z}\,,
\end{equation*}
and by applying the Poisson summation formula, we deduce that \eqref{Q12_bounds_iv_2} equals
\begin{align}
\notag
&\frac{\ee^{-\frac{\kappa t}{\tau}}}{\tau} \sum_{\ell=1}^{\infty} \ee^{-\frac{\ell \kappa}{\tau}} \Bigg\{\sum_{n \in \mathbb{Z}^d} \bigg(\frac{\tau}{4 \pi (\ell+t)}\bigg)^{d/2}\,\ee^{-\frac{\tau}{4 \pi (\ell+t)} (n+z)^2}\Bigg\}
\\
\label{Q12_bounds_iv_3}
&\;\geq\; \frac{C}{\tau} \, \sum_{\ell=1}^{\infty} \ee^{-\frac{\ell \kappa}{\tau}} \Bigg\{\sum_{n \in \mathbb{Z}^d} \bigg(\frac{\tau}{4 \pi (\ell+t)}\bigg)^{d/2}\,\ee^{-\frac{\tau}{4 \pi (\ell+t)} (n+z)^2}\Bigg\}
\,.
\end{align}
In the above inequality, we used that $\frac{\kappa t}{\tau}=\mathcal{O}(1)$.

For fixed $\ell \geq 1$, we note that 
\begin{equation*}
\mathcal{R}_{\ell} \;\deq\; \sum_{n \in \mathbb{Z}^d} \bigg(\frac{\tau}{4 \pi (\ell+t)}\bigg)^{d/2}\,\ee^{-\frac{\tau}{4 \pi (\ell+t)} (n+z)^2}
\end{equation*}
is a Riemann sum corresponding to the integral 
\begin{equation*}
\int_{\mathbb{R}^d} \ee^{-\pi |u|^2} \,\dd u\;=\;1
\end{equation*}
with mesh size $\sqrt{\frac{\tau}{4 \pi (\ell+t)}}$. In particular, we note that $\mathcal{R}_{\ell} \geq \frac{1}{2}$ provided that $\sqrt{\frac{\tau}{4 \pi (\ell+t)}}$ is sufficiently small. This holds for all $\ell \geq \ell_0$, where $\ell_0 \sim \tau$ is chosen sufficiently large. Therefore, for such $\ell_0$, we have by \eqref{Q12_bounds_iv_3} that 
\begin{equation*}
G_{\tau,t}(x;y) \;\geq\; \frac{C}{\tau}\sum_{\ell=\ell_0}^{\infty} \ee^{-\frac{\ell \kappa}{\tau}} \;=\; \frac{C}{\tau} \frac{\ee^{-\frac{\ell_0 \kappa}{\tau}}}{1-\ee^{-\frac{\kappa}{\tau}}} \sim \frac{1}{\tau (1-\ee^{-\frac{\kappa}{\tau}})} \gtrsim 1\,. 
\end{equation*}
Above, we used the fact that $\ell_0 \sim \tau$. We hence deduce \eqref{Q12_bounds_iv_1a}.
\end{itemize}
\end{proof}

By \eqref{Q^12} and Proposition \ref{Q^12_bounds} (i), we know that the quantum Green function $G_{\tau}$ defined in \eqref{G_tau} is bounded in $L^\infty(\Lambda \times \Lambda)$ uniformly in $\tau$ when $d=1$. This fact was used repeatedly in the analysis in \cite[Section 4.4]{FrKnScSo1}.  In higher dimensions, this is no longer true. We now give the appropriate bounds on $G_{\tau,0}$.

\begin{proposition}
\label{G_{tau,0}_bound}
Let us fix $q \in \mathcal{Q}_d$, where
\begin{equation}
\label{Q_d}
\mathcal{Q}_d \;\deq \;
\begin{cases}
[1,\infty]\,\,&\mbox{if}\,\,\,d=1
\\
[1,\infty)\,\,&\mbox{if}\,\,\,d=2
\\
[1,3) \,\,&\mbox{if}\,\,\,d=3.
\end{cases}
\end{equation}
\begin{itemize}
\item[(i)]
We have
\begin{equation}
\label{G_{tau,0}_bound_claim}
\|G_\tau(x;\cdot)\|_{L^q(\Lambda)} \;=\;\|G_\tau(\cdot;x)\|_{L^q(\Lambda)} \;\leq\; C(q)\,,
\end{equation}
uniformly in $x \in \Lambda$, $\tau \geq 1$. In particular, we have
\begin{equation}
\label{G_{tau,0}_bound_claim2}
\|G_\tau\|_{L^q(\Lambda^2)}\;\leq\;C(q)\,.
\end{equation}
\item[(ii)] More generally, for $t \in (-1,1)$, we have
\begin{equation*}
\|Q^{(1)}_{\tau,t}(x;\cdot)\|_{L^q(\Lambda)}\;=\;\|Q^{(1)}_{\tau,t}(\cdot\,;x)\|_{L^q(\Lambda)}\;\leq\;C(q)\,,
\end{equation*}
uniformly in $t,x,\tau$.
\end{itemize}
\end{proposition}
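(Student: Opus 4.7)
\textbf{Proof plan for Proposition \ref{G_{tau,0}_bound}.} I will prove (ii) directly; part (i) then follows by specialising to $t=0$, and the bound \eqref{G_{tau,0}_bound_claim2} follows from \eqref{G_{tau,0}_bound_claim} by integration in $x$. The equality of the two norms in \eqref{G_{tau,0}_bound_claim} is immediate from the symmetry of the kernel $G_\tau(x;y)=G_\tau(y;x)$ (already observed above). Because $h = -\Delta + \kappa$ is translation invariant, so is $Q^{(1)}_{\tau,t}$, i.e.\ $Q^{(1)}_{\tau,t}(x;y) = K_{\tau,t}(x-y)$ for some function $K_{\tau,t}$ on $\Lambda$; hence the $L^q$ norm in $y$ is independent of $x$, and matters reduce to bounding $\|K_{\tau,t}\|_{L^q(\Lambda)}$.

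The plan is to rewrite $K_{\tau,t}$ as a superposition of periodised heat kernels. Expanding the geometric series $\frac{1}{\tau(\ee^{\lambda_k/\tau}-1)}=\frac{1}{\tau}\sum_{\ell\geq 1}\ee^{-\ell\lambda_k/\tau}$, substituting into the Fourier series for $Q^{(1)}_{\tau,t}$, using $\lambda_k = 4\pi^2|k|^2+\kappa$, and applying the Poisson summation formula mode-by-mode (exactly as in the calculation \eqref{Q12_bounds_iv_1}--\eqref{Q12_bounds_iv_3} above), I get
\begin{equation*}
Q^{(1)}_{\tau,t}(x;y) \;=\; \frac{1}{\tau}\sum_{\ell=1}^{\infty}\ee^{-(\ell+\{t\})\kappa/\tau}\,p^{\mathbb{T}^d}_{(\ell+\{t\})/\tau}(x-y)\,,
\end{equation*}
where $p^{\mathbb{T}^d}_s(z) \deq \sum_{n\in\mathbb{Z}^d}(4\pi s)^{-d/2}\ee^{-|z+n|^2/(4s)}$ is the Gaussian heat kernel on $\mathbb{T}^d$. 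An elementary computation on $\mathbb{R}^d$ gives $\|(4\pi s)^{-d/2}\ee^{-|\cdot|^2/(4s)}\|_{L^q(\mathbb{R}^d)}\sim s^{-\alpha}$ with $\alpha \deq \frac{d}{2}\bigl(1-\frac{1}{q}\bigr)$; the contribution of the terms with $n\neq 0$ is exponentially small for $s\leq 1$, while for $s\geq 1$ the kernel is $\mathcal O(1)$ uniformly. Combining these yields the bound
\begin{equation*}
\|p^{\mathbb{T}^d}_s\|_{L^q(\Lambda)}\;\leq\;C(q)\bigl(1+s^{-\alpha}\bigr)\,,\qquad s>0\,.
\end{equation*}

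Now Minkowski's inequality applied to the series expansion above gives
\begin{equation*}
\|Q^{(1)}_{\tau,t}(x;\cdot)\|_{L^q(\Lambda)}\;\leq\;\frac{C(q)}{\tau}\sum_{\ell=1}^{\infty}\ee^{-\ell\kappa/\tau}\bigl[1+(\ell/\tau)^{-\alpha}\bigr]\,,
\end{equation*}
where I used $\{t\}\geq 0$ so $(\ell+\{t\})/\tau \geq \ell/\tau$. The constant term sums to $\frac{1}{\tau}\cdot\frac{\ee^{-\kappa/\tau}}{1-\ee^{-\kappa/\tau}}\leq C$. For the second term, write $\tau^{\alpha-1}\sum_{\ell\geq 1}\ee^{-\ell\kappa/\tau}\ell^{-\alpha}$ and split at $\ell=\floor{\tau}$: for $\ell\leq\tau$ I bound the exponential by $1$ and use $\sum_{\ell=1}^{\floor\tau}\ell^{-\alpha}\leq C\tau^{1-\alpha}$ (valid precisely when $\alpha<1$), giving $\mathcal O(1)$; for $\ell>\tau$ I bound $\ell^{-\alpha}\leq\tau^{-\alpha}$ and use $\sum_{\ell>\tau}\ee^{-\ell\kappa/\tau}\leq C\tau$, again giving $\mathcal O(1)$. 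Thus both contributions are bounded uniformly in $\tau\geq 1$ and $t\in(-1,1)$, proving the claim.

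The condition for convergence is exactly $\alpha = \frac{d}{2}(1-\frac{1}{q})<1$, which unpacks to $q$ arbitrary in $d=1$ (with $q=\infty$ giving $\alpha=\tfrac12$), $q<\infty$ in $d=2$, and $q<3$ in $d=3$; this is precisely the range $\mathcal{Q}_d$ in \eqref{Q_d}. The one small subtlety will be the comparison of the partial sum $\sum_{\ell=1}^{\floor\tau}\ell^{-\alpha}$ with $\int_0^\tau u^{-\alpha}\dd u$; this is the only place where $\alpha<1$ is used in an essential way, and it is the source of the sharp constraint on $q$ for $d=3$. No part of the argument requires $\tau$ to be large, so the bounds are automatically uniform for $\tau\geq 1$.
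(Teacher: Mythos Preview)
Your proof is correct and complete, but it takes a genuinely different route from the paper's. The paper observes that the Fourier coefficients of $Q^{(1)}_{\tau,t}(x;\cdot)$ are bounded in absolute value by $1/\lambda_k$, uniformly in $\tau$ and $t$; this gives $\|Q^{(1)}_{\tau,t}(x;\cdot)\|_{H^s(\Lambda)}\leq C(s)$ for $s<2-\frac{d}{2}$, and the stated $L^q$ bound then follows from Sobolev embedding on the torus. Your argument instead expands $Q^{(1)}_{\tau,t}$ as a series of periodised heat kernels via the geometric series and Poisson summation, bounds each heat kernel directly in $L^q$, and sums using Minkowski. The paper's approach is shorter and reduces the sharp integrability threshold to the familiar Sobolev exponent; your approach avoids invoking Sobolev embedding entirely and makes the critical condition $\alpha=\frac{d}{2}(1-\frac{1}{q})<1$ appear as a concrete summability constraint, which is perhaps more transparent. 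Both methods yield exactly the same range $\mathcal{Q}_d$ and both are uniform in $t$ and $\tau$ for the same reason (the extra factor $\ee^{-\{t\}\lambda_k/\tau}\leq 1$).
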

\begin{proof}
\begin{itemize}
\item[(i)]
We note that \eqref{G_{tau,0}_bound_claim2} follows immediately from \eqref{G_{tau,0}_bound_claim}. We now prove \eqref{G_{tau,0}_bound_claim}.
Let $x \in \Lambda$ and $\tau \geq 1$ be given.
Recall that by \eqref{positivity} with $t=0$, we have $G_\tau(x;\cdot)=G_\tau(\cdot;x)$, so it suffices to prove the bound on $\|G_\tau(x,\cdot)\|_{L^q(\Lambda)}$. The claim when $d=1$ follows from Proposition \ref{Q^12_bounds} (i) and H\"{o}lder's inequality, so we consider the cases when $d=2,3$.
By \eqref{G_tau}, we have that 
\begin{equation*}
G_\tau(x;y)\;=\;\sum_{k \in \mathbb{Z}^d} \frac{1}{\tau(\ee^{\lambda_k/\tau}-1)}\,\ee^{2\pi \ii k \cdot (x-y)}\,.
\end{equation*}
Therefore, for $k \in \mathbb{Z}^d$ we have
\begin{equation*}
\big|\big(G_\tau(x;\cdot)\big)\,\,\widehat{}\,\,(k)\big|\;\leq\; \frac{1}{\tau(\ee^{\lambda_k/\tau}-1)}\;\leq\;\frac{1}{\lambda_k} \;\leq\; \frac{C}{|k|^2+1}\,.
\end{equation*}
We hence deduce that 
\begin{equation}
\label{G_{tau,0}_bound_proof_s1}
\|G_\tau(x;\cdot)\|_{H^s(\Lambda)} \;\leq\;C(s)\,,\,\,\, \mbox {for} \quad 0 \;\leq\; s \;<\; 2-\frac{d}{2}\,.
\end{equation}
By using \eqref{G_{tau,0}_bound_proof_s1} and Sobolev embedding on the torus, \eqref{G_{tau,0}_bound_claim} follows. 
\item[(ii)] Using \eqref{Q^12}--\eqref{positivity}  and arguing as in (i), we deduce that
\begin{equation}
\label{Q_{tau,t}_bound_proof_s1}
\big|\big(Q_{\tau,t}^{(1)}(x;\cdot)\big)\,\,\widehat{}\,\,(k)\big|\;=\;\big|\big(Q_{\tau,t}^{(1)}(\cdot;x)\big)\,\,\widehat{}\,\,(k)\big|\;\leq\;\frac{\ee^{-\{t\}\lambda_k/\tau}}{\tau(\ee^{\lambda_k/\tau}-1)}\;\leq\;\frac{1}{\tau(\ee^{\lambda_k/\tau}-1)}\;\leq\;\frac{1}{\lambda_k}\,.
\end{equation}
We now proceed as in the proof of \eqref{G_{tau,0}_bound_claim2} to deduce the claim.
\end{itemize}
\end{proof}

We can relate the set $\mathcal{P}_d$ defined in \eqref{P_d} with the set $\mathcal{Q}_d$ defined in \eqref{Q_d}.
\begin{lemma}
\label{QP_embedding}
For $p \in \mathcal{P}_d$ we have $2p' \in \mathcal{Q}_d$.
\end{lemma}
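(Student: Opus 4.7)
The plan is to verify the claim by a direct case analysis on $d \in \{1,2,3\}$ using the relation $p' = p/(p-1)$ between a Hölder exponent and its conjugate. In each case I would compute the range of $2p'$ as $p$ ranges over $\mathcal{P}_d$ and check that it lies inside $\mathcal{Q}_d$ as defined in \eqref{P_d} and \eqref{Q_d}.

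More explicitly, for $d=1$ we have $p \in [1,\infty)$; since the map $p \mapsto p' = p/(p-1)$ is decreasing with $p'=\infty$ at $p=1$ and $p' \to 1$ as $p \to \infty$, we obtain $2p' \in [2,\infty]$, which is contained in $[1,\infty]=\mathcal{Q}_1$. For $d=2$ we have $p \in (1,\infty)$, so $p' \in (1,\infty)$ and therefore $2p' \in (2,\infty) \subset [1,\infty) = \mathcal{Q}_2$. For $d=3$ we have $p \in (3,\infty)$, and the same monotonicity gives $p' \in (1, 3/2)$, hence $2p' \in (2,3) \subset [1,3) = \mathcal{Q}_3$.

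The only mild subtlety is the endpoint behaviour: in the $d=1$ case one needs to allow $p=1$, which forces $p'=\infty$ and hence $2p'=\infty$, and this is precisely why $\mathcal{Q}_1$ is defined with the closed endpoint at $\infty$. No nontrivial analytic input is needed — the lemma is really a bookkeeping statement aligning the integrability exponents of the interaction potential (captured by $\mathcal{P}_d$) with the integrability of the quantum Green function established in Proposition \ref{G_{tau,0}_bound} (captured by $\mathcal{Q}_d$), via a Hölder pairing that later lets us estimate $\int w\,G^2$ type expressions by $\|w\|_{L^p}\,\|G\|_{L^{2p'}}^{2}$.
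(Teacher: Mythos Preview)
Your proof is correct and follows the same approach as the paper, which simply says the claim follows from the definitions \eqref{P_d} and \eqref{Q_d}; you have merely written out the case analysis explicitly. (One tiny imprecision: for $d=1$ the range of $2p'$ is $(2,\infty]$ rather than $[2,\infty]$, since $p<\infty$, but this does not affect the inclusion in $\mathcal{Q}_1$.)
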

\begin{proof}
This follows from \eqref{P_d} and \eqref{Q_d}.
\end{proof}

\subsection{Bounds on the explicit terms}
\label{Bounds on the explicit terms}
Our goal in this section is to first prove an upper bound as in \eqref{Borel_summation_unbounded2} on the explicit terms $a_{\tau,m}$. Throughout this section, we fix $p \in \mathcal{P}_d$ and a $d$-admissible interaction potential $w \in L^p(\Lambda)$. Furthermore, we let $\beta \in \cal B_d$, as defined in \eqref{B_d} above.
With the parameters $p,\beta$ chosen in this way, let $w_\tau$ be as in Lemma \ref{w_1D_approximation} for $d=1$ and as in Lemma \ref{w_positive_approximation} for $d=2,3$. 
We recall the definition of $\mathfrak{R}(d,m,r)$ from \eqref{R(d,m,r)}, as well as that $\mathcal{I}_{\tau,\Pi}^{\xi}(\mathbf{t})$ is given by Definition \ref{def:I_tau_PI_unbounded} when $\xi \in \mathbb{B}_r$ and by \eqref{value_of_Pi_1D_delta} when $d=1$ and $\xi=\mathrm{Id}_r$. Furthermore, we recall \eqref{function_f_all_D}. We now state the upper bounds that we prove.

\begin{proposition}
\label{Product of subgraphs}
Fix $m,r \in \mathbb{N}$. Given $\Pi \in \mathfrak{R}(d,m,r)$, $\mathbf{t} \in \mathfrak{A}(m)$, the following estimates hold uniformly in  $\xi \in \mathbf{C}_r$ and $\tau \geq 1$.
\begin{itemize}
\item[(i)] We have
\begin{equation*}
\big| \mathcal{I}_{\tau,\Pi}^{\xi} (\mathbf{t}) \big| \leq C_0^{m+r}(1+\|w\|_{L^{p}(\Lambda)})^{m}\,,
\end{equation*}
for some $C_0>0$.
\item[(ii)] $|a^{\xi}_{\tau,m}| \leq (Cr)^{r} \,C^m  \,(1+\|w\|_{L^p(\Lambda)})^{m}\,m!\,$.
\item[(iii)] $|f^{\xi}_{\tau,m}(\mathbf{t})| \leq C^{m+r}\,(1+\|w\|_{L^p(\Lambda)})^m\,(2m+r)!\,$.  
\end{itemize}
\end{proposition}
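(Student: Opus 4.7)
The plan is to prove (i) via a careful analysis on the graphical representation \eqref{I_representation_unbounded}, then derive (iii) by summing over pairings and (ii) from (iii) by integrating over the simplex.

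For (i), I would start from \eqref{I_representation_unbounded} and apply the splitting \eqref{Q_splitting} to each of the $|\mathcal{E}| \leq 2m+r$ edge factors $\mathcal{J}_{\tau,e}$. This decomposes the integrand into a sum of at most $2^{2m+r}$ contributions, indexed by a subset $E_2 \subseteq \mathcal{E}$ of short-time edges carrying $Q^{(2)}$-kernels (with prefactor $\tau^{-|E_2|}$), the remaining edges in $E_1 \deq \mathcal{E} \setminus E_2$ carrying $Q^{(1)}$-kernels. For each term, the strategy follows the spirit of \cite[Section 4.4]{FrKnScSo1}: first integrate out the $Q^{(2)}$-variables using the sub-Markovian bound in Proposition \ref{Q^12_bounds}(ii), which absorbs the $\tau^{-|E_2|}$ prefactor and harmlessly contracts the corresponding edges. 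One is then left with an integral featuring only $Q^{(1)}$-kernels and the interaction potentials $w_\tau$. These are estimated by H\"{o}lder along each path of the collapsed graph $\mathcal{E}$: at each interaction vertex $i \in \{1,\ldots,m\}$, one pulls out $\|w_\tau\|_{L^p(\Lambda)} \leq C\|w\|_{L^p(\Lambda)}$ (by Lemma \ref{w_1D_approximation} or Lemma \ref{w_positive_approximation}) against mixed-norm bounds on the adjacent $Q^{(1)}$-kernels. The crucial integrability input is Lemma \ref{QP_embedding}: since $2p' \in \mathcal{Q}_d$, Proposition \ref{G_{tau,0}_bound}(ii) gives a uniform bound $\|Q^{(1)}_{\tau,t}(x,\cdot)\|_{L^{2p'}(\Lambda)} \leq C$ that can be composed along paths. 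Near endpoints in $\mathcal{V}_1$, the factor $\xi$ is controlled using $\|\xi\|_{\mathfrak{S}^2(\mathfrak{H}^{(r)})} \leq 1$ by a Cauchy--Schwarz argument. This yields a per-term bound $C^{m+r}(1+\|w\|_{L^p(\Lambda)})^m$, and summing over the $2^{2m+r}$ splitting choices gives (i).

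For (iii), combine (i) with $|\mathfrak{R}(d,m,r)| \leq (2m+r)!$, which comes from counting matchings of the $2m+r$ labels of sign $+1$ with the $2m+r$ labels of sign $-1$ in $\mathcal{X}(m,r)$. For (ii), use \eqref{Wick_application_identity_unbounded_all_D}: the simplex $\mathfrak{A}(m)$ has volume $\leq 1/m!$, so (iii) gives $|a^{\xi}_{\tau,m}| \lesssim C^{m+r}(1+\|w\|_{L^p(\Lambda)})^m (2m+r)!/m!$. The elementary estimate $(2m+r)! \leq 3^{2m+r} r!\, m!^2 \leq (Cr)^r C^m m!^2$, from the multinomial bound $\binom{2m+r}{m,m,r} \leq 3^{2m+r}$ and $r! \leq r^r$, closes the argument.

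The main obstacle is step (i). The approach in \cite[Section 2.4]{FrKnScSo1} reduced to $w \equiv 1$ at the cost of $\|w\|^m_{L^{\infty}(\Lambda)}$, which is unavailable here since $w \notin L^{\infty}(\Lambda)$. One must instead keep track of integrability parameters along the graph, and the splitting \eqref{Q_splitting} is essential: neither $G_{\tau,t}$ alone nor $S_{\tau,t}$ alone is uniformly bounded in $\tau$ on its natural range, but the complementary bounds $Q^{(1)}_{\tau,t} \in L^{2p'}(\Lambda)$ (Proposition \ref{G_{tau,0}_bound}) and $Q^{(2)}_{\tau,t}$ sub-Markovian (Proposition \ref{Q^12_bounds}(ii)) combine cleanly to produce a $\tau$-uniform estimate. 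The compatibility condition $2p' \in \mathcal{Q}_d$ from Lemma \ref{QP_embedding} is what drives the integrability assumption $p \in \mathcal{P}_d$ and reflects the optimality discussed after Theorem \ref{main_result}.
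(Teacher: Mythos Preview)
Your derivations of (ii) and (iii) from (i) are fine and match the paper. The gap is in your sketch of (i), specifically in the step ``first integrate out the $Q^{(2)}$-variables using the sub-Markovian bound \ldots\ which harmlessly contracts the corresponding edges.''

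The problem is that the $Q^{(2)}$-edges are not attached to bare vertices: every vertex $a\in\mathcal{V}_2$ carries a factor $w_\tau(y_a-y_{a^*})$. To apply the bound $\int Q^{(2)}_{\tau,t}(x;y)\,\dd y\le 1$ you must first remove everything else that depends on $y_a$. Concretely, take the mixed term $\tau^{-1}\int \dd y_{a_2}\,w_\tau(y_{a_2}-y_{a_2^*})\,Q^{(2)}_{\tau,\zeta_1}(y_{a_1};y_{a_2})\,Q^{(1)}_{\tau,\zeta_2}(y_{a_2};y_{a_3})$. The only way to free up the $y_{a_2}$-integral for the sub-Markov bound is to put $w_\tau$ and $Q^{(1)}$ in $L^\infty$, and $\|Q^{(1)}_{\tau,t}\|_{L^\infty}$ is \emph{not} uniformly bounded in $\tau$ for $d=2,3$ (Proposition~\ref{Q^12_bounds}(i)), nor is $\|w_\tau\|_{L^\infty}$. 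The paper uses precisely $\|w_\tau\|_{L^\infty}\le\tau^\beta$ from \eqref{cal W bound_unbounded2} together with the logarithmic/polynomial growth of $\|Q^{(1)}\|_{L^\infty}$, and the losses are only absorbed because of the $\tau^{-1}$ prefactor and the restriction $\beta\in\mathcal{B}_d$ in \eqref{B_d}. Your sketch never invokes \eqref{cal W bound_unbounded2} or \eqref{B_d}, so the mixed and pure-$Q^{(2)}$ terms are uncontrolled. Similarly, when two $Q^{(2)}$-edges meet at a vertex, contracting them via the semigroup property produces another $Q^{(2)}$-kernel (possibly with small $\{t\}$), not a bounded factor; the paper handles this by the case split $\{\zeta_1\}+\{\zeta_2\}\gtrless 1$ in \eqref{Induction_Step_unbounded_2_4th_term_B}--\eqref{Step_l.o.t._5}, which your global contraction scheme does not accommodate.

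The paper's actual mechanism is different from a global $2^{|\mathcal{E}|}$ expansion: it decomposes $\mathcal{E}$ into paths, assigns each $w_\tau$ factor to a single path by the rule \eqref{cal W_unbounded}, and then proves the path estimate \eqref{Inductive inequality 2 unbounded_B} by induction on $|\mathcal{V}(\mathfrak{P})|$. At each inductive step one integrates a single vertex $a_2$, splits only the two adjacent $Q$'s via \eqref{Q_splitting}, and shows \eqref{Induction_Step_unbounded}: the $Q^{(1)}Q^{(1)}$ part gives the leading $L^p\times L^{2p'}\times L^{2p'}$ contribution via Lemma~\ref{QP_embedding}, while the remaining three parts are lower order only after pulling out $\|w_\tau\|_{L^\infty}\le\tau^\beta$. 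The upshot is a recursion $\mathfrak{I}(\mathfrak{P})\le C_0(1+\|w\|_{L^p})\,\mathfrak{I}(\hat{\mathfrak{P}})$ where the contracted edge carries the full $Q_{\tau,\zeta_1+\zeta_2}$ rather than a $Q^{(1)}$ alone. This vertex-by-vertex structure is what makes the argument close; the ingredients you identified ($L^{2p'}$ bound on $Q^{(1)}$, sub-Markov bound on $Q^{(2)}$, Lemma~\ref{QP_embedding}) are all used, but they do not combine in the way you describe.
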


\begin{proof}
Let us note that (ii) is obtained from (i) by applying \eqref{Wick_application_identity_unbounded_all_D}, as well as the fact that $|\mathfrak{R}(d,m,r)| \leq (2m+r)!\,$. Similarly, (i) implies (iii) by \eqref{function_f_all_D}.
We now prove (i).
Let us consider first the case when $\xi \in \mathbf{B}_r$. Then, by Lemma \ref{lem:I_rep_unbounded}, $\mathcal I_{\tau,\Pi}^{\xi}(\mathbf{t})$ is given by \eqref{I_representation_unbounded}.
The multigraph $\mathcal{E}$ corresponding to $\Pi$ decomposes into paths $\mathfrak{P}_1, \ldots, \mathfrak{P}_k$, ordered in an arbitrary way.
Recalling \eqref{J_nonnegative}, we deduce from \eqref{I_representation_unbounded} and the path decomposition of $\mathfrak{P}$ that
\begin{equation}
\label{Product of subgraphs unbounded 1}
\big|\mathcal I_{\tau,\Pi}^{\xi}(\mathbf{t})\big| \;\leq\; \int_{\Lambda^{\mathcal V}} \dd \mathbf{y} \, \Bigg[\prod_{i=1}^{m} \big|w_\tau(y_{i,1}-y_{i,2})\big|\Bigg] \,\big|\xi(\mathbf{y_1})\big|\, \Bigg[\prod_{j=1}^{k}\, \prod_{e\, \in\, \mathfrak{P}_j} \mathcal J_{\tau, e}(\mathbf{y_e}, \mathbf{s})\Bigg]\,.
\end{equation}
In the sequel, we repeatedly use the nonnegativity of $\mathcal{J}_{\tau,e}(\mathbf{y_e}, \mathbf{s})$, i.e.\ \eqref{J_nonnegative} without further comment.
Since $\|w_\tau\|_{L^\infty}$ is not bounded uniformly in $\tau$, we cannot estimate \eqref{Product of subgraphs unbounded 1} by arguing as in \cite[Section 2.4]{FrKnScSo1}. Instead, we need to systematically distribute the factors of $w_\tau$ among the paths $\mathfrak{P}_1,\ldots,\mathfrak{P}_k$, similarly as in the proof of \cite[Proposition 4.11]{FrKnScSo1}.

As in \cite[Section 4.4]{FrKnScSo1}, given a vertex $a=(i_a,\vartheta_a) \in \mathcal{V}_2$, we define the vertex $a^* \in \mathcal{V}_2$ by 
\begin{equation}
\label{a_star_unbounded}
a^* \deq (i_a,3-\vartheta_a)\,.
\end{equation}
Furthermore, the factors of $w_\tau$ are distributed among the paths $\mathfrak{P}_1,\ldots,\mathfrak{P}_k$ according to the rule that for $a \in \mathcal{V}_2$ the function $\mathcal{W}_\tau^{a}$ is given by
\begin{align} 
\label{cal W_unbounded}
\mathcal{W}_\tau^{a} \;\deq\;
\begin{cases}
w_\tau &\mbox{if }a \in \mathfrak{P}_j,\,a^{*} \in \mathfrak{P}_{\ell} \,\, \mbox{for } 1 \leq j<\ell \leq k \\
1 &\mbox{if }a \in  \mathfrak{P}_j,\,a^{*} \in  \mathfrak{P}_{\ell} \,\, \mbox{for } 1 \leq \ell<j \leq k \\
w_\tau &\mbox{if }a,a^* \in  \mathfrak{P}_j \,\, \mbox{for } 1 \leq j \leq k \,\, \mbox{and } \vartheta_a=1 \\
1 &\mbox{if }a,a^* \in  \mathfrak{P}_j \,\, \mbox{for } 1 \leq j \leq k \,\, \mbox{and } \vartheta_a=2\,.
\end{cases}
\end{align}
In other words, we put the factor of $w_\tau(y_a-y_{a^*})$ with the vertex in $\{a,a^*\}$ belonging to the cycle of lowest index. In case these two indices are the same, we put it with the vertex whose second component $\vartheta$ is equal to $1$. By \eqref{cal W_unbounded}, we have that 
\begin{equation}
\label{product w tau_unbounded}
\prod_{i=1}^{m} w_\tau(y_{i,1}-y_{i,2}) \;=\; \prod_{a \in \mathcal{V}_2} \mathcal{W}_{\tau}^{a}(y_{a}-y_{a^*})\,.
\end{equation}
Moreover, since $p \in \mathcal{P}_d$, we also have by Lemma \ref{w_1D_approximation} (ii)--(iii) (when $d=1$) and by Lemma \ref{w_positive_approximation} (ii)--(iii) (when $d=2,3$) that $\mathcal{W}_\tau^a$ satisfies
\begin{align}
\label{cal W bound_unbounded2}
\|\mathcal{W}_{\tau}^a\|_{L^{\infty}(\Lambda)} &\;\leq\; \tau^{\beta}
\\
\label{cal W bound_unbounded}
\|\mathcal{W}_{\tau}^a\|_{L^p(\Lambda)} &\;\leq\; 1+\|w_{\tau}\|_{L^p(\Lambda)}\;\leq\;1+ C\|w\|_{L^p(\Lambda)}\,.
\end{align}
We note that the term $1$ on the right-hand side of \eqref{cal W bound_unbounded} corresponds to the case when $\mathcal{W}_{\tau}^a=1$ and when $\|w\|_{L^p(\Lambda)}$ is small.

We now adapt the method from the proof of \cite[Proposition 4.11]{FrKnScSo1}. The goal is to integrate out the $\mathbf{y_2}$-variables by successively integrating them out in the paths $\mathfrak{P}_1, \dots, \mathfrak{P}_k$.  
Let us first recall some useful notation, which was used in \cite{FrKnScSo1}.
Given $1 \leq j \leq k+1$, we define 
\begin{equation}
\label{V_{2,j}}
\mathcal V_{2,j} \deq \mathcal V_2 \,\Big\backslash \,\Big(\bigcup_{\ell=1}^{j-1} \mathcal{V}_2(\mathfrak{P}_{\ell})\Big)\,.
\end{equation}
This is the set of variables that remain after we have integrated out the vertices which occur in the first $j-1$ paths $\mathfrak{P}_1, \ldots, \mathfrak{P}_{j-1}$. In particular, we have $\mathcal{V}_{2,1}=\mathcal{V}_2$ and $\mathcal{V}_{2,k+1}=\emptyset$. Given a subset $A \subseteq \mathcal{V}$, we introduce the variable $\mathbf{y}_A \deq (y_a)_{a \in A}$. Similarly, given $\alpha \in [1,\infty]$, we denote by $L^\alpha_A$ the space $L^\alpha_{\mathbf{y}_A}$ with the corresponding norm. 

By \eqref{product w tau_unbounded}, it follows that the right-hand side of \eqref{Product of subgraphs unbounded 1} equals

\begin{equation*}
\int_{\Lambda^{\mathcal{V}_1}} \dd \mathbf{y_1} \, \big|\xi(\mathbf{y_1})\big| \int_{\Lambda^{\mathcal{V}_2}} \dd \mathbf{y_2} \,\prod_{j=1}^{k} \Bigg[ \prod_{e \in \mathfrak{P}_j} \mathcal{J}_{\tau, e}(\mathbf{y}_e, \mathbf{s}) \, \prod_{a \in \mathcal {V}_2(\mathfrak{P}_j)} \big|\mathcal{W}_\tau^{a}(y_{a}-y_{a^*})\big|\Bigg]\,,
\end{equation*}
which by applying the Cauchy-Schwarz inequality in $\mathbf{y_1}$ and recalling that $\|\xi\|_{L^2_{\mathbf{y_1}}} \leq 1$ is
\begin{equation}
\label{Product of subgraphs unbounded 2}
\;\leq\;
\Bigg\|\int_{\Lambda^{\mathcal{V}_2}} \dd \mathbf{y_2} \, \prod_{j=1}^{k} \Bigg[\prod_{e \in \mathfrak{P}_j} \mathcal{J}_{\tau, e}(\mathbf{y}_e, \mathbf{s})\, \prod_{a \in \mathcal{V}_2(\mathfrak{P}_j)} \big| \mathcal{W}_\tau^{a}(y_{a}-y_{a^*})\big|\Bigg] \Bigg\|_{L^{2}_{\mathbf{y_1}}}\,.
\end{equation}
The claim of the proposition follows from \eqref{Product of subgraphs unbounded 2} if we show that for all $1 \leq \ell \leq k$ we have the recursive inequality
\begin{align}
\notag
&\Bigg\|\int_{\Lambda^{\mathcal{V}_{2,\ell}}} \dd \mathbf{y}_{\mathcal{V}_{2,\ell}} \, \prod_{j=\ell}^{k} \Bigg[\prod_{e \in \mathfrak{P}_j} \mathcal{J}_{\tau, e}(\mathbf{y}_e, \mathbf{s})\, \prod_{a \in \mathcal{V}_2(\mathfrak{P}_j)} \big|\mathcal{W}_\tau^{a}(y_{a}-y_{a^*})\big|\Bigg]\Bigg\|_{L^{\infty}_{\mathcal{V}_2 \setminus \mathcal{V}_{2,\ell}}L^{2}_{\mathbf{y_1}}}
\\
\notag
&\;\leq\; C_0^{\,|\mathcal{V}(\mathfrak{P}_{\ell})|}\,\big(1+\|w\|_{L^p(\Lambda)}\big)^{\,|\mathcal{V}_2(\mathfrak{P}_{\ell})|} \,
\\
\label{Inductive inequality 1 unbounded}
&\times
\Bigg\|\int_{\Lambda^{\mathcal{V}_{2,\ell+1}}} \dd \mathbf y_{\mathcal{V}_{2,\ell+1}} \, \prod_{j=\ell+1}^{k} \Bigg[\prod_{e \in \mathfrak{P}_j} \mathcal{J}_{\tau, e}(\mathbf y_e, \mathbf s)\, \prod_{a \in \mathcal{V}_2(\mathfrak{P}_j)} \big|\mathcal{W}_\tau^{a}(y_{a}-y_{a^*})\big|\Bigg]\Bigg\|_{L^{\infty}_{\mathcal{V}_2 \setminus \mathcal{V}_{2,\ell+1}} L^{2}_{\mathbf{y_1}}}\,,
\end{align}
for some $C_0>0$. We first show that \eqref{Inductive inequality 1 unbounded} follows if we prove that
\begin{align}
\notag
&\Bigg\|\prod_{e \in \mathfrak{P}_{\ell}} \mathcal{J}_{\tau, e}(\mathbf{y}_e, \mathbf{s})\, \prod_{a \in \mathcal{V}_2(\mathfrak{P}_{\ell})} \big|\mathcal{W}_\tau^{a}(y_{a}-y_{a^*})\big|\Bigg\|_{L^{2}_{\mathbf{y_1}} L^{\infty}_{\mathcal{V}^{*}(\mathfrak{P}_{\ell})}  L^1_{\mathcal{V}_2(\mathfrak{P}_{\ell})}}
\\ 
\label{Inductive inequality 2 unbounded}
&\;\leq\;C_0^{\,|\mathcal{V}(\mathfrak{P}_{\ell})|}\,\big(1+\|w\|_{L^p(\Lambda)}\big)^{\,|\mathcal{V}_2(\mathfrak{P}_{\ell})|}\,,
\end{align}
where for $\mathfrak{P} \in \conn (\mathcal{E})$, $\mathcal{V}^*(\mathfrak{P}) \deq \{a^{*}:\,a \in \mathcal{V}_2 (\mathfrak{P}) \} \setminus \mathcal{V}_2(\mathfrak{P})$. In other words, $\mathcal{V}^*(\mathfrak{P})$ denotes the vertices in $\mathcal{V}_2$ that are connected to some vertex in $\mathfrak{P}$ by an interaction $w_\tau$, but which do not belong to the set $\mathcal V_2(\mathfrak{P})$.

In order to see that \eqref{Inductive inequality 2 unbounded} implies \eqref{Inductive inequality 1 unbounded}, we fix $1 \leq \ell \leq k$ and observe that we can write the set $\mathcal{V}_{2,\ell}$ as the disjoint union
\begin{equation*}
\label{V2l}
\mathcal{V}_{2,\ell}\;=\;\mathcal{V}_2(\mathfrak{P}_\ell) \,\sqcup\, \Big(\mathcal{V}_{2,\ell+1} \,\cap\,\mathcal{V}^{*}(\mathfrak{P}_\ell)\Big) \,\sqcup\, \Big(\mathcal{V}_{2,\ell+1} \setminus \mathcal{V}^{*}(\mathfrak{P}_\ell)\Big)\,.
\end{equation*}
We can hence rewrite the expression on the left-hand side of the inequality in \eqref{Inductive inequality 1 unbounded} as
\begin{multline}
\label{L 1 L infty A_unbounded}
\Bigg\|\int_{\Lambda^{\mathcal{V}_{2,\ell+1} \cap \mathcal{V}^*(\mathfrak{P}_\ell)}} \dd \mathbf{y}_{\mathcal{V}_{2,\ell+1} \cap \mathcal{V}^*(\mathfrak{P}_\ell)} \,
\int_{\Lambda^{\mathcal{V}_2(\mathfrak{P}_\ell)}} \dd \mathbf{y}_{\mathcal{V}_2(\mathfrak{P}_\ell)} \,\prod_{e \in \mathfrak{P}_\ell} \mathcal{J}_{\tau, e}(\mathbf{y}_e, \mathbf{s})\, \prod_{a \in \mathcal{V}_2(\mathfrak{P}_\ell)} \big|\mathcal{W}^a_{\tau}(y_{a}-y_{a^{*}})\big|
\\
\Bigg\{\int_{\Lambda^{\mathcal{V}_{2,\ell+1} \setminus \mathcal{V}^*(\mathfrak{P}_\ell)}} \dd \mathbf{y}_{\mathcal{V}_{2,\ell+1} \setminus \mathcal{V}^*(\mathfrak{P}_\ell)} \,
\prod_{j=\ell+1}^{k} \Bigg[\prod_{e \in \mathfrak{P}_j} \mathcal{J}_{\tau,e}(\mathbf{y}_e, \mathbf{s})\, \prod_{a \in \mathcal {V}_2(\mathfrak{P}_j)}\big|\mathcal{W}^a_{\tau}(y_a-y_{a^{*}})\big|\Bigg]  \Bigg\}\Bigg\|_{L^{\infty}_{\mathcal{V}_2 \setminus \mathcal{V}_{2,\ell}}L^{2}_{\mathbf{y_1}}}\,,
\\
\;=\; \Bigg\|\Bigg\|\int_{\Lambda^{\mathcal{V}_{2,\ell+1} \cap \mathcal{V}^*(\mathfrak{P}_\ell)}} \dd \mathbf{y}_{\mathcal{V}_{2,\ell+1} \cap \mathcal{V}^*(\mathfrak{P}_\ell)} \,
\int_{\Lambda^{\mathcal{V}_2(\mathfrak{P}_\ell)}} \dd \mathbf{y}_{\mathcal{V}_2(\mathfrak{P}_\ell)} \,\prod_{e \in \mathfrak{P}_\ell} \mathcal{J}_{\tau, e}(\mathbf{y}_e, \mathbf{s})\, \prod_{a \in \mathcal{V}_2(\mathfrak{P}_\ell)}\big|\mathcal{W}^a_{\tau}(y_a-y_{a^{*}})\big|
\\
\Bigg\{\int_{\Lambda^{\mathcal{V}_{2,\ell+1} \setminus \mathcal{V}^*(\mathfrak{P}_\ell)}} \dd \mathbf{y}_{\mathcal{V}_{2,\ell+1} \setminus \mathcal{V}^*(\mathfrak{P}_\ell)} \,
\prod_{j=\ell+1}^{k} \Bigg[\prod_{e \in \mathfrak{P}_j} \mathcal{J}_{\tau,e}(\mathbf{y}_e, \mathbf{s})\, \prod_{a \in \mathcal {V}_2(\mathfrak{P}_j)}\big|\mathcal{W}^a_{\tau}(y_a-y_{a^{*}})\big|\Bigg]  \Bigg\} \Bigg\|_{L^{2}_{\mathbf{y_1}}} \Bigg\|_{L^{\infty}_{\mathcal{V}_2 \setminus \mathcal{V}_{2,\ell}}}\,.
\end{multline}
In \eqref{L 1 L infty A_unbounded}, we first integrate in $\mathbf{y}_{\mathcal{V}_2(\mathfrak{P}_\ell)}$, keeping in mind that the 
$\mathbf{y}_{\mathcal{V}_{2,\ell+1} \setminus \mathcal{V}^{*}(\mathfrak{P}_\ell)}$ integral does not depend on $\mathbf{y}_{\mathcal{V}_2(\mathfrak{P}_\ell)}$. We then apply an $L^{\infty}-L^1$ H\"{o}lder inequality in the 
$\mathbf{y}_{\mathcal{V}_{2,\ell+1} \cap \mathcal{V}^{*}(\mathfrak{P}_{\ell})}$ variable, putting in $L^{\infty}_{\mathbf{y}_{\mathcal{V}_{2,\ell+1} \cap \mathcal{V}^{*}(\mathfrak{P}_{\ell})}}$ the $\mathbf{y}_{\mathcal{V}_2(\mathfrak{P}_\ell)}$ integral, i.e.\ the quantity
\begin{equation*}
I\;\deq\;\int_{\Lambda^{\mathcal{V}_2(\mathfrak{P}_\ell)}} \dd \mathbf{y}_{\mathcal{V}_2(\mathfrak{P}_\ell)} \,\prod_{e \in \mathfrak{P}_\ell} \mathcal{J}_{\tau, e}(\mathbf{y}_e, \mathbf{s})\, \prod_{a \in \mathcal{V}_2(\mathfrak{P}_\ell)}\big|\mathcal{W}^a_{\tau}(y_a-y_{a^{*}})\big|\,,
\end{equation*}
which is a function of $(\mathbf{y_1},\mathbf{y}_{\mathcal{V}^*(\mathfrak{P}_\ell)})$.
For fixed $\mathbf{y_1}$, we estimate from above the factor 
\begin{equation*}
\|I\|_{L^{\infty}_{\mathbf{y}_{\mathcal{V}_{2,\ell+1} \cap \mathcal{V}^{*}(\mathfrak{P}_{\ell})}}}
\end{equation*}
by taking suprema in the remaining variables in  $\mathbf{y}_{\mathcal{V}^*(\mathfrak{P}_\ell)}$, i.e.\ in $\mathbf{y}_{\mathcal{V}^*(\mathfrak{P}_\ell) \setminus \mathcal{V}_{2,\ell+1}}$.
Next we take the $L^2_{\mathbf{y_1}}$ norm, keeping in mind that each variable $y_a, a \in \mathcal{V}_1$ appears in at most one of the composite variables $\mathbf{y}_{e}, e \in \mathfrak{P}_j, \ell \leq j \leq k$.
Taking suprema in all of the remaining variables, we conclude that \eqref{L 1 L infty A_unbounded} is
\begin{align}
\notag
&\;\leq\;
\Bigg\|\prod_{e \in \mathfrak{P}_\ell} \mathcal{J}_{\tau,e}(\mathbf{y}_e,\mathbf{s})\,\prod_{a \in \mathcal{V}_2(\mathfrak{P}_\ell)}\big|\mathcal{W}^a_{\tau}(y_a-y_{a^{*}})\big|\Bigg\|_{L^{2}_{\mathbf{y_1}} L^{\infty}_{\mathcal{V}^*(\mathfrak{P}_\ell)} L^1_{\mathcal{V}_2(\mathfrak{P}_\ell)}}
\\
\label{L1 Linfty_unbounded}
&\times 
\Bigg\|\int_{\Lambda^{\mathcal{V}_{2,\ell+1}}} \dd \mathbf{y}_{\mathcal{V}_{2,\ell+1}}\,\prod_{j=\ell+1}^{k} \Bigg[\prod_{e \in \mathfrak{P}_j} \mathcal{J}_{\tau, e}(\mathbf{y}_e, \mathbf{s})\, \prod_{a \in \mathcal{V}_2(\mathfrak{P}_j)} \big|\mathcal{W}^a_{\tau}(y_a-y_{a^{*}})\big|\Bigg] \Bigg\|_{L^{\infty}_{\mathcal{V}_2 \setminus \mathcal{V}_{2,\ell+1}}L^{2}_{\mathbf{y_1}}}\,.
\end{align}
From \eqref{L1 Linfty_unbounded}, we obtain that the claim of the proposition indeed follows from  \eqref{Inductive inequality 2 unbounded}.

Let us reformulate \eqref{Inductive inequality 2 unbounded}. For $\mathfrak{P} \in \conn (\mathcal{E})$, we define
\begin{equation}
\label{I(P)}
\mathfrak{I}(\mathfrak {P}) \;\deq\; \int \dd \mathbf{y}_{\mathcal{V}_2(\mathfrak{P})} \, \prod_{e \in \mathfrak{P}} \mathcal{J}_{\tau, e}(\mathbf{y}_e, \mathbf{s})\, \prod_{a \in \mathcal{V}_2(\mathfrak{P})} \big|\mathcal{W}_\tau^{a}(y_{a}-y_{a^*})\big|\,.
\end{equation}
Note that $\mathfrak{I}(\mathfrak{P})$ is a function of $(\mathbf{y_1}, \mathbf{y}_{\mathcal{V}^{*}(\mathfrak{P})})$. Furthermore, \eqref{Inductive inequality 2 unbounded} is equivalent to showing that
\begin{equation}
\label{Inductive inequality 2 unbounded_B}
\|\mathfrak{I}(\mathfrak{P})\|_{L^{2}_{\mathbf{y_1}} L^{\infty}_{\mathcal{V}^{*}(\mathfrak{P})}}\;\leq\;C_0^{\,|\mathcal {V}(\mathfrak{P})|}
\,\,\big(1+\|w\|_{L^p(\Lambda)}\big)^{\,|\mathcal{V}_2(\mathfrak{P})|}\,,
\end{equation}
when $\mathfrak{P}=\mathfrak{P}_{\ell}$. We now prove \eqref{Inductive inequality 2 unbounded_B} for $\mathfrak{P} \in \conn (\mathcal{E})$ by induction on $n \deq |\mathcal{V}(\mathfrak{P})|$. In doing so, we need to keep track of the time carried by each edge of $\mathfrak{P}$ in the sense that we precisely define now. 

Let us first assume that $\mathfrak{P}$ is a closed path in the sense of Definition \ref{Open_and_Closed_paths_unbounded} above. We list its edges as $e_1,e_2,\ldots,e_{n}$, where the edges $e_{j}$ and $e_{j+1}$ are incident at vertex $a_j \in \mathcal{V}(\mathfrak{P})$. 
For an example, see Figure \ref{Closed_Cycle_Example_Graph}.
\begin{figure}[!ht]
\begin{center}
\includegraphics[scale=0.5]{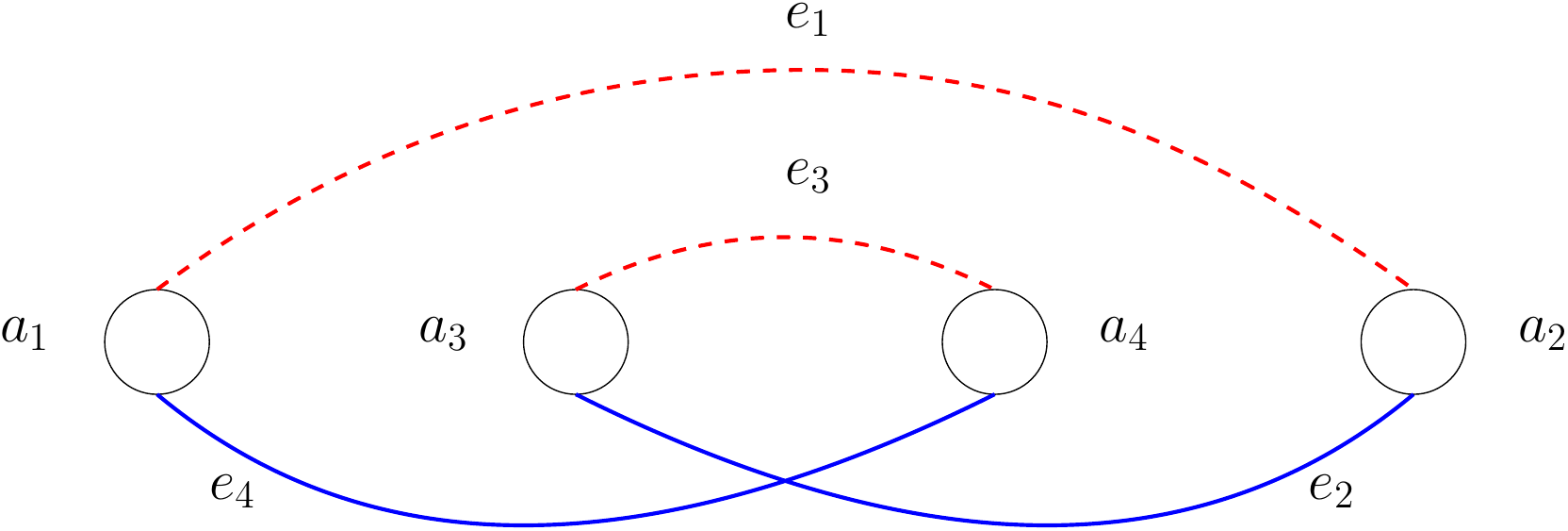}
\end{center}
\caption{This is an example of a closed path $\mathfrak{P}$ of size $4$. We order the vertices in $\mathcal{V}_2$ from left to right. The edges with colour $\sigma=+1$ are depicted by the full blue lines, whereas edges with colour $\sigma=-1$ are depicted by red dashed lines.
\label{Closed_Cycle_Example_Graph}
}
\end{figure}

Here, and in the sequel, we consider all indices modulo $n$. Given $j \in \{1,\ldots,n\}$, the time carried by the edge $e_j$ is given by
\begin{align}
\zeta_j \;\deq\;
\label{zeta_unbounded_j}
\begin{cases}
\sigma(e_1)\,(s_{a_j}-s_{a_{j+1}}) &\mbox{if }a_1<a_2\\
-\sigma(e_1)\,(s_{a_j}-s_{a_{j+1}}) &\mbox{otherwise,}
\end{cases}
\end{align}
where $\sigma$ is given by Definition \ref{def_collapsed_graph_unbounded} (iii). Note that $\zeta_j$ is an intrinsically defined quantity that one can read off from \eqref{J_e_unbounded} and \eqref{I_representation_unbounded}. By \eqref{zeta_unbounded_j}, we obtain
\begin{equation}
\label{path_sum_1_unbounded}
\sum_{j=1}^{n} \zeta_j \;=\; \pm \sum_{j=1}^{n} \sigma(e_1)\,(s_{a_j}-s_{a_{j+1}})\;=\;0\,,
\end{equation}
and for all $i \in \{1,\ldots,n\}, k \in \{1,\ldots,n-1\}$
\begin{equation}
\label{path_sum_2_unbounded}
\sum_{j=i}^{i+k} \zeta_j \;=\; \pm \sum_{j=i}^{i+k}\sigma(e_1)\,(s_{a_j}-s_{a_{j+1}}) \;=\; \pm \sigma(e_1)\,(s_{a_i}-s_{a_{i+k+1}}) \in (-1,1)\,.
\end{equation}
In the above calculations, we used the assumption that $\mathbf{t} \in \mathfrak{A}(m)$.
Analogous results to \eqref{path_sum_1_unbounded}-\eqref{path_sum_2_unbounded} hold for open paths $\mathfrak{P}$ after appropriate modifications.

\subsubsection*{Base of induction: $n=1$ (for $d=1$) and $n=2$ (for $d=2,3$)}

If $n=1$, then $\mathfrak{P}$ is a loop based at some $a_1 \in \mathcal{V}_2$.
Due to the Wick ordering, we note that this situation occurs only when $d=1$. In this case, $\zeta_1=0$, i.e.\ there is no time-evolution applied to the quantum Green function. See Figure \ref{Induction_Graph_1} below.

\begin{figure}[!ht]
\begin{center}
\includegraphics[scale=0.5]{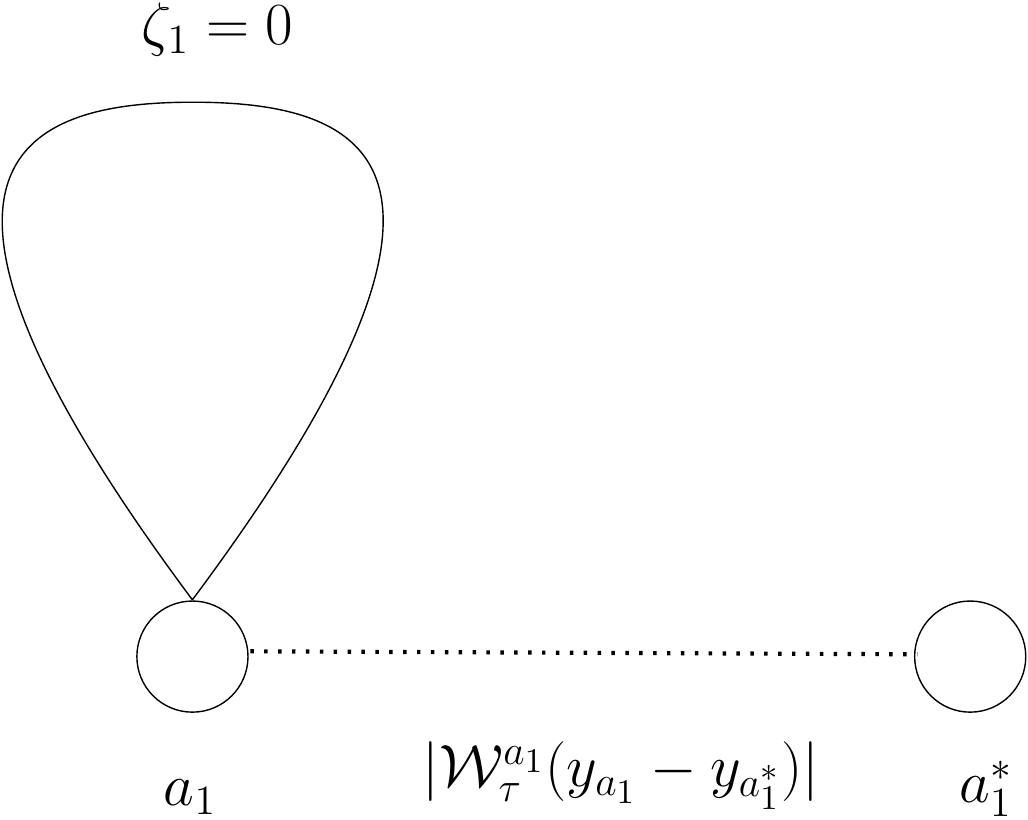}
\end{center}
\caption{In the sequel, we no longer draw the vertices in order corresponding to $\leq$ . The full lines depict the time-evolved Green functions. On these edges, we write the associated time $\zeta$. The dotted lines depict the interactions through the factors of $|\mathcal{W}_{\tau}^{a}|$.
\label{Induction_Graph_1}}
\end{figure}

By using the fact that $a_1^* \neq a_1$ and H\"{o}lder's inequality in $y_{a_1}$, we hence have
\begin{align}
\notag
&\mathfrak{I}(\mathfrak{P}) \;=\; \int \dd y_{a_1}\,\big|\mathcal{W}_\tau^{a_1} (y_{a_1}-y_{a_1^*})\big| \, G_{\tau}(y_{a_1};y_{a_1}) \;\leq\; \|\mathcal{W}_\tau^{a_1}\|_{L^1(\Lambda)}\,
\|G_\tau\|_{L^\infty(\Lambda^2)}
\\
\label{Base_n=1}
&\;\leq\;\|\mathcal{W}_\tau^{a_1}\|_{L^p(\Lambda)}\,
\|G_\tau\|_{L^\infty(\Lambda^2)}\;\leq\;C+C\|w_{\tau}\|_{L^p(\Lambda)} \;\leq\; C\big(1+ \|w\|_{L^p(\Lambda)}\big)\,.
\end{align}
Above, we used \eqref{cal W bound_unbounded} and Proposition \ref{G_{tau,0}_bound} (i) when $d=1$. We also used that $L^p(\Lambda) \hookrightarrow L^1(\Lambda)$ by compactness of $\Lambda$. We note that \eqref{Base_n=1} is an acceptable upper bound.


If $n=2$, we consider three cases.
\begin{itemize}
\item[(1)] $\mathfrak{P}$ is a closed path connecting $a_1,a_2 \in \mathcal{V}_2$ satisfying $a_2=a_1^{*}$. See Figure \ref{Induction_Graph_2}.

\begin{figure}[!ht]
\begin{center}
\includegraphics[scale=0.5]{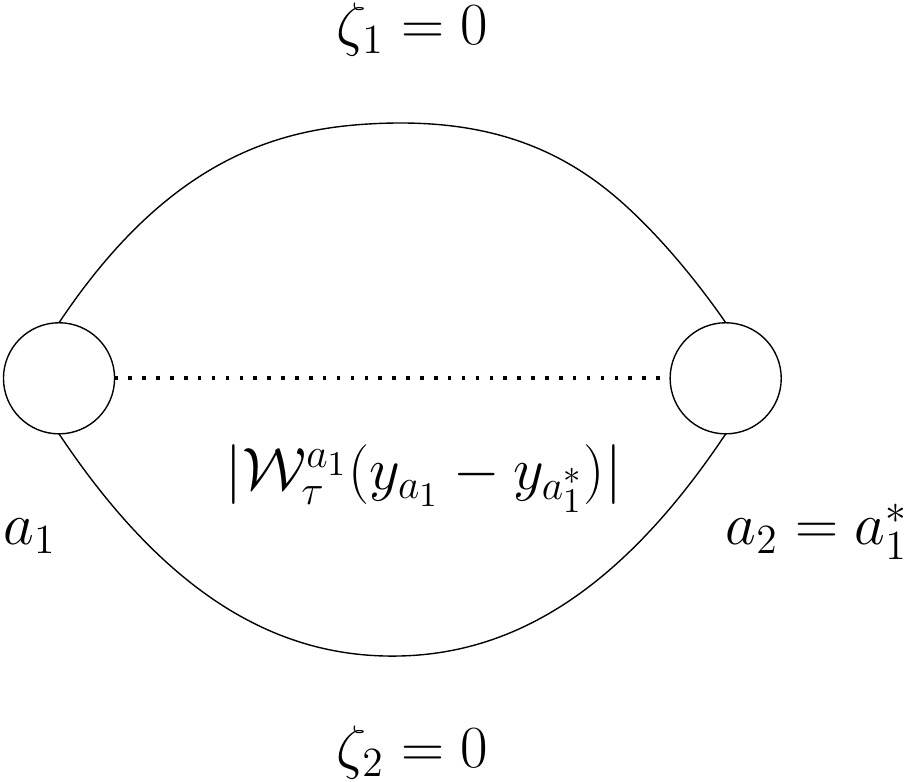}
\end{center}
\caption{
\label{Induction_Graph_2}}
\end{figure}

In this case, we have $\zeta_1=\zeta_2=0$. We can assume, without loss of generality, that $\vartheta_{a_1}=1$. Hence, \eqref{cal W_unbounded} yields that $\mathcal{W}_{\tau}^{a_1}=w_{\tau}$ and $\mathcal{W}_{\tau}^{a_2}=1$. So, we obtain 
\begin{multline}
\label{Base_n=2_(1)}
\mathfrak{I}(\mathfrak{P})\;=\;\int \dd y_{a_1}\,\dd y_{a_2}\,|w_{\tau}(y_{a_1}-y_{a_2})|\,G_{\tau}(y_{a_1};y_{a_2})\,G_{\tau}(y_{a_2};y_{a_1})
\\
+\frac{1}{\tau} \,\int \dd y_{a_1}\,|w_{\tau}(0)|\,G_{\tau}(y_{a_1};y_{a_1})
\;\leq\;\|w_{\tau}\|_{L^p(\Lambda)}\,\|G_\tau\|_{L^{2p'}(\Lambda^2)}^2+\frac{1}{\tau} \|w_{\tau}\|_{L^{\infty}(\Lambda)} \,\|G_{\tau}\|_{L^{\infty}(\Lambda^2)}
\\
\;\leq\;C\|w\|_{L^p(\Lambda)}+\tau^{-1+\beta}\,\|G_{\tau}\|_{L^{\infty}(\Lambda^2)}
\,.
\end{multline}
In order to get the first term on the last line of \eqref{Base_n=2_(1)}, we used \eqref{Q_positivity} with $t=0$, H\"{o}lder's inequality in $y_{a_1},y_{a_2}$, \eqref{cal W bound_unbounded} (with $\mathcal{W}_{\tau}^a=w_\tau$), and Proposition \ref{G_{tau,0}_bound} (i). The application of the latter is justified by Lemma \ref{QP_embedding}. For the second term on the last line of \eqref{Base_n=2_(1)}, we used \eqref{cal W bound_unbounded2} and Proposition \ref{Q^12_bounds} (i) with $t=0$ to deduce that this quantity is
\begin{equation}
\label{Base_n=2_(1)_2}
\;\leq\;
\begin{cases}
C \tau^{-1+\beta} \,\,&\mbox{if}\,\,\,d=1
\\
C \tau^{-1+\beta}\,\log \tau
\,\,&\mbox{if}\,\,\,d=2
\\
C \tau^{-1/2+\beta}
\,\,&\mbox{if}\,\,\,d=3\,.
\end{cases}
\end{equation}
Recalling \eqref{B_d}, we conclude that \eqref{Base_n=2_(1)_2} is an acceptable upper bound. 

\item[(2)] $\mathfrak{P}$ is a closed path connecting $a_1,a_2 \in \mathcal{V}_2$ satisfying $a_2 \neq a_1^{*}$. See Figure \ref{Induction_Graph_3} below.

We note that in this case we have $\zeta_1=\pm \sigma(e_1)\,(s_{a_1}-s_{a_2}) \in (-1,1) \setminus \{0\}$. Therefore,
\begin{equation}
\label{Induction_Base_Case_2}
\mathfrak{I} (\mathfrak{P})\;=\;\int \dd y_{a_1}\,\dd y_{a_2}\,\big|\mathcal{W}_\tau^{a_1}(y_{a_1}-y_{a_1^*})\big|\,
\big|\mathcal{W}_\tau^{a_2}(y_{a_2}-y_{a_2^*})\big|\,Q_{\tau,\zeta_1}(y_{a_1};y_{a_2})\,Q_{\tau,-\zeta_1}(y_{a_2};y_{a_1})\,.
\end{equation}
We now split the factors $Q_{\tau,\zeta_1}(y_{a_1};y_{a_2})$ and $Q_{\tau,-\zeta_1}(y_{a_2};y_{a_2})$ in \eqref{Induction_Base_Case_2} according to \eqref{Q_splitting} and rewrite $\mathfrak{I} (\mathfrak{P})$ as
\begin{align}
\notag
&\int \dd y_{a_1}\,\dd y_{a_2}\,\big|\mathcal{W}_\tau^{a_1}(y_{a_1}-y_{a_1^*})\big|\,\big|\mathcal{W}_\tau^{a_2}(y_{a_2}-y_{a_2^*})\big|\,Q_{\tau,\zeta_1}^{(1)}(y_{a_1};y_{a_2})\,Q_{\tau,-\zeta_1}^{(1)}(y_{a_2};y_{a_1})
\\
\notag
+\frac{1}{\tau} &\int \dd y_{a_1}\,\dd y_{a_2}\,\big|\mathcal{W}_\tau^{a_1}(y_{a_1}-y_{a_1^*})\big|\,\big|\mathcal{W}_\tau^{a_2}(y_{a_2}-y_{a_2^*})\big|\,Q_{\tau,\zeta_1}^{(2)}(y_{a_1};y_{a_2})\,Q_{\tau,-\zeta_1}^{(1)}(y_{a_2};y_{a_1})
\\
\notag
+\frac{1}{\tau} &\int \dd y_{a_1}\,\dd y_{a_2}\,\big|\mathcal{W}_\tau^{a_1}(y_{a_1}-y_{a_1^*})\big|\,\big|\mathcal{W}_\tau^{a_2}(y_{a_2}-y_{a_2^*})\big|\,Q_{\tau,\zeta_1}^{(1)}(y_{a_1};y_{a_2})\,Q_{\tau,-\zeta_1}^{(2)}(y_{a_2};y_{a_1})
\\
\label{Induction_Base_Case_2B}
+\frac{1}{\tau^2} &\int \dd y_{a_1}\,\dd y_{a_2}\,\big|\mathcal{W}_\tau^{a_1}(y_{a_1}-y_{a_1^*})\big|\,\big|\mathcal{W}_\tau^{a_2}(y_{a_2}-y_{a_2^*})\big|\,Q_{\tau,\zeta_1}^{(2)}(y_{a_1};y_{a_2})\,Q_{\tau,-\zeta_1}^{(2)}(y_{a_2};y_{a_1})\,.
\end{align}
Note that all the integrands in \eqref{Induction_Base_Case_2B} are nonnegative by \eqref{positivity}.

Using the inequality
\begin{equation*}
Q_{\tau,\zeta_1}^{(1)}(y_{a_1};y_{a_2})\,Q_{\tau,-\zeta_1}^{(1)}(y_{a_2};y_{a_1}) \;\leq\; \frac{1}{2} \Big(\big[Q_{\tau,\zeta_1}^{(1)}(y_{a_1};y_{a_2})\big]^2+\big[Q_{\tau,-\zeta_1}^{(1)}(y_{a_2};y_{a_1})\big]^2\Big)
\end{equation*}
in the first integrand, and applying H\"{o}lder's inequality in $y_{a_1}$ and $y_{a_2}$, we deduce that the first term in \eqref{Induction_Base_Case_2B} is
\begin{align}
\label{Induction_Base_Case_2B_2}
&\;\leq\;\frac{1}{2}\|\mathcal{W}_\tau^{a_1}\|_{L^p(\Lambda)}\,\|\mathcal{W}_\tau^{a_2}\|_{L^p(\Lambda)}\, \Big(\|Q_{\tau,\zeta_1}^{(1)}\|_{L^{2p'}(\Lambda^2)}^2+\|Q_{\tau,-\zeta_1}^{(1)}\|_{L^{2p'}(\Lambda^2)}^2\Big)
\\
\notag
&\;\leq\;C \big(1+\|w\|_{L^p(\Lambda)}\big)^2\,,
\end{align}
which is an acceptable upper bound.
In the above inequality, we used \eqref{cal W bound_unbounded} and Proposition \ref{G_{tau,0}_bound} (ii), which is justified by Lemma \ref{QP_embedding}.

By applying \eqref{positivity} and Proposition \ref{Q^12_bounds} (ii), as well as H\"{o}lder's inequality, the second term in \eqref{Induction_Base_Case_2B} is 

\begin{multline*}
\;\leq\;\frac{1}{\tau} \,\|\mathcal{W}^{a_2}_{\tau}\|_{L^\infty(\Lambda)}\,\|Q_{\tau,-\zeta_1}^{(1)}\|_{L^\infty(\Lambda^2)}\, \int \dd y_{a_1}\,\big|\mathcal{W}^{a_1}_{\tau}(y_{a_1}-y_{a_1^*})\big|\,\int \dd y_{a_2}\,Q_{\tau,\zeta_1}^{(2)}(y_{a_1};y_{a_2})
\\
\;\leq\; \frac{1}{\tau} \,\|\mathcal{W}^{a_2}_{\tau}\|_{L^\infty(\Lambda)}\,\|Q_{\tau,-\zeta_1}^{(1)}\|_{L^\infty(\Lambda^2)}\,\|\mathcal{W}_\tau^{a_1}\|_{L^p(\Lambda)}\,.
\end{multline*}
By \eqref{cal W bound_unbounded2}, Proposition \ref{Q^12_bounds} (i), and \eqref{cal W bound_unbounded}, this expression is
\begin{equation}
\label{Induction_Base_Case_2B_l.o.t.1}
\;\leq\;
\begin{cases}
C \tau^{-1+\beta}\,\big(1+\|w\|_{L^p(\Lambda)}\big)\,\,&\mbox{if}\,\,\,d=1
\\
C \tau^{-1+\beta}\,\log \tau\,\big(1+\|w\|_{L^p(\Lambda)}\big) \,\,&\mbox{if}\,\,\,d=2
\\
C \tau^{-1/2+\beta}\,\big(1+\|w\|_{L^p(\Lambda)}\big)\,&\mbox{if}\,\,\,d=3\,.
\end{cases}
\end{equation}
In particular, by \eqref{B_d}, this is an acceptable bound. 
We estimate the third term in \eqref{Induction_Base_Case_2B} by an analogous argument. 

For the fourth term in \eqref{Induction_Base_Case_2B}, we consider two possible cases for the values of the time label $\zeta_1 \in (-1,0) \cup (0,1)$.

\begin{itemize}
\item[(a)] $\{\zeta_1\} \geq \frac{1}{2}$, i.e.\ $\zeta_1 \in [-\frac{1}{2},0) \cup [\frac{1}{2},1)$.

In this case, the fourth term in \eqref{Induction_Base_Case_2B} is
\begin{align}
\notag
&\;\leq\;\frac{1}{\tau^2}\,\|\mathcal{W}^{a_2}_{\tau}\|_{L^\infty(\Lambda)}\,\|Q_{\tau,\zeta_1}^{(2)}\|_{L^\infty(\Lambda^2)} \,\int \dd y_{a_1}\, \big|\mathcal{W}^{a_1}_{\tau}(y_{a_1}-y_{a_1^*})\big|\,\int \dd y_{a_2}\,Q_{\tau,-\zeta_1}^{(2)}(y_{a_2};y_{a_1})
\\
\label{Induction_Base_Case_2B_Term4}
&\;\leq\;\frac{1}{\tau^2}\,\|\mathcal{W}^{a_1}_{\tau}\|_{L^p(\Lambda)}\,\|\mathcal{W}^{a_2}_{\tau}\|_{L^\infty(\Lambda)}\,\|Q_{\tau,\zeta_1}^{(2)}\|_{L^\infty(\Lambda^2)}\,.
\end{align}
In the last line, we used Proposition \ref{Q^12_bounds} (ii).
By H\"{o}lder's inequality, \eqref{cal W bound_unbounded2}, \eqref{cal W bound_unbounded}, Proposition \ref{Q^12_bounds} (iii), and \eqref{B_d},  the expression in \eqref{Induction_Base_Case_2B_Term4} is 
\begin{equation}
\label{Induction_Base_Case_2B_Term4_l.o.t.}
\;\leq\; C\tau^{-2+\beta+d/2}\,\,\big(1+\|w\|_{L^p(\Lambda)}\big)\;\leq\; C\,\big(1+\|w\|_{L^p(\Lambda)}\big)\,.
\end{equation}
\item[(b)] $\{\zeta_1\} < \frac{1}{2}$, i.e.\ $\zeta_1 \in (-1,-\frac{1}{2}) \cup (0,\frac{1}{2})$.

In this case, we have $\{-\zeta_1\}>\frac{1}{2}$. We estimate the fourth term in \eqref{Induction_Base_Case_2B} as
\begin{align}
\notag
&\;\leq\; \frac{1}{\tau^2}\,\|\mathcal{W}_{\tau}^{a_2}\|_{L^{\infty}(\Lambda)}\,\|Q_{\tau,-\zeta_1}^{(2)}\|_{L^{\infty}(\Lambda^2)}\,
\\
\label{Induction_Base_Case_2B_Term4_l.o.t.2}
& \times \int \dd y_{a_1}\, 
\big|\mathcal{W}_\tau^{a_1}(y_{a_1}-y_{a_1^{*}})\big| 
\int \dd y_{a_2}\,Q_{\tau,\zeta_1}^{(2)}(y_{a_1};y_{a_2})\,,
\end{align}
\end{itemize}
and deduce the bound as in (i) by replacing $\zeta_1$ by $-\zeta_1$.

\begin{figure}[!ht]
\begin{center}
\includegraphics[scale=0.5]{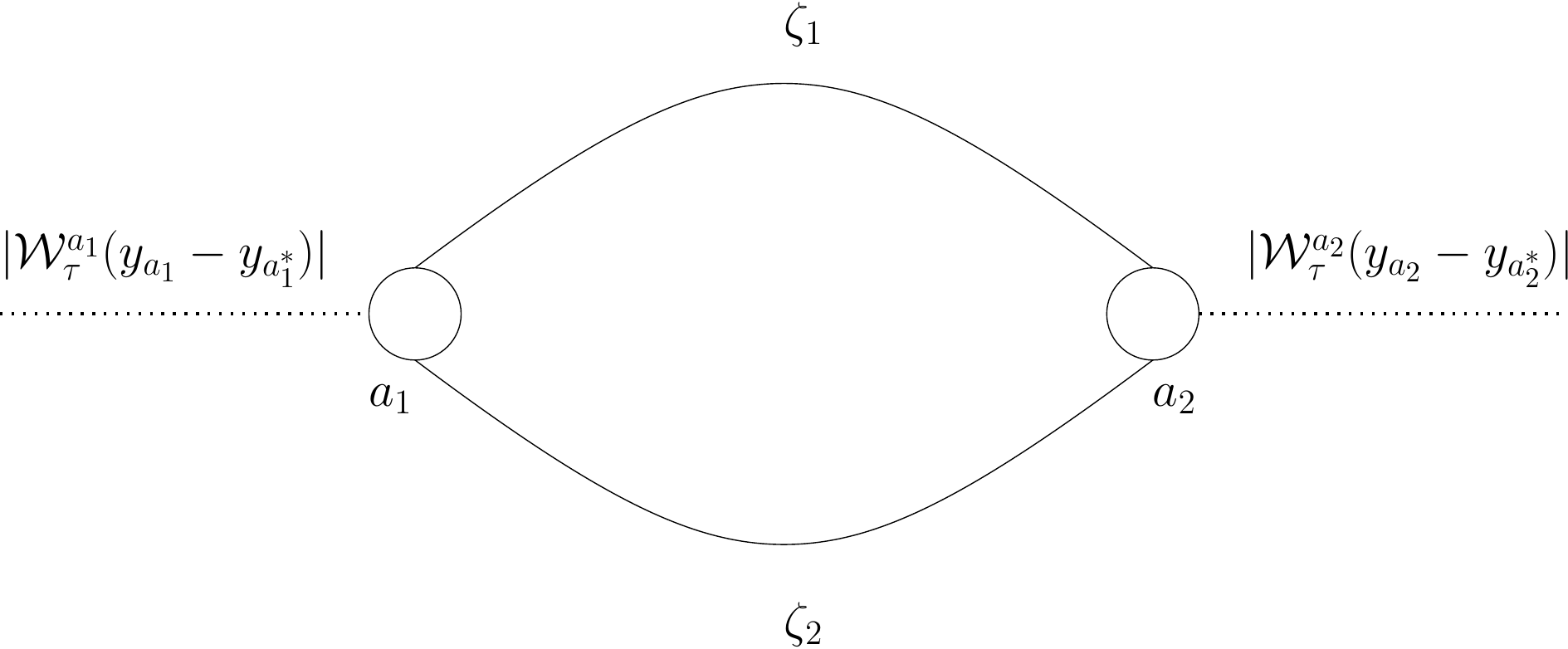}
\end{center}
\caption{
\label{Induction_Graph_3}}
\end{figure}

\item[(3)] $\mathfrak{P}$ is an open path with vertices $b_1,b_2 \in \mathcal{V}_1$.
In this case, we have
$\mathfrak{I} (\mathfrak{P})=G_\tau$ and we note that
\begin{equation}
\label{Case_3_Induction_Base}
\|\mathfrak{I} (\mathfrak{P})\|_{L^2_{\mathbf{y_1}}L^{\infty}_{\mathcal{V}^{*}(\mathfrak{P})}}\;=\;\|G_\tau\|_{L^2(\Lambda^2)}\;\leq\;C\,,
\end{equation}
uniformly in $\tau$ by
Proposition \ref{G_{tau,0}_bound} (i).
\end{itemize}
This completes the base step. Hence, \eqref{Inductive inequality 2 unbounded_B} holds when $n \leq 2$.

\subsubsection*{Inductive step}
Let $n \in \mathbb{N}, n \geq 3$ be given. Suppose that \eqref{Inductive inequality 2 unbounded_B} holds for all $\mathfrak{P} \in \conn(\mathcal{E})$ with $|\mathcal{V}(\mathfrak{P})| \leq n-1$. We show that this implies that \eqref{Inductive inequality 2 unbounded_B} holds when $|\mathcal{V}(\mathfrak{P})|=n$. The inductive step proceeds by integrating out an appropriately chosen vertex in $\mathcal{V}_2(\mathfrak{P})$.
The vertex $a \in \mathcal{V}_2(\mathfrak{P})$ which we integrate out is chosen such that it satisfies the property that 
\begin{equation}
\label{integration_vertex_inductive_step}
a^{*} \in \mathcal{V}_2(\mathfrak{P}) \implies \vartheta_a=1\,.
\end{equation}
Note that such a vertex always exists. Namely, if $c \in \mathcal{V}_2(\mathfrak{P})$ does not satisfy this property, then $c^{*} \in \mathcal{V}_2(\mathfrak{P})$ and it satisfies $\vartheta_{c^{*}}=1$. Therefore, $c^{*} \in \mathcal{V}_2$ satisfies the wanted property.

Having found such an $a \in \mathcal{V}_2(\mathfrak{P})$, we write $c_2=a$ and note that there exist $c_1,c_3 \in \mathcal{V}(\mathfrak{P})$ such that $\{c_1,c_2\}, \{c_2,c_3\} \in \mathfrak{P}$.
In what follows, we consider the case when $c_1=a_1,c_2=a_2,c_3=a_3$. The general case follows by analogous arguments when we appropriately shift the indices\footnote{In the case when $\mathfrak{P}$ is a closed path, by cyclically relabelling its vertices, we can assume without loss of generality that $a=a_2$.}.

By choice of $a \equiv a_2$ and \eqref{cal W_unbounded}, we have that the only dependence on the variable $y_a \equiv y_{a_2}$ in the integrand in \eqref{I(P)} is through an expression which is
\begin{align}
\notag
\;\leq\; 
\big|\mathcal{W}_{\tau}^{a_2}(y_{a_2}-y_{a_2^*})\big|\,&\bigg[Q_{\tau,\zeta_1}(y_{a_1};y_{a_2})+\frac{\mathbf{1}_{\zeta_1=0}}{\tau} \delta(y_{a_1}-y_{a_2})\bigg]\,
\\
\label{y_a_dependence_unbounded}
\times &\bigg[
Q_{\tau,\zeta_2}(y_{a_2};y_{a_3})+\frac{\mathbf{1}_{\zeta_2=0}}{\tau}\delta(y_{a_2}-y_{a_3})\bigg]
\,.
\end{align}
Since $\mathbf{t} \in \mathfrak{A}(m)$, it follows that it is not possible to have $\zeta_1=0$ and $\zeta_2=0$ at the same time. In other words, at most one of the delta functions in \eqref{y_a_dependence_unbounded} can appear.

Before proceeding, let us first estimate the contribution to the $y_{a_2}$ integral of each of the terms coming from a single delta function in \eqref{y_a_dependence_unbounded}. We have
\begin{align}
\notag
&\frac{\mathbf{1}_{\zeta_1=0}}{\tau}\,\int\dd y_{a_2}\,\big|\mathcal{W}_{\tau}^{a_2}(y_{a_2}-y_{a_2^*})\big|\,\delta(y_{a_1}-y_{a_2})\,Q_{\tau,\zeta_2}(y_{a_2};y_{a_3}) 
\\
\label{Induction_Step_delta_1_unbounded}
&\;=\;\frac{\mathbf{1}_{\zeta_1=0}}{\tau}\,\big|\mathcal{W}_\tau^{a_2}(y_{a_1}-y_{a_2^*})\big|\, Q_{\tau,\zeta_2}(y_{a_1};y_{a_3}) 
\;\leq\; C\,\tau^{\beta-1}\,\mathbf{1}_{\zeta_1=0}\,Q_{\tau,\zeta_1+\zeta_2} (y_{a_1};y_{a_3})\,.
\end{align}
In the last line we used \eqref{cal W bound_unbounded2}.
By analogous arguments, we have
\begin{equation}
\label{Induction_Step_delta_2_unbounded}
\frac{\mathbf{1}_{\zeta_2=0}}{\tau}\,\int \dd y_{a_2}\,\big|\mathcal{W}_\tau^{a_2}(y_{a_2}-y_{a_2^{*}})\big|\,Q_{\tau,\zeta_1}(y_{a_1};y_{a_2}) \, \delta(y_{a_2}-y_{a_3})
\;\leq\; C\,\tau^{\beta-1}\,\mathbf{1}_{\zeta_2=0}\,Q_{\tau,\zeta_1+\zeta_2} (y_{a_1};y_{a_3})\,.
\end{equation}
In what follows, we prove that
\begin{multline}
\label{Induction_Step_unbounded}
\int \dd y_{a_2}\,\big|\mathcal{W}_\tau^{a_2}(y_{a_2}-y_{a_2^{*}})\big| \,Q_{\tau,\zeta_1}(y_{a_1};y_{a_2}) \, Q_{\tau,\zeta_2}(y_{a_2};y_{a_3}) 
\\
\;\leq\;C\,\big(1+\|w\|_{L^p(\Lambda)}\big)\, \Big[1+ \,\mathbf{1}_{\zeta_1+\zeta_2 \neq 0}\,Q_{\tau,\zeta_1+\zeta_2} (y_{a_1};y_{a_3})\Big]\,.
\end{multline}
Assuming \eqref{Induction_Step_unbounded} for the moment and recalling \eqref{I(P)}, we deduce by \eqref{Induction_Step_delta_1_unbounded}--\eqref{Induction_Step_unbounded}, \eqref{B_d}, and Proposition \ref{Q^12_bounds} (iv) that 
\begin{equation}
\label{I(P)_induction_unbounded}
\mathfrak{I}(\mathfrak{P})\;\leq\;C_0\,\big(1+\|w\|_{L^p(\Lambda)}\big)\, \mathfrak{I}(\mathfrak{\hat{P}})\,,
\end{equation}
where $\hat{\mathfrak{P}}$ denotes the (open or closed) path that we obtain from $\mathfrak{P}$ after deleting the vertex $a_2$ and by replacing its edges $\{a_1,a_2\}$ (carrying time $\zeta_1$) and $\{a_2,a_3\}$ (carrying time $\zeta_2$) with the edge $\{a_1,a_3\}$ which now carries time $\zeta_1+\zeta_2$. See Figure \ref{Graph_1} below.
The times carried by the edges of the new path $\mathfrak{\hat{P}}$ still satisfy \eqref{path_sum_1_unbounded} and \eqref{path_sum_2_unbounded}.  We note that \eqref{I(P)_induction_unbounded} together with the induction base allows us to deduce \eqref{Inductive inequality 2 unbounded_B}.

\begin{figure}[!ht]
\begin{center}
\includegraphics[scale=0.5]{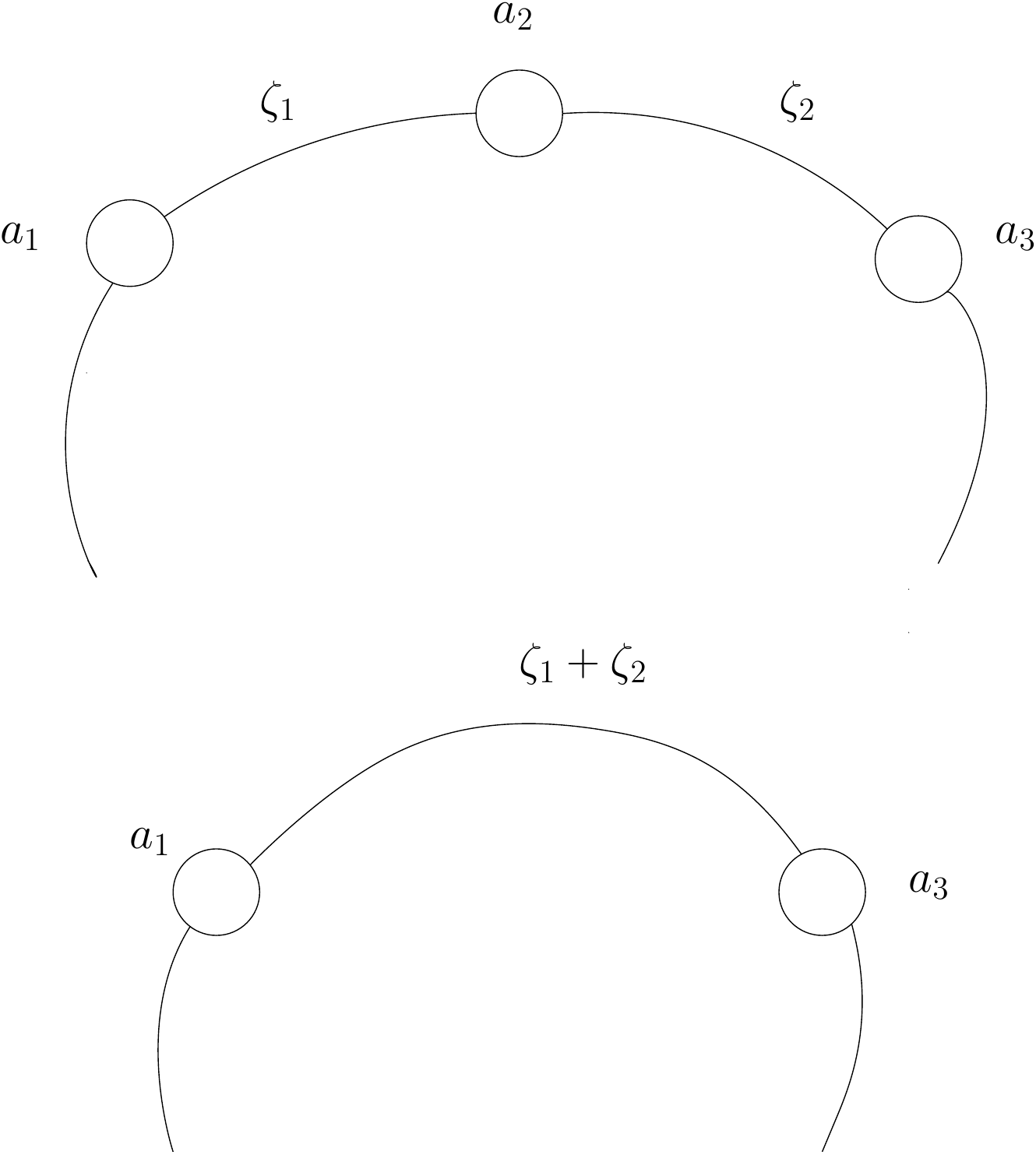}
\end{center}
\caption{The top depicts path $\mathfrak{P}$ and the bottom depicts path $\hat{\mathfrak{P}}$ obtained after having integrated out the vertex $a_2$. Here, we omitted the dotted lines corresponding to the interactions for clarity.
\label{Graph_1}}
\end{figure}

Let us now prove \eqref{Induction_Step_unbounded}. We apply \eqref{Q_splitting} and rewrite the left-hand side of \eqref{Induction_Step_unbounded} as
\begin{align}
\notag
&\int \dd y_{a_2}\,\big|\mathcal{W}^{a_2}_{\tau}(y_{a_2}-y_{a_2^*})\big|\,Q_{\tau,\zeta_1}^{(1)}(y_{a_1};y_{a_2})\,Q_{\tau,\zeta_2}^{(1)}(y_{a_2};y_{a_3})
\\
\notag
+\frac{1}{\tau}&\,\int \dd y_{a_2}\,\big|\mathcal{W}^{a_2}_{\tau}(y_{a_2}-y_{a_2^*})\big|\,Q_{\tau,\zeta_1}^{(2)}(y_{a_1};y_{a_2})\,Q_{\tau,\zeta_2}^{(1)}(y_{a_2};y_{a_3})
\\
\notag
+\frac{1}{\tau}&\,\int \dd y_{a_2}\,\big|\mathcal{W}^{a_2}_{\tau}(y_{a_2}-y_{a_2^*})\big|\,Q_{\tau,\zeta_1}^{(1)}(y_{a_1};y_{a_2})\,Q_{\tau,\zeta_2}^{(2)}(y_{a_2};y_{a_3})
\\
\label{Induction_Step_unbounded_2}
+\frac{1}{\tau^2}&\,\int \dd y_{a_2}\,\big|\mathcal{W}^{a_2}_{\tau}(y_{a_2}-y_{a_2^*})\big|\,Q_{\tau,\zeta_1}^{(2)}(y_{a_1};y_{a_2})\,Q_{\tau,\zeta_2}^{(2)}(y_{a_2};y_{a_3})\,.
\end{align}
We now bound each of the four terms in \eqref{Induction_Step_unbounded_2}. In estimating each term, we first apply H\"{o}lder's inequality in $y_{a_2}$.

The first term in \eqref{Induction_Step_unbounded_2} is 
\begin{equation}
\label{Term_1}
\;\leq\; \|\mathcal{W}_{\tau}^{a_2}\|_{L^p(\Lambda)}\,\|Q_{\tau,\zeta_1}^{(1)}(y_{a_1};\cdot)\|_{L^{2p'}(\Lambda)} \,
\|Q_{\tau,\zeta_2}^{(1)}(\cdot\,;y_{a_3})\|_{L^{2p'}(\Lambda)} \;\leq\;C \big(1+\|w\|_{L^p(\Lambda)}\big)\,,
\end{equation}
by using \eqref{cal W bound_unbounded} and Proposition \ref{G_{tau,0}_bound} (ii). The application of the latter is justified by Lemma \ref{QP_embedding}.

The second term in \eqref{Induction_Step_unbounded_2} is
\begin{equation*}
\;\leq\;\frac{1}{\tau}\,\|\mathcal{W}_{\tau}^{a_2}\|_{L^\infty(\Lambda)}\,\|Q^{(1)}_{\tau,\zeta_2}\|_{L^{\infty}(\Lambda^2)}\,\int \dd y_{a_2} \,Q_{\tau,\zeta_1}^{(2)}(y_{a_2};y_{a_3})\,,
\end{equation*}
which by \eqref{cal W bound_unbounded2} and Proposition \ref{Q^12_bounds} (i)--(ii) is
\begin{equation}
\label{Step_l.o.t._1}
\;\leq\;
\begin{cases}
C \tau^{-1+\beta}\,&\mbox{if}\,\,\,d=1
\\
C \tau^{-1+\beta} \,\log \tau \,\,&\mbox{if}\,\,\,d=2
\\
C \tau^{-1/2+\beta} \,\,&\mbox{if}\,\,\,d=3\,,
\end{cases}
\end{equation}
which in turn is bounded uniformly in $\tau \geq 1$ by using \eqref{B_d}. 
The third term in \eqref{Induction_Step_unbounded_2} is bounded analogously as the second term.

The fourth term in \eqref{Induction_Step_unbounded_2} is
\begin{align}
\notag
&\;\leq\; \frac{1}{\tau^2}\,\|\mathcal{W}_{\tau}^{a_2}\|_{L^\infty(\Lambda)}\,\int \dd y_{a_2}\,
Q^{(2)}_{\tau,\zeta_1}(y_{a_1};y_{a_2})\,Q^{(2)}_{\tau,\zeta_2}(y_{a_2};y_{a_3})
\\
\label{Induction_Step_unbounded_2_4th_term}
&\;\leq\; 
\frac{1}{\tau^{2-\beta}}\,\int \dd y_{a_2}\,
Q^{(2)}_{\tau,\zeta_1}(y_{a_1};y_{a_2})\,Q^{(2)}_{\tau,\zeta_2}(y_{a_2};y_{a_3})\,.
\end{align}
Here, we used \eqref{cal W bound_unbounded2}. Furthermore, by using \eqref{Q^12}, we note that the expression in 
\eqref{Induction_Step_unbounded_2_4th_term} is zero when $\zeta_1=0$ or $\zeta_2=0$.
Therefore, it suffices to consider the case when 
\begin{equation}
\label{zeta_nonzero}
\zeta_1,\zeta_2 \in (-1,0) \cup (0,1)\,.
\end{equation}
We need to consider separately the cases when $\zeta_1+\zeta_2 \neq 0$ and when $\zeta_1+\zeta_2=0$.
\begin{itemize}
\item[(1)] $\zeta_1+\zeta_2 \neq 0$.

By \eqref{Q^12}, \eqref{zeta_nonzero}, and the semigroup property, we can rewrite the expression in \eqref{Induction_Step_unbounded_2_4th_term} as
\begin{equation}
\label{Induction_Step_unbounded_2_4th_term_B}
\frac{C}{\tau^{2-\beta}}\,\int \dd y_{a_2}\,\ee^{-\{\zeta_1\}h/\tau}(y_{a_1};y_{a_2})\,\ee^{-\{\zeta_2\}h/\tau}(y_{a_2};y_{a_3})
\;=\;\frac{C}{\tau^{2-\beta}}\,\ee^{-(\{\zeta_1\}+\{\zeta_2\})h/ \tau}(y_{a_1};y_{a_3})\,.
\end{equation}
We now consider two possibilities.
\begin{itemize}
\item[(a)] $\{\zeta_1\}+\{\zeta_2\} \geq 1$.

In this case, by the proof of Proposition \ref{Q^12_bounds} (iii), the expression in \eqref{Induction_Step_unbounded_2_4th_term_B} is 
\begin{equation}
\label{Step_l.o.t._2}
\;\leq\;C\,\tau^{-2+\beta+d/2}\,,
\end{equation}
which is bounded by using \eqref{B_d}.

\item[(b)] $\{\zeta_1\}+\{\zeta_2\} < 1$.

In this case, we know that $\{\zeta_1\}+\{\zeta_2\}=\{\zeta_1+\zeta_2\}$. Hence, the expression in \eqref{Induction_Step_unbounded_2_4th_term_B} is
\begin{equation*}
\;=\;\frac{C}{\tau^{2-\beta}}\,\ee^{-\{\zeta_1+\zeta_2\}h/ \tau}(y_{a_1};y_{a_3})\,,
\end{equation*}
which by \eqref{Q^12}, followed by \eqref{positivity}, \eqref{B_d}, and \eqref{Q_splitting} is 
\begin{equation}
\label{Step_l.o.t._3}
\;=\;\frac{C}{\tau^{2-\beta}}\,Q_{\tau,\zeta_1+\zeta_2}^{(2)}(y_{a_1};y_{a_3}) \;\leq\; 
C\, \tau^{-1+\beta}\,Q_{\tau,\zeta_1+\zeta_2}(y_{a_1};y_{a_3})\,.
\end{equation}
This is an acceptable bound due to \eqref{B_d}.
Note that the application of \eqref{Q^12} above is justified since $\zeta_1+\zeta_2 \neq 0$.
\end{itemize}
\item[(2)] $\zeta_1+\zeta_2=0$.

By \eqref{zeta_nonzero} we have that $\zeta_2=-\zeta_1 \neq 0$. We now need to consider two possibilities.
\begin{itemize}
\item[(a)]  $\{\zeta_1\} \geq \frac{1}{2}$, i.e.\ $\zeta_1 \in [-\frac{1}{2},0) \cup [\frac{1}{2},1)$.

We bound the fourth term in \eqref{Induction_Step_unbounded_2} as
\begin{equation}
\label{Step_l.o.t._4}
\leq\; \frac{1}{\tau^2} \, \|\mathcal{W}_\tau^{a_2}\|_{L^{\infty}(\Lambda)}\,\|Q_{\tau,\zeta_1}^{(2)}\|_{L^{\infty}(\Lambda^2)}\,\int \dd y_{a_2}\,Q_{\tau,-\zeta_1}^{(2)}(y_{a_2};y_{a_3})\;\leq\; C\,\tau^{-2+\beta+d/2}\,,
\end{equation}
by \eqref{cal W bound_unbounded2} and Proposition \ref{Q^12_bounds} (ii)--(iii). The above quantity is bounded by using \eqref{B_d}.
\item[(b)]   $\{\zeta_1\} < \frac{1}{2}$, i.e.\ $\zeta_1 \in (-1,-\frac{1}{2}) \cup (0,\frac{1}{2})$.

In this case, we have $\{-\zeta_1\}>1/2$, and we bound the fourth term in \eqref{Induction_Step_unbounded_2} as
\begin{equation}
\label{Step_l.o.t._5}
\leq\; \frac{1}{\tau^2} \, \|\mathcal{W}_\tau^{a_2}\|_{L^{\infty}(\Lambda)}\,\|Q_{\tau,-\zeta_1}^{(2)}\|_{L^{\infty}(\Lambda^2)}\,\int \dd y_{a_2}\,Q_{\tau,\zeta_1}^{(2)}(y_{a_1};y_{a_2})\;\leq\; C\,\tau^{-2+\beta+d/2}\,,
\end{equation}
again by \eqref{cal W bound_unbounded2} and Proposition \ref{Q^12_bounds} (ii)--(iii). This quantity is bounded by using \eqref{B_d}.

\end{itemize}
\end{itemize}

Putting the estimates \eqref{Term_1}--\eqref{Step_l.o.t._1}, \eqref{Step_l.o.t._2}--\eqref{Step_l.o.t._5} on all of the terms in \eqref{Induction_Step_unbounded_2} together, we deduce \eqref{Induction_Step_unbounded}.
The inductive step now follows. This proves part (i) when $\xi \in \mathbf{B}_r$.

The claim in the case when $d=1$ and $\xi=\mathrm{Id}_p$ is shown by analogous arguments. 
We recall Definition \ref{collapsed_graph_1D_delta_unbounded}. All of the connected components of $\tilde{\mathcal{E}}$, which we denote by $\tilde{\mathfrak{P}}_1, \ldots, \tilde{\mathfrak{P}}_k$, are now closed paths (which can be loops). 
In this case, $\mathcal I_{\tau,\Pi}^{\xi}(\mathbf{t})$ is given by \eqref{value_of_Pi_1D_delta} and \eqref{Product of subgraphs unbounded 1} is replaced by 
\begin{equation}
\label{Product of subgraphs unbounded 1_delta}
\big|\mathcal I_{\tau,\Pi}^{\xi}(\mathbf{t})\big| \;\leq\; \int_{\Lambda^{\tilde{\mathcal V}}} \dd \mathbf{y} \, \Bigg[\prod_{i=1}^{m} w_\tau(y_{i,1}-y_{i,2})\Bigg] \,\Bigg[\prod_{j=1}^{k}\, \prod_{e\, \in\, \tilde{\mathfrak{P}}_j} \mathcal J_{\tau, e}(\mathbf{y_e}, \mathbf{s})\Bigg]\,.
\end{equation}
Recall that $w_\tau$ is pointwise nonnegative since $d=1$ and we are constructing it using Lemma \ref{w_1D_approximation}.
For $a \in \mathcal{V}_2$, $\mathcal{W}_\tau^{a}$ is still defined as in \eqref{cal W_unbounded}.
In particular, \eqref{product w tau_unbounded} holds and the right-hand side of \eqref{Product of subgraphs unbounded 1_delta} is
\begin{equation}
\label{Product of subgraphs unbounded 2_delta}
\;\leq\;
\Bigg\|\int_{\Lambda^{\tilde{\mathcal{V}}_2}} \dd \mathbf{y_2} \, \prod_{j=1}^{k} \Bigg[\prod_{e \in \tilde{\mathfrak{P}}_j} \mathcal{J}_{\tau, e}(\mathbf{y}_e, \mathbf{s})\, \prod_{a \in \mathcal{V}_2(\tilde{\mathfrak{P}}_j)} \mathcal{W}_\tau^{a}(y_{a}-y_{a^*})\Bigg] \Bigg\|_{L^{\infty}_{\mathbf{y_1}}}\,,
\end{equation}
where we note that in \eqref{Product of subgraphs unbounded 2_delta}, we are considering $\mathbf{y_1} \equiv \mathbf{y}_{\tilde{\mathcal{V}}_1}$. 
We modify \eqref{V_{2,j}} by defining, for $1 \leq j \leq k+1$
\begin{equation*}
\tilde{\mathcal V}_{2,j} \deq \tilde{\mathcal V}_2 \,\Big\backslash \,\Big(\bigcup_{\ell=1}^{j-1} \tilde{\mathcal{V}}_2(\tilde{\mathfrak{P}}_{\ell})\Big)\,.
\end{equation*}
As in \eqref{a_star_unbounded}, for $a=(i_a,\vartheta_a) \in \tilde{\mathcal{V}}_2$, we define $a^{*}=(i_a,3-\vartheta_a) \in \tilde{\mathcal{V}}_2$.
Analogously to \eqref{Inductive inequality 1 unbounded}, we reduce the claim to showing that for all $1 \leq \ell \leq k$ we have the recursive inequality
\begin{align}
\notag
&\Bigg\|\int_{\Lambda^{\tilde{\mathcal{V}}_{2,\ell}}} \dd \mathbf{y}_{\tilde{\mathcal{V}}_{2,\ell}} \, \prod_{j=\ell}^{k} \Bigg[\prod_{e \in \tilde{\mathfrak{P}}_j} \mathcal{J}_{\tau, e}(\mathbf y_e, \mathbf s)\, \prod_{a \in \tilde{\mathcal{V}}_2(\tilde{\mathfrak{P}}_j)} \mathcal{W}_\tau^{a}(y_{a}-y_{a^*})\Bigg]\Bigg\|_{L^{\infty}_{\tilde{\mathcal{V}}_2 \setminus \tilde{\mathcal{V}}_{2,\ell}}L^{\infty}_{\mathbf{y_1}} }
\\
\notag
&\;\leq\;C_0^{\,|\tilde{\mathcal{V}}(\tilde{\mathfrak{P}}_{\ell})|}\,\big(1+\|w\|_{L^p(\Lambda)}\big)^{\,|\tilde{\mathcal{V}}_2(\mathfrak{P}_{\ell})|} \,
\\
\label{Inductive inequality 1 unbounded_delta}
&\times
\Bigg\|\int_{\Lambda^{\tilde{\mathcal{V}}_{2,\ell+1}}} \dd \mathbf{y}_{\tilde{\mathcal{V}}_{2,\ell+1}} \, \prod_{j=\ell+1}^{k} \Bigg[\prod_{e \in \tilde{\mathfrak{P}}_j} \mathcal{J}_{\tau, e}(\mathbf{y}_e, \mathbf{s})\, \prod_{a \in \tilde{\mathcal{V}}_2(\mathfrak{P}_j)} \mathcal{W}_\tau^{a}(y_{a}-y_{a^*})\Bigg]\Bigg\|_{L^{\infty}_{\tilde{\mathcal{V}}_2 \setminus \tilde{\mathcal{V}}_{2,\ell+1}} L^{\infty}_{\mathbf{y_1}}}\,,
\end{align}
for some $C_0>0$.

Similarly as in \eqref{I(P)}, for $\mathfrak{P} \in \conn (\tilde{\mathcal{E}})$, we let
\begin{equation*}
\tilde{\mathfrak{I}}(\mathfrak {P}) \;\deq\; \int \dd \mathbf{y}_{\tilde{\mathcal{V}}_2(\mathfrak{P})} \, \prod_{e \in \mathfrak{P}} \mathcal{J}_{\tau, e}(\mathbf{y}_e, \mathbf{s})\, \prod_{a \in \tilde{\mathcal{V}}_2(\mathfrak{P})} \mathcal{W}_\tau^{a}(y_{a}-y_{a^*})\,.
\end{equation*}
Arguing as in the proof when $\xi \in \mathbf{B}_p$, we obtain that \eqref{Inductive inequality 1 unbounded_delta} follows if we prove that
\begin{equation}
\label{Inductive inequality 2 unbounded_B_tilde}
\|\tilde{\mathfrak{I}}(\mathfrak{P})\|_{L^{\infty}_{\mathbf{y_1}} L^{\infty}_{\tilde{\mathcal{V}}^{*}(\mathfrak{P})}}\;\leq\;C_0^{\,|\tilde{\mathcal{V}}(\mathfrak{P})|}\,\big(1+\|w\|_{L^p(\Lambda)}\big)^{\,|\tilde{\mathcal{V}}_2(\mathfrak{P})|}\,,
\end{equation}
for all $\mathfrak{P} \in \conn (\tilde{\mathcal{E}})$. The inductive proof of \eqref{Inductive inequality 2 unbounded_B} given earlier lets us obtain \eqref{Inductive inequality 2 unbounded_B_tilde} since we are now working in dimension $d=1$. More precisely, we only need to modify case (3) of the induction base (see \eqref{Case_3_Induction_Base} above). In particular, when $d=1$, we can replace the $L^2_{\mathbf{y_1}}$ norm by the $L^{\infty}_{\mathbf{y_1}}$ norm in \eqref{Case_3_Induction_Base} since $\|G_\tau\|_{L^\infty(\Lambda^2)} \leq C$, uniformly in $\tau$ by Proposition \ref{G_{tau,0}_bound} (i) and the claim follows. This finishes the proof of (i).
\end{proof}

\begin{remark}
\label{Product_of_subgraphs_Remark}
Let us summarize the inductive proof given in Proposition \ref{Product of subgraphs} above. We consider the case when $\xi \in \mathbf{B}_r$. The case when $d=1$ and $\xi=\mathrm{Id}_r$ is treated analogously with appropriate modifications in the notation. With notation as in the proof of Proposition \ref{Product of subgraphs}, let $\mathfrak{P} \in \conn (\mathcal{E})$ and recall that  $\mathfrak{I}(\mathfrak{P})$ is given by \eqref{I(P)}. We again write $n \deq |\mathcal{V}(\mathfrak{P})|$. 

When $n=1$ and when $\mathfrak{P}$ is a loop based at $a_1 \in \mathcal{V}_2$, we have by \eqref{Base_n=1} that 
\begin{equation}
\label{Product_of_subgraphs_Remark_n=1}
\mathfrak{I}(\mathfrak{P})\;\leq\;\|\mathcal{W}_\tau^{a_1}\|_{L^p(\Lambda)}\,\|G_\tau\|_{L^\infty(\Lambda^2)}\,.
\end{equation}
When $n=2$, let $c_1,c_2$ denote the vertices in $\mathfrak{P}$. If $c_1,c_2 \in \mathcal{V}_2$ and $c_2=c_1^*$, we have by \eqref{Base_n=2_(1)}--\eqref{Base_n=2_(1)_2} that
\begin{equation}
\label{Product_of_subgraphs_Remark_n=2a}
\mathfrak{I}(\mathfrak{P}) \;\leq\;
\|w_{\tau}\|_{L^p(\Lambda)}\,\|G_\tau\|_{L^{2p'}(\Lambda^2)}^2 + \mathcal{O}(\tau^{-\epsilon_0})\,
\end{equation}
for some $\epsilon_0>0$.
\\
If  $c_1,c_2 \in \mathcal{V}_2$ and $c_2 \neq c_1^*$, we have by \eqref{Induction_Base_Case_2}--\eqref{Induction_Base_Case_2B_Term4_l.o.t.2} that 
\begin{equation}
\label{Product_of_subgraphs_Remark_n=2b}
\mathfrak{I}(\mathfrak{P}) \;\leq\; \frac{1}{2}\|\mathcal{W}_\tau^{a_1}\|_{L^p(\Lambda)}\,\|\mathcal{W}_\tau^{a_2}\|_{L^p(\Lambda)}\, \Big(\|Q_{\tau,\zeta_1}^{(1)}\|_{L^{2p'}(\Lambda^2)}^2+\|Q_{\tau,-\zeta_1}^{(1)}\|_{L^{2p'}(\Lambda^2)}^2\Big)+ \mathcal{O}(\tau^{-\epsilon_0})\,
\end{equation}
for some $\epsilon_0>0$. For the positivity of $\epsilon_0$ in \eqref{Product_of_subgraphs_Remark_n=2a}--\eqref{Product_of_subgraphs_Remark_n=2b}, we used \eqref{B_d}.
\\
If $c_1,c_2 \in \mathcal{V}_1$, i.e.\ if $\mathfrak{P}$ is an open path, we have by \eqref{Case_3_Induction_Base} that 
\begin{equation}
\label{Product_of_subgraphs_Remark_n=2c}
\mathfrak{I}(\mathfrak{P})\;\leq\;\|G_{\tau}\|_{L^2(\Lambda^2)}\,.
\end{equation}
When $n \geq 3$, we note that by \eqref{Induction_Step_unbounded_2}--\eqref{Step_l.o.t._5}, the estimate in  \eqref{I(P)_induction_unbounded} can be rewritten as 
\begin{equation}
\label{Product_of_subgraphs_Remark_n>=3}
\mathfrak{I}(\mathfrak{P})\;\leq\;C\,\Big(\|\mathcal{W}_{\tau}^{a_2}\|_{L^p(\Lambda)}\,\|Q_{\tau,\zeta_1}^{(1)}(y_{a_1};\cdot)\|_{L^{2p'}(\Lambda)} \,
\|Q_{\tau,\zeta_2}^{(1)}(\cdot\,;y_{a_3})\|_{L^{2p'}(\Lambda)}  + \mathcal{O}(\tau^{-\epsilon_0})\Big)\, \mathfrak{I}(\mathfrak{\hat{P}})\,,
\end{equation}
for some $\epsilon_0>0$. Here, we again used \eqref{B_d} and recalled the definition of $\hat{\mathfrak{P}}$ given in \eqref{I(P)_induction_unbounded}.
\\
We note that, in \eqref{Product_of_subgraphs_Remark_n=2a}--\eqref{Product_of_subgraphs_Remark_n=2b} and \eqref{Product_of_subgraphs_Remark_n>=3}, the $\mathcal{O}(\tau^{-\epsilon_0})$ contribution comes from all of the $Q^{(2)}$ and delta function factors. All the factors involving only $Q^{(1)}$ give the leading order terms in \eqref{Product_of_subgraphs_Remark_n=1}--\eqref{Product_of_subgraphs_Remark_n>=3}.
\end{remark}

\begin{remark}
\label{Product_of_subgraphs_Remark2_A}
We also note that, in proving \eqref{Product_of_subgraphs_Remark_n=1}--\eqref{Product_of_subgraphs_Remark_n>=3}, the bounds giving us the leading order terms (i.e.\ not the terms involving $\mathcal{O}(\tau^{-\epsilon_0})$) were obtained by applying only \eqref{cal W bound_unbounded} and never by applying \eqref{cal W bound_unbounded2}. The leading order terms in the upper bounds \eqref{Product_of_subgraphs_Remark_n=1}--\eqref{Product_of_subgraphs_Remark_n>=3} are the ones that we obtain by estimating expressions involving only factors of $Q^{(1)}$ and no $Q^{(2)}$ or delta function factors. The precise details of these steps are given in \eqref{Base_n=1}, \eqref{Base_n=2_(1)}, \eqref{Induction_Base_Case_2B_2}, and \eqref{Term_1} above.
\end{remark}

\subsection{Convergence of the explicit terms}
\label{Convergence of the explicit terms}

We now study the convergence of the explicit terms as $\tau \rightarrow \infty$. Throughout this section, we use the convention that when we are working with an $r$-particle operator $\xi \in \mathbf{C}_r$, we write
\begin{equation}
\label{hat_V,E,sigma}
(\hat{\mathcal{V}},\hat{\mathcal{E}},\hat{\sigma}) \;\deq\;
\begin{cases}
(\mathcal{V},\mathcal{E},\sigma) &\mbox{if }\,\,\xi \in \mathbf{B}_r
\\
(\tilde{\mathcal{V}},\tilde{\mathcal{E}},\tilde{\sigma}) &\mbox{if } \,\,d=1\,\, \mbox{and } \xi=\mathrm{Id}_r\,.
\end{cases}
\end{equation}
Here we recall Definitions \ref{def_collapsed_graph_unbounded} and \ref{collapsed_graph_1D_delta_unbounded} above.


By Definition \ref{def_J_e_unbounded} and \eqref{Q_{tau,t}}, it follows that for all $e = \{a,b\} \in \hat{\mathcal{E}}$ with $a <b$, we have
\begin{equation}
\label{J_tau_e_Q}
\mathcal{J}_{\tau, e}(\mathbf{y}_e,\mathbf{s}) \;=\; Q_{\tau, \hat{\sigma}(e) (s_a - s_b)}(y_a; y_b) + \frac{\mathbf{1}_{\hat{\sigma}(e) = +1}\,\mathbf{1}_{i_a=i_b}}{\tau} \,\delta(y_a-y_b)\,.
\end{equation}
In light of \eqref{J_tau_e_Q}, we define 
\begin{equation}
\label{J_tau_e^1}
\mathcal{J}_{\tau, e}^{(1)}(\mathbf{y}_e,\mathbf{s}) \;\deq\; Q^{(1)}_{\tau, \hat{\sigma}(e) (s_a - s_b)}(y_a; y_b)\,.
\end{equation}
and 
\begin{align}
\notag
&\mathcal{J}_{\tau, e}^{(2)}(\mathbf{y}_e,\mathbf{s}) \;\deq\; \mathcal{J}_{\tau, e}(\mathbf{y}_e,\mathbf{s})-\mathcal{J}_{\tau, e}^{(1)}(\mathbf{y}_e,\mathbf{s}) 
\\
\label{J_tau_e^2}
&\;=\; \frac{1}{\tau}\,Q^{(2)}_{\tau, \hat{\sigma}(e) (s_a - s_b)}(y_a; y_b)+\frac{\mathbf{1}_{\hat{\sigma}(e) = +1}\,\mathbf{1}_{i_a=i_b}}{\tau} \,\delta(y_a-y_b)\,.
\end{align}
In the last line, we used \eqref{Q_splitting}.
By \eqref{positivity} and \eqref{J_tau_e_Q}--\eqref{J_tau_e^2}, we deduce that 
\begin{equation}
\label{J_tau_ordering}
0\;\leq\; \mathcal{J}_{\tau,e}^{(i)}(\mathbf{y}_e,\mathbf{s})\;\leq\; \mathcal{J}_{\tau,e}(\mathbf{y}_e,\mathbf{s})\,,\quad i=1,2\,.
\end{equation}
In the sequel, we use the nonnegativity of $\mathcal{J}_{\tau,e}^{(i)}(\mathbf{y}_e,\mathbf{s})$ for $i=1,2$ without further comment.

We now define new quantities analogous to those considered in \eqref{I_representation_unbounded} and \eqref{value_of_Pi_1D_delta} above.
\begin{definition}
\label{I1_definition}
Let $m,r \in \mathbb{N}, \mathbf{t} \in \mathfrak{A}(m), \Pi \in \mathfrak{R}(d,m,r)$, and $\xi \in \mathbf{C}_r$ be given. We define the following quantities.
\begin{itemize}
\item[(i)] If $\xi \in \mathbf{B}_r$, we let
\begin{equation}
\label{I1_representation_unbounded}
\mathcal{I}_{\tau,\Pi}^{1,\xi}(\mathbf{t})\;\deq\;\int_{\Lambda^{\mathcal{V}}} \dd \mathbf{y}\,\Bigg[\prod_{i=1}^{m} w_{\tau}(y_{i,1}-y_{i,2})\Bigg] \xi(\mathbf{y_1})\,\prod_{e \in \mathcal{E}} \mathcal{J}^{(1)}_{\tau, e}(\mathbf{y}_e,\mathbf{s})\,.
\end{equation}
\item[(ii)] If $d=1$ and $\xi=\mathrm{Id}_r$, we let
\begin{equation}
\label{I1_representation_unbounded_1D_delta}
\mathcal{I}^{1,\xi}_{\tau,\Pi}(\mathbf{t})\;\deq\;\int_{\Lambda^{\tilde{\mathcal{V}}}}\dd \mathbf{y}\,\Bigg[ \prod_{i=1}^{m} w_{\tau}(y_{i,1}-y_{i,2})\Bigg] \prod_{e \in \tilde{\mathcal{E}}} \mathcal{J}^{(1)}_{\tau, e}(\mathbf{y}_e,\mathbf{s})\,.
\end{equation}
\end{itemize}
\end{definition}
We now state an approximation result that makes precise the heuristic that the terms coming from $Q^{(2)}$ and delta function factors are lower order and hence vanish in the limit as $\tau \rightarrow \infty$.
\begin{lemma}
\label{Approximation_result_Q1}
Fix $m,r \in \mathbb{N}$. Given $\Pi \in \mathfrak{R}(d,m,r)$, $\mathbf{t} \in \mathfrak{A}(m)$, we have that 
\begin{equation}
\label{Approximation_result_Q1_claim}
\big|\mathcal{I}^{\xi}_{\tau,\Pi}(\mathbf{t}) - \mathcal{I}^{1,\xi}_{\tau,\Pi} (\mathbf{t})\big| \rightarrow 0 \quad \mbox{as}\quad \tau \rightarrow \infty\quad \mbox{uniformly in} \quad \xi \in \mathbf{C}_r\,.
\end{equation}
\end{lemma}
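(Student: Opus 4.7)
The plan is to expand the product $\prod_{e \in \hat{\mathcal{E}}} \mathcal{J}_{\tau,e}(\mathbf{y}_e,\mathbf{s})$ using multilinearity and the splitting \eqref{J_tau_e^1}--\eqref{J_tau_e^2}, giving
\begin{equation*}
\prod_{e \in \hat{\mathcal{E}}} \mathcal{J}_{\tau,e} \;=\; \sum_{A \subseteq \hat{\mathcal{E}}} \Bigg[\prod_{e \in A} \mathcal{J}^{(2)}_{\tau,e}\Bigg] \Bigg[\prod_{e \in \hat{\mathcal{E}} \setminus A} \mathcal{J}^{(1)}_{\tau,e}\Bigg].
\end{equation*}
Separating off the term $A = \emptyset$, one obtains $\mathcal{I}^{\xi}_{\tau,\Pi}(\mathbf{t}) - \mathcal{I}^{1,\xi}_{\tau,\Pi}(\mathbf{t}) = \sum_{\emptyset \neq A \subseteq \hat{\mathcal{E}}} \mathcal{I}^{\xi,A}_{\tau,\Pi}(\mathbf{t})$, where $\mathcal{I}^{\xi,A}_{\tau,\Pi}(\mathbf{t})$ is the integral associated to the subset $A$. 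Since $m$ and $r$ are fixed, $|\hat{\mathcal{E}}|$ is bounded by an absolute constant, and the above is a finite sum. It therefore suffices to prove that for each nonempty $A \subseteq \hat{\mathcal{E}}$,
\begin{equation*}
\sup_{\xi \in \mathbf{C}_r} \big|\mathcal{I}^{\xi,A}_{\tau,\Pi}(\mathbf{t})\big| \;=\; \mathcal{O}(\tau^{-\epsilon_0})\,, \quad \tau \to \infty\,,
\end{equation*}
for some $\epsilon_0 \equiv \epsilon_0(d,\beta) > 0$, whence \eqref{Approximation_result_Q1_claim} follows by summation.

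For each nonempty $A$, the strategy is to re-run the path-by-path inductive estimate from the proof of Proposition \ref{Product of subgraphs} in Section \ref{Bounds on the explicit terms}, together with the $L^2_{\mathbf{y_1}} L^\infty_{\mathcal{V}^*(\mathfrak{P})}$ (respectively $L^\infty_{\mathbf{y_1}} L^\infty_{\mathcal{V}^*(\mathfrak{P})}$ for $\xi = \mathrm{Id}_r$) reduction \eqref{Inductive inequality 2 unbounded_B} (respectively \eqref{Inductive inequality 2 unbounded_B_tilde}). The only modification is that, when the inductive step integrates out a vertex $a_j$ whose incident edge $e$ lies in $A$, we use the bound coming from the $\mathcal{J}^{(2)}_{\tau,e}$ part only. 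By the explicit subleading estimates in the proof of Proposition \ref{Product of subgraphs}, namely \eqref{Induction_Step_delta_1_unbounded}--\eqref{Induction_Step_delta_2_unbounded} for the delta contribution and \eqref{Step_l.o.t._1}--\eqref{Step_l.o.t._5} for the $Q^{(2)}$ contribution, each such use produces an extra factor of the form $\tau^{-\epsilon_0}$ with $\epsilon_0 > 0$. The edges in $\hat{\mathcal{E}} \setminus A$ are handled by the leading-order $Q^{(1)}$ estimate \eqref{Term_1} (respectively the base-case estimates \eqref{Product_of_subgraphs_Remark_n=1}--\eqref{Product_of_subgraphs_Remark_n=2c}), which give only contributions of size $C^{|\mathcal{V}(\mathfrak{P})|}(1+\|w\|_{L^p(\Lambda)})^{|\mathcal{V}_2(\mathfrak{P})|}$, as summarised in Remark \ref{Product_of_subgraphs_Remark2_A}.

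The induction base requires parallel attention: when the unique (respectively one of two) edges of a small path lies in $A$, one substitutes the subleading bounds \eqref{Base_n=2_(1)_2}, \eqref{Induction_Base_Case_2B_l.o.t.1}, and \eqref{Induction_Base_Case_2B_Term4_l.o.t.}--\eqref{Induction_Base_Case_2B_Term4_l.o.t.2} in place of the leading-order bound, each of which already carries a factor $\tau^{-\epsilon_0}$. The case of an open path of size two with its unique edge in $A$ is in fact vacuous: both endpoints lie in $\mathcal{V}_1$, so $i_a = i_b = m+1$ and $s_a = s_b$ by \eqref{s_ordered equality_unbounded}, while the delta contribution in \eqref{J_tau_e^2} requires $i_a = i_b$ and an endpoint in $\mathcal{V}_2$; hence $\zeta = 0$ and $\mathcal{J}^{(2)}_{\tau,e} = 0$ by \eqref{Q^12}. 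The loop base ($n=1$, only for $d=1, \xi = \mathrm{Id}_r$) with the loop in $A$ is estimated directly: $\frac{1}{\tau}\int \dd y_{a_1}\,|w_\tau(0)| \leq \tau^{\beta-1}\|w\|_{L^p(\Lambda)}$ after the analogue of \eqref{Base_n=1}.

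The main obstacle is to verify that the $\tau^{-\epsilon_0}$ savings collected at the $\mathcal{J}^{(2)}$ edges are not cancelled by the $\|w_\tau\|_{L^\infty(\Lambda)} \leq \tau^\beta$ bound used on interactions incident to them. This is precisely arranged by the choice of $\mathcal{B}_d$ in \eqref{B_d}: the relevant exponents appearing in the subleading bounds are $\beta-1$, $\beta-1/2$, and $\beta-2+d/2$, all of which are strictly negative exactly when $\beta \in \mathcal{B}_d$. Thus in every branch of the inductive analysis, the contribution from each subset $A \neq \emptyset$ is bounded by $C(m,r,d,\kappa,\|w\|_{L^p(\Lambda)}) \, \tau^{-\epsilon_0}$ uniformly in $\xi \in \mathbf{C}_r$, and summing over the finitely many nonempty $A \subseteq \hat{\mathcal{E}}$ yields \eqref{Approximation_result_Q1_claim}.
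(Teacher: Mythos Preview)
Your approach is correct but differs from the paper's in one structural way: the paper uses a \emph{telescoping} decomposition rather than the full multilinear expansion. Fixing an arbitrary total order $\prec$ on $\hat{\mathcal{E}}$, the paper writes
\[
\prod_{e} \mathcal{J}_{\tau,e} - \prod_{e} \mathcal{J}^{(1)}_{\tau,e}
\;=\; \sum_{e_0 \in \hat{\mathcal{E}}} \Bigg(\prod_{e \prec e_0} \mathcal{J}_{\tau,e}\Bigg)\,\mathcal{J}^{(2)}_{\tau,e_0}\,\Bigg(\prod_{e \succ e_0} \mathcal{J}^{(1)}_{\tau,e}\Bigg)
\]
and then immediately uses \eqref{J_tau_ordering} to bound each $\mathcal{J}^{(1)}_{\tau,e}$ by the full $\mathcal{J}_{\tau,e}$ for all $e \neq e_0$. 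This leaves, for each $e_0$, exactly the integrand of Proposition~\ref{Product of subgraphs} with a \emph{single} edge replaced by $\mathcal{J}^{(2)}_{\tau,e_0}$; choosing the integrated vertex $a_2$ adjacent to $e_0$ gives $\mathfrak{I}_{e_0}(\mathfrak{P}) \leq C\tau^{-\epsilon_0}\,\mathfrak{I}(\hat{\mathfrak{P}})$, and one invokes the already-proved bound \eqref{Inductive inequality 2 unbounded_B} on $\hat{\mathfrak{P}}$ without further modification.

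Your full expansion produces $2^{|\hat{\mathcal{E}}|}-1$ terms rather than $|\hat{\mathcal{E}}|$; this is harmless since $m,r$ are fixed. But note that your description of ``re-running the induction with the $A$-labelling'' is looser than it needs to be: after the first inductive step eliminates $a_2$, the new edge $\{a_1,a_3\}$ lies outside the original $\hat{\mathcal{E}}$ and carries the full $Q_{\tau,\zeta_1+\zeta_2}$ (via Proposition~\ref{Q^12_bounds}~(iv)), not $Q^{(1)}$ or $Q^{(2)}$. The cleanest fix---and what the paper's telescoping buys you directly---is to pick any single $e_0 \in A$, bound all other factors via $\mathcal{J}^{(i)}_{\tau,e} \leq \mathcal{J}_{\tau,e}$, and thereby reduce each of your $\mathcal{I}^{\xi,A}_{\tau,\Pi}$ to the paper's $\mathcal{I}^{\xi}_{\tau,\Pi,e_0}$. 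One small correction: in your treatment of the open path of size two, the delta term vanishes not because ``an endpoint must lie in $\mathcal{V}_2$'' but because the ordering on $\mathcal{V}_1$ forces $\sigma(e)=-1$ (all $\delta=+1$ vertices precede all $\delta=-1$ vertices), so $\mathbf{1}_{\sigma(e)=+1}=0$.
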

\begin{proof}
We first prove \eqref{Approximation_result_Q1_claim} when $\xi \in \mathbf{B}_r$. We do this by a telescoping argument. 
First, we introduce an arbitrary strict total order $\prec$ on the edges in $\mathcal{E}$. Furthermore, we write $e_1 \succ e_2$ if $e_2 \prec e_1$.
By \eqref{J_tau_e^1}--\eqref{J_tau_e^2}, we have
\begin{align}
\notag
&\prod_{e \in \mathcal{E}} \mathcal{J}_{\tau, e}(\mathbf{y}_e,\mathbf{s})-\prod_{e \in \mathcal{E}} \mathcal{J}^{(1)}_{\tau, e}(\mathbf{y}_e,\mathbf{s})
\\
\label{Telescoping_1}
&\;=\;\sum_{e_0 \in \mathcal{E}}  \Bigg[\Bigg(\prod_{e \prec e_0} \mathcal{J}_{\tau, e}(\mathbf{y}_e,\mathbf{s})\Bigg) \, \mathcal{J}^{(2)}_{\tau,e_0}(\mathbf{y}_{e_0},\mathbf{s})\,\Bigg(\prod_{e \succ e_0} \mathcal{J}^{(1)}_{\tau, e}(\mathbf{y}_e,\mathbf{s})\Bigg)\Bigg]\,.
\end{align}
By applying \eqref{Telescoping_1}, we  estimate
\begin{align}
\notag
\big|\mathcal{I}^{\xi}_{\tau,\Pi}(\mathbf{t}) - \mathcal{I}^{1,\xi}_{\tau,\Pi} (\mathbf{t})\big| &\;\leq\; \sum_{e_0 \in \mathcal{E}} \int_{\Lambda^{\mathcal{V}}} \dd \mathbf{y}\,\Bigg[\prod_{i=1}^{m} \big|w_{\tau}(y_{i,1}-y_{i,2})\big|\Bigg] \,|\xi(\mathbf{y_1})|\,
\\
\label{Telescoping_1_application}
&\times \Bigg[\Bigg(\prod_{e \prec e_0} \mathcal{J}_{\tau, e}(\mathbf{y}_e,\mathbf{s})\Bigg) \, \mathcal{J}^{(2)}_{\tau,e_0}(\mathbf{y}_{e_0},\mathbf{s})\,\Bigg(\prod_{e \succ e_0} \mathcal{J}^{(1)}_{\tau, e}(\mathbf{y}_e,\mathbf{s})\Bigg)\Bigg]\,,
\end{align}
which by \eqref{J_tau_ordering} is 
\begin{equation}
\label{e_0_telescoping_sum}
\;\leq\; \sum_{e_0 \in \mathcal{E}} \int_{\Lambda^{\mathcal{V}}} \dd \mathbf{y}\,\Bigg[\prod_{i=1}^{m} \big|w_{\tau}(y_{i,1}-y_{i,2})\big|\Bigg] \,|\xi(\mathbf{y_1})|\,
\Bigg[\Bigg(\prod_{e \neq e_0} \mathcal{J}_{\tau, e}(\mathbf{y}_e,\mathbf{s})\Bigg) \, \mathcal{J}^{(2)}_{\tau,e_0}(\mathbf{y}_{e_0},\mathbf{s})\Bigg]\,.
\end{equation}
Given $e_0 \in \mathcal{E}$, let $\mathcal{I}^{\xi}_{\tau,\Pi,e_0}(\mathbf{t})$
denote the corresponding summand in \eqref{e_0_telescoping_sum}.
The quantity $\mathcal{I}^{\xi}_{\tau,\Pi,e_0}(\mathbf{t})$ differs from the expression on the right-hand side of \eqref{Product of subgraphs unbounded 1} only by replacing the factor $\mathcal{J}_{\tau,e_0}(\mathbf{y}_{e_0},\mathbf{s})$ by $\mathcal{J}^{(2)}_{\tau,e_0}(\mathbf{y}_{e_0},\mathbf{s})$.

Note that the proof of Proposition \ref{Product of subgraphs} gives us that $\mathcal{I}^{\xi}_{\tau,\Pi,e_0}(\mathbf{t}) \rightarrow 0$ as $\tau \rightarrow \infty$ uniformly in $\xi \in \mathbf{B}_r$. Let us now explain this in more detail. 
For $\mathfrak{P} \in \conn (\mathcal{E})$, we define
\begin{align}
\notag
&\mathfrak{I}_{e_0}(\mathfrak {P}) \;\deq\; \int \dd \mathbf{y}_{\mathcal{V}_2(\mathfrak{P})} \, \Bigg(\prod_{e \in \mathfrak{P} \setminus \{e_0\}} \mathcal{J}_{\tau, e}(\mathbf{y}_e, \mathbf{s})\Bigg)\, 
\\
\label{I_{e_0}_definition}
&\times \Bigg(\prod_{e \in \mathfrak{P} \cap \{e_0\}} \mathcal{J}_{\tau, e}^{(2)}(\mathbf{y}_e, \mathbf{s})\Bigg) \,\prod_{a \in \mathcal{V}_2(\mathfrak{P})} \big|\mathcal{W}_\tau^{a}(y_{a}-y_{a^*})\big|\,.
\end{align}
In particular, for $\mathfrak{I}(\mathfrak{P})$ as defined in \eqref{I(P)} above, we have $\mathfrak{I}_{e_0}(\mathfrak{P})=\mathfrak{I}(\mathfrak{P})$ if and only if $e_0 \notin \mathfrak{P}$.
We find the path $\mathfrak{P} \in \conn (\mathcal{E})$ which contains the edge $e_0$ and we follow the arguments given in Remark \ref{Product_of_subgraphs_Remark}. 
\\
Namely, if $n \deq |\mathcal{V}(\mathfrak{P})|=1$, then by arguing as in \eqref{Base_n=1}, we deduce that $\mathfrak{I}_{e_0}(\mathfrak {P}) =0$.
\\
If $n=2$, and if $\mathfrak{P}$ is a closed path, we argue as in \eqref{Product_of_subgraphs_Remark_n=2a}--\eqref{Product_of_subgraphs_Remark_n=2b} and deduce that $\mathfrak{I}_{e_0}(\mathfrak {P}) =\mathcal{O}(\tau^{-\epsilon_0})$ for some $\epsilon_0>0$. If $\mathfrak{P}$ is an open path of length $2$, then $\mathfrak{I}_{e_0}(\mathfrak {P}) =0$.
\\
If $n \geq 3$, then we arrange that the vertex $a_2 \in \mathcal{V}_2$ from the proof of Proposition \ref{Product of subgraphs} is adjacent to the edge $e_0$. By arguing as in \eqref{Product_of_subgraphs_Remark_n>=3}, it follows that 
\begin{equation}
\label{I_{e_0}_bound}
\mathfrak{I}_{e_0}(\mathfrak {P})  \;\leq\; C \tau^{-\epsilon_0} \mathfrak{I}(\hat{\mathfrak{P}})\,,
\end{equation}
for some $\epsilon_0>0$ and for $\hat{\mathfrak{P}}$ as in \eqref{I(P)_induction_unbounded}.
The claim for $\xi \in \mathbf{B}_r$ now follows. The claim for $d=1$ and $\xi=\mathrm{Id}_r$ follows by analogous arguments.
\end{proof}

Recalling the definition \eqref{one_body_Hamiltonian} of $h$, the \emph{classical Green function} is given by
\begin{equation}
\label{classical_Green_function}
G \;\deq\; h^{-1}\,.
\end{equation}
One has that for all $x,y \in \Lambda$ 
\begin{equation}
\label{G_positive}
G(x;y) \;=G(y;x) \;\geq\; 0\,.
\end{equation}
For a proof of \eqref{G_positive}, see \cite[Lemma 2.23]{FrKnScSo1}.

\begin{definition}
\label{J_e_classical_unbounded}
Let $\mathbf{y} = (y_a)_{a \in \hat{\mathcal{V}}} \in \Lambda^{\hat{\mathcal{V}}}$ be given. With every edge $e=\{a,b\} \in \hat{\mathcal{E}}$ such that $a<b$, we associate the integral kernel
$\mathcal{J}_{e}(\mathbf{y}_e) \deq G(y_a;y_b)$.
\end{definition}
By \eqref{G_positive}, it follows that $\mathcal{J}_{e}(\mathbf{y}_e) \geq 0$. We henceforth use this nonnegativity property without further comment. Note that $\mathcal{J}_{e}(\mathbf{y}_e)$ is independent of time.

We now define the quantity $\mathcal{I}_{\Pi}^{\xi}$, which is a formal limit as $\tau \rightarrow \infty$ of $\mathcal{I}_{\tau,\Pi}^{\xi}(\mathbf{t})$.
\begin{definition}
\label{I_infinity_unbounded}
Let $m,r \in \mathbb{N}, \mathbf{t} \in \mathfrak{A}(m), \Pi \in \mathfrak{R}(d,m,r)$, and $\xi \in \mathbf{C}_r$ be given. We define the following quantities.
\begin{itemize}
\item[(i)] If $\xi \in \mathbf{B}_r$, we let
\begin{equation*}
\mathcal{I}_{\Pi}^{\xi}\;\deq\;\int_{\Lambda^{\mathcal{V}}} \dd \mathbf{y}\,\Bigg[\prod_{i=1}^{m} w(y_{i,1}-y_{i,2})\Bigg] \xi(\mathbf{y_1})\,\prod_{e \in \mathcal{E}} \mathcal{J}_{e}(\mathbf{y}_e)\,.
\end{equation*}
\item[(ii)] If $d=1$ and $\xi=\mathrm{Id}_r$, we let
\begin{equation*}
\mathcal{I}^{\xi}_{\Pi}\;\deq\;\int_{\Lambda^{\tilde{\mathcal{V}}}} \dd \mathbf{y}\,\Bigg[ \prod_{i=1}^{m} w (y_{i,1}-y_{i,2})\Bigg] \prod_{e \in \tilde{\mathcal{E}}} \mathcal{J}_{e}(\mathbf{y}_e)\,.
\end{equation*}
\end{itemize}
\end{definition}

\begin{proposition}
\label{Approximation_result_2_unbounded}
Let $m,r \in \mathbb{N}$, $\Pi \in \mathfrak{R}(d,m,r)$, and  $\mathbf{t} \in \mathfrak{A}(m)$ be given. We have that 
\begin{equation}
\label{Approximation_result_2_unbounded_convergence}
\mathcal{I}^{\xi}_{\tau,\Pi}(\mathbf{t}) \rightarrow \mathcal{I}^{\xi}_{\Pi} \quad \mbox{as}\quad \tau \rightarrow \infty\quad \mbox{uniformly in} \,\, \xi \in \mathbf{C}_r\,.
\end{equation}
\end{proposition}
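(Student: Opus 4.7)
The plan is to combine Lemma \ref{Approximation_result_Q1} with a telescoping argument that successively replaces each factor of $w_\tau$ by $w$ and each factor of $\mathcal{J}^{(1)}_{\tau,e}$ by $\mathcal{J}_e$. By Lemma \ref{Approximation_result_Q1} and the triangle inequality, it suffices to show that
\[
\mathcal{I}^{1,\xi}_{\tau,\Pi}(\mathbf{t}) \,\longrightarrow\, \mathcal{I}^{\xi}_{\Pi} \quad \mbox{as } \tau \to \infty\,,
\]
uniformly in $\xi \in \mathbf{C}_r$.

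First I would establish the auxiliary convergence
\[
\sup_{x \in \Lambda}\,\sup_{t \in (-1,1)} \|Q^{(1)}_{\tau,t}(x;\cdot) - G(x;\cdot)\|_{L^q(\Lambda)}\,\longrightarrow\,0\,,
\]
for every $q \in \mathcal{Q}_d$, and the analogous statement in $L^{\infty}(\Lambda)$ when $d=1$. Comparing Fourier coefficients using \eqref{G_tau}, \eqref{Q^12}, and \eqref{classical_Green_function}, the squared $H^s$-norm of the difference equals $\sum_{k \in \mathbb{Z}^d} \bigl|\tfrac{\ee^{-\{t\}\lambda_k/\tau}}{\tau(\ee^{\lambda_k/\tau}-1)} - \tfrac{1}{\lambda_k}\bigr|^2 (1+|k|^2)^s$. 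Each summand is bounded uniformly in $(t,\tau)$ by $C\lambda_k^{s-2}$, which is summable for $s < 2 - d/2$, and it converges to $0$ uniformly in $t$ for each fixed $k$, since $|\ee^{-\{t\}\lambda_k/\tau} - 1| \leq \lambda_k/\tau$ and $\tau(\ee^{\lambda_k/\tau}-1) \to \lambda_k$. Dominated convergence in $\ell^2$ combined with Sobolev embedding yields the $L^q$-convergence; an analogous argument in $\ell^1$ gives the $L^\infty$ version when $d=1$.

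Next, enumerate the interaction indices $i=1,\dots,m$ and the edges $e \in \hat{\mathcal{E}}$, where $\hat{\mathcal{E}}$ is given by \eqref{hat_V,E,sigma}. Define intermediate quantities $\mathcal{I}^{(j,k)}_{\tau,\Pi,\xi}(\mathbf{t})$ in which, reading along these enumerations, the first $j$ interaction factors have been replaced by $w$ (while the remaining $m-j$ retain $w_\tau$) and the first $k$ Green function factors have been replaced by $\mathcal{J}_e$ (while the remaining retain $\mathcal{J}^{(1)}_{\tau,e}$). Then $\mathcal{I}^{(0,0)}_{\tau,\Pi,\xi}(\mathbf{t}) = \mathcal{I}^{1,\xi}_{\tau,\Pi}(\mathbf{t})$ and $\mathcal{I}^{(m,|\hat{\mathcal{E}}|)}_{\tau,\Pi,\xi}(\mathbf{t}) = \mathcal{I}^{\xi}_{\Pi}$. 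Writing the total difference as a telescoping sum of consecutive differences produces at most $m + |\hat{\mathcal{E}}|$ error terms, each of which is an integral of exactly the same graphical form but with one factor replaced by its difference: either $(w_\tau - w)(y_{i_0,1} - y_{i_0,2})$ or $(\mathcal{J}^{(1)}_{\tau,e_0} - \mathcal{J}_{e_0})(\mathbf{y}_{e_0},\mathbf{s})$ for some fixed $i_0$ or $e_0$.

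Each such error integral is bounded by adapting the induction of Proposition \ref{Product of subgraphs}, using the observation of Remark \ref{Product_of_subgraphs_Remark2_A} that the leading-order bounds there involve only $L^p$-norms of interaction factors and $L^{2p'}$-norms of Green function factors; these remain uniformly controlled for both $w_\tau$ and $w$ (by Lemmas \ref{w_1D_approximation}--\ref{w_positive_approximation}) and for both $Q^{(1)}_{\tau,t}$ and $G$ (by Proposition \ref{G_{tau,0}_bound} together with $G \in L^q(\Lambda^2)$ for $q \in \mathcal{Q}_d$). When the replaced factor is $(w_\tau - w)$, the induction produces a factor of $\|w_\tau - w\|_{L^p(\Lambda)} \to 0$ by Lemma \ref{w_1D_approximation}(iv) or \ref{w_positive_approximation}(iv); when the replaced factor is a Green function difference, the induction produces a factor of $\sup_{x,t}\|Q^{(1)}_{\tau,t}(x;\cdot) - G(x;\cdot)\|_{L^{2p'}(\Lambda)} \to 0$ by the auxiliary convergence. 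The main technical obstacle is verifying cleanly that the inductive step adapts when one Green function factor is replaced by a difference; this works because the relevant step (as in \eqref{Term_1}) produces a product of one $L^p$-norm of $\mathcal{W}_\tau^{a_2}$ and two $L^{2p'}$-norms of $Q^{(1)}_\tau$-factors, and replacing any one of these by the corresponding difference norm only introduces a vanishing prefactor while leaving the remaining inductive structure intact. The endpoint cases $n=1,2$ in the path decomposition and the case $d=1$ with $\xi = \mathrm{Id}_r$ are treated identically using the convergence in $L^{2p'}$ or $L^\infty$ as appropriate. Summing the at most $m + |\hat{\mathcal{E}}|$ vanishing error estimates gives \eqref{Approximation_result_2_unbounded_convergence}.
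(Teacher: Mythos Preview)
Your proof is correct and follows essentially the same approach as the paper. The paper separates the telescoping into two stages via an intermediate quantity $\mathcal{I}^{2,\xi}_{\tau,\Pi}(\mathbf{t})$ (first replacing all $w_\tau$ by $w$, then all $\mathcal{J}^{(1)}_{\tau,e}$ by $\mathcal{J}_e$), while you do both replacements in a single combined telescope; and you prove the Green function convergence uniformly in $t\in(-1,1)$ whereas the paper's Lemma~\ref{Q1_convergence_lemma} states it only for fixed $t$ (which suffices since $\mathbf{t}$ is fixed)---but both versions rest on the same dominated-convergence argument, and the inductive mechanism you invoke via Remark~\ref{Product_of_subgraphs_Remark2_A} is exactly what the paper uses.
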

We emphasise that the convergence in \eqref{Approximation_result_2_unbounded_convergence} is not uniform in $\mathbf{t} \in \mathfrak{A}(m)$. Before proceeding with the proof of Proposition \ref{Approximation_result_2_unbounded}, we note a useful convergence result.
\begin{lemma}
\label{Q1_convergence_lemma}
Let $t \in (-1,1)$ and $q \in \mathcal{Q}_d$, for $\mathcal{Q}_d$ as defined in \eqref{Q_d} be given. Then, we have
\begin{equation}
\label{Q1_convergence_lemma1}
\|Q^{(1)}_{\tau,t}(x;\cdot)-G(x;\cdot)\|_{L^q(\Lambda)}\;=\;\|Q^{(1)}_{\tau,t}(\cdot;x)-G(\cdot;x)\|_{L^q(\Lambda)} \rightarrow 0 \quad \mbox{as}\quad \tau \rightarrow \infty\,, 
\end{equation}
uniformly in $x \in \Lambda$. In particular, we have 
\begin{equation}
\label{Q1_convergence_lemma2}
\|Q^{(1)}_{\tau,t}-G\|_{L^q(\Lambda^2)} \rightarrow 0 \quad \mbox{as}\quad \tau \rightarrow \infty\,.
\end{equation}
\end{lemma}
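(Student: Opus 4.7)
My plan is to reduce the statement to a convergence result in a Sobolev norm via Fourier analysis, then to convert that into $L^q$ convergence by Sobolev embedding on the torus.

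First, I would exploit translation invariance. Since both $Q^{(1)}_{\tau,t}(x;y)$ and $G(x;y)$ depend only on $x-y$, we can write
$$Q^{(1)}_{\tau,t}(x;y)-G(x;y)\;=\;\sum_{k \in \Z^d}\Big(q^{\tau,t}_k-\frac{1}{\lambda_k}\Big)\,\ee^{2\pi \ii k \cdot (x-y)}\,,\qquad q^{\tau,t}_k\;\deq\;\frac{\ee^{-\{t\}\lambda_k/\tau}}{\tau(\ee^{\lambda_k/\tau}-1)}\,.$$
Because only the phase factor depends on $x$, the norm $\|Q^{(1)}_{\tau,t}(x;\cdot)-G(x;\cdot)\|_{H^s(\Lambda)}$ is independent of $x \in \Lambda$ for every $s \geq 0$, and the uniformity in $x$ in \eqref{Q1_convergence_lemma1} is automatic.

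Second, I would establish convergence in $H^s(\Lambda)$ for every $s$ with $0 \leq s < 2-d/2$ by dominated convergence on the discrete sum
$$\|Q^{(1)}_{\tau,t}(x;\cdot)-G(x;\cdot)\|_{H^s(\Lambda)}^2\;=\;\sum_{k \in \Z^d}\langle k \rangle^{2s}\,\Big|q^{\tau,t}_k-\lambda_k^{-1}\Big|^2\,.$$
For each fixed $k$, $\lambda_k/\tau \to 0$ as $\tau \to \infty$, so $\tau(\ee^{\lambda_k/\tau}-1)\to \lambda_k$ and $\ee^{-\{t\}\lambda_k/\tau}\to 1$, yielding pointwise convergence of the summand to $0$. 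The elementary inequality $\ee^y-1 \geq y$ for $y \geq 0$, combined with $\ee^{-\{t\}\lambda_k/\tau} \leq 1$, gives $|q^{\tau,t}_k|\leq 1/\lambda_k$ uniformly in $\tau \geq 1$ and $t$; hence the summand is dominated by $4\langle k \rangle^{2s}\lambda_k^{-2}\lesssim \langle k \rangle^{2s-4}$, which is summable on $\Z^d$ precisely when $s<2-d/2$.

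Third, I would convert $H^s$ convergence into $L^q$ convergence via Sobolev embedding on $\mathbb{T}^d$ (\cite[Corollary 1.2]{Benyi_Oh}). The point is that for each $q \in \mathcal{Q}_d$ one can choose $s < 2-d/2$ with $H^s(\Lambda)\hookrightarrow L^q(\Lambda)$: for $d=1$ pick $s=1$ (so $H^1\hookrightarrow L^\infty$); for $d=2$ and $q \in [2,\infty)$ pick $s=1-2/q+\epsilon$ for small $\epsilon>0$, while for $q\in[1,2]$ use $L^q \supset L^2$ and finite volume; for $d=3$ and $q \in [1,3)$ pick $s$ just below $1/2$. This yields \eqref{Q1_convergence_lemma1}. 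Part (ii) then follows immediately: by translation invariance $\int_\Lambda |Q^{(1)}_{\tau,t}(x;y)-G(x;y)|^q\,\dd y$ is independent of $x$, so for any fixed $x_0 \in \Lambda$,
$$\|Q^{(1)}_{\tau,t}-G\|_{L^q(\Lambda^2)}\;=\;\|Q^{(1)}_{\tau,t}(x_0;\cdot)-G(x_0;\cdot)\|_{L^q(\Lambda)}\;\longrightarrow\; 0$$
as $\tau \to \infty$, by (i). I do not anticipate a substantive obstacle; the only delicate step is the bookkeeping of Sobolev indices, which also explains why $q=3$ is excluded from $\mathcal{Q}_3$ and $q=\infty$ from $\mathcal{Q}_2$ — these endpoints would require $s = 2-d/2$, which is not allowed.
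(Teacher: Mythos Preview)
Your proof is correct and follows essentially the same route as the paper: both compute the Fourier coefficients of $Q^{(1)}_{\tau,t}(x;\cdot)-G(x;\cdot)$, use the pointwise convergence $q^{\tau,t}_k\to\lambda_k^{-1}$ together with the uniform dominating bound $|q^{\tau,t}_k|\leq \lambda_k^{-1}$, apply dominated convergence to get $H^s$ convergence for $s<2-d/2$, and then invoke Sobolev embedding (the paper does this by referring back to the proof of Proposition~\ref{G_{tau,0}_bound}). Your write-up is somewhat more explicit about the Sobolev indices and the role of translation invariance, but there is no substantive difference.
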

\begin{proof}[Proof of Lemma \ref{Q1_convergence_lemma}]
We argue similarly as in the proof of Proposition \ref{G_{tau,0}_bound}. Note that \eqref{Q1_convergence_lemma2} follows from \eqref{Q1_convergence_lemma1}, so it suffices to prove \eqref{Q1_convergence_lemma1}. By \eqref{positivity} and  \eqref{G_positive} we get the equality of the first two expressions in \eqref{Q1_convergence_lemma2}.
For $x,y \in \Lambda$, we compute by \eqref{Q^12} and \eqref{classical_Green_function} 
\begin{equation}
\label{Q1_convergence_lemma3}
Q^{(1)}_{\tau,t}(x;y)-G(x;y)\;=\; Q^{(1)}_{\tau,t}(y;x)-G(y;x)\;=\;\sum_{k \in \mathbb{Z}^d}\bigg(\frac{\ee^{-\{t\}\lambda_k/\tau}}{\tau(\ee^{\lambda_k/\tau}-1)}-\frac{1}{\lambda_k}\bigg)\,\ee^{2\pi \ii k \cdot (x-y)}\,.
\end{equation}
We note that for fixed $k \in \mathbb{Z}^d$ we have 
\begin{equation}
\label{Q1_convergence_lemma4}
\frac{\ee^{-\{t\}\lambda_k/\tau}}{\tau(\ee^{\lambda_k/\tau}-1)}-\frac{1}{\lambda_k} \rightarrow 0  \quad \mbox{as}\quad \tau \rightarrow \infty\,,
\end{equation}
and 
\begin{equation}
\label{Q1_convergence_lemma5}
\bigg|\frac{\ee^{-\{t\}\lambda_k/\tau}}{\tau(\ee^{\lambda_k/\tau}-1)}-\frac{1}{\lambda_k}\bigg| \;\leq\; \frac{C}{|k|^2+1}\,.
\end{equation}
The claim now follows using \eqref{Q1_convergence_lemma3}--\eqref{Q1_convergence_lemma5}, the dominated convergence theorem and arguing as in the proof of Proposition \ref{G_{tau,0}_bound}.
\end{proof}
We now have the necessary tools to prove Proposition \ref{Approximation_result_2_unbounded}.

\begin{proof}[Proof of Proposition \ref{Approximation_result_2_unbounded}]
Recalling \eqref{J_tau_e^1}, we define the following auxiliary quantities.
\begin{itemize}
\item[(i)] If $\xi \in \mathbf{B}_r$, we let
\begin{equation}
\label{I_{tau,Pi}^{2,xi}_definition}
\mathcal{I}_{\tau,\Pi}^{2,\xi}(\mathbf{t})\;\deq\;\int_{\Lambda^{\mathcal{V}}} \dd \mathbf{y}\,\Bigg[\prod_{i=1}^{m} w(y_{i,1}-y_{i,2})\Bigg] \xi(\mathbf{y_1})\,\prod_{e \in \mathcal{E}} \mathcal{J}^{(1)}_{\tau,e}(\mathbf{y}_e,\mathbf{s})\,.
\end{equation}
\item[(ii)] If $d=1$ and $\xi=\mathrm{Id}_r$, we let
\begin{equation*}
\mathcal{I}_{\tau,\Pi}^{2,\xi}(\mathbf{t})\;\deq\;\int_{\Lambda^{\tilde{\mathcal{V}}}} \dd \mathbf{y}\,\Bigg[ \prod_{i=1}^{m} w (y_{i,1}-y_{i,2})\Bigg] \prod_{e \in \tilde{\mathcal{E}}} \mathcal{J}^{(1)}_{\tau,e}(\mathbf{y}_e,\mathbf{s})\,.
\end{equation*}
\end{itemize}
We show that 
\begin{equation}
\label{Approximation_result_2_unbounded_I1-I2}
\big|\mathcal{I}^{1,\xi}_{\tau,\Pi}(\mathbf{t}) - \mathcal{I}^{2,\xi}_{\tau,\Pi} (\mathbf{t})\big| \rightarrow 0 \quad \mbox{as}\quad \tau \rightarrow \infty\quad \mbox{uniformly in} \,\, \xi \in \mathbf{C}_r\,.
\end{equation}
To this end, we use a telescoping argument.
More precisely, we write 
\begin{align}
\notag
&\prod_{i=1}^{m} w_{\tau} (y_{i,1}-y_{i,2})-\prod_{i=1}^{m} w (y_{i,1}-y_{i,2})
\\
\label{Approximation_result_2_unbounded_I1-I2_telescoping}
&\;=\;\sum_{n=1}^{m} \Bigg[\prod_{i=1}^{n-1} w_{\tau}(y_{i,1}-y_{i,2}) \Bigg] \, \Big(w_{\tau}(y_{n,1}-y_{n,2})-w(y_{n,1}-y_{n,2})\Big)\,\Bigg[\prod_{i=n+1}^{m} w(y_{i,1}-y_{i,2})\Bigg]\,.
\end{align}
We fix $n \in \{1,\ldots,m\}$ and consider the contribution to $\mathcal{I}^{1,\xi}_{\tau,\Pi}(\mathbf{t}) - \mathcal{I}^{2,\xi}_{\tau,\Pi} (\mathbf{t})$ coming from the $n$-th term in \eqref{Approximation_result_2_unbounded_I1-I2_telescoping}.
We need to modify \eqref{cal W_unbounded} above. For $a \in \mathcal{V}_2$ and with the same notation as in \eqref{cal W_unbounded}, we define the interaction $\widetilde{\mathcal{W}}_{\tau}^a$ by
\begin{align} 
\label{cal W_tilde}
\widetilde{\mathcal{W}}_\tau^{a} \;\deq\;
\begin{cases}
w_\tau &\mbox{if }a \in \mathfrak{P}_j,\,a^{*} \in \mathfrak{P}_{\ell} \,\, \mbox{for } 1 \leq j<\ell \leq k \,\,\, \mbox{and }i_a<n \\
w_\tau-w &\mbox{if }a \in \mathfrak{P}_j,\,a^{*} \in \mathfrak{P}_{\ell} \,\, \mbox{for } 1 \leq j<\ell \leq k \,\,\, \mbox{and }i_a=n \\
w &\mbox{if }a \in \mathfrak{P}_j,\,a^{*} \in \mathfrak{P}_{\ell} \,\, \mbox{for } 1 \leq j<\ell \leq k \,\,\, \mbox{and }i_a>n \\
1 &\mbox{if }a \in  \mathfrak{P}_j,\,a^{*} \in  \mathfrak{P}_{\ell} \,\, \mbox{for } 1 \leq \ell<j \leq k \\
w_\tau &\mbox{if }a,a^* \in  \mathfrak{P}_j \,\, \mbox{for } 1 \leq j \leq k \,\, \mbox{and } \vartheta_a=1, i_a<n \\
w_\tau-w &\mbox{if }a,a^* \in  \mathfrak{P}_j \,\, \mbox{for } 1 \leq j \leq k \,\, \mbox{and } \vartheta_a=1, i_a=n \\
w &\mbox{if }a,a^* \in  \mathfrak{P}_j \,\, \mbox{for } 1 \leq j \leq k \,\, \mbox{and } \vartheta_a=1, i_a>n \\
1 &\mbox{if }a,a^* \in  \mathfrak{P}_j \,\, \mbox{for } 1 \leq j \leq k \,\, \mbox{and } \vartheta_a=2\,.
\end{cases}
\end{align}
Then the estimate \eqref{cal W bound_unbounded} is still satisfied with $\mathcal{W}_\tau^a$ replaced by $\widetilde{\mathcal{W}}_\tau^{a}$ (note that \eqref{cal W bound_unbounded2} is not). Furthermore, by using Lemma \ref{w_1D_approximation} (iv) when $d=1$ and Lemma \ref{w_positive_approximation} (iv) when $d=2,3$, we have that 
\begin{equation}
\label{2.125}
\|\widetilde{\mathcal{W}}_\tau^{a_0}\|_{L^p(\Lambda)} \rightarrow 0 \quad \mbox{as}\quad \tau \rightarrow \infty\quad \mbox{whenever} \quad \widetilde{\mathcal{W}}_\tau^{a_0}=w_\tau-w\,.
\end{equation}
We note that by \eqref{cal W_tilde}, a unique such $a_0 \in \mathcal{V}_2$ exists.

We now argue as in Remark \ref{Product_of_subgraphs_Remark}. Analogously to \eqref{I(P)}, given $\mathfrak{P} \in \conn (\mathcal{E})$, we define
\begin{equation}
\label{I'(P)}
\mathfrak{I}'(\mathfrak {P}) \;\deq\; \int \dd \mathbf{y}_{\mathcal{V}_2(\mathfrak{P})} \, \prod_{e \in \mathfrak{P}} \mathcal{J}_{\tau, e}^{(1)}(\mathbf{y}_e, \mathbf{s})\, \prod_{a \in \mathcal{V}_2(\mathfrak{P})} \big|\widetilde{\mathcal{W}}_\tau^{a}(y_{a}-y_{a^*})\big|\,.
\end{equation}
We now use \eqref{Product_of_subgraphs_Remark_n=1}--\eqref{Product_of_subgraphs_Remark_n>=3} for $\mathfrak{I}'$ instead of $\mathfrak{I}$. Note that now there are no $\mathcal{O}(\tau^{-\epsilon_0})$ error terms since we are working only with $Q^{(1)}$ factors. Moreover, it is important to use Remark \ref{Product_of_subgraphs_Remark2_A} (properly adapted to this context). In other words, we are only applying \eqref{cal W bound_unbounded} for $\widetilde{\mathcal{W}}_{\tau}^{a}$
and we are never applying \eqref{cal W bound_unbounded2} (which does not hold for $\widetilde{\mathcal{W}}_{\tau}^{a}$).
Furthermore, when applying the induction base \eqref{Product_of_subgraphs_Remark_n=2a} in this context, we replace $w_\tau$ by $w_\tau-w$ when $a_0 \in \{c_1,c_2\}$. Likewise, if $a_0 \in \{a_1,a_2\}$ with notation as in \eqref{Product_of_subgraphs_Remark_n>=3}, then we replace the corresponding factor of $\mathcal{W}_{\tau}^{a_j}$ by $w_\tau-w$ and we estimate it using \eqref{2.125} instead of \eqref{cal W bound_unbounded}. In particular, we deduce that if $\mathfrak{P} \in \conn(\mathcal{E})$ is such that $a_0 \in \mathcal{V}_2(\mathfrak{P})$, then 
\begin{equation}
\label{a_0_vertex_factor1}
\|\mathfrak{I}'(\mathfrak{P})\|_{L^{2}_{\mathbf{y_1}} L^{\infty}_{\mathcal{V}^{*}(\mathfrak{P})}} \rightarrow 0 \quad \mbox{as}\quad \tau \rightarrow \infty\,.
\end{equation}
We also obtain that $\|\mathfrak{I}'(\mathfrak{P})\|_{L^{\infty}_{\mathbf{y_1}} L^{\infty}_{\mathcal{V}^{*}(\mathfrak{P})}} \rightarrow 0$ if $d=1$ and $\xi=\mathrm{Id}_r$ by replacing the base case \eqref{Product_of_subgraphs_Remark_n=2a} with \eqref{Product_of_subgraphs_Remark_n=1} and applying the same arguments. Finally, by arguing as in the proof of Proposition \ref{Product of subgraphs}, for the other $\mathfrak{P} \in \conn(\mathcal{E})$ we have the bounds \eqref{Inductive inequality 2 unbounded_B} when $\xi \in \mathbf{B}_r$ and \eqref{Inductive inequality 2 unbounded_B_tilde} when $d=1$ and $\xi=\mathrm{Id}_r$ with $\mathfrak{I}$ and $\tilde{\mathfrak{I}}$ replaced by $\mathfrak{I}'$ respectively.  
Putting everything together, we obtain \eqref{Approximation_result_2_unbounded_I1-I2}.

We now show that 
\begin{equation}
\label{Approximation_result_2_unbounded_I2-I}
\mathcal{I}^{2,\xi}_{\tau,\Pi}(\mathbf{t}) \rightarrow \mathcal{I}^{\xi}_{\Pi} \quad \mbox{as}\quad \tau \rightarrow \infty\quad \mbox{uniformly in} \,\, \xi \in \mathbf{C}_r\,.
\end{equation}
We use a further telescoping argument.
Let us first consider the case when $\xi \in \mathbf{B}_r$.
By arguing as in \eqref{Telescoping_1_application} and with the same notation, we have 
\begin{align}
\notag
\big|\mathcal{I}^{2,\xi}_{\tau,\Pi}(\mathbf{t}) - \mathcal{I}^{\xi}_{\Pi}\big| &\;\leq\; \sum_{e_0 \in \mathcal{E}} \int_{\Lambda^{\mathcal{V}}} \dd \mathbf{y}\,\Bigg[\prod_{i=1}^{m} \big|w(y_{i,1}-y_{i,2})\big|\Bigg] \,|\xi(\mathbf{y_1})|\,
\\
\label{Telescoping_2_application}
&\times \Bigg[\Bigg(\prod_{e \prec e_0} \mathcal{J}^{(1)}_{\tau, e}(\mathbf{y}_e,\mathbf{s})\Bigg) \, \big|\mathcal{J}^{(1)}_{\tau,e_0}(\mathbf{y}_{e_0},\mathbf{s})-\mathcal{J}_{e_0}(\mathbf{y}_{e_0})\big|\,\Bigg(\prod_{e \succ e_0} \mathcal{J}_{e}(\mathbf{y}_e)\Bigg)\Bigg]\,.
\end{align}
We fix $e_0 \in \mathcal{E}$ and consider the corresponding term on the right-hand side of \eqref{Telescoping_2_application}.
Let us define
\begin{equation}
\label{J_tilde}
\tilde{\mathcal{J}}_{\tau,e}(\mathbf{y}_e,\mathbf{s}) \;\deq\;
\begin{cases}
\mathcal{J}^{(1)}_{\tau, e}(\mathbf{y}_e,\mathbf{s}) &\mbox{if }e \prec e_0\\
\big|\mathcal{J}^{(1)}_{\tau,e_0}(\mathbf{y}_{e_0},\mathbf{s})-\mathcal{J}_{e_0}(\mathbf{y}_{e_0})\big| &\mbox{if }e=e_0\\
\mathcal{J}_{e}(\mathbf{y}_e) &\mbox{if }e \succ e_0\,. 
\end{cases}
\end{equation}
Given $a \in \mathcal{V}_2$, we define $\mathcal{W}^a$ analogously as in \eqref{cal W_unbounded} with $w_\tau$ replaced by $w$. Then $\mathcal{W}^a$ satisfies $\|\mathcal{W}^a\|_{L^p(\Lambda)} \leq 1+ C\|w\|_{L^p(\Lambda)}$ as in  \eqref{cal W bound_unbounded}. Given $\mathfrak{P} \in \conn (\mathcal{E})$, we define
\begin{equation}
\label{I''_definition}
\mathfrak{I}''(\mathfrak {P}) \;\deq\; \int \dd \mathbf{y}_{\mathcal{V}_2(\mathfrak{P})} \, \prod_{e \in \mathfrak{P}} \tilde{\mathcal{J}}_{\tau, e}(\mathbf{y}_e, \mathbf{s})\, \prod_{a \in \mathcal{V}_2(\mathfrak{P})} \big|\mathcal{W}^{a}(y_{a}-y_{a^*})\big|\,.
\end{equation}
We now apply \eqref{Product_of_subgraphs_Remark_n=1}--\eqref{Product_of_subgraphs_Remark_n>=3} and Remark  \ref{Product_of_subgraphs_Remark2_A} with proper modifications to the context of $\mathfrak{I}''$. More precisely, all factors of $w_\tau$ are replaced by $w$. The factor corresponding to the edge $e$, which was previously of the form $Q^{(1)}_{\tau,\zeta}$ gets replaced by $|Q^{(1)}_{\tau,\zeta}-G|$ if $e=e_0$ and by $G$ if $e>e_0$. In order to deduce this, we use \eqref{J_tau_e^1}, Definition \ref{J_e_classical_unbounded}, and \eqref{J_tilde}. Finally, we use Lemma \ref{QP_embedding} and Lemma \ref{Q1_convergence_lemma} and deduce \eqref{Approximation_result_2_unbounded_I2-I} when $\xi \in \mathbf{B}_r$.
The proof of \eqref{Approximation_result_2_unbounded_I2-I} when $d=1$ and $\xi= \mathrm{Id}_r$ proceeds analogously, with minor notational modifications. 
We omit the details.
The claim of the proposition now follows by using Lemma \ref{Approximation_result_Q1}, \eqref{Approximation_result_2_unbounded_I1-I2}, and \eqref{Approximation_result_2_unbounded_I2-I}.
\end{proof}

Given $m,r \in \mathbb{N}$ and $\xi \in \mathbf{C}_r$, we let
\begin{equation}
\label{a_infty}
a^{\xi}_{\infty,m} \;\deq\; \frac{(-1)^m}{m!\,2^m} \sum_{\Pi \in \mathfrak{R}(d,m,r)} \mathcal{I}^{\xi}_\Pi\,,
\end{equation}
for $\mathcal{I}^{\xi}_\Pi$ as given by Definition \ref{I_infinity_unbounded}. We now show that this quantity corresponds to  the limit as $\tau \rightarrow \infty$ of the explicit term $a^{\xi}_{\tau,m}$ given by \eqref{Explicit_term_a_unbounded}.

\begin{proposition}
\label{a_convergence}
Let $m,r \in \mathbb{N}$ be given. We have 
\begin{equation*}
a^{\xi}_{\tau,m} \rightarrow a^{\xi}_{\infty,m} \quad \mbox{as}\quad \tau \rightarrow \infty\quad \mbox{uniformly in} \,\, \xi \in \mathbf{C}_r\,.
\end{equation*}
\end{proposition}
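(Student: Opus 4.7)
The strategy is to combine the pointwise (in $\mathbf{t}$) convergence from Proposition \ref{Approximation_result_2_unbounded} with the uniform upper bound from Proposition \ref{Product of subgraphs} (i) via dominated convergence, and then unwind the definitions to match the normalisation. Concretely, starting from \eqref{Wick_application_identity_unbounded_all_D}, I would first note that the set $\mathfrak{R}(d,m,r)$ has finite cardinality (bounded by $(2m+r)!$ as used already in the proof of Proposition \ref{Product of subgraphs} (ii)), so it suffices to establish, for each fixed $\Pi \in \mathfrak{R}(d,m,r)$, that
\begin{equation*}
\int_{\mathfrak{A}(m)} \dd \mathbf{t}\,\mathcal{I}^{\xi}_{\tau,\Pi}(\mathbf{t})\;\longrightarrow\;\frac{(1-2\eta)^m}{m!}\,\mathcal{I}^{\xi}_{\Pi}\quad\text{as }\tau \to \infty\,,\,\text{ uniformly in }\xi \in \mathbf{C}_r\,,
\end{equation*}
using that $\mathcal{I}^{\xi}_{\Pi}$ is independent of $\mathbf{t}$ and $\mathfrak{A}(m)$ has Lebesgue measure $(1-2\eta)^m/m!$. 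Multiplying by $(-1)^m/[(1-2\eta)^m 2^m]$ and summing over $\Pi$ then recovers \eqref{a_infty}.

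For each fixed $\Pi$ and each $\mathbf{t} \in \mathfrak{A}(m)$, Proposition \ref{Approximation_result_2_unbounded} gives $\mathcal{I}^{\xi}_{\tau,\Pi}(\mathbf{t}) \to \mathcal{I}^{\xi}_{\Pi}$ as $\tau \to \infty$, uniformly in $\xi \in \mathbf{C}_r$. The uniform bound $|\mathcal{I}^{\xi}_{\tau,\Pi}(\mathbf{t})| \leq C_0^{m+r}(1+\|w\|_{L^p(\Lambda)})^m$ from Proposition \ref{Product of subgraphs} (i), which is independent of $\mathbf{t}$ and $\xi$ and integrable over $\mathfrak{A}(m)$, together with the analogous bound on $\mathcal{I}^{\xi}_{\Pi}$ obtained by taking the classical limit in the proof of Proposition \ref{Product of subgraphs}, lets me apply the dominated convergence theorem in the variable $\mathbf{t}$. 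To upgrade the convergence to be uniform in $\xi$, I would write
\begin{equation*}
\sup_{\xi \in \mathbf{C}_r}\bigg|\int_{\mathfrak{A}(m)} \dd \mathbf{t}\,\big[\mathcal{I}^{\xi}_{\tau,\Pi}(\mathbf{t})-\mathcal{I}^{\xi}_{\Pi}\big]\bigg|\;\leq\;\int_{\mathfrak{A}(m)}\dd \mathbf{t}\,\sup_{\xi \in \mathbf{C}_r}\big|\mathcal{I}^{\xi}_{\tau,\Pi}(\mathbf{t})-\mathcal{I}^{\xi}_{\Pi}\big|\,,
\end{equation*}
and then apply dominated convergence to the right-hand side, using that $\sup_\xi |\mathcal{I}^{\xi}_{\tau,\Pi}(\mathbf{t})-\mathcal{I}^{\xi}_{\Pi}| \to 0$ pointwise in $\mathbf{t}$ (this is the uniform-in-$\xi$ statement of Proposition \ref{Approximation_result_2_unbounded}) and is dominated by $2\,C_0^{m+r}(1+\|w\|_{L^p(\Lambda)})^m$.

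Once the convergence of each integral is established, I would sum over the finitely many $\Pi \in \mathfrak{R}(d,m,r)$, factor out the prefactor, and verify that the $(1-2\eta)^m$ factors cancel to match the definition \eqref{a_infty}. Since no steps are needed beyond combining two results already proved, there is no serious obstacle; the only point requiring a slight bit of care is that the convergence $\mathcal{I}^{\xi}_{\tau,\Pi}(\mathbf{t}) \to \mathcal{I}^{\xi}_{\Pi}$ is not uniform in $\mathbf{t}$ (the singularity of the time-evolved Green functions as $\mathbf{t}$ approaches the boundary of $\mathfrak{A}(m)$ prevents this), so one must route the argument through dominated convergence rather than interchanging limits naively.
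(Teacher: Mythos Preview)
Your proposal is correct and takes essentially the same approach as the paper: the paper's proof simply cites \eqref{Wick_application_identity_unbounded_all_D}, \eqref{a_infty}, Proposition \ref{Approximation_result_2_unbounded}, Proposition \ref{Product of subgraphs} (i), and the dominated convergence theorem, which is exactly the combination you spell out. Your additional care in routing the uniformity in $\xi$ through the inequality $\sup_\xi |\int \cdots| \leq \int \sup_\xi |\cdots|$ and then applying dominated convergence to the integrand $\sup_\xi |\mathcal{I}^{\xi}_{\tau,\Pi}(\mathbf{t})-\mathcal{I}^{\xi}_{\Pi}|$ is the correct way to make the one-line proof rigorous.
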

\begin{proof}
The claim follows from  \eqref{Wick_application_identity_unbounded_all_D}, \eqref{a_infty}, Proposition \ref{Approximation_result_2_unbounded}, Proposition \ref{Product of subgraphs} (i), and the dominated convergence theorem.
\end{proof}

\subsection{Bounds on the remainder term}
\label{Bounds on the remainder term}

This section is devoted to estimating the quantum remainder term $R^{\xi}_{\tau,M}(z)$ given by \eqref{Remainder_term_R_unbounded}. Since we have already proved the required upper bound on the explicit terms in Proposition \ref{Product of subgraphs} (ii) above, most of the arguments will follow in a similar way as in the case of bounded interaction potentials \cite[Section 2.7]{FrKnScSo1}. We will emphasise the necessary modifications and refer the reader to \cite[Section 2.7]{FrKnScSo1} for more details and explanations. We recall that for $d=2,3$, we are taking $\eta \in (0,1/4]$ in \eqref{modified_GCE}. Strictly speaking, this was not necessary for the previous sections, but in this section, we use it crucially. For $d=1$, we take $\eta=0$. We now state the main result of the section.
\begin{proposition} 
\label{remainder_term}
Let $r,M \in \mathbb{N}, \xi \in \mathbf{C}_r$, and $z \in \mathbb{C}$ with $\re z \geq 0$ be given. Then $R^{\xi}_{\tau,M}(z)$ given by \eqref{Remainder_term_R_unbounded} satisfies the following bounds.
\begin{itemize}
\item[(i)] If $d=2,3$, we have
\begin{equation*}
|R_{\tau,M}^{\xi}(z)|\;\leq\; \bigg(\frac{Cr}{\eta^2}\bigg)^r \, \Bigg[\frac{C(1+\|w\|_{L^p(\Lambda)})}{\eta^2}\Bigg]^M\,|z|^M\,M!\,.
\end{equation*}
\item[(ii)] If $d=1$, we have 
\begin{equation*}
|R_{\tau,M}^{\xi}(z)|\;\leq\; (Cr)^r \, C^M\,(1+\|w\|_{L^p(\Lambda)})^M\,|z|^M\,M!\,.
\end{equation*}
\end{itemize}
\end{proposition}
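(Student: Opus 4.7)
The plan is to adapt the strategy of \cite[Section 2.7]{FrKnScSo1} to the present setting of unbounded interaction potentials, following the same outline but invoking Proposition \ref{Product of subgraphs} in place of the corresponding bound for $w\in L^\infty(\Lambda)$. The four main ingredients are: cyclicity of the trace to symmetrise the modification $\ee^{-\eta H_{\tau,0}}$; Cauchy--Schwarz (or a Hölder inequality) on Schatten classes to isolate the perturbed final evolution $\ee^{-t_M\tilde H_\tau}$, where $\tilde H_\tau \deq H_{\tau,0}+\frac{z}{1-2\eta}W_\tau$; a positivity argument controlling this perturbed evolution; and an application of the graphical bound of Section \ref{Bounds on the explicit terms} to the remaining unperturbed chain. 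The distribution \eqref{cal W_unbounded} of $w_\tau$-factors among connected components of the resulting graphs is inherited unchanged from Section \ref{Bounds on the explicit terms}.

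First, write $A_\tau \deq \Theta_\tau(\xi)\,\ee^{-(1-\eta-t_1)H_{\tau,0}}\,W_\tau\cdots \ee^{-(t_{M-1}-t_M)H_{\tau,0}}\,W_\tau$ for the product of $\Theta_\tau(\xi)$ with the first $M$ factors of $W_\tau$ appearing in \eqref{Remainder_term_R_unbounded}. Using cyclicity to symmetrise the outer factor $\ee^{-\eta H_{\tau,0}}$ and Cauchy--Schwarz for Hilbert--Schmidt operators yields
\begin{equation*}
\big|\tr\bigl(\ee^{-\eta H_{\tau,0}/2}A_\tau\,\ee^{-t_M\tilde H_\tau}\,\ee^{-\eta H_{\tau,0}/2}\bigr)\big|\leq \bigl\|\ee^{-\eta H_{\tau,0}/2}A_\tau\bigr\|_{\mathfrak{S}^2}\cdot\bigl\|\ee^{-t_M\tilde H_\tau}\,\ee^{-\eta H_{\tau,0}/2}\bigr\|_{\mathfrak{S}^2}.
\end{equation*}
When $d=1$ and $\eta=0$, one instead uses an $\mathfrak{S}^1$--$\mathfrak{S}^\infty$ Hölder inequality together with the operator-norm bound $\|\ee^{-t_M\tilde H_\tau}\|_{\mathrm{op}}\leq 1$ on the perturbed evolution, and the first factor is then estimated directly in trace norm by the graphical machinery.

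The positivity argument controlling the second factor relies on the nonnegativity of $W_\tau$: when $d=1$, pointwise nonnegativity of $w_\tau$ gives $W_\tau \geq 0$ as an operator, while for $d=2,3$, the positive-type property of $w_\tau$ together with the Wick-ordered form \eqref{W_renormalised_unbounded} lets one write $W_\tau = \tfrac12 \sum_{k\in\mathbb{Z}^d} \hat w_\tau(k)\,N_\tau(k)^*\,N_\tau(k)\geq 0$, where $N_\tau(k) \deq \int \dd x\,\ee^{-2\pi\ii k\cdot x}\bigl(\phi_\tau^*(x)\phi_\tau(x)-\varrho_\tau\bigr)$ is the Fourier coefficient of the density fluctuation. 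Combined with $\re z\geq 0$, this yields $\|\ee^{-t_M\tilde H_\tau}\|_{\mathrm{op}}\leq 1$, and hence
\begin{equation*}
\bigl\|\ee^{-t_M\tilde H_\tau}\,\ee^{-\eta H_{\tau,0}/2}\bigr\|_{\mathfrak{S}^2}\leq \tr\bigl(\ee^{-\eta H_{\tau,0}}\bigr)^{1/2}.
\end{equation*}
The first factor is then handled by expanding $\bigl\|\ee^{-\eta H_{\tau,0}/2}A_\tau\bigr\|_{\mathfrak{S}^2}^2 = \tr(A_\tau^*\,\ee^{-\eta H_{\tau,0}}\,A_\tau)$ via the quantum Wick theorem. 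This produces a graphical sum of the same form as in Section \ref{The graphical representation}, but with $2M$ interaction vertices and an interior observable vertex coming from $\Theta_\tau(\xi)^*\Theta_\tau(\xi)$, which acts effectively as a $2r$-particle operator. The bounds of Proposition \ref{Product of subgraphs} then give a chain estimate of order $C^{M+r}(1+\|w\|_{L^p(\Lambda)})^{2M}(4M+2r)!$. Taking the square root, combining with the time-simplex volume $(1-2\eta)^M/M!$ and the prefactor $|z|^M/(1-2\eta)^M$, and using the elementary combinatorial estimate $((4M+2r)!)^{1/2}\leq C^M (Cr)^r M!$ yields the claimed bounds. The $\eta$-dependent factors $1/\eta^2$ in (i) arise from partition-function ratios $\tr(\ee^{-\eta H_{\tau,0}})/\tr(\ee^{-H_{\tau,0}})$ and from a finer $\eta$-splitting of the margins distributed among the vertices of each graph, exactly as in \cite[Section 2.7]{FrKnScSo1}.

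The principal obstacle is to verify the positivity $\|\ee^{-t_M\tilde H_\tau}\|_{\mathrm{op}}\leq 1$ in the Wick-ordered $d=2,3$ regime; this reduces to the Fourier decomposition of $W_\tau$ indicated above, which crucially uses $\hat w_\tau(k)\geq 0$. A secondary technical point is to adapt the inductive graphical proof of Proposition \ref{Product of subgraphs} so that the interior observable vertex $\Theta_\tau(\xi)^*\Theta_\tau(\xi)$ is treated on the same footing as the endpoints of an open path in Section \ref{Bounds on the explicit terms}. This requires only a mild rearrangement of the induction base and step, since the graphical estimate is insensitive to the position of the observable vertices along the chain.
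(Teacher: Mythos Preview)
Your proposal contains a genuine gap in the normalization bookkeeping. When you apply Cauchy--Schwarz in $\mathfrak{S}^2$ (for $d=2,3$) and bound the second factor by $\tr(\ee^{-\eta H_{\tau,0}})^{1/2}$, you then have to divide the whole expression by $\tr(\ee^{-H_{\tau,0}})$ (this normalization is built into \eqref{Remainder_term_R_unbounded}). You claim the resulting partition-function ratio $\tr(\ee^{-\eta H_{\tau,0}})/\tr(\ee^{-H_{\tau,0}})$ produces only a power of $1/\eta$, but in fact
\[
\frac{\tr(\ee^{-\eta H_{\tau,0}})}{\tr(\ee^{-H_{\tau,0}})}
=\prod_{k\in\mathbb{Z}^d}\frac{1-\ee^{-\lambda_k/\tau}}{1-\ee^{-\eta\lambda_k/\tau}}
\;\gtrsim\;(1/\eta)^{c\,\tau^{d/2}},
\]
since each of the $\sim\tau^{d/2}$ low modes $|k|\lesssim\sqrt\tau$ contributes a factor $\sim 1/\eta$. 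This diverges as $\tau\to\infty$, so the bound you obtain is not uniform in $\tau$. The same mismatch appears on the first factor: the doubled trace $\tr(A_\tau^*\,\ee^{-\eta H_{\tau,0}}A_\tau)$ has total free-evolution time $T=2-\eta-2t_M\neq 1$, so the Wick theorem yields Green functions and a partition-function factor $\tr(\ee^{-TH_{\tau,0}})$ adapted to time $T$, not the $G_{\tau,t}$ and $\tr(\ee^{-H_{\tau,0}})$ used in Proposition~\ref{Product of subgraphs}. For $d=1$ your $\mathfrak{S}^1$--$\mathfrak{S}^\infty$ split has the same defect: removing $\ee^{-t_M\tilde H_\tau}$ via the operator norm leaves total time $1-t_M<1$ in the remaining trace.

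The paper avoids this by two devices you are missing. For $d=2,3$ it uses a \emph{three-way} H\"older inequality in the \emph{rescaled} Schatten spaces $\tilde{\mathfrak{S}}^{1/t}$ with exponents $2/(3\eta)$, $1/(1-2\eta-|\mathbf u|)$, $1/(\eta/2+|\mathbf u|)$ summing to $1$; each exponent matches the free-evolution time of its factor, so the partition-function normalizations cancel exactly. The perturbed factor is then controlled by the Schatten bound $\|\ee^{-t(H_{\tau,0}+\frac{z}{1-2\eta}W_\tau)}\|_{\tilde{\mathfrak S}^{1/t}}\le 1$ (not just an operator-norm bound), and the third factor is estimated by Proposition~\ref{Product of subgraphs}~(iii) with total time $1$, so no doubling of the graph is needed. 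For $d=1$ the paper uses the Feynman--Kac formula and $w_\tau\ge 0$ to bound $\ee^{-t_M\tilde H_\tau}$ by $\ee^{-t_M H_{\tau,0}}$ \emph{pointwise on kernels} (after replacing $\xi$ by $|\xi|$), which restores total time $1$ inside the trace and feeds directly into Proposition~\ref{Product of subgraphs}.
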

\begin{proof}
We first prove (i). Let us recall some notation used in the proof of \cite[Proposition 2.27]{FrKnScSo1}. For $0<t_{M}<\cdots<t_2<t_1<1-2\eta$ (as in the support of the time integral in \eqref{Remainder_term_R_unbounded}, up to measure zero), we let $u_1 \deq 1-2\eta-t_1$ and $u_j \deq t_{j-1}-t_{j}$ for $j=2,\ldots,M$. Furthermore, we let 
$\mathbf{u} \deq (u_1,\ldots,u_M)$ and $|\mathbf{u}|\deq u_1+\cdots +u_M$. We then rewrite \eqref{Remainder_term_R_unbounded} as
\begin{equation}
\label{definition_of_g_1}
R^{\xi}_{\tau,M}(z) \;=\; (-1)^M\,\frac{z^M}{(1-2\eta)^{M}} \, \int_{(0,1-2\eta)^M} \dd \mathbf{u}\, \mathbf{1}_{|\mathbf{u}|<1-2\eta} \, g_{\tau,M}^{\xi}(z,\mathbf{u})\,,
\end{equation}
for 
\begin{align*}
g_{\tau,M}^{\xi}(z,\mathbf{u})\;\deq\;&\tr \Big( \Theta_{\tau}(\xi)\,\ee^{-(\eta+u_1)\,H_{\tau,0}}\,W_\tau\,\ee^{-u_2\,H_{\tau,0}}
\,W_\tau\,\ee^{-u_3\,H_{\tau,0}} \,\cdots
\\
&\cdots \,\ee^{-u_M\,H_{\tau,0}} \,W_{\tau}\,\ee^{-(1-2\eta-|\mathbf{u}|)\,(H_{\tau,0}+\frac{z}{1-2\eta}\,W_\tau)}\,\ee^{-\eta\,H_{\tau,0}}\Big) \Big/ \tr \big(\ee^{-H_{\tau,0}}\big)\,.
\end{align*}
We henceforth work with $\mathbf{u} \in (0,1-2\eta)^M$ such that $|\mathbf{u}|<1-2\eta$. As in \cite[(2.84)]{FrKnScSo1}, we have
\begin{align} 
\notag
|g_{\tau,M}^{\xi}(z,\mathbf{u})|\;\leq\;&\big\|\ee^{-\eta H_{\tau,0}}\,\Theta_\tau(\xi)\,\ee^{-\eta/2\,H_{\tau,0}}\big\|_{\tilde {\mathfrak{S}}^{2/(3\eta)}(\mathcal{F})}\,\big\|\ee^{-(1-2\eta-|\mathbf{u}|)\,(H_{\tau,0}+\frac{z}{1-2\eta}W_\tau)}\big\|_{\tilde{\mathfrak {S}}^{1/(1-2\eta-|\mathbf{u}|)}(\mathcal{F})}
\\
\label{estimate_on_g}
&\times \big\|\ee^{-(\eta/2+u_1)\,H_{\tau,0}}\,W_{\tau}\,
\ee^{-u_2\,H_{\tau,0}}\,W_\tau\,\ee^{-u_3\,H_{\tau,0}}
\cdots \ee^{-u_M\,H_{\tau,0}}\,W_{\tau}\big\|_{\tilde{\mathfrak{S}}^{1/(\eta/2+|\mathbf{u}|)}(\mathcal{F})}\,.
\end{align}
Here, we are working with the rescaled Schatten norms given by
\begin{equation*}
\|\mathcal{A}\|_{\tilde{\mathfrak{S}}^q(\mathcal{F})} \;\deq\;
\begin{cases}
\|\mathcal{A}\|_{\mathfrak{S}^q(\mathcal{F})}\Big/\big(\tr (\ee^{-H_{\tau,0}})\big)^{1/q} &\mbox{if } q \in [1,\infty)
\\
\|\mathcal{A}\|_{\mathfrak{S}^{\infty}(\mathcal{F})} &\mbox{if } q=\infty\,.
\end{cases}
\end{equation*}
The estimate \eqref{estimate_on_g} then follows by applying cyclicity of the trace and H\"{o}lder's inequality in Schatten spaces. For details of the latter, see \cite[Lemma 2.28]{FrKnScSo1}.

We now estimate each of the three factors on the right-hand-side of \eqref{estimate_on_g}.
By \cite[Lemma 2.32]{FrKnScSo1}, the first term on the right-hand side of \eqref{estimate_on_g} is
\begin{equation}
\label{Remainder_term_1_bound}
\;\leq\; (Cr \eta^{-2})^r\,. 
\end{equation}
Note that the proof of \cite[Lemma 2.32]{FrKnScSo1} directly applies since this term does not depend on the interaction.

In order to estimate the second term on the right-hand side of \eqref{estimate_on_g}, we first note that the operator $W_\tau$ is positive. In order to do this, we expand $w_\tau$ into a Fourier series in \eqref{W_renormalised_unbounded} and obtain that 
\begin{equation}
\label{W_tau_positivity}
W_\tau \;=\; \frac{1}{2} \sum_{k \in \mathbb{Z}^d} \hat{w}_{\tau}(k) \, \bigg[\int \dd x\, \ee^{2 \pi \ii k \cdot x} \big(\phi_{\tau}^{*}(x) \phi_{\tau}(x)- \varrho_{\tau}\big)\bigg] \, \bigg[\int \dd y\, \ee^{2 \pi \ii k \cdot y} \big(\phi_{\tau}^{*}(y) \phi_{\tau}(y)- \varrho_{\tau}\big)\bigg]^{*}\,
\end{equation}
which is positive by Lemma \ref{w_positive_approximation} (i). The proof of \cite[Lemma 2.30]{FrKnScSo1} now shows that for all $t \in (0,1)$ we have
\begin{equation}
\label{Remainder_term_2_bound}
\Big\|\ee^{-t\,(H_{\tau,0}+\frac{z}{1-2\eta} \,W_\tau)}\Big\|_{\tilde{\mathfrak {S}}^{1/t}(\mathcal{F})}\,
\;\leq\;1
\end{equation}

By arguing analogously as in the proof of \cite[Lemma 2.29]{FrKnScSo1}, we deduce that the third term on the right-hand side of \eqref{estimate_on_g} is 
\begin{equation}
\label{Remainder_term_3_bound}
\;\leq\; \Big(CM^2 \eta^{-2} (1+\|w\|_{L^p(\Lambda)}\Big)^M\,.
\end{equation}
The only difference in the proof is that we use Proposition \ref{Product of subgraphs} (iii) instead of \cite[Proposition 2.20]{FrKnScSo1} (for the details of this step in the context of bounded interaction potentials, see \cite[(2.88)]{FrKnScSo1}).
Using \eqref{Remainder_term_1_bound}--\eqref{Remainder_term_3_bound} in \eqref{estimate_on_g}, we obtain that
\begin{equation}
\label{Remainder_term_123_bound}
|g_{\tau,M}^{\xi}(z,\mathbf{u})|\;\leq\;\bigg(\frac{Cr}{\eta^2}\bigg)^r\,\Bigg[\frac{C(1+\|w\|_{L^p(\Lambda)})}{\eta^2}\Bigg]^M\,M!\,.
\end{equation}
Substituting \eqref{Remainder_term_123_bound} into \eqref{definition_of_g_1}, we deduce (i).

We note that (ii) follows from the Feynman-Kac formula and the proof of Proposition \ref{Product of subgraphs} with $\xi$ replaced by $\tilde{\xi} \in \mathbf{C}_r$, which is the operator whose kernel is the absolute value of the kernel of $\xi$.
The arguments are analogous to those in \cite[Proposition 4.5, Corollary 4.6]{FrKnScSo1} when $\xi \in \mathbf{B}_r$ with minor modifications when $\xi=\mathrm{Id}_r$ (for the details of the latter see \cite[Section 4.2]{FrKnScSo1}).
\end{proof}
We recall the function $A^{\xi}_{\tau}$ given in \eqref{Duhamel_expansion_unbounded_A}.
\begin{lemma} 
\label{A^xi_{tau,z}_analytic}
Let $r \in \mathbb{N}$ and $\xi \in \mathbf{C}_r$ be given.
The function $A^{\xi}_{\tau}$ is analytic in the right-half plane $\{z \in \mathbb{C}\,, \re z >0\}$.
\end{lemma}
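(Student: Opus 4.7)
Since $\tr\ee^{-H_{\tau,0}}$ is a positive constant independent of $z$, it suffices to show that
\[
N(z)\;\deq\;\tr\bigl(\Theta_\tau(\xi)\,\ee^{-\eta H_{\tau,0}}\,\ee^{-(1-2\eta)H_{\tau,0} - zW_\tau}\,\ee^{-\eta H_{\tau,0}}\bigr)
\]
is analytic on $\{\re z>0\}$. The positivity of $W_\tau$ as a self-adjoint operator on $\mathcal{F}$, already exploited in the proof of Proposition \ref{remainder_term}, will be crucial: it follows from \eqref{W_tau_positivity} together with $\hat w_\tau\geq 0$ (Lemma \ref{w_positive_approximation}(i)) when $d=2,3$, and from the normal-ordered form \eqref{W_tau_1D} together with pointwise positivity of $w_\tau$ (Lemma \ref{w_1D_approximation}(i)) when $d=1$.

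My plan is to decompose $N(z) = \sum_{n\in\mathbb{N}} N_n(z)$ into partial traces over the $n$-particle sectors $\mathfrak{H}^{(n)}$, prove each $N_n$ is entire in $z$, and verify that the series converges uniformly on $\{\re z\geq 0\}$. For each fixed $\tau\geq 1$ and $n\in\mathbb{N}$, the restriction $W_\tau|_{\mathfrak{H}^{(n)}}$ is a bounded self-adjoint multiplication operator on $\mathfrak{H}^{(n)}$ (with norm $\lesssim_{\tau,n}\|w_\tau\|_{L^\infty(\Lambda)}<\infty$ by Lemma \ref{w_1D_approximation}(ii) or Lemma \ref{w_positive_approximation}(ii)), so the Dyson expansion of $\ee^{-(1-2\eta)H_{\tau,0} - zW_\tau}|_{\mathfrak{H}^{(n)}}$ converges in operator norm for every $z\in\mathbb{C}$, with the $k$-th term bounded by $|z|^k\|W_\tau|_{\mathfrak{H}^{(n)}}\|^k/k!$. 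Combined with the fact that $\ee^{-\eta H_{\tau,0}}|_{\mathfrak{H}^{(n)}}$ is trace class and $\Theta_\tau(\xi)|_{\mathfrak{H}^{(n)}}$ is bounded, this shows that each $N_n$ is entire.

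For uniform convergence of $\sum_n N_n$ on the closed right half-plane, I would apply cyclicity of the trace and the rescaled Schatten-H\"older inequality as in \eqref{estimate_on_g}, restricted to the sector $\mathfrak{H}^{(n)}$. The key input is the sectorwise Schatten bound on the Gibbs factor, valid uniformly in $z\in\{\re z\geq 0\}$, which follows from $W_\tau\geq 0$ via the argument of \cite[Lemma 2.30]{FrKnScSo1}. Combined with the estimate on $\Theta_\tau(\xi)$ provided by \cite[Lemma 2.32]{FrKnScSo1} (giving a factor bounded polynomially in $n$ of degree $r$), this produces a bound of the form
\[
|N_n(z)|\;\leq\; C\,n^r\,\tr_{\mathfrak{H}^{(n)}}\ee^{-H_{\tau,0}}\Big/\tr_{\mathcal F}\ee^{-H_{\tau,0}},
\]
uniform in $z\in\{\re z\geq 0\}$. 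The right-hand side is summable in $n$ since $\tr(\mathcal N^r\ee^{-H_{\tau,0}})<\infty$ by $h\geq\kappa>0$. The Weierstrass $M$-test then gives uniform convergence of $\sum_n N_n$ on $\{\re z\geq 0\}$, and the standard theorem on uniform limits of holomorphic functions yields analyticity of $N$ on $\{\re z>0\}$. When $d=1$ and $\eta=0$, the same strategy applies with the sectorwise Schatten bound replaced by the analogous one obtained from the Feynman-Kac representation, as used elsewhere in the paper.

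The main obstacle is obtaining the uniform-in-$z$ Schatten bound on $\ee^{-(1-2\eta)H_{\tau,0} - zW_\tau}$ for complex $z$ with $\re z\geq 0$; here the positivity $W_\tau\geq 0$ (i.e.\ the defocusing assumption) is indispensable. Without it one would instead need to invoke Kato's holomorphic semigroup theorem for type-(B) sectorial forms, which is a more delicate route.
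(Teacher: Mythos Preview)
Your proposal is correct and follows essentially the same route as the paper's proof: the paper also reduces to the two key ingredients you identify, namely that $W_\tau$ is bounded on each finite-particle sector (from $w_\tau\in L^\infty(\Lambda)$) and that $W_\tau\geq 0$, and then invokes \cite[Lemma 2.34]{FrKnScSo1} for the remaining details. Your sector decomposition, entireness of each $N_n$ via the Dyson series, and uniform summability via the Schatten--H\"older machinery are precisely the mechanism behind that cited lemma, so you have effectively unpacked what the paper leaves as a reference.
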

\begin{proof}
We argue analogously as in the proof of \cite[Lemma 2.34]{FrKnScSo1}. The proof carries over since by Lemma \ref{w_positive_approximation} (ii) (when $d=2,3$) and Lemma \ref{w_1D_approximation} (ii) (when $d=1$), the operator $W_{\tau}$ is bounded on the range of $P^{(\leq n)}$, which is defined to be the orthogonal projection onto 
$\bigoplus_{n'=0}^{n} \mathfrak{H}^{(n')} \subseteq \mathcal{F}$. Furthermore, $W_{\tau}$ is positive by \eqref{W_tau_positivity} when $d=2,3$ and by using Lemma \ref{w_1D_approximation} (i) when $d=1$. 
Namely when $d=1$, we use \eqref{CCR_tau} to rewrite \eqref{W_tau_1D} as 
\begin{equation*}
W_{\tau}\;=\;\frac{1}{2} \int \dd x\,\dd y\, w_\tau(x-y) \big[\phi^{*}_{\tau}(x)\,\phi^{*}_{\tau}(y)\big]\,\big[\phi_{\tau}^{*}(x)\,\phi_{\tau}^{*}(y)\big]^{*} \;\geq\;0\,.
\end{equation*}
Once one has these two ingredients, the rest of the proof follows in the same way.
We refer the reader to the proof of \cite[Lemma 2.34]{FrKnScSo1} for the full details.
\end{proof}

\section{Analysis of the classical system}
\label{Analysis of the classical system}

\subsection{General framework}
\label{Analysis of the classical system: General framework}
As in Section \ref{Analysis of the quantum system}, we assume throughout this section that $w$ is a $d$-admissible interaction potential as in Definition \ref{interaction_potential_w}.
In order to study the classical system for $d=2,3$, we first prove Lemma \ref{W_Wick-ordered} (i), which gives us the construction of the classical interaction $W$. Recall that the construction of the classical interaction when $d=1$ does not require renormalisation and is given in \eqref{W_1D_unbounded}--\eqref{W_1D_unbounded2} above. 
Before proceeding to this proof, let us set up some notation and conventions that we will use in all dimensions.

In addition to the classical Green function \eqref{classical_Green_function}, we also work with the \emph{truncated classical Green function}. Given $K \in \mathbb{N}$ and recalling \eqref{lambda_k}--\eqref{e_k}, we define $G_{[K]}: \Lambda \times \Lambda \rightarrow \mathbb{C}$ by
\begin{equation}
\label{G_K}
G_{[K]} (x;y)\;\deq\; \sum_{|k| \leq K} \frac{1}{\lambda_k}\,e_k(x)\,\overline{e_k(y)}\,.
\end{equation}
One can then rewrite \eqref{varrho_K} as $\varrho_{[K]}=G_{[K]}(0;0)=G_{[K]}(x;x)$ for all $x \in \Lambda$. We note a convergence result for the $G_{[K]}$.

\begin{lemma}
\label{GK_convergence_lemma}
Let $q \in \mathcal{Q}_d$, for $\mathcal{Q}_d$ as in \eqref{Q_d}. Then, we have
\begin{equation*}
\|G_{[K]}(x;\cdot)-G(x;\cdot)\|_{L^q(\Lambda)}\;=\;\|G_{[K]}(\cdot;x)-G(\cdot;x)\|_{L^q(\Lambda)} \rightarrow 0 \quad \mbox{as}\quad K \rightarrow \infty\,, 
\end{equation*}
uniformly in $x \in \Lambda$. 
\end{lemma}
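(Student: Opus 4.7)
The plan is to exploit two facts: the symmetry $G(x;y)=G(y;x)$ (and similarly for $G_{[K]}$) immediately yields the first equality, and the translation invariance of $h$ on the torus reduces the uniformity in $x$ to a statement about a single function on $\Lambda$. Indeed, \eqref{classical_Green_function} and \eqref{G_K} give $G(x;y)=\tilde G(x-y)$ and $G_{[K]}(x;y)=\tilde G_{[K]}(x-y)$ for functions $\tilde G,\tilde G_{[K]}:\Lambda\to\mathbb{C}$, so
\begin{equation*}
\|G(x;\cdot)-G_{[K]}(x;\cdot)\|_{L^q(\Lambda)}\;=\;\|\tilde G-\tilde G_{[K]}\|_{L^q(\Lambda)}\,,
\end{equation*}
which is manifestly independent of $x$. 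It therefore suffices to show that $\|\tilde G-\tilde G_{[K]}\|_{L^q(\Lambda)}\to 0$ as $K\to\infty$ for $q\in\mathcal{Q}_d$.

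To do this I would combine the explicit Fourier decay with Sobolev embedding on $\Lambda=\mathbb{T}^d$. From \eqref{lambda_k} and the definitions, the Fourier coefficients of $\tilde G-\tilde G_{[K]}$ are $\widehat{(\tilde G-\tilde G_{[K]})}(k)=\mathbf{1}_{|k|>K}/\lambda_k$, which satisfy $|\widehat{(\tilde G-\tilde G_{[K]})}(k)|\leq C\,\mathbf{1}_{|k|>K}\langle k\rangle^{-2}$. Hence for any $s$ with $0\leq s<2-d/2$,
\begin{equation*}
\|\tilde G-\tilde G_{[K]}\|_{H^s(\Lambda)}^2\;\leq\;C\sum_{|k|>K}\langle k\rangle^{2s-4}\;\longrightarrow\;0\quad\text{as }K\to\infty\,,
\end{equation*}
since $2s-4<-d$.

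The final step is to choose such an $s$ compatibly with Sobolev embedding into $L^q$. On the torus, $H^s(\Lambda)\hookrightarrow L^q(\Lambda)$ provided $s>d(1/2-1/q)_+$ (using the embedding result cited at the end of Section 1.5 of the paper, together with the trivial $L^2\hookrightarrow L^q$ for $q\leq 2$ since $\Lambda$ has finite measure). The compatibility window $d(1/2-1/q)_+<s<2-d/2$ is nonempty precisely when $1/q>1-2/d$, which reads $q<\infty$ (resp.\ $q<3$) when $d=2$ (resp.\ $d=3$), and is vacuous when $d=1$; that is, exactly when $q\in\mathcal{Q}_d$ as in \eqref{Q_d}. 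Picking any admissible $s$ gives $\|\tilde G-\tilde G_{[K]}\|_{L^q(\Lambda)}\lesssim\|\tilde G-\tilde G_{[K]}\|_{H^s(\Lambda)}\to 0$, completing the proof. No real obstacle arises beyond matching the integrability window $\mathcal{Q}_d$ with the critical Sobolev exponent for $G$; the arithmetic above is precisely what pins this down.
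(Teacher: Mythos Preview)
Your proof is correct and follows essentially the same approach as the paper: write the Fourier series of the difference (which the paper records as \eqref{GK_convergence_lemma_proof}) and then argue via the $H^s$ bound plus Sobolev embedding exactly as in the proof of Proposition~\ref{G_{tau,0}_bound}~(i). Your explicit reduction by translation invariance to a single function $\tilde G-\tilde G_{[K]}$ is a clean way to make the uniformity in $x$ immediate, but it is equivalent to the paper's observation that the Fourier coefficients of $G_{[K]}(x;\cdot)-G(x;\cdot)$ are bounded independently of $x$.
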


\begin{proof}
For $x,y \in \Lambda$ and $K \in \mathbb{N}$ we write
\begin{equation}
\label{GK_convergence_lemma_proof}
G_{[K]}(x;y)-G(x;y)\;=\;G_{[K]}(y;x)-G(y;x)\;=-\;\sum_{|k| >K}\frac{1}{\lambda_k}\,\ee^{2\pi \ii k \cdot (x-y)}\,.
\end{equation}
The claim now follows by arguing as in the proof of Proposition \ref{G_{tau,0}_bound} (i).
\end{proof}

Throughout this section, we use the classical Wick theorem applied to polynomials of the classical free field \eqref{classical_free_field} or its truncated version \eqref{phi_K}. In order to encode the pairings that one obtains in this way, we can use the graph structure given in Definitions \ref{cal X_unbounded}, \ref{def_pairing_unbounded}, and \ref{def_collapsed_graph_unbounded} when $d=2,3$ and in Definition \ref{def_pairing_1D_unbounded} when $d=1$. The main difference is that now the classical fields commute and so it is no longer necessary to impose the order as in Definition \ref{cal X_unbounded} (ii) when $d=2,3$ (or with appropriate modifications when $d=1$). As in Definition \ref{cal X_unbounded}, each vertex $(i,\vartheta,\delta) \in \mathcal{X}$ corresponds to a factor of $\phi$ or $\bar{\phi}$ ($\phi_{[K]}$ or $\bar{\phi}_{[K]}$ in the truncated setting), where the $\phi,\phi_{[K]}$ are obtained by replacing the $\phi_{\tau}$ and the $\bar{\phi},\bar{\phi}_{[K]}$ are obtained by replacing the $\phi_{\tau}^{*}$. 
In other words, the former case corresponds to $\delta=+1$ and the latter to $\delta=-1$. Furthermore, the $i$ indexes the factors of the interaction $w$ for $i=1,\ldots,m$ and the observable $\xi$ when $i=m+1$.
From context, it will be clear whether we are working in the truncated setting or not. We adapt a lot of conventions and notation from Section \ref{The graphical representation} without further comment. With this setup, we now prove Lemma \ref{W_Wick-ordered} (i). Note that proof of Lemma \ref{W_Wick-ordered} (ii) is given in Section \ref{Endpoint-admissible interaction potentials_3} below. 

\begin{proof}[Proof of Lemma \ref{W_Wick-ordered} (i)]
In order to prove the claim, it suffices to show that $W_{[K]}$ converges in $L^m(\mu)$ for $m$
an even integer. We consider $K_1,\ldots,K_m \in \mathbb{N}$ with $\min\{M_1,\ldots,M_m\}=K$ and we let
$T_{\mathbf{K}}\;\deq\; \int \dd \mu \, W_{[K_1]} \cdots W_{[K_m]}$. We want to show that $T_{\mathbf{K}}$ converges to a limit as $K \rightarrow \infty$ and that this limit does not depend on $\mathbf{K} \deq (K_1,\ldots,K_m)$. 
By \eqref{phi_K} and \eqref{G_K}, we note that for all $x,y \in \Lambda$ and $L, M \in \mathbb{N}$, we have
\begin{equation}
\label{phi_K2}
\int \dd \mu \, \phi_{[L]}(x)\,\bar{\phi}_{[M]}(y)\;=\; G_{[L \wedge M ]}(x;y)\,.
\end{equation}
Here $L \wedge M$ denotes the minimum of $\{L,M\}$.

By using Wick's theorem, Definitions \ref{cal X_unbounded}, \ref{def_pairing_unbounded}, and \ref{def_collapsed_graph_unbounded} and \eqref{phi_K2}, we can write
\begin{equation}
\label{T_K}
T_{\mathbf{K}}\;=\;\frac{1}{2^m} \sum_{\Pi \in \mathfrak{R}(m,0)} \mathcal{I}_{\mathbf{K},\Pi}\,,
\end{equation} 
where with $(\mathcal{V},\mathcal{E}) \equiv (\mathcal{V}_{\Pi},\mathcal{E}_{\Pi})$ as in Definition \ref{def_collapsed_graph_unbounded}, we define
\begin{equation}
\label{I_{K,Pi}}
\mathcal{I}_{\mathbf{K},\Pi}\;\deq\;\int_{\mathcal{V}} \dd \mathbf{y}\,\Bigg[\prod_{i=1}^{m}w(y_{i,1}-y_{i,2})\Bigg]\,
\prod_{\{a,b\} \in \mathcal{E}} G_{[K_{i_a} \wedge K_{i_b}]}(y_a;y_b)\,.
\end{equation}
Furthermore, we let
\begin{equation}
\label{I_Pi}
\mathcal{I}_{\Pi}\;\deq\;\int_{\mathcal{V}} \dd \mathbf{y}\,\Bigg[\prod_{i=1}^{m}w(y_{i,1}-y_{i,2})\Bigg]\,
\prod_{\{a,b\} \in \mathcal{E}} G(y_a;y_b)\,.
\end{equation}
Note that, $\mathcal{I}_{\Pi}=\mathcal{I}^{\emptyset}_{\Pi}$ by taking $r=0$ in Definition \ref{I_infinity_unbounded} (i). In particular, by Proposition \ref{Product of subgraphs} (i) and  Proposition \ref{Approximation_result_2_unbounded}, we have that $\mathcal{I}_{\Pi}$ is well-defined and finite.

By \eqref{T_K}, it suffices to show that for all $\Pi \in \mathfrak{R}(m,0)$ we have
\begin{equation}
\label{I_K_convergence}
\mathcal{I}_{\mathbf{K},\Pi} \rightarrow \mathcal{I}_{\Pi} \quad \mbox{as}\quad K \rightarrow \infty\,.
\end{equation}
The proof of \eqref{I_K_convergence} follows by using an analogous telescoping argument to the one used in the proof of \eqref{Approximation_result_2_unbounded_I2-I} above. The only difference is that, instead of Lemma \ref{Q1_convergence_lemma}, we use Lemma \ref{GK_convergence_lemma}. The proof is now concluded as in \cite[Proof of Lemma 1.5; given in Section 3.1]{FrKnScSo1}.
\end{proof}

\subsection{The perturbative expansion}
\label{Expansion_classical}
Analogously to \eqref{theta_tau_unbounded}, given $\xi \in \mathbf{C}_r$, we define the random variable
\begin{equation}
\label{theta_unbounded}
\Theta(\xi)\;\deq\;\int \dd x_1 \cdots \dd x_r\,\dd y_1 \cdots \dd y_r\,\xi(x_1, \dots, x_r; y_1, \dots, y_r)\,\bar{\phi}(x_1) \cdots \bar{\phi}(x_r)\, \phi(y_1) \cdots \phi(y_r)\,.
\end{equation}
Furthermore, analogously to \eqref{rho_tau_fraction_unbounded}, we write
\begin{equation*}
\rho(\Theta(\xi))\;=\;\frac{\int \Theta(\xi)\,\ee^{-zW}\,\dd \mu}{\int \ee^{-zW}\,\dd \mu}\;=\;\frac{\hat{\rho}_{1} (\Theta(\xi))}{\hat{\rho}_{1}(\mathrm{1})}\,,
\end{equation*}
where for $z \in \mathbb{C}$ with $\re z \geq 0$ and $X \equiv X(\omega)$ a random variable we define
\begin{equation} 
\label{rho_hat_unbounded}
\hat{\rho}_{z}(X)\;\deq\;\int X\,\ee^{-zW}\,\dd \mu\,.
\end{equation}
Given $M \in \mathbb{N}$, we expand the exponential in $\ee^{-zW}$ up to order $M$ to obtain that
\begin{equation}
\label{A^xi}
A^{\xi}(z) \;\deq\; \hat{\rho}_z(\Theta(\xi)) \;=\; \sum_{m=0}^{M-1} a^{\xi}_{m}\,z^m + R^{\xi}_{M}(z)\,,
\end{equation}
where
\begin{equation}
\label{a^xi_m}
a^{\xi}_{m} \;\deq\; \frac{(-1)^m}{m!} \int \Theta(\xi)\,W^{m}\,\dd \mu
\end{equation}
and
\begin{equation}
\label{R^xi_m}
R^{\xi}_{m}(z)\;\deq\;\frac{(-1)^{M}\,z^{M}}{M!}\,\int \Theta(\xi)\, W^{M}\,\ee^{-z' \,W}\,\dd \mu\,,
\end{equation}
for some $z' \in [0,z]$.

We recall the definition of $a^{\xi}_{\infty,m}$ given by \eqref{a_infty} above.
\begin{lemma} 
\label{a^xi_m_identity}
Given $m,r \in \mathbb{N}$ and $\xi \in \mathbf{C}_r$, we have $a^{\xi}_m=a^{\xi}_{\infty,m}$.
\end{lemma}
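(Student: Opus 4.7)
The goal is to recognise the integral $\int \Theta(\xi)\,W^m\,\dd\mu$ as a sum of Gaussian contractions produced by the classical Wick theorem, and then to identify those contractions with the pairings $\Pi \in \mathfrak{R}(d,m,r)$ that parametrise $\mathcal{I}^{\xi}_{\Pi}$ in Definition \ref{I_infinity_unbounded}.

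The plan is to treat $d=1$ first, where $W$ is defined pointwise by \eqref{W_1D_unbounded} with no renormalisation. Expanding $W^m$ pulls out a prefactor $(1/2)^m$ from the factors of $1/2$ in the definition of $W$, so that $\Theta(\xi)\,W^m$ becomes an integral over the $2(m+r)$ space variables indexed by $\mathcal{V}$ of $[\prod_i w(y_{i,1}-y_{i,2})]\,\xi(\mathbf{y_1})$ times a product of $4m+2r$ fields $\phi, \bar{\phi}$ indexed by $\mathcal{X}(m,r)$. Applying the classical Wick theorem to the Gaussian expectation of this product produces a sum over pairings of $\mathcal{X}(m,r)$ that contract a $\phi$ with a $\bar{\phi}$, which is exactly $\mathfrak{Q}(m,r)=\mathfrak{R}(1,m,r)$. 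Each contraction $\{\alpha,\beta\}$ yields the factor $\int \phi(x_{\alpha})\bar{\phi}(x_{\beta})\,\dd\mu = G(x_{\alpha};x_{\beta})$, so after collapsing vertices as in Definition \ref{def_collapsed_graph_unbounded} (resp.\ Definition \ref{collapsed_graph_1D_delta_unbounded} when $\xi=\mathrm{Id}_r$) and comparing with Definition \ref{J_e_classical_unbounded}, the contribution of $\Pi$ equals $\mathcal{I}^{\xi}_{\Pi}$. Multiplying by $(-1)^m/m!$ together with the prefactor $(1/2)^m$ gives \eqref{a_infty}.

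For $d=2,3$, $W$ is not pointwise-defined but is the $L^m(\mu)$ limit of the Wick-ordered $W_{[K]}$ from \eqref{W_K_unbounded} (by Lemma \ref{W_Wick-ordered} (i)). The strategy is to perform the Wick expansion at the truncated level with $\phi$ replaced by $\phi_{[K]}$ in $\Theta(\xi)$ (call the result $\Theta_{[K]}(\xi)$) and $W$ replaced by $W_{[K]}$, and then pass to the limit $K\to\infty$. Because each quadratic factor $|\phi_{[K]}(x)|^2-\varrho_{[K]}$ appearing in $W_{[K]}$ is Wick-ordered, the self-contractions forbidden by condition (ii) of Definition \ref{def_pairing_unbounded} drop out, leaving exactly $\mathfrak{R}(m,r)=\mathfrak{R}(d,m,r)$. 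Each surviving pairing produces, after collapse, a truncated analogue $\mathcal{I}^{\xi}_{\mathbf{K},\Pi}$ defined by replacing $G$ with $G_{[K_{i_a} \wedge K_{i_b}]}$ in Definition \ref{I_infinity_unbounded}, exactly as in \eqref{I_{K,Pi}} but with the additional $\xi(\mathbf{y_1})$ insertion. The telescoping bound of Lemma \ref{GK_convergence_lemma} combined with the domination from Proposition \ref{Product of subgraphs} (i) (applied with $w$ in place of $w_\tau$ and $G$ in place of $Q^{(1)}_{\tau,t}$, with the $Q^{(2)}$-terms absent) yields $\mathcal{I}^{\xi}_{\mathbf{K},\Pi} \to \mathcal{I}^{\xi}_{\Pi}$ by dominated convergence, exactly as in the proof of Lemma \ref{W_Wick-ordered} (i).

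The remaining point is to justify passing the limit on the left-hand side, that is, to show $\int \Theta_{[K]}(\xi)\,W_{[K]}^m\,\dd\mu \to \int \Theta(\xi)\,W^m\,\dd\mu$. This follows by writing $\Theta_{[K]}(\xi)\,W_{[K]}^m - \Theta(\xi)\,W^m$ as a telescoping sum, applying H\"older's inequality on each term, and using $W_{[K]} \to W$ in $L^{m+1}(\mu)$ from Lemma \ref{W_Wick-ordered} together with the $L^{m+1}(\mu)$ convergence $\Theta_{[K]}(\xi) \to \Theta(\xi)$ (which holds since $\xi$ is Hilbert--Schmidt and $\phi \in \bigcap_q L^q(\mu; H^s(\Lambda))$ for $s<1-d/2$ by \eqref{classical_free_field_regularity}, so Sobolev embedding places the relevant products in all $L^q(\mu)$). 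The main technical obstacle is this last $L^{m+1}(\mu)$ approximation for $\Theta(\xi)$, which for $\xi \in \mathbf{B}_r$ reduces to a straightforward computation using Wick's theorem again to express moments of $|\Theta_{[K]}(\xi)-\Theta(\xi)|^2$ in terms of $G_{[K]}-G$ and to bound them using Lemma \ref{GK_convergence_lemma}; the case $d=1$, $\xi=\mathrm{Id}_r$ is handled by similar moment estimates on $\int |\phi_{[K]}|^{2r}\,\dd x$. Combining these ingredients gives the identity $a_m^{\xi}=a^{\xi}_{\infty,m}$.
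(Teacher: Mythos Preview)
Your proof is correct and follows the same overall strategy as the paper: reduce to a truncated Wick expansion and pass to the limit $K\to\infty$ using the telescoping argument based on Lemma~\ref{GK_convergence_lemma}. There is, however, one organisational difference worth noting. You truncate \emph{both} $W$ and $\Theta(\xi)$ by $K$, so that every edge in your truncated value carries a $G_{[K]}$; this forces you to prove the additional convergence $\Theta_{[K]}(\xi)\to\Theta(\xi)$ in $L^{m+1}(\mu)$, which you handle correctly via Wick's theorem and Lemma~\ref{GK_convergence_lemma}. The paper instead leaves $\Theta(\xi)$ untruncated and only replaces $W$ by $W_{[K]}$; applying Wick's theorem to $\int\Theta(\xi)\,W_{[K]}^m\,\dd\mu$ then produces the mixed kernel $\mathcal{J}_{[K],e}$ of \eqref{a^xi_m_identity_proof_2}, equal to $G$ on edges joining two $\mathcal{V}_1$ vertices and to $G_{[K]}$ on edges touching $\mathcal{V}_2$. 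The paper's route is slightly more economical (one fewer approximation to justify), while yours has the minor advantage of a uniform $G_{[K]}$ on all edges, making the telescoping step notationally cleaner. Either way the limit identification \eqref{I^xi_K_convergence} goes through identically.
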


\begin{proof}
When $d=1$, the claim follows from \eqref{W_1D_unbounded}, \eqref{a_infty} and \eqref{a^xi_m} by applying Wick's theorem.
Let us now consider the case when $d=2,3$.
We use Lemma \ref{W_Wick-ordered} (i) and argue as in the proof of \cite[Lemma 3.1]{FrKnScSo1} to deduce that \eqref{a^xi_m} can be rewritten as 
\begin{equation}
\label{a^xi_m2}
a^{\xi}_{m}\;=\; \frac{(-1)^{m}}{m!\,2^{m}}\,\lim_{K \rightarrow \infty} \sum_{\Pi \in \mathfrak{R}(d,m,r)} \mathcal{I}^{\xi}_{[K],\Pi}\,, 
\end{equation}
where for $K \in \mathbb{N}$ we let
\begin{equation}
\label{a^xi_m_identity_proof_1}
\mathcal{I}^{\xi}_{[K],\Pi}\;\deq\;\int_{\Lambda^{\mathcal{V}}} \dd \mathbf{y}\, \Bigg[\prod_{i=1}^{m} w(y_{i,1}-y_{i,2})\Bigg]\,\xi(\mathbf{y_1})\,\prod_{e \in \mathcal{E}} \mathcal{J}_{[K],e}(\mathbf{y}_e)
\end{equation}
and for $e \in \mathcal{E} \equiv \mathcal{E}_{\Pi}$
\begin{equation}
\label{a^xi_m_identity_proof_2}
\mathcal{J}_{[K],e}(\mathbf{y}_e) \;\deq\;
\begin{cases}
G(y_{a};y_{b}) & \mbox{if } e=\{a,b\}\,\,\mbox{for}\,\,a,b \in \mathcal{V}_1
\\
G_{[K]}(y_{a};y_{b}) & \mbox{if } e=\{a,b\}\,\,\mbox{for}\,\,a \in \mathcal{V}_2\,\,\mbox{or}\,\,a \in \mathcal{V}_2\,.
\end{cases}
\end{equation}

By applying a telescoping argument analogously as in the proof of \eqref{Approximation_result_2_unbounded_I2-I}  (see also the proof of \eqref{I_K_convergence} above) and by using Lemma \ref{GK_convergence_lemma}, we deduce that 
\begin{equation}
\label{I^xi_K_convergence}
\mathcal{I}^{\xi}_{[K],\Pi} \rightarrow \mathcal{I}^{\xi}_{\Pi} \quad \mbox{as}\quad K \rightarrow \infty\,.
\end{equation}
Here $\mathcal{I}^{\xi}_{\Pi}$ is given by Definition \ref{I_infinity_unbounded} (i).
The claim now follows from \eqref{a_infty}, \eqref{a^xi_m2}, and \eqref{I^xi_K_convergence}.
\end{proof}
We can now deduce upper bounds on the explicit terms in the expansion \eqref{A^xi}.
\begin{corollary} 
\label{a^xi_m_corollary}
Given $m,r \in \mathbb{N}$ and $\xi \in \mathbf{C}_r$, we have 
\begin{equation*}
|a^{\xi}_{m}|\;\leq\;(Cr)^{r} \,C^m  \,(1+\|w\|_{L^p(\Lambda)})^{m}\,m!\,.
\end{equation*}
\end{corollary}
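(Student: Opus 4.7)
The plan is to deduce the corollary directly from the results already established in the preceding sections, with essentially no additional work required. Specifically, I would combine the identification of the classical explicit term with its ``graph-sum'' representation, the quantum-to-classical convergence of explicit terms, and the uniform bound on the quantum explicit terms.

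First, I would invoke Lemma \ref{a^xi_m_identity} to rewrite
\begin{equation*}
a^{\xi}_m \;=\; a^{\xi}_{\infty,m}\,,
\end{equation*}
which expresses the $m$-th classical coefficient as the graph sum \eqref{a_infty}. Next, by Proposition \ref{a_convergence}, for every fixed $m,r \in \mathbb{N}$ we have
\begin{equation*}
a^{\xi}_{\tau,m} \;\longrightarrow\; a^{\xi}_{\infty,m} \quad \text{as } \tau \rightarrow \infty\,,
\end{equation*}
uniformly in $\xi \in \mathbf{C}_r$. In particular, $|a^{\xi}_m| = |a^{\xi}_{\infty,m}| \leq \liminf_{\tau \to \infty}|a^{\xi}_{\tau,m}|$. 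Finally, applying the uniform bound of Proposition \ref{Product of subgraphs} (ii), which gives
\begin{equation*}
|a^{\xi}_{\tau,m}| \;\leq\; (Cr)^r\,C^m\,(1+\|w\|_{L^p(\Lambda)})^{m}\,m!
\end{equation*}
uniformly in $\tau \geq 1$ and $\xi \in \mathbf{C}_r$, and then passing to the limit $\tau \to \infty$ yields the claimed bound.

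There is no real obstacle here: all the substantive work is contained in the inductive graphical estimates inside Proposition \ref{Product of subgraphs} (in particular, the proof of \eqref{Inductive inequality 2 unbounded_B} via the splitting of the time-evolved quantum Green functions) and in the telescoping/convergence arguments of Section \ref{Convergence of the explicit terms} used to establish Proposition \ref{a_convergence}. The corollary is essentially a two-line consequence of these, valid uniformly in $\xi \in \mathbf{C}_r$ since both the quantum bound and the quantum-to-classical convergence are uniform in $\xi$. One minor remark is that the constant $C$ appearing in the statement may be taken to be the same constant appearing in Proposition \ref{Product of subgraphs} (ii), and does not depend on $r$ or $m$, only on $d$ and $\kappa$.
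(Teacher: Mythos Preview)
Your proposal is correct and follows essentially the same approach as the paper: the paper's proof is a one-line citation of Proposition \ref{Product of subgraphs} (ii), Proposition \ref{a_convergence}, and Lemma \ref{a^xi_m_identity}, which is exactly the combination you use.
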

\begin{proof}
The claim follows from Proposition \ref{Product of subgraphs} (ii), Proposition \ref{a_convergence}, and Lemma \ref{a^xi_m_identity}.
\end{proof}
Furthermore, we can estimate the remainder term \eqref{R^xi_m}.
\begin{proposition} 
\label{R^xi_M_bound_proposition}
Given $M,r \in \mathbb{N}$, $\xi \in \mathbf{C}_r$, and $z \in \mathbb{C}$ with $\re z \geq 0$, we have
\begin{equation*}
|R^{\xi}_{M}(z)|\;\leq\;(Cr)^r \, C^M\,(1+\|w\|_{L^p(\Lambda)})^M\,|z|^M\,M!\,.
\end{equation*}
\end{proposition}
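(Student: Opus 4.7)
The plan is to bound $|R^\xi_M(z)|$ by Cauchy--Schwarz in $L^2(\mu)$, reducing to two separate moment estimates which are then handled via Corollary~\ref{a^xi_m_corollary} (applied with $r=0$) and by elementary Wick-theorem/hypercontractivity arguments for $\Theta(\xi)$.

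First I will verify that $W \geq 0$ almost surely. For $d=1$ this is immediate from \eqref{W_1D_unbounded}, using $w \geq 0$ pointwise. For $d=2,3$, expanding $w$ as a Fourier series and using $\hat w \geq 0$ yields
\[
W_{[K]} \;=\; \tfrac12 \sum_{k \in \mathbb{Z}^d} \hat w(k)\,\Bigl|\int \ee^{2\pi \ii k \cdot x}\bigl(|\phi_{[K]}(x)|^2 - \varrho_{[K]}\bigr)\,\dd x\Bigr|^2 \;\geq\; 0,
\]
and passing to the $L^2(\mu)$ limit from Lemma~\ref{W_Wick-ordered} (i) (along a subsequence, a.s.) gives $W \geq 0$. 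Since $z' \in [0,z]$ with $\re z \geq 0$, we have $\re z' \geq 0$ and therefore $|\ee^{-z'W}| \leq 1$. Combined with Cauchy--Schwarz in $L^2(\mu)$, \eqref{R^xi_m} yields
\[
|R^\xi_M(z)| \;\leq\; \frac{|z|^M}{M!} \int |\Theta(\xi)|\,W^M\,\dd\mu \;\leq\; \frac{|z|^M}{M!}\,\|\Theta(\xi)\|_{L^2(\mu)}\,\|W^M\|_{L^2(\mu)}.
\]

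For the moment bound on $W$, I apply Corollary~\ref{a^xi_m_corollary} with $r=0$ and $m=2M$. Since $\Theta(\emptyset)=1$ and $W \geq 0$, formula \eqref{a^xi_m} gives $\int W^{2M}\,\dd\mu = (2M)!\,|a^{\emptyset}_{2M}|$, and the corollary yields $|a^{\emptyset}_{2M}| \leq C^{2M}(1+\|w\|_{L^p})^{2M}(2M)!$. Hence
\[
\int W^{2M}\,\dd\mu \;\leq\; C^{2M}(1+\|w\|_{L^p})^{2M}\,\bigl((2M)!\bigr)^2,
\]
and using the Stirling-type inequality $(2M)! \leq 4^M (M!)^2$ gives
\[
\|W^M\|_{L^2(\mu)} \;\leq\; C^M(1+\|w\|_{L^p})^M\,(M!)^2.
\]
For the bound on $\Theta(\xi)$, I claim $\|\Theta(\xi)\|_{L^2(\mu)} \leq (Cr)^r$ uniformly over $\xi \in \mathbf{C}_r$. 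When $\xi \in \mathbf{B}_r$, expand $\|\Theta(\xi)\|_{L^2}^2 = \int \Theta(\xi)\,\overline{\Theta(\xi)}\,\dd\mu$ via the classical Wick theorem: this is a sum of $(2r)!$ pairings, each yielding a product of classical Green functions contracted against two copies of the kernel of $\xi$; applying $\|\xi\|_{\mathfrak{S}^2} \leq 1$ together with $\|G\|_{\mathfrak{S}^2}^2 = \sum_k \lambda_k^{-2} < \infty$ (valid in all dimensions $d=1,2,3$), each pairing is bounded by $C^r$, so the total is at most $(2r)!\,C^r \leq (Cr)^{2r}$. For $d=1$ and $\xi=\mathrm{Id}_r$, one has $\Theta(\mathrm{Id}_r) = \|\phi\|_{L^2(\Lambda)}^{2r}$; since $\|\phi\|_{L^2}^2$ lies in the second Wiener chaos, hypercontractivity gives $\|\|\phi\|_{L^2}^{2r}\|_{L^2(\mu)} \leq (Cr)^r (\tr G)^r$, which is finite precisely because $\tr G < \infty$ in dimension $d=1$.

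Combining the three estimates,
\[
|R^\xi_M(z)| \;\leq\; \frac{|z|^M}{M!}\,(Cr)^r \cdot C^M(1+\|w\|_{L^p})^M\,(M!)^2 \;=\; (Cr)^r\,C^M(1+\|w\|_{L^p})^M\,|z|^M\,M!,
\]
as required. The main technical obstacle is the estimate $\|\Theta(\xi)\|_{L^2(\mu)} \leq (Cr)^r$ in the $d=1$, $\xi=\mathrm{Id}_r$ case, where $\xi$ is not Hilbert--Schmidt and one must exploit the dimension-specific fact $\tr G < \infty$; the Wick-theorem counting for $\xi \in \mathbf{B}_r$ is essentially routine but must be done carefully to extract the $(Cr)^r$ dependence on the particle number. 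The positivity of the Wick-ordered $W$ for $d=2,3$ also requires passing to an a.s.\ convergent subsequence, which is standard given Lemma~\ref{W_Wick-ordered}.
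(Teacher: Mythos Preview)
Your proof is correct and, for $\xi \in \mathbf{B}_r$, follows exactly the paper's argument: nonnegativity of $W$, Cauchy--Schwarz, then Corollary~\ref{a^xi_m_corollary} with $r=0$, $m=2M$ for the $W$-factor and the Wick theorem for $\|\Theta(\xi)\|_{L^2(\mu)}$.

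The one genuine difference is the treatment of the case $d=1$, $\xi=\mathrm{Id}_r$. You apply Cauchy--Schwarz uniformly and then separately bound $\|\Theta(\mathrm{Id}_r)\|_{L^2(\mu)}=\|\,\|\phi\|_{L^2}^{2r}\,\|_{L^2(\mu)}$ via hypercontractivity, using $\tr G<\infty$ in one dimension. The paper instead observes that $\Theta(\mathrm{Id}_r)=\bigl(\int|\phi|^2\bigr)^r\geq 0$, so that $|R^{\mathrm{Id}_r}_M(z)|\leq \frac{|z|^M}{M!}\int\Theta(\mathrm{Id}_r)W^M\,\dd\mu = |z|^M\,|a^{\mathrm{Id}_r}_M|$ directly, and then applies Corollary~\ref{a^xi_m_corollary} with $m=M$ and the observable $\xi=\mathrm{Id}_r$ itself. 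The paper's route is slightly more economical (no Cauchy--Schwarz, no separate hypercontractivity input), while yours has the minor advantage of treating all $\xi\in\mathbf{C}_r$ by a single scheme. Both are valid.
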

\begin{proof}
We first note that $W \geq 0$. When $d=1$, this follows from \eqref{W_1D_unbounded} and Definition \ref{interaction_potential_w} (ii). 
When $d=2,3$, we argue as in \eqref{W_tau_positivity}. Namely, we use \eqref{W_K_unbounded} and Definition \ref{interaction_potential_w} (iii) to deduce that for $K \in \mathbb{N}$, we have
\begin{equation}
\label{R^xi_M_bound_proposition_1}
W_{[K]} \;=\; \frac{1}{2} \sum_{k \in \mathbb{Z}^d} \hat{w}(k) \, \bigg|\int \dd x\, \ee^{2 \pi \ii k \cdot x} \big(|\phi_{[K]}(x)|^2- \varrho_{[K]} \big)\bigg|^2 \;\geq\;0\,.
\end{equation}
The nonnegativity of $W$ then follows from \eqref{R^xi_M_bound_proposition_1} and Lemma \ref{W_Wick-ordered} (i).

For $\xi \in \mathbf{B}_r$, we deduce the claim by arguing as in the proof of \cite[Lemma 3.3]{FrKnScSo1}.
Namely, we use the nonnegativity of the interaction, take absolute values, and use the Cauchy-Schwarz inequality 
\begin{equation}
\label{R^xi_M_bound}
|R^{\xi}_{M}(z)|\;\leq\; \frac{|z|^M}{M!}\,\bigg(\int \Theta(\xi)^{2}\,\dd \mu\bigg)^{1/2}\,\bigg(\int W^{2M}\,\dd \mu\bigg)^{1/2}\,.
\end{equation}
For the second factor in \eqref{R^xi_M_bound} we use Wick's theorem and for the third, we use Corollary \ref{a^xi_m_corollary} with $m=2M$ and $r=0$ (in this case, the observable is $\xi=\emptyset$). We hence deduce that the right-hand side of \eqref{R^xi_M_bound} is
\begin{equation}
\label{R^xi_M_bound_proposition_2}
\;\leq\; \frac{|z|^M}{M!}\,(Cr)^r\,(1+\|w\|_{L^p(\Lambda)})^{M} (CM)^{2M}\;\leq\;(Cr)^r \, C^M\,(1+\|w\|_{L^p(\Lambda)})^M\,|z|^M\,M!\,.
\end{equation}
For $d=1$ and $\xi=\mathrm{Id}_r$, we have $\Theta(\xi)=(\int \dd x\,|\phi(x)|^2)^r \geq 0$.
Hence, again using the nonnegativity of $W$, we obtain that 
\begin{equation*}
|R^{\xi}_{M}(z)|\;\leq\; \frac{|z|^M}{M!}\,\int \Theta(\xi)\, W^{M}\,\dd \mu\;\leq\;(Cr)^r \, C^M\,(1+\|w\|_{L^p(\Lambda)})^M\,|z|^M\,M!\,,
\end{equation*}
by applying Corollary \ref{a^xi_m_corollary} with $m=M$.
\end{proof}
We have a classical analogue of Lemma \ref{A^xi_{tau,z}_analytic}.
\begin{lemma} 
\label{A^xi_analytic}
Let $r \in \mathbb{N}$ and $\xi \in \mathbf{C}_r$ be given.
The function $A^{\xi}$ given by \eqref{A^xi} is analytic in the right-half plane $\{z \in \mathbb{C}\,, \re z >0\}$.
\end{lemma}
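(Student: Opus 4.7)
The plan is to verify analyticity of $A^{\xi}$ on $\{\re z > 0\}$ by applying Morera's theorem directly to the integral representation $A^{\xi}(z) = \int \Theta(\xi)\,\ee^{-zW}\,\dd\mu$. For each fixed sample point $\omega$ in the probability space, the map $z \mapsto \ee^{-z W(\omega)}$ is entire, so the only work is to justify the interchange of the $\mu$-integral with a contour integral.

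First I would collect the integrability ingredients. Since $W \geq 0$ almost surely (established in the proof of Proposition \ref{R^xi_M_bound_proposition}), we have the pointwise bound $|\Theta(\xi)\,\ee^{-zW}| \leq |\Theta(\xi)|\,\ee^{-(\re z) W} \leq |\Theta(\xi)|$ for all $z$ with $\re z \geq 0$. Next I would record that $\Theta(\xi) \in L^m(\mu)$ for every $m \geq 1$; this follows from the classical Wick theorem applied to \eqref{theta_unbounded} together with the fact that $\xi$ is Hilbert-Schmidt (or the identity when $d=1$), combined with $\tr h^{-1}<\infty$ for $d=1$ and Lemma \ref{W_Wick-ordered} (i) type moment bounds for $d=2,3$. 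Similarly, $W \in L^m(\mu)$ for every $m$: for $d=1$ this is immediate from \eqref{W_1D_unbounded2} and the Gaussianity of $\phi$, while for $d=2,3$ it is exactly the content of Lemma \ref{W_Wick-ordered} (i).

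With these ingredients in hand, I would prove analyticity as follows. Fix a triangle $\gamma$ contained in the open half-plane $\{\re z > 0\}$, and let $\delta > 0$ be such that $\re z \geq \delta$ for every $z$ on $\gamma$. For such $z$, the pointwise bound above gives $|\Theta(\xi)\,\ee^{-zW}| \leq |\Theta(\xi)|$, which is an element of $L^1(\mu)$ independent of $z$. Since $\gamma$ has finite length, the integrand $(z,\omega) \mapsto \Theta(\xi)(\omega)\,\ee^{-z W(\omega)}$ is integrable with respect to the product of arc-length measure on $\gamma$ and $\mu$. Fubini's theorem then permits exchanging the orders of integration
\[
\oint_{\gamma} A^{\xi}(z)\,\dd z \;=\; \int \Theta(\xi)\,\pbb{\oint_{\gamma} \ee^{-zW}\,\dd z}\,\dd\mu \;=\; 0\,,
\]
where the last equality uses that $z \mapsto \ee^{-zW(\omega)}$ is entire for each $\omega$. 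I would also verify continuity of $A^{\xi}$ on $\{\re z > 0\}$ by a standard dominated-convergence argument using the same envelope $|\Theta(\xi)|$; continuity together with vanishing of triangle integrals yields analyticity by Morera's theorem.

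The main technical point to justify carefully is the $L^1(\mu)$ integrability of $\Theta(\xi)$, which is the only structural input distinguishing the classical setting from the quantum one treated in Lemma \ref{A^xi_{tau,z}_analytic}. For $d=1$ with $\xi \in \mathbf{B}_r$ this can be read off directly from Gaussian moment bounds on $\phi \in L^2(\mu;H^s)$ for $s<1/2$; for $d=1$ with $\xi = \mathrm{Id}_r$ it follows since $\Theta(\xi) = (\int |\phi|^2)^r \in L^m(\mu)$ for all $m$ by Gaussianity. For $d=2,3$, where the free field is only a distribution, one interprets $\Theta(\xi)$ through the Hilbert-Schmidt kernel $\xi$ using $\tr h^{-2} < \infty$ and the bound $\mathbb{E}_\mu |\Theta(\xi)|^2 \leq C(r)\,\|\xi\|_{\mathfrak{S}^2}^2 \,(\tr h^{-1})^{r}$ in the truncated approximation, passing to the limit by the same telescoping argument used in the proof of Lemma \ref{W_Wick-ordered} (i). This is the only place where modest work is required; once the $L^1$ bound on $\Theta(\xi)$ is in hand, the rest of the argument is essentially the same as for the quantum analogue.
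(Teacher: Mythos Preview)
Your approach via Morera's theorem is correct and genuinely different from the paper's. The paper instead differentiates under the integral sign in $\hat{\rho}_z(\Theta(\xi)) = \int \Theta(\xi)\,\ee^{-zW}\,\dd\mu$, which requires controlling $\int |\Theta(\xi)|\,W^k\,\dd\mu$ for every $k$ and hence invokes the moment bounds on $W$ from Proposition \ref{R^xi_M_bound_proposition}. Your route is more economical: because $W \geq 0$ gives the uniform envelope $|\Theta(\xi)\,\ee^{-zW}| \leq |\Theta(\xi)|$ on $\{\re z \geq 0\}$, you only need $\Theta(\xi)\in L^1(\mu)$ and Fubini to run Morera, sidestepping the higher moments of $W$ entirely.

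One correction is needed in your justification for $d=2,3$. The displayed bound $\E_\mu|\Theta(\xi)|^2 \leq C(r)\,\|\xi\|_{\mathfrak{S}^2}^2\,(\tr h^{-1})^r$ is vacuous there, since $\tr h^{-1}=\infty$; passing to the limit through the truncation does not rescue it because the constant diverges with $K$. The correct statement is that $\int \Theta(\xi)^2\,\dd\mu$ is finite and bounded by $(Cr)^{2r}$, which follows from Wick's theorem together with the graphical estimate of Proposition \ref{Product of subgraphs} (i) applied with $m=0$ (equivalently, from Corollary \ref{a^xi_m_corollary} or the second-factor bound in \eqref{R^xi_M_bound}); the point is that each pairing produces an integral of two copies of the Hilbert--Schmidt kernel $\xi$ against $2r$ Green functions in $L^2(\Lambda^2)$, not a bare power of $\tr h^{-1}$. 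Once you replace your bound by this one, the argument goes through as written.
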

\begin{proof}
We can take derivatives in $z$ by differentiating under the integral sign in \eqref{rho_hat_unbounded} with $X=\Theta(\xi)$. This is justified by analogous arguments as in the proof of Proposition \ref{R^xi_M_bound_proposition} (with a higher power of $W$). The analyticity of $A^{\xi}$ in the right-half plane follows.
\end{proof}

\subsection{Proof of Theorem \ref{main_result}}
\label{Proof of Theorem 1}

We now have all the ingredients needed to prove Theorem \ref{main_result}. 
With what we have obtained so far, the method of proof is now analogous to the proof of \cite[Theorem 1.6]{FrKnScSo1} (given in \cite[Section 3.3]{FrKnScSo1}) when $d=2,3$, of \cite[Theorem 1.8]{FrKnScSo1} when $d=1$ and $\xi \in \mathbf{B}_r$ (given in \cite[Section 4.3]{FrKnScSo1}) and of \cite[Theorem 1.9]{FrKnScSo1} when $d=1$ and $\xi=\mathrm{Id}_r$ (given in \cite[Section 4.4]{FrKnScSo1}). We outline the main ideas and refer the reader to \cite{FrKnScSo1} for further details.

\begin{proof}[Proof of Theorem \ref{main_result}]

Let us first show (ii). 
We need to show that
\begin{equation}
\label{main_result_proof_1}
\rho_{\tau}^{\eta}(\Theta_{\tau}(\xi)) \stackrel{\tau \rightarrow \infty}{\longrightarrow} \rho(\Theta(\xi)) \quad \mbox{uniformly in} \,\, \xi \in \mathbf{B}_r\,.
\end{equation}
Namely, assuming \eqref{main_result_proof_1}, by using the duality $\mathfrak{S}^{2}(\mathfrak{H}^{(r)}) \cong \mathfrak{S}^{2}(\mathfrak{H}^{(r)})^{*}$, we indeed have
\begin{equation}
\label{main_result_proof_1A}
\|\gamma^{\eta}_{\tau,r}-\gamma_r\|_{\mathfrak{S}^{2}(\mathfrak{H}^{(r)})}\;=\;\mathop{\mathrm{sup}}_{{\xi \in \mathbf{B}_r}} \big|\tr\,(\gamma^{\eta}_{\tau,r}\xi-\gamma_r \xi)\big|\;=\;\mathop{\mathrm{sup}}_{{\xi \in \mathbf{B}_r}} \big|\rho^{\eta}(\Theta_{\tau}(\xi))-\rho(\Theta(\xi))\big|\stackrel{\tau \rightarrow \infty}{\longrightarrow}0\,.
\end{equation}
Note that \eqref{main_result_proof_1} follows if we prove that 
\begin{equation}
\label{main_result_proof_2}
\hat{\rho}_{\tau,1}^{\eta}(\Theta_{\tau}(\xi)) \stackrel{\tau \rightarrow \infty}{\longrightarrow} \hat{\rho}_{1}(\Theta(\xi)) \quad \mbox{uniformly in} \,\, \xi \in \mathbf{B}_r\,.
\end{equation}
Let us now show \eqref{main_result_proof_2}.
We work with the functions $A^{\xi}_{\tau}$ and $A^{\xi}$ defined in \eqref{Duhamel_expansion_unbounded_A} and \eqref{A^xi} respectively and apply Proposition \ref{Borel_summation_unbounded}. 
By Lemma \ref{A^xi_{tau,z}_analytic} and Lemma \ref{A^xi_analytic}, we know that both of these functions are analytic in the right-half plane. In particular, recalling \eqref{cal_C_R_unbounded}, they are analytic in $\mathcal{C}_R$ for all $R>0$. 
In particular, we consider $R=2$ so that $1 \in \mathcal{C}_R$.

In $\mathcal{C}_2$, we perform the expansion \eqref{Borel_summation_unbounded1} according to \eqref{Duhamel_expansion_unbounded_A} and \eqref{A^xi}.
By Proposition \ref{Product of subgraphs} (ii), Corollary \ref{a^xi_m_corollary}, Proposition \ref{remainder_term} (i), and Proposition \ref{R^xi_M_bound_proposition}, we know that \eqref{Borel_summation_unbounded2}--\eqref{Borel_summation_unbounded3} hold with 
\begin{equation}
\label{nu_sigma_choice}
\nu\;=\;\bigg(\frac{Cr}{\eta^2}\bigg)^r\,,\quad \sigma\;=\;\frac{C(1+\|w\|_{L^p(\Lambda)})}{\eta^2}\,.
\end{equation}
We can now deduce \eqref{main_result_proof_2} by setting $z=1$ in Proposition \ref{Borel_summation_unbounded}.

We now prove (i). Note that \eqref{main_result_proof_1}--\eqref{main_result_proof_1A} still hold when $d=1$, with $\eta=0$ and the appropriate modifications of the graph structure. Furthermore, if we let $\xi=\mathrm{Id}_r$, the proof of \eqref{main_result_proof_2} shows that 
\begin{equation*}
\hat{\rho}_{\tau,1}(\Theta_{\tau}(\mathrm{Id}_r)) \stackrel{\tau \rightarrow \infty}{\longrightarrow} \hat{\rho}_{1}(\Theta(\mathrm{Id}_r))\,, 
\end{equation*}
which implies that 
\begin{equation*}
\rho_{\tau}(\Theta_{\tau}(\mathrm{Id}_r)) \stackrel{\tau \rightarrow \infty}{\longrightarrow} \rho(\Theta(\mathrm{Id}_r))\,.
\end{equation*}
The latter convergence can be rewritten as 
\begin{equation}
\label{main_result_proof_3}
\tr \gamma_{\tau,r} \stackrel{\tau \rightarrow \infty}{\longrightarrow} \tr \gamma_r\,.
\end{equation}
Using \eqref{gamma_r_unbounded} and \eqref{gamma_{tau,r}_unbounded}, one can show that $\gamma_{\tau,r}$ and $\gamma_r$ are positive operators. We can now deduce \eqref{Theorem_1D} from \eqref{main_result_proof_1A} (applied for $d=1$), \eqref{main_result_proof_3} and \cite[Lemma 4.10]{FrKnScSo1} (which, in turn, is based on \cite[Lemma 2.20]{Simon05}). We hence deduce the claim in (i).
\end{proof}

\section{Endpoint-admissible interaction potentials}
\label{Endpoint-admissible interaction potentials}
\subsection{General framework}
\label{Endpoint-admissible interaction potentials_1}
We recall that in Sections \ref{Analysis of the quantum system} and \ref{Analysis of the classical system}, we were always considering interaction potentials $w$, which were $d$-admissible as in Definition \ref{interaction_potential_w}. In this section, we fix $d=2$ and consider an interaction potential $w$ which is endpoint admissible as in Definition \ref{endpoint_admissible_w}. 
Furthermore, for $\tau \geq 1$, we let $w_\tau$ be given by Lemma \ref{w_endpoint_admissible_approximation}. 
In this construction, we are always considering the parameter $\beta \in \mathcal{B}_2 \equiv (0,1)$ as in \eqref{B_d}. We adapt all of the conventions and notation accordingly.
Throughout this section, we fix $\epsilon>0$ small and $L>0$ as in Definition \ref{endpoint_admissible_w} (iv). Furthermore, we fix $\delta=\epsilon/2$ as in Lemma \ref{w_endpoint_admissible_approximation}. In the sequel, we use the observation that $w \in H^{-1+\delta}(\Lambda)$ and that $\|w\|_{H^{-1+\delta}(\Lambda)} \leq CL$ by Lemma \ref{w_endpoint_admissible_approximation} (i).

The graphical setup that we consider in this section is the same as the one given in Section \ref{The graphical representation} when $d=2$. We are always considering Hilbert-Schmidt observables $\xi \in \mathbf{B}_r \equiv \mathbf{C}_r$. All of the other conventions and notation are the same as before.

Let us recall that, by Definition \ref{endpoint_admissible_w}, $w$ is assumed to be pointwise nonnegative (in addition to being of positive type). 
This allows us to work with $w_\tau$ pointwise nonnegative as in Lemma \ref{w_endpoint_admissible_approximation} (iv) (see \eqref{endpoint_admissible_w_tau2}--\eqref{f_tau} below).
These pointwise nonnegativity conditions are used in the proof of the convergence of the explicit terms given in Proposition \ref{Approximation_result_2_unbounded_endpoint}. 
In particular, we can take absolute values and obtain \eqref{a_0_vertex_factor1}--\eqref{I'_endpoint_bound4} below.
This is the only place where we use the pointwise nonnegativity of the interaction, albeit in a crucial way.
At this step, it is important to keep in mind that the absolute value map is not continuous on $H^s(\Lambda)$ for $s<0$, see Remark \ref{absolute_value_unbounded_negative_Sobolev} below.

We recall that the analysis in Sections \ref{Analysis of the quantum system}--\ref{Analysis of the classical system} relied primarily on bounds on the quantum and classical Green functions (see Proposition \ref{Q^12_bounds}, Proposition \ref{G_{tau,0}_bound}, and Lemma \ref{Q1_convergence_lemma} above). Once we had such bounds at our disposal, we never needed to explicitly use the translation invariance of the Green functions.
In this section, we use the translation invariance of the Green functions on the torus in a crucial way. In order to motivate why we need this, we observe that translation invariance allows us to use the decay in Fourier space of $w$ from Definition \ref{endpoint_admissible_w} (iv) to obtain the finiteness of \eqref{optimality} above when $w$ is an endpoint-admissible interaction potential. For further details, see the proof of Proposition \ref{Product of subgraphs_endpoint} below.

Given $x \in \Lambda$ and with notation as in \eqref{G}, we define 
\begin{equation}
\label{G_bold}
\mathbf{G}_{\tau,t}(x)\;\deq\; G_{\tau,t}(x;0)\,\,\,\mbox{for}\,\,t >-1\,.
\end{equation}
Furthermore, we let 
\begin{equation}
\label{G_tau_bold}
\mathbf{G}_{\tau}\;\deq\;\mathbf{G}_{\tau,0}\,.
\end{equation}
Note that \eqref{G_tau_bold} is consistent with \eqref{G_tau} and \eqref{G_bold}.
Recalling \eqref{Q_{tau,t}} and \eqref{Q^12}, we define for $x \in \Lambda$ and $t \in (-1,1)$
\begin{equation}
\label{Q^12_bold}
\mathbf{Q}_{\tau,t}(x) \;\deq\; Q_{\tau,t}(x;0)\,,\quad \mathbf{Q}_{\tau,t}^{(j)}(x)\;\deq\;Q_{\tau,t}^{(j)}(x;0)\,,\,\,j\;=\;1,2\,.
\end{equation}
In this setting, \eqref{Q_splitting} can be rewritten as
\begin{equation}
\label{Q_splitting_bold}
\mathbf{Q}_{\tau,t}\;=\; \mathbf{Q}_{\tau,t}^{(1)}+\frac{1}{\tau} \mathbf{Q}_{\tau,t}^{(2)}\,.
\end{equation}
The appropriate modifications of \eqref{positivity} and \eqref{Q_positivity} also hold. We note several estimates that are analogous to those in Propositions \ref{Q^12_bounds} and \ref{G_{tau,0}_bound}.
\begin{lemma}
\label{Q_bounds_endpoint}
There exist constants $C_1,C_2>0$ such that for all $\tau \geq 1$ and $t \in (-1,1)$, we have the following estimates.
\begin{itemize}
\item[(i)] 
$\|\mathbf{Q}_{\tau,t}^{(1)}\|_{L^{\infty}(\Lambda)} \leq C_1 \log \tau$.
\item[(ii)] 
For all $s<1$ we have
$\|\mathbf{Q}_{\tau,t}^{(1)}\|_{H^s(\Lambda)} \leq C(s)$.
\item[(iii)] $\int \dd y \,\mathbf{Q}_{\tau,t}^{(2)}(y) \leq 1$.
\item[(iv)] If $\{t\} \geq \frac{1}{2}$, then we have $\|\mathbf{Q}_{\tau,t}^{(2)}\|_{L^{\infty}(\Lambda)} \leq C_1 \tau$.
\item[(v)] For all $x \in \Lambda$ we have $\mathbf{Q}_{\tau,t}(x) \geq \mathbf{Q}^{(1)}_{\tau,t}(x) \geq C_2$.
\end{itemize}
\end{lemma}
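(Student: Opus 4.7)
The plan is to derive Lemma \ref{Q_bounds_endpoint} as essentially a direct corollary of Proposition \ref{Q^12_bounds} and (the proof of) Proposition \ref{G_{tau,0}_bound}, specialised to $d=2$. The only genuinely new content is the Sobolev-regularity bound (ii); everything else follows by unpacking the definitions in \eqref{G_bold}--\eqref{Q^12_bold} and invoking the corresponding estimate from Section \ref{The splitting of the time-evolved quantum Green functions}.

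First, for claims (i), (iii), (iv), (v), I would argue as follows. By \eqref{Q^12_bold}, $\mathbf{Q}^{(j)}_{\tau,t}(x) = Q^{(j)}_{\tau,t}(x;0)$, so trivially $\|\mathbf{Q}^{(j)}_{\tau,t}\|_{L^\infty(\Lambda)} \leq \|Q^{(j)}_{\tau,t}\|_{L^\infty(\Lambda^2)}$ and Proposition \ref{Q^12_bounds}~(i) specialised to $d=2$ gives (i) while Proposition \ref{Q^12_bounds}~(iii) gives (iv). For (iii), we apply Proposition \ref{Q^12_bounds}~(ii) at $x=0$ to obtain
\begin{equation*}
\int \dd y\, \mathbf{Q}^{(2)}_{\tau,t}(y) \;=\; \int \dd y\, Q^{(2)}_{\tau,t}(y;0) \;\leq\; 1\,.
\end{equation*}
Finally (v) is obtained by evaluating Proposition \ref{Q^12_bounds}~(iv) at $y=0$.

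For claim (ii), which is the only statement going strictly beyond Proposition \ref{Q^12_bounds}, I would work in Fourier space. Since $Q^{(1)}_{\tau,t}$ has translation-invariant integral kernel, the function $\mathbf{Q}^{(1)}_{\tau,t}(x) = Q^{(1)}_{\tau,t}(x;0)$ has Fourier coefficients
\begin{equation*}
\widehat{\mathbf{Q}^{(1)}_{\tau,t}}(k)\;=\;\frac{\ee^{-\{t\}\lambda_k/\tau}}{\tau(\ee^{\lambda_k/\tau}-1)}\,,\quad k \in \mathbb{Z}^2\,,
\end{equation*}
by \eqref{Q^12}. Using $\ee^{\lambda_k/\tau}-1 \geq \lambda_k/\tau$ and $\ee^{-\{t\}\lambda_k/\tau} \leq 1$ (exactly as in \eqref{Q_{tau,t}_bound_proof_s1}) gives $|\widehat{\mathbf{Q}^{(1)}_{\tau,t}}(k)| \leq 1/\lambda_k \leq C/(|k|^2+1)$. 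For $s<1=2-d/2$ with $d=2$, we then have
\begin{equation*}
\|\mathbf{Q}^{(1)}_{\tau,t}\|_{H^s(\Lambda)}^2\;=\;\sum_{k \in \mathbb{Z}^2}(1+|k|^2)^s\,\big|\widehat{\mathbf{Q}^{(1)}_{\tau,t}}(k)\big|^2 \;\leq\; C\sum_{k \in \mathbb{Z}^2}\frac{1}{(1+|k|^2)^{2-s}}\;\leq\; C(s)\,,
\end{equation*}
uniformly in $\tau \geq 1$ and $t \in (-1,1)$. This proves (ii).

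There is no real obstacle here; the statement is a bookkeeping lemma that repackages Propositions \ref{Q^12_bounds}--\ref{G_{tau,0}_bound} in the form (with the single spatial variable $x$ and the stronger Sobolev estimate in (ii)) that will be convenient for exploiting translation invariance of the Green function in the endpoint-admissible analysis of Section \ref{Endpoint-admissible interaction potentials}.
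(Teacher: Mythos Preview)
Your proposal is correct and follows essentially the same approach as the paper: claims (i), (iii), (iv), (v) are obtained directly from Proposition~\ref{Q^12_bounds} via the definition \eqref{Q^12_bold}, and claim (ii) is proved by bounding the Fourier coefficients $|\widehat{\mathbf{Q}^{(1)}_{\tau,t}}(k)| \leq 1/\lambda_k$ (as in \eqref{Q_{tau,t}_bound_proof_s1}) and then summing to get the $H^s$ norm, exactly as in the proof of \eqref{G_{tau,0}_bound_proof_s1}.
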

\begin{proof}
Claims (i), (iii), (iv), and (v) follow immediately from \eqref{Q^12_bold} and Proposition \ref{Q^12_bounds}. In order to prove claim (ii), we recall \eqref{Q_{tau,t}_bound_proof_s1} and use \eqref{Q^12_bold} to deduce that for all $k \in \mathbb{Z}^2$
\begin{equation}
\label{Q_{tau,t}_bound_proof_s1_bold} 
\big|\big(\mathbf{Q}_{\tau,t}^{(1)}\big)\,\,\widehat{}\,\,(k)\big|\;\leq\;\frac{C}{\lambda_k}\,.
\end{equation}
We hence obtain (ii) from \eqref{Q_{tau,t}_bound_proof_s1_bold} by arguing as in the proof of \eqref{G_{tau,0}_bound_proof_s1}.
\end{proof}
In the sequel, we use a general product estimate in Sobolev spaces.

\begin{lemma}
\label{Sobolev_product_estimate}
Let $s \geq 0$ and $\alpha \in (0,1)$ be given. 
Then, for all $f,g \in H^{\max\{s+\alpha,1-\alpha\}}(\Lambda)$, we have
\begin{equation*}
\|f g \|_{H^s(\Lambda)} \lesssim_{s,\alpha} \|f\|_{H^{s+\alpha}(\Lambda)}\,\|g\|_{H^{1-\alpha}(\Lambda)}+\|f\|_{H^{1-\alpha}(\Lambda)}\,\|g\|_{H^{s+\alpha}(\Lambda)}\,.
\end{equation*}
\end{lemma}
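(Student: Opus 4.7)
The plan is to combine the fractional Leibniz (Kato--Ponce) inequality on $\mathbb{T}^2$ with Sobolev embedding. Let $J^s \deq (1-\Delta)^{s/2}$, so that $\|u\|_{H^s(\Lambda)} = \|J^s u\|_{L^2(\Lambda)}$. The Kato--Ponce inequality (see, e.g., the versions on the torus obtained via Littlewood--Paley / Bony paraproduct decomposition) asserts that for $s \geq 0$ and $p_1,p_2,q_1,q_2 \in (2,\infty)$ satisfying $\tfrac{1}{p_i}+\tfrac{1}{q_i}=\tfrac{1}{2}$,
\begin{equation}
\label{KP_plan}
\|fg\|_{H^s(\Lambda)} \;\lesssim_{s}\; \|J^s f\|_{L^{p_1}(\Lambda)}\,\|g\|_{L^{q_1}(\Lambda)} + \|f\|_{L^{p_2}(\Lambda)}\,\|J^s g\|_{L^{q_2}(\Lambda)}\,.
\end{equation}

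With this in hand, I would choose $p_1 = q_2 = 2/(1-\alpha)$ and $q_1 = p_2 = 2/\alpha$, which all lie in $(2,\infty)$ since $\alpha \in (0,1)$, and trivially satisfy the H\"older relation. The critical Sobolev embedding on $\mathbb{T}^2$ gives $H^\alpha(\Lambda) \hookrightarrow L^{2/(1-\alpha)}(\Lambda)$ and $H^{1-\alpha}(\Lambda) \hookrightarrow L^{2/\alpha}(\Lambda)$, each with constants depending only on $\alpha$. Applying these to the four factors on the right of \eqref{KP_plan} yields
\begin{align*}
\|J^s f\|_{L^{2/(1-\alpha)}(\Lambda)} &\;\lesssim_\alpha\; \|J^s f\|_{H^\alpha(\Lambda)} \;=\; \|f\|_{H^{s+\alpha}(\Lambda)}\,, \\
\|g\|_{L^{2/\alpha}(\Lambda)} &\;\lesssim_\alpha\; \|g\|_{H^{1-\alpha}(\Lambda)}\,,
\end{align*}
and symmetrically for the other term, giving the desired estimate.

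The main obstacle is \eqref{KP_plan} on the torus, since the Kato--Ponce inequality is most commonly stated on $\mathbb{R}^d$. This can be addressed in a self-contained manner via Bony's paraproduct decomposition $fg = T_f g + T_g f + R(f,g)$ on $\mathbb{T}^2$, where $T_f g = \sum_j P_{\leq j-N}f \cdot \Delta_j g$ is the low--high paraproduct and $R(f,g)$ is the diagonal remainder. For each piece, one estimates $\|\Delta_j(\cdot)\|_{L^2}$ in terms of $L^{p}\times L^q$ norms of the Littlewood--Paley components (using Bernstein's inequality for the low-frequency factor), sums in $j$ with Cauchy--Schwarz, and then undoes the decomposition using the Littlewood--Paley characterization of $W^{s,p}$; the constants in each piece are controlled by the right-hand side of \eqref{KP_plan} via ordinary H\"older. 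Alternatively one may cite a known reference for the Kato--Ponce inequality on $\mathbb{T}^d$, at which point the lemma follows immediately from Sobolev embedding as above.
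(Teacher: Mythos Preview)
Your proposal is correct, but it takes a heavier route than the paper. You invoke the full Kato--Ponce inequality on $\mathbb{T}^2$, which indeed yields the result after the Sobolev embeddings you describe; however, as you acknowledge, this requires either a paraproduct argument or an external reference. The paper instead gives an elementary, self-contained argument directly on the Fourier side: starting from $\langle k\rangle^s \lesssim_s \langle k-k'\rangle^s + \langle k'\rangle^s$ (valid for $s\geq 0$), one bounds $\|fg\|_{H^s}$ by the $\ell^2_k$ norm of $\sum_{k'}\langle k-k'\rangle^s|\hat f(k-k')|\,|\hat g(k')|$ plus the symmetric term, rewrites each via Plancherel as $\|(\langle D\rangle^s \mathbf{F}^{-1}|\hat f|)\cdot \mathbf{F}^{-1}|\hat g|\|_{L^2}$, and then applies exactly the same H\"older splitting $L^{2/(1-\alpha)}\times L^{2/\alpha}$ and Sobolev embeddings that you use. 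The trick that makes this work without paraproducts is that $L^2$-based Sobolev norms are unchanged under $|\hat\cdot|$, so one may freely pass to nonnegative Fourier coefficients. Your approach buys generality (it would extend to $L^p$-based spaces), while the paper's buys brevity and avoids any black box.
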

\begin{proof}
We have
\begin{multline*}
\|f g\|_{H^s(\Lambda)} \sim_s \Bigg\|\langle k \rangle^{s} \sum_{k' \in \mathbb{Z}^2} \hat{f}(k-k')\,\hat{g}(k')\Bigg\|_{\ell^2_k}
\;\leq\;  \Bigg\|\langle k \rangle^{s} \sum_{k' \in \mathbb{Z}^2} |\hat{f}(k-k')|\,|\hat{g}(k')|\Bigg\|_{\ell^2_k}
\\
\;\lesssim_s\;
\Bigg\|\sum_{k' \in \mathbb{Z}^2} \langle k-k' \rangle^{s} \,|\hat{f}(k-k')|\,|\hat{g}(k')|\Bigg\|_{\ell^2_k}
+ \Bigg\|\sum_{k' \in \mathbb{Z}^2} |\hat{f}(k-k')|\,\langle k' \rangle^{s} \,|\hat{g}(k')|\Bigg\|_{\ell^2_k}\,,
\end{multline*}
which by Plancherel's theorem is 
\begin{equation}
\label{Sobolev_product_estimate_proof_1}
\sim_s \Big\|\Big(\langle D \rangle^{s} \,\mathbf{F}^{-1}\,|\hat{f}|\Big)\,\mathbf{F}^{-1}\,|\hat{g}|\Big\|_{L^2(\Lambda)}+
\Big\|\mathbf{F}^{-1}\,|\hat{f}|\,\Big(\langle D \rangle^{s}\, \mathbf{F}^{-1}\,|\hat{g}|\Big)\Big\|_{L^2(\Lambda)}\,.
\end{equation}
In \eqref{Sobolev_product_estimate_proof_1}, $\langle D \rangle^s$ denotes the Fourier multiplier operator with symbol $\langle k \rangle^s$ and $\mathbf{F}^{-1}$ denotes the inverse Fourier transform. By H\"{o}lder's inequality, the expression in \eqref{Sobolev_product_estimate_proof_1} is
\begin{equation*}
\;\leq\; \big\|\langle D \rangle^{s} \,\mathbf{F}^{-1}\,|\hat{f}|\big\|_{L^{\frac{2}{1-\alpha}}(\Lambda)} \,\big\| \mathbf{F}^{-1}\,|\hat{g}|\big\|_{L^{\frac{2}{\alpha}}(\Lambda)}+\big\| \mathbf{F}^{-1}\,|\hat{f}|\big\|_{L^{\frac{2}{\alpha}}(\Lambda)} \,\big\|\langle D \rangle^{s} \,\mathbf{F}^{-1}\,|\hat{g}|\big\|_{L^{\frac{2}{1-\alpha}}(\Lambda)}\,,
\end{equation*}
which is 
\begin{equation*}
\;\lesssim_{s,\alpha}\; \|f\|_{H^{s+\alpha}(\Lambda)}\,\|g\|_{H^{1-\alpha}(\Lambda)}+\|f\|_{H^{1-\alpha}(\Lambda)}\,\|g\|_{H^{s+\alpha}(\Lambda)}
\end{equation*}
by using the Sobolev embeddings $H^{\alpha} (\Lambda) \hookrightarrow L^{\frac{2}{1-\alpha}}(\Lambda)$ and $H^{1-\alpha}(\Lambda) \hookrightarrow L^{\frac{2}{\alpha}}(\Lambda)$, as well as the fact that $L^2$-based Sobolev norms are invariant under taking absolute values of the Fourier transform.
\end{proof}

\subsection{The quantum system}
\label{Endpoint-admissible interaction potentials_2}
As in Section \ref{Bounds on the explicit terms}, we first need to show the upper bound \eqref{Borel_summation_unbounded2}  on the explicit terms $a_{\tau,m}$ given by \eqref{Explicit_term_a_unbounded} (where now $w_\tau$ is given by Lemma \ref{w_endpoint_admissible_approximation}). More precisely, we prove an analogue of Proposition \ref{Product of subgraphs}.
\begin{proposition}
\label{Product of subgraphs_endpoint}
Fix $m,r \in \mathbb{N}$. Given $\Pi \in \mathfrak{R}(d,m,r)$, $\mathbf{t} \in \mathfrak{A}(m)$, the following estimates hold uniformly in  $\xi \in \mathbf{B}_r$ and $\tau \geq 1$.
\begin{itemize}
\item[(i)] We have
\begin{equation*}
\big| \mathcal{I}_{\tau,\Pi}^{\xi} (\mathbf{t}) \big| \leq C_0^{m+r}(1+\|w\|_{L^1(\Lambda)}+L)^{m}\,,
\end{equation*}
for some $C_0>0$.
\item[(ii)] $|a^{\xi}_{\tau,m}| \leq (Cr)^{r} \,C^m  \,(1+\|w\|_{L^1(\Lambda)}+L)^{m}\,m!\,$.
\item[(iii)] $|f^{\xi}_{\tau,m}(\mathbf{t})| \leq C^{m+r}\,(1+\|w\|_{L^1(\Lambda)}+L)^m\,(2m+r)!\,$.  
\end{itemize}
\end{proposition}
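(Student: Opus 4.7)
The proof proposal closely parallels that of Proposition \ref{Product of subgraphs}, with the key novelty being the replacement of $L^p$-based estimates on $w_\tau$ by $H^{-1+\delta}(\Lambda)$-duality estimates exploiting Definition \ref{endpoint_admissible_w} (iv). First, (ii) and (iii) follow from (i) via \eqref{Wick_application_identity_unbounded_all_D} and \eqref{function_f_all_D} together with $|\mathfrak{R}(d,m,r)| \leq (2m+r)!$. For (i), I would follow verbatim the reduction from the proof of Proposition \ref{Product of subgraphs}: using the distribution of interaction factors \eqref{cal W_unbounded}, the identity \eqref{product w tau_unbounded}, and successive H\"older inequalities, it suffices to prove the connected-component estimate
\begin{equation*}
\|\mathfrak{I}(\mathfrak{P})\|_{L^{2}_{\mathbf{y_1}} L^{\infty}_{\mathcal{V}^{*}(\mathfrak{P})}}\;\leq\;C_0^{|\mathcal{V}(\mathfrak{P})|}\,\big(1+\|w\|_{L^{1}(\Lambda)}+L\big)^{|\mathcal{V}_2(\mathfrak{P})|}
\end{equation*}
for every $\mathfrak{P} \in \conn(\mathcal{E})$, with $\mathfrak{I}(\mathfrak{P})$ as in \eqref{I(P)}. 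Here Lemma \ref{w_endpoint_admissible_approximation} (i) provides $\|w_\tau\|_{H^{-1+\delta}(\Lambda)} \leq CL$, part (vi) provides $\|w_\tau\|_{L^1(\Lambda)} \leq C\|w\|_{L^1(\Lambda)}$, and part (v) supplies the $\tau^\beta$ $L^\infty$-bound needed for the error terms.

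The main new ingredient enters when bounding the leading-order integral produced at each step of the induction (both in the base and in the inductive step). By translation invariance $Q^{(1)}_{\tau,t}(x;y) = \mathbf{Q}^{(1)}_{\tau,t}(x-y)$ on the torus and a change of variables, this integral reduces to
\begin{equation*}
\int \dd z \, \mathcal{W}_\tau^a(z) \, \mathbf{Q}^{(1)}_{\tau,\zeta_1}(u-z) \, \mathbf{Q}^{(1)}_{\tau,\zeta_2}(z+v)
\end{equation*}
for outer parameters $u,v \in \Lambda$. When $\mathcal{W}^a_\tau = 1$, Cauchy-Schwarz together with the uniform bound $\|\mathbf{Q}^{(1)}_{\tau,t}\|_{L^2(\Lambda)} \leq C$ from Lemma \ref{Q_bounds_endpoint} (ii) gives a constant bound. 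When $\mathcal{W}^a_\tau = w_\tau$, I would apply the $H^{-1+\delta}\times H^{1-\delta}$ duality pairing to obtain
\begin{equation*}
\|w_\tau\|_{H^{-1+\delta}(\Lambda)}\,\big\|\mathbf{Q}^{(1)}_{\tau,\zeta_1}(u-\cdot)\,\mathbf{Q}^{(1)}_{\tau,\zeta_2}(\cdot+v)\big\|_{H^{1-\delta}(\Lambda)}\,,
\end{equation*}
and then invoke Lemma \ref{Sobolev_product_estimate} with $s = 1-\delta$ and $\alpha = \delta/2$, together with the translation invariance of $L^2$-based Sobolev norms, to bound the second factor by $C\|\mathbf{Q}^{(1)}_{\tau,\zeta_1}\|_{H^{1-\delta/2}(\Lambda)}\,\|\mathbf{Q}^{(1)}_{\tau,\zeta_2}\|_{H^{1-\delta/2}(\Lambda)} \leq C$ by Lemma \ref{Q_bounds_endpoint} (ii). Both cases yield a bound $\leq C(1+L)$ uniform in $u,v$, which is what is required.

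The error terms arising from the splitting \eqref{Q_splitting} of $Q_{\tau,t}$ into $Q^{(1)}_{\tau,t}$ and $\tau^{-1}Q^{(2)}_{\tau,t}$, as well as the delta-function contributions, are handled exactly as in the original proof; they now involve the bound $\|w_\tau\|_{L^\infty(\Lambda)} \leq \tau^\beta$ together with Lemma \ref{Q_bounds_endpoint} (i), (iii), and (iv). Since $\beta \in \mathcal{B}_2 = (0,1)$, each such error term contributes a factor of $\mathcal{O}(\tau^{-\epsilon_0})$ for some $\epsilon_0>0$ and is therefore bounded uniformly in $\tau \geq 1$. The $n=2$ open-path base case is unchanged and gives $\|G_\tau\|_{L^2(\Lambda^2)} \leq C$ via Proposition \ref{G_{tau,0}_bound}. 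For the $n=2$ closed-path base case with two paired vertices, translation invariance immediately reduces the estimate to $\int w_\tau(z)\mathbf{G}_\tau^2(z)\,\dd z$, which is bounded by $CL$ via duality and Lemma \ref{Sobolev_product_estimate}; the case of two non-paired vertices is treated via the Cauchy-Schwarz inequality (as in \eqref{Induction_Base_Case_2B_2}), which reduces it to the same type of integral handled above.

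The main obstacle is that we no longer dispose of a uniform $L^p$-bound for $w_\tau$ with $p>1$ and must instead use only its $H^{-1+\delta}(\Lambda)$ regularity together with pointwise positivity. The combination of the torus's translation invariance with the translation invariance of $L^2$-based Sobolev norms is what allows the duality estimate to be applied uniformly in all translation parameters $u,v,y_{a^*}$, so that the successive integration scheme from Proposition \ref{Product of subgraphs} can be carried through without losing uniformity. The restriction to $d=2$ enters exclusively through Lemma \ref{Q_bounds_endpoint} (ii), which requires $1-\delta/2 < 2 - d/2 = 1$; this argument would break down in $d=3$, consistent with the scope of Theorem \ref{endpoint_admissible_result}.
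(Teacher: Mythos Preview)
Your proposal is correct and follows essentially the same strategy as the paper: the same reduction to the path estimate \eqref{Inductive inequality 2 unbounded_B_endpoint}, the same induction on $|\mathcal{V}(\mathfrak{P})|$, the same splitting $\mathbf{Q}=\mathbf{Q}^{(1)}+\tau^{-1}\mathbf{Q}^{(2)}$ with error terms handled exactly as in Proposition \ref{Product of subgraphs}, and the same key device of $H^{-1+\delta}\times H^{1-\delta}$ duality together with Lemma \ref{Sobolev_product_estimate} and Lemma \ref{Q_bounds_endpoint} (ii) for the leading $\mathbf{Q}^{(1)}$-contribution. Two minor points where your write-up differs from the paper: (a) in the inductive step you apply Lemma \ref{Sobolev_product_estimate} directly to the product $\mathbf{Q}^{(1)}_{\tau,\zeta_1}(u-\cdot)\,\mathbf{Q}^{(1)}_{\tau,\zeta_2}(\cdot+v)$, whereas the paper first uses AM--GM to reduce to squares $[\mathbf{Q}^{(1)}_{\tau,\zeta_j}]^2$ before applying the product lemma---both work, yours is slightly shorter; (b) in the $n=2$ closed-path base case with $a_2\neq a_1^*$ the paper handles the second interaction factor via the Fourier-multiplier bound $0\le\widehat{\mathcal{W}}_\tau^{a}\le 1+L$ (see \eqref{Induction_Base_Case_2B_bold_term1B}), while your phrase ``reduces it to the same type of integral'' leaves implicit that after integrating one vertex by duality the remaining one-variable integral $\int \mathcal{W}^{a_1}_\tau(y_{a_1}-y_{a_1^*})\,\dd y_{a_1}$ is finished off using $\|\mathcal{W}^{a_1}_\tau\|_{L^1}\le 1+C\|w\|_{L^1}$; this is fine and gives the same $(1+\|w\|_{L^1}+L)^2$ bound, but you should state it explicitly.
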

\begin{proof}
As in the proof of Proposition \ref{Product of subgraphs}, claims (ii) and (iii) follow from claim (i), so we need to prove (i).
We argue similarly and use the same notation as in the proof of Proposition \ref{Product of subgraphs} (i). 
Since $w_\tau \geq 0$ by Lemma \ref{w_endpoint_admissible_approximation} (iv), we note that \eqref{Product of subgraphs unbounded 1} now becomes
\begin{equation}
\label{Product of subgraphs unbounded 1_endpoint}
\big|\mathcal I_{\tau,\Pi}^{\xi}(\mathbf{t})\big| \;\leq\; \int_{\Lambda^{\mathcal V}} \dd \mathbf{y} \, \Bigg[\prod_{i=1}^{m} w_\tau(y_{i,1}-y_{i,2})\Bigg] \,\big|\xi(\mathbf{y_1})\big|\, \Bigg[\prod_{j=1}^{k}\, \prod_{e\, \in\, \mathfrak{P}_j} \mathcal J_{\tau, e}(\mathbf{y_e}, \mathbf{s})\Bigg]\,.
\end{equation}
Given $a \in \mathcal{V}_2$, we define $\mathcal{W}_\tau^{a}$ as in \eqref{cal W_unbounded}. By Lemma \ref{w_endpoint_admissible_approximation} (iv), we have that $\mathcal{W}_\tau^{a} \geq 0$ pointwise and, by Lemma \ref{w_endpoint_admissible_approximation} (i), (ii), (iii), (v), (vi), we have that
\begin{align}
\label{W_bound_bold1}
\|\mathcal{W}_{\tau}^a\|_{H^{-1+\delta}(\Lambda)} &\;\leq\; 1 + \|w_\tau\|_{H^{-1+\delta}(\Lambda)} \;\leq 1 + CL\,. 
\\
\label{W_bound_bold2}
0\; \leq\; \widehat{\mathcal{W}}_{\tau}^a &\;\leq\;1+ L\,.
\\
\label{W_bound_bold3}
\|\mathcal{W}_{\tau}^a\|_{L^{\infty}(\Lambda)} &\;\leq\; \tau^{\beta}\,.
\\
\label{W_bound_bold4}
\|\mathcal{W}_{\tau}^a\|_{L^1(\Lambda)} &\;\leq\; 1+\|w_{\tau}\|_{L^1(\Lambda)}\;\leq\;1+ C\|w\|_{L^1(\Lambda)}\,.
\end{align}
By arguing analogously as for \eqref{Inductive inequality 2 unbounded_B}, we reduce the claim to showing that for all $\mathfrak{P} \in \mathrm{conn}(\mathcal{E})$, we have
\begin{equation}
\label{Inductive inequality 2 unbounded_B_endpoint}
\|\mathfrak{I}(\mathfrak{P})\|_{L^{2}_{\mathbf{y_1}} L^{\infty}_{\mathcal{V}^{*}(\mathfrak{P})}}\;\leq\;C_0^{\,|\mathcal {V}(\mathfrak{P})|}
\,\,\big(1+\|w\|_{L^1(\Lambda)}+L\big)^{\,|\mathcal{V}_2(\mathfrak{P})|}\,,
\end{equation}
where
\begin{equation}
\label{I(P)_endpoint}
\mathfrak{I}(\mathfrak {P}) \;\deq\; \int \dd \mathbf{y}_{\mathcal{V}_2(\mathfrak{P})} \, \prod_{e \in \mathfrak{P}} \mathcal{J}_{\tau, e}(\mathbf{y}_e, \mathbf{s})\, \prod_{a \in \mathcal{V}_2(\mathfrak{P})} \mathcal{W}_\tau^{a}(y_{a}-y_{a^*})\,.
\end{equation}
We prove \eqref{Inductive inequality 2 unbounded_B_endpoint} by using induction on $n \deq |\mathcal{V}(\mathfrak{P})|$, as in the proof of \eqref{Inductive inequality 2 unbounded_B}. We define the times associated to the edges of $\mathfrak{P}$ as in \eqref{zeta_unbounded_j} above.

\subsubsection*{Base of induction: $n=2$}

We consider three cases.
\begin{itemize}
\item[(1)] $\mathfrak{P}$ is a closed path connecting $a_1,a_2 \in \mathcal{V}_2$ satisfying $a_2=a_1^{*}$. 

In this case, we have
\begin{align*}
&\mathfrak{I}(\mathfrak{P})\;=\;\int \dd y_{a_1}\,\dd y_{a_2}\,w_{\tau}(y_{a_1}-y_{a_2})\,\mathbf{G}_{\tau}^2(y_{a_1}-y_{a_2})
+\frac{1}{\tau} \,\int \dd y_{a_1}\,w_{\tau}(0)\,\mathbf{G}_{\tau}(0)
\\
&\;=\;\int \dd x \,w_{\tau}(x)\,\mathbf{G}_{\tau}^2(x)
+\frac{1}{\tau} \,w_{\tau}(0)\,\mathbf{G}_{\tau}(0)\,,
\end{align*}
which by duality and Lemma \ref{Sobolev_product_estimate} with $s=1-\delta$ and $\alpha=\delta/2$ is
\begin{align}
\notag
&\;\leq\;\|w_{\tau}\|_{H^{-1+\delta}(\Lambda)}\,\|\mathbf{G}_\tau^2\|_{H^{1-\delta}(\Lambda)}+\frac{1}{\tau} \|w_{\tau}\|_{L^{\infty}(\Lambda)} \,\|\mathbf{G}_{\tau}\|_{L^{\infty}(\Lambda)}
\\
\label{Base_n=2_(1)_endpoint}
&\;\lesssim_{\delta}\; \|w_{\tau}\|_{H^{-1+\delta}(\Lambda)}\,\|\mathbf{G}_\tau\|_{H^{1-\delta/2}(\Lambda)}^2+\frac{1}{\tau} \|w_{\tau}\|_{L^{\infty}(\Lambda)} \,\|\mathbf{G}_{\tau}\|_{L^{\infty}(\Lambda)}\,.
\end{align}
Using Lemma \ref{w_endpoint_admissible_approximation} (i), Lemma \ref{Q_bounds_endpoint} (ii), Lemma \ref{w_endpoint_admissible_approximation} (v), Lemma \ref{Q_bounds_endpoint} (i), and \eqref{B_d}, we deduce that the expression in \eqref{Base_n=2_(1)_endpoint} is 
\begin{equation}
\label{Base_n=2_(1)_endpoint_2}
\;\leq\; C \|w_{\tau}\|_{H^{-1+\delta}(\Lambda)} +C \tau^{-1+\beta}\,\log \tau\;\leq\; C(1+\|w\|_{H^{-1+\delta}(\Lambda)})\;\leq\;C(1+L)\,.
\end{equation}
\item[(2)] $\mathfrak{P}$ is a closed path connecting $a_1,a_2 \in \mathcal{V}_2$ satisfying $a_2 \neq a_1^{*}$.

Using \eqref{Q_splitting_bold} and arguing as in \eqref{Induction_Base_Case_2B}, we deduce that in this case
we can rewrite $\mathfrak{I} (\mathfrak{P})$ as
\begin{align}
\notag
&\int \dd y_{a_1}\,\dd y_{a_2}\, \mathcal{W}_\tau^{a_1}(y_{a_1}-y_{a_1^*})\,\mathcal{W}_\tau^{a_2}(y_{a_2}-y_{a_2^*})\,\mathbf{Q}_{\tau,\zeta_1}^{(1)}(y_{a_1}-y_{a_2})\,\mathbf{Q}_{\tau,-\zeta_1}^{(1)}(y_{a_2}-y_{a_1})
\\
\notag
+\frac{1}{\tau} &\int \dd y_{a_1}\,\dd y_{a_2}\,\mathcal{W}_\tau^{a_1}(y_{a_1}-y_{a_1^*})\,\mathcal{W}_\tau^{a_2}(y_{a_2}-y_{a_2^*})\,\mathbf{Q}_{\tau,\zeta_1}^{(2)}(y_{a_1}-y_{a_2})\,\mathbf{Q}_{\tau,-\zeta_1}^{(1)}(y_{a_2}-y_{a_1})
\\
\notag
+\frac{1}{\tau} &\int \dd y_{a_1}\,\dd y_{a_2}\,\mathcal{W}_\tau^{a_1}(y_{a_1}-y_{a_1^*})\,\mathcal{W}_\tau^{a_2}(y_{a_2}-y_{a_2^*})\,\mathbf{Q}_{\tau,\zeta_1}^{(1)}(y_{a_1}-y_{a_2})\,\mathbf{Q}_{\tau,-\zeta_1}^{(2)}(y_{a_2}-y_{a_1})
\\
\label{Induction_Base_Case_2B_bold}
+\frac{1}{\tau^2} &\int \dd y_{a_1}\,\dd y_{a_2}\,\mathcal{W}_\tau^{a_1}(y_{a_1}-y_{a_1^*})\,\mathcal{W}_\tau^{a_2}(y_{a_2}-y_{a_2^*})\,\mathbf{Q}_{\tau,\zeta_1}^{(2)}(y_{a_1}-y_{a_2})\,\mathbf{Q}_{\tau,-\zeta_1}^{(2)}(y_{a_2}-y_{a_1})\,,
\end{align}
where $\zeta_1=\pm \sigma(e_1)\,(s_{a_1}-s_{a_2}) \in (-1,1) \setminus \{0\}$.
As in the analysis of \eqref{Induction_Base_Case_2B}, we have that all the integrands in \eqref{Induction_Base_Case_2B_bold} are nonnegative.

We can estimate the first term in \eqref{Induction_Base_Case_2B_bold} as
\begin{align}
\notag
&\frac{1}{2} \int \dd y_{a_1}\,\dd y_{a_2}\, \mathcal{W}_\tau^{a_1}(y_{a_1}-y_{a_1^*})\,\mathcal{W}_\tau^{a_2}(y_{a_2}-y_{a_2^*})\,\big[\mathbf{Q}_{\tau,\zeta_1}^{(1)}(y_{a_1}-y_{a_2})\big]^2
\\
\label{Induction_Base_Case_2B_bold_term1}
+&\frac{1}{2} \int \dd y_{a_1}\,\dd y_{a_2}\, \mathcal{W}_\tau^{a_1}(y_{a_1}-y_{a_1^*})\,\mathcal{W}_\tau^{a_2}(y_{a_2}-y_{a_2^*})\,\big[\mathbf{Q}_{\tau,-\zeta_1}^{(1)}(y_{a_2}-y_{a_1})\big]^2\,.
\end{align}
We rewrite the first term in \eqref{Induction_Base_Case_2B_bold_term1} as
\begin{equation*}
\sim \int \dd y_{a_1}\, \mathcal{W}_\tau^{a_1}(y_{a_1}-y_{a_1^*})\, \Big(\big[\mathbf{Q}_{\tau,\zeta_1}^{(1)}\big]^2*\mathcal{W}_{\tau}^{a_2}\Big) (y_{a_1}-y_{a_2^*})\,,
\end{equation*}
which by duality in the $y_{a_1}$ variable is 
\begin{align}
\notag
&\;\leq\; \big\|\mathcal{W}_\tau^{a_1}(\cdot-y_{a_1^*})\big\|_{H^{-1+\delta}(\Lambda)}\,\big\|\big[\mathbf{Q}_{\tau,\zeta_1}^{(1)}\big]^2*\mathcal{W}_{\tau}^{a_2}(\cdot-y_{a_2^*})\big\|_{H^{1-\delta}(\Lambda)}
\\
\label{Induction_Base_Case_2B_bold_term1A}
&\;=\;\big\|\mathcal{W}_\tau^{a_1}\big\|_{H^{-1+\delta}(\Lambda)}\,\big\|\big[\mathbf{Q}_{\tau,\zeta_1}^{(1)}\big]^2*\mathcal{W}_{\tau}^{a_2}\big\|_{H^{1-\delta}(\Lambda)}\,.
\end{align}
Note that by \eqref{W_bound_bold2}, we have 
\begin{equation}
\label{Induction_Base_Case_2B_bold_term1B}
|(f*\mathcal{W}_{\tau}^{a_2}){\,\,\widehat{}\,\,}|\;\leq\; (1+L)|\hat{f}|
\end{equation}
for all $f \in L^1(\Lambda)$.
Using \eqref{Induction_Base_Case_2B_bold_term1B}, we deduce that the expression in \eqref{Induction_Base_Case_2B_bold_term1A} is
\begin{equation}
\label{Induction_Base_Case_2B_bold_term1B_2}
\;\leq\; (1+L)\big\|\mathcal{W}_\tau^{a_1}\big\|_{H^{-1+\delta}(\Lambda)}\,\big\|\big[\mathbf{Q}_{\tau,\zeta_1}^{(1)}\big]^2\big\|_{H^{1-\delta}(\Lambda)}\,.
\end{equation}
Note that, by Lemma \ref{Sobolev_product_estimate} with $s=1-\delta$ and $\alpha=\frac{\delta}{2}$ and Lemma \ref{Q_bounds_endpoint} (ii), we have that
\begin{equation}
\label{Induction_Base_Case_2B_bold_term1C}
\big\|\big[\mathbf{Q}_{\tau,\zeta_1}^{(1)}\big]^2\big\|_{H^{1-\delta}(\Lambda)} \;\leq\; C(\delta)\,\big\|\mathbf{Q}_{\tau,\zeta_1}^{(1)}\big\|_{H^{1-\delta/2}(\Lambda)}^2 \;\leq\; C(\delta)\,.
\end{equation}

By \eqref{W_bound_bold1} and \eqref{Induction_Base_Case_2B_bold_term1C}, it follows that the expression in \eqref{Induction_Base_Case_2B_bold_term1B_2} is 
\begin{equation}
\label{Induction_Base_Case_2B_bold_term1D}
\;\leq\; C(\delta) (1+L)^2\,.
\end{equation}
By analogous arguments, the second term in \eqref{Induction_Base_Case_2B_bold_term1} also satisfies the bound \eqref{Induction_Base_Case_2B_bold_term1D}.
This gives us an acceptable bound for the first term in \eqref{Induction_Base_Case_2B_bold}.

For the second and third term in \eqref{Induction_Base_Case_2B_bold}, we argue analogously as in the proof of \eqref{Induction_Base_Case_2B_l.o.t.1}. We now set $p=1$ and use \eqref{W_bound_bold4} instead of \eqref{cal W bound_unbounded}. Furthermore, instead of Proposition \ref{Q^12_bounds} (ii), we use Lemma \ref{Q_bounds_endpoint} (iii). In the end, we obtain the upper bound of 
\begin{equation}
\label{l.o.t.1_endpoint}
\;\leq\; C \tau^{-1+\beta}\,\log \tau\,(1+\|w\|_{L^1(\Lambda)})
\end{equation}
as in \eqref{Induction_Base_Case_2B_l.o.t.1} when $d=2$ and $p=1$, which is acceptable.

Finally, for the fourth term in \eqref{Induction_Base_Case_2B_bold}, we use Lemma \ref{Q_bounds_endpoint} (iii), (iv), as well as 
\eqref{W_bound_bold3} and \eqref{W_bound_bold4}.
We argue analogously as in \eqref{Induction_Base_Case_2B_Term4}--\eqref{Induction_Base_Case_2B_Term4_l.o.t.2} with $d=2$ and $p=1$ to get the acceptable upper bound
\begin{equation}
\label{l.o.t.2_endpoint}
\;\leq\; C\tau^{-1+\beta}\,(1+\|w\|_{L^1(\Lambda)})\,.
\end{equation}

\item[(3)] $\mathfrak{P}$ is an open path with vertices $b_1,b_2 \in \mathcal{V}_1$. We have
\begin{equation}
\label{n=2_open_path_bound_endpoint}
\|\mathfrak{I} (\mathfrak{P})\|_{L^2_{\mathbf{y_1}}L^{\infty}_{\mathcal{V}^{*}(\mathfrak{P})}}\;=\;\|\mathbf{G}_\tau\|_{L^2(\Lambda)}\;\leq\;C\,,
\end{equation}
uniformly in $\tau$ by Lemma \ref{Q_bounds_endpoint} (ii).
\end{itemize}
This completes the base step and proves \eqref{Inductive inequality 2 unbounded_B_endpoint} when $n=2$.

\subsubsection*{Inductive step}
Let $n \in \mathbb{N}, n \geq 3$ be given. With notation as in the inductive step of the proof of \eqref{Inductive inequality 2 unbounded_B}, we find a vertex $a \in \mathcal{V}_2(\mathfrak{P})$ satisfying \eqref{integration_vertex_inductive_step}. With analogous notation as in the setup of \eqref{y_a_dependence_unbounded}, we get that the dependence on the variable $y_a \equiv y_{a_2}$ in \eqref{I(P)_endpoint} is 
\begin{align}
\notag
\;\leq\; 
\mathcal{W}_{\tau}^{a_2}(y_{a_2}-y_{a_2^*})\,&\bigg[\mathbf{Q}_{\tau,\zeta_1}(y_{a_1}-y_{a_2})+\frac{\mathbf{1}_{\zeta_1=0}}{\tau} \delta(y_{a_1}-y_{a_2})\bigg]\,
\\
\label{y_a_dependence_unbounded_endpoint}
\times &\bigg[
\mathbf{Q}_{\tau,\zeta_2}(y_{a_2}-y_{a_3})+\frac{\mathbf{1}_{\zeta_2=0}}{\tau}\delta(y_{a_2}-y_{a_3})\bigg]
\,.
\end{align}
Furthermore, it is not possible for $\zeta_1=0$ and $\zeta_2=0$ simultaneously. The contributions to the $y_{a_2}$ integral coming from the single delta functions in \eqref{y_a_dependence_unbounded_endpoint} are estimated as in \eqref{Induction_Step_delta_1_unbounded}--\eqref{Induction_Step_delta_2_unbounded} by
\begin{align}
\notag
&\int\dd y_{a_2}\,\mathcal{W}_{\tau}^{a_2}(y_{a_2}-y_{a_2^*})\,\bigg[\frac{\mathbf{1}_{\zeta_1=0}}{\tau}\,\delta(y_{a_1}-y_{a_2})\,\mathbf{Q}_{\tau,\zeta_2}(y_{a_2}-y_{a_3}) + \frac{\mathbf{1}_{\zeta_2=0}}{\tau}\,\mathbf{Q}_{\tau,\zeta_1}(y_{a_1}-y_{a_2}) \, \delta(y_{a_2}-y_{a_3})\bigg]
\\
\label{Induction_Step_delta_unbounded_endpoint}
&\;\leq\; C\,\tau^{-1+\beta}\,\Big(\mathbf{1}_{\zeta_1=0}+\mathbf{1}_{\zeta_2=0}\Big)\,\mathbf{Q}_{\tau,\zeta_1+\zeta_2} (y_{a_1}-y_{a_3})\,.
\end{align}
Analogously to \eqref{Induction_Step_unbounded}, we now prove
\begin{multline}
\label{Induction_Step_unbounded_endpoint}
\int \dd y_{a_2}\,\mathcal{W}_\tau^{a_2}(y_{a_2}-y_{a_2^{*}}) \,\mathbf{Q}_{\tau,\zeta_1}(y_{a_1}-y_{a_2}) \, \mathbf{Q}_{\tau,\zeta_2}(y_{a_2}-y_{a_3}) 
\\
\;\leq\;C\,\big(1+L\big)\, \Big[1+ \,\mathbf{1}_{\zeta_1+\zeta_2 \neq 0}\,\mathbf{Q}_{\tau,\zeta_1+\zeta_2} (y_{a_1}-y_{a_3})\Big]\,.
\end{multline}
Namely, if we know \eqref{Induction_Step_unbounded_endpoint}, by using \eqref{I(P)_endpoint}, \eqref{y_a_dependence_unbounded_endpoint}--\eqref{Induction_Step_delta_unbounded_endpoint}, \eqref{B_d}, and Lemma \ref{Q_bounds_endpoint} (v), we deduce that
\begin{equation}
\label{I(P)_induction_unbounded_endpoint}
\mathfrak{I}(\mathfrak{P})\;\leq\;C_0\,\big(1+L\big)\, \mathfrak{I}(\mathfrak{\hat{P}})\,,
\end{equation}
for $\mathfrak{\hat{P}}$ defined as in \eqref{I(P)_induction_unbounded}. As in the proof of Proposition \ref{Product of subgraphs}, \eqref{I(P)_induction_unbounded_endpoint} implies the inductive step.

Let us now prove \eqref{Induction_Step_unbounded_endpoint}. By \eqref{Q_splitting_bold}, we can rewrite the left-hand side of \eqref{Induction_Step_unbounded_endpoint} as 

\begin{align}
\notag
&\int \dd y_{a_2}\,\mathcal{W}^{a_2}_{\tau}(y_{a_2}-y_{a_2^*})\,\mathbf{Q}_{\tau,\zeta_1}^{(1)}(y_{a_1}-y_{a_2})\,\mathbf{Q}_{\tau,\zeta_2}^{(1)}(y_{a_2}-y_{a_3})
\\
\notag
+\frac{1}{\tau}&\,\int \dd y_{a_2}\,\mathcal{W}^{a_2}_{\tau}(y_{a_2}-y_{a_2^*})\,\mathbf{Q}_{\tau,\zeta_1}^{(2)}(y_{a_1}-y_{a_2})\,\mathbf{Q}_{\tau,\zeta_2}^{(1)}(y_{a_2}-y_{a_3})
\\
\notag
+\frac{1}{\tau}&\,\int \dd y_{a_2}\,\mathcal{W}^{a_2}_{\tau}(y_{a_2}-y_{a_2^*})\,\mathbf{Q}_{\tau,\zeta_1}^{(1)}(y_{a_1}-y_{a_2})\,\mathbf{Q}_{\tau,\zeta_2}^{(2)}(y_{a_2}-y_{a_3})
\\
\label{Induction_Step_unbounded_2_endpoint}
+\frac{1}{\tau^2}&\,\int \dd y_{a_2}\,\mathcal{W}^{a_2}_{\tau}(y_{a_2}-y_{a_2^*})\,\mathbf{Q}_{\tau,\zeta_1}^{(2)}(y_{a_1}-y_{a_2})\,\mathbf{Q}_{\tau,\zeta_2}^{(2)}(y_{a_2}-y_{a_3})\,.
\end{align}
The first term in \eqref{Induction_Step_unbounded_2_endpoint} is
\begin{equation}
\label{Induction_Step_unbounded_2_endpoint_Term1}
\frac{1}{2}\,\int \dd y_{a_2}\,\mathcal{W}^{a_2}_{\tau}(y_{a_2}-y_{a_2^*})\,\big[\mathbf{Q}_{\tau,\zeta_1}^{(1)}(y_{a_1}-y_{a_2})\big]^2+\frac{1}{2}\,\int \dd y_{a_2}\,\mathcal{W}^{a_2}_{\tau}(y_{a_2}-y_{a_2^*})\,\big[\mathbf{Q}_{\tau,\zeta_2}^{(1)}(y_{a_2}-y_{a_3})\big]^2\,.
\end{equation}
By symmetry, we need to consider only the first term in \eqref{Induction_Step_unbounded_2_endpoint_Term1}, which by duality in the $y_{a_2}$ variable is
\begin{equation*}
\leq \frac{1}{2}\, \big\|\mathcal{W}^{a_2}_{\tau}(\cdot-y_{a_2^*})\big\|_{H^{-1+\delta}(\Lambda)}\,\big\|\big[\mathbf{Q}_{\tau,\zeta_1}^{(1)}(y_{a_1}-\cdot)\big]^2\big\|_{H^{1-\delta}(\Lambda)}\;=\; \frac{1}{2}\,\big\|\mathcal{W}^{a_2}_{\tau}\big\|_{H^{-1+\delta}(\Lambda)}\,\big\|\big[\mathbf{Q}_{\tau,\zeta_1}^{(1)}\big]^2\big\|_{H^{1-\delta}(\Lambda)}\,,
\end{equation*}
which, in turn, by \eqref{W_bound_bold1} and \eqref{Induction_Base_Case_2B_bold_term1C} is 
\begin{equation}
\label{Induction_Step_unbounded_2_endpoint_Term1_bound}
\;\leq\; C(1+L)\,.
\end{equation}

The second and third term in \eqref{Induction_Step_unbounded_2_endpoint} are 
\begin{equation}
\label{Induction_Step_unbounded_2_endpoint_Term23_bound}
\;\leq\; C \tau^{-1+\beta} \,\log \tau
\end{equation}
by using the same arguments as in the proof of \eqref{Step_l.o.t._1} with appropriate notational modifications. Here, instead of 
\eqref{cal W bound_unbounded2} and Proposition \ref{Q^12_bounds} (i)--(ii), we use the analogous bound \eqref{W_bound_bold3} and Lemma \ref{Q_bounds_endpoint} (i) and (iii) respectively. With analogous notational modifications as above, the fourth term in \eqref{Induction_Step_unbounded_2_endpoint} is
\begin{equation}
\label{Induction_Step_unbounded_2_endpoint_Term4_bound}
\;\leq\; C \tau^{-1+\beta} \big(1+\,\mathbf{1}_{\zeta_1+\zeta_2 \neq 0}\,\mathbf{Q}_{\tau,\zeta_1+\zeta_2}(y_{a_1}-y_{a_3})\big)
\end{equation}
by using the same arguments as in \eqref{Induction_Step_unbounded_2_4th_term}--\eqref{Step_l.o.t._5}. Here, instead of \eqref{cal W bound_unbounded2} and Proposition \ref{Q^12_bounds} (ii)--(iii), we use \eqref{W_bound_bold3} and Lemma \ref{Q_bounds_endpoint} (iii)--(iv) respectively. Combining \eqref{Induction_Step_unbounded_2_endpoint_Term1_bound}--\eqref{Induction_Step_unbounded_2_endpoint_Term4_bound}, we deduce \eqref{Induction_Step_unbounded_endpoint}.
\end{proof}
Before we proceed, let us make two observations that follow from the proof of Proposition \ref{Product of subgraphs_endpoint}. There are analogues of Remarks \ref{Product_of_subgraphs_Remark} and \ref{Product_of_subgraphs_Remark2_A}.

\begin{remark}
\label{Product_of_subgraphs_Remark_endpoint}
When $n=2$ and $c_2=c_1^*$, in the proof of Proposition \ref{Product of subgraphs_endpoint}, \eqref{Product_of_subgraphs_Remark_n=2a}
gets replaced by 
\begin{equation}
\label{Product_of_subgraphs_Remark_n=2a_endpoint}
\mathfrak{I}(\mathfrak{P}) \;\lesssim\; \|w_{\tau}\|_{H^{-1+\delta}(\Lambda)}\,\|\mathbf{G}_\tau\|_{H^{1-\delta/2}(\Lambda)}^2
+ \mathcal{O}(\tau^{-\epsilon_0})\,
\end{equation}
for some $\epsilon_0>0$. This follows from \eqref{Base_n=2_(1)_endpoint}--\eqref{Base_n=2_(1)_endpoint_2}.
\\
If  $c_1,c_2 \in \mathcal{V}_2$ and $c_2 \neq c_1^*$, \eqref{Product_of_subgraphs_Remark_n=2b} gets replaced by
\begin{align}
\notag
\mathfrak{I}(\mathfrak{P}) &\;\lesssim\; \big\|\mathcal{W}_\tau^{a_1}\big\|_{H^{-1+\delta}(\Lambda)}\,\big\|\widehat{\mathcal{W}}_\tau^{a_2}\big\|_{\ell^{\infty}(\mathbb{Z}^2)} \,\big\|\mathbf{Q}_{\tau,\zeta_1}^{(1)}\big\|_{H^{1-\delta/2}(\Lambda)}^2
\\
\label{Product_of_subgraphs_Remark_n=2b_endpoint}
&+\big\|\widehat{\mathcal{W}}_\tau^{a_1}\big\|_{\ell^{\infty}(\mathbb{Z}^2)} \big\|\mathcal{W}_\tau^{a_2}\big\|_{H^{-1+\delta}(\Lambda)}\,\,\big\|\mathbf{Q}_{\tau,\zeta_2}^{(1)}\big\|_{H^{1-\delta/2}(\Lambda)}^2+\mathcal{O}(\tau^{-\epsilon_0})\,.
\end{align}
for some $\epsilon_0>0$. This follows from the arguments in \eqref{Induction_Base_Case_2B_bold}--\eqref{l.o.t.2_endpoint}.
\\
If $c_1,c_2 \in \mathcal{V}_1$, we again have \eqref{Product_of_subgraphs_Remark_n=2c}.
When $n \geq 3$, \eqref{Product_of_subgraphs_Remark_n>=3} gets replaced by 
\begin{equation}
\label{Product_of_subgraphs_Remark_n>=3_endpoint}
\mathfrak{I}(\mathfrak{P})\;\lesssim\; \Bigg[\big\|\mathcal{W}^{a_2}_{\tau}\big\|_{H^{-1+\delta}(\Lambda)}\,\Big(\big\|\mathbf{Q}_{\tau,\zeta_1}^{(1)}\big\|_{H^{1-\delta/2}(\Lambda)}^2+\big\|\mathbf{Q}_{\tau,\zeta_2}^{(1)}\big\|_{H^{1-\delta/2}(\Lambda)}^2\Big)+ \mathcal{O}(\tau^{-\epsilon_0})\Bigg]\,\mathfrak{I}(\mathfrak{\hat{P}}),
\end{equation}
for some $\epsilon_0>0$.
This follows from the arguments in \eqref{y_a_dependence_unbounded_endpoint}--\eqref{Induction_Step_unbounded_2_endpoint_Term4_bound}.
In \eqref{Product_of_subgraphs_Remark_n=2a_endpoint}--\eqref{Product_of_subgraphs_Remark_n>=3_endpoint}, the $\mathcal{O}(\tau^{-\epsilon_0})$ contributions come from the $\mathbf{Q}^{(2)}$ and delta function factors. All the factors involving only $\mathbf{Q}^{(1)}$ give us the leading order terms in \eqref{Product_of_subgraphs_Remark_n=2a_endpoint}--\eqref{Product_of_subgraphs_Remark_n>=3_endpoint}, and in \eqref{Product_of_subgraphs_Remark_n=2c}, when we are considering an open path with two vertices.
\end{remark}

\begin{remark}
\label{Product_of_subgraphs_Remark2}
In proving \eqref{Product_of_subgraphs_Remark_n=2a_endpoint}--\eqref{Product_of_subgraphs_Remark_n>=3_endpoint}, the bounds giving us the leading order terms were obtained by applying only \eqref{W_bound_bold1}, \eqref{W_bound_bold2}, and \eqref{W_bound_bold4}, and never by applying \eqref{W_bound_bold3}. The leading order terms in the upper bounds \eqref{Product_of_subgraphs_Remark_n=2a_endpoint}--\eqref{Product_of_subgraphs_Remark_n>=3_endpoint} are the ones that we obtain by estimating expressions involving only factors of $\mathbf{Q}^{(1)}$ and no factors of $\mathbf{Q}^{(2)}$ or the delta function. 
For details, see \eqref{Base_n=2_(1)_endpoint}--\eqref{Base_n=2_(1)_endpoint_2}, \eqref{Induction_Base_Case_2B_bold_term1}--\eqref{Induction_Base_Case_2B_bold_term1D}, \eqref{n=2_open_path_bound_endpoint} and \eqref{Induction_Step_unbounded_2_endpoint_Term1}--\eqref{Induction_Step_unbounded_2_endpoint_Term1_bound} above.
\end{remark}

We now study the convergence of the explicit terms, as in Section \ref{Convergence of the explicit terms}.
Let us first appropriately adapt the notation to the current setting. Recalling
\eqref{hat_V,E,sigma}--\eqref{J_tau_e^2} and \eqref{Q^12_bold}, we write 
\begin{align*}
\mathcal{J}_{\tau, e}(\mathbf{y}_e,\mathbf{s}) &\;=\; \mathbf{Q}_{\tau, \sigma(e) (s_a - s_b)}(y_a-y_b) + \frac{\mathbf{1}_{\sigma(e) = +1}\,\mathbf{1}_{i_a=i_b}}{\tau} \,\delta(y_a-y_b)\,.
\\
\mathcal{J}_{\tau, e}^{(1)}(\mathbf{y}_e,\mathbf{s}) &\;=\; \mathbf{Q}^{(1)}_{\tau, \sigma(e) (s_a - s_b)}(y_a-y_b)\,.
\\ 
\mathcal{J}_{\tau, e}^{(2)}(\mathbf{y}_e,\mathbf{s})&\;=\; \frac{1}{\tau}\mathbf{Q}^{(2)}_{\tau, \sigma(e) (s_a - s_b)}(y_a-y_b)+\frac{\mathbf{1}_{\sigma(e) = +1}\,\mathbf{1}_{i_a=i_b}}{\tau} \,\delta(y_a-y_b)\,.
\end{align*}
By properly adapting \eqref{I_representation_unbounded} and \eqref{I1_representation_unbounded} to the setting of endpoint-admissible interaction potentials, we now show that the analogue of the approximation result given in Lemma \ref{Approximation_result_Q1} above holds in this setting.

\begin{lemma}
\label{Approximation_result_Q1_endpoint}
Fix $m,r \in \mathbb{N}$. Given $\Pi \in \mathfrak{R}(d,m,r)$, $\mathbf{t} \in \mathfrak{A}(m)$, we have that 
\begin{equation*}
\big|\mathcal{I}^{\xi}_{\tau,\Pi}(\mathbf{t}) - \mathcal{I}^{1,\xi}_{\tau,\Pi} (\mathbf{t})\big| \rightarrow 0 \quad \mbox{as}\quad \tau \rightarrow \infty\,\, \mbox{uniformly in} \,\, \xi \in \mathbf{B}_r\,.
\end{equation*}
\end{lemma}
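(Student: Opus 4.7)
The plan is to follow the telescoping strategy from the proof of Lemma \ref{Approximation_result_Q1}, but with the path estimates replaced by their endpoint-admissible counterparts from Proposition \ref{Product of subgraphs_endpoint} and Remarks \ref{Product_of_subgraphs_Remark_endpoint}--\ref{Product_of_subgraphs_Remark2}. First, I would fix an arbitrary strict total order $\prec$ on $\mathcal{E}$ and use the identity
\begin{equation*}
\prod_{e \in \mathcal{E}} \mathcal{J}_{\tau,e}(\mathbf{y}_e,\mathbf{s}) - \prod_{e \in \mathcal{E}} \mathcal{J}^{(1)}_{\tau,e}(\mathbf{y}_e,\mathbf{s}) \;=\; \sum_{e_0 \in \mathcal{E}} \Bigg[\Bigg(\prod_{e \prec e_0} \mathcal{J}_{\tau,e}(\mathbf{y}_e,\mathbf{s})\Bigg) \mathcal{J}^{(2)}_{\tau,e_0}(\mathbf{y}_{e_0},\mathbf{s}) \Bigg(\prod_{e \succ e_0} \mathcal{J}^{(1)}_{\tau,e}(\mathbf{y}_e,\mathbf{s})\Bigg)\Bigg].
\end{equation*}
Applying \eqref{J_tau_ordering}, the nonnegativity of $w_\tau$ (from Lemma \ref{w_endpoint_admissible_approximation} (iv)), and the Cauchy-Schwarz inequality in $\mathbf{y_1}$, the difference $|\mathcal{I}^{\xi}_{\tau,\Pi}(\mathbf{t}) - \mathcal{I}^{1,\xi}_{\tau,\Pi}(\mathbf{t})|$ is bounded by a sum over $e_0 \in \mathcal{E}$ of terms in which exactly one edge factor is replaced by $\mathcal{J}^{(2)}_{\tau,e_0}$.

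For each fixed $e_0$, I would then distribute the interaction factors among the paths via \eqref{cal W_unbounded}, \eqref{product w tau_unbounded}, and reduce as in the proof of Proposition \ref{Product of subgraphs_endpoint} to an estimate of the form \eqref{Inductive inequality 2 unbounded_B_endpoint} for the modified path integral
\begin{equation*}
\mathfrak{I}_{e_0}(\mathfrak{P}) \;\deq\; \int \dd \mathbf{y}_{\mathcal{V}_2(\mathfrak{P})} \Bigg(\prod_{e \in \mathfrak{P} \setminus \{e_0\}} \tilde{\mathcal{J}}_{\tau,e}(\mathbf{y}_e,\mathbf{s})\Bigg) \Bigg(\prod_{e \in \mathfrak{P} \cap \{e_0\}} \mathcal{J}^{(2)}_{\tau,e}(\mathbf{y}_e,\mathbf{s})\Bigg) \prod_{a \in \mathcal{V}_2(\mathfrak{P})} \mathcal{W}^{a}_\tau(y_a - y_{a^*}),
\end{equation*}
where $\tilde{\mathcal{J}}_{\tau,e}$ denotes $\mathcal{J}_{\tau,e}$ for $e \prec e_0$ and $\mathcal{J}^{(1)}_{\tau,e}$ for $e \succ e_0$. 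If $e_0$ lies in a path $\mathfrak{P}$, I would apply the base cases and inductive step of Proposition \ref{Product of subgraphs_endpoint} with the vertex $a_2$ chosen to be adjacent to $e_0$ whenever $|\mathcal{V}(\mathfrak{P})| \geq 3$. The crucial observation, already recorded in Remarks \ref{Product_of_subgraphs_Remark_endpoint} and \ref{Product_of_subgraphs_Remark2}, is that the explicit $\frac{1}{\tau}$ prefactor in $\mathcal{J}^{(2)}_{\tau,e_0}$ combines with either the $L^\infty$ bound of $\mathbf{Q}^{(2)}_{\tau,t}$ from Lemma \ref{Q_bounds_endpoint} (iv) (when $\{t\} \geq \frac{1}{2}$, so of order $\tau$), the integral bound from Lemma \ref{Q_bounds_endpoint} (iii), or the collapse caused by the $\delta(y_a-y_b)$ term, together with at most a single application of the $\tau^\beta$ bound \eqref{W_bound_bold3} on an adjacent $\mathcal{W}^a_\tau$, producing a net factor of $\tau^{-\epsilon_0}$ for some $\epsilon_0 > 0$ since $\beta \in (0,1)$. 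The remaining paths are bounded by Proposition \ref{Product of subgraphs_endpoint} uniformly in $\tau$.

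Combining these estimates gives $\mathfrak{I}_{e_0}(\mathfrak{P}) \leq C\tau^{-\epsilon_0} \mathfrak{I}(\hat{\mathfrak{P}})$ (cf.\ \eqref{I_{e_0}_bound}), so each term in the telescoping sum tends to zero uniformly in $\xi \in \mathbf{B}_r$. Since $|\mathcal{E}|$ is finite and $e_0$-independent, summing over $e_0$ yields the claim. The main obstacle will be the careful treatment of the case when $\sigma(e_0) = +1$ and $i_a = i_b$, where $\mathcal{J}^{(2)}_{\tau,e_0}$ contains a delta function collapsing two vertices; here I would use the same collapsing analysis as in \eqref{Induction_Step_delta_1_unbounded}--\eqref{Induction_Step_delta_2_unbounded} and in the base case \eqref{Base_n=2_(1)_endpoint}, where the resulting product $\frac{1}{\tau}\|w_\tau\|_{L^\infty} \leq \tau^{-1+\beta}$ is absorbed into $\tau^{-\epsilon_0}$. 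The need to avoid using \eqref{W_bound_bold3} on more than one vertex per path, as emphasized in Remark \ref{Product_of_subgraphs_Remark2}, will constrain the order in which the inductive reduction is carried out near $e_0$.
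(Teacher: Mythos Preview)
Your proposal is correct and follows essentially the same approach as the paper's proof: both telescope over the distinguished edge $e_0$, use the nonnegativity of $w_\tau$ to avoid absolute values on the interaction, and then apply the path-by-path induction of Proposition \ref{Product of subgraphs_endpoint} together with Remarks \ref{Product_of_subgraphs_Remark_endpoint}--\ref{Product_of_subgraphs_Remark2} to extract the $\tau^{-\epsilon_0}$ gain from the single $\mathcal{J}^{(2)}_{\tau,e_0}$ factor. The paper simplifies slightly by immediately invoking $\mathcal{J}^{(1)}_{\tau,e} \leq \mathcal{J}_{\tau,e}$ to replace all $e \neq e_0$ factors by the full $\mathcal{J}_{\tau,e}$ (as in \eqref{e_0_telescoping_sum}), rather than tracking the $\prec$/$\succ$ split, but this is cosmetic.
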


\begin{proof}
We argue as in the proof of Lemma \ref{Approximation_result_Q1} and use a telescoping method. More precisely, we argue as in \eqref{Telescoping_1}--\eqref{e_0_telescoping_sum} and reduce to showing that for $e_0 \in \mathcal{E}$, we have
\begin{equation}
\label{Approximation_result_Q1_endpoint_proof1}
\mathcal{I}^{\xi}_{\tau,\Pi,e_0}(\mathbf{t}) \;=\; \int_{\Lambda^{\mathcal{V}}} \dd \mathbf{y}\,\Bigg[\prod_{i=1}^{m} w_{\tau}(y_{i,1}-y_{i,2})\Bigg] \,|\xi(\mathbf{y_1})|\,
\Bigg[\Bigg(\prod_{e \neq e_0} \mathcal{J}_{\tau, e}(\mathbf{y}_e,\mathbf{s})\Bigg) \, \mathcal{J}^{(2)}_{\tau,e_0}(\mathbf{y}_{e_0},\mathbf{s})\Bigg] \rightarrow 0
\end{equation}
as $\tau \rightarrow \infty$ uniformly in $\xi \in \mathbf{B}_r$. 
For $\mathfrak{P} \in \conn (\mathcal{E})$, as in \eqref{I_{e_0}_definition}, we work with
\begin{align}
\notag
&\mathfrak{I}_{e_0}(\mathfrak {P}) \;=\; \int \dd \mathbf{y}_{\mathcal{V}_2(\mathfrak{P})} \, \Bigg(\prod_{e \in \mathfrak{P} \setminus \{e_0\}} \mathcal{J}_{\tau, e}(\mathbf{y}_e, \mathbf{s})\Bigg)\, 
\\
\label{Approximation_result_Q1_endpoint_proof2}
&\times
\Bigg(\prod_{e \in \mathfrak{P} \cap \{e_0\}} \mathcal{J}_{\tau, e}^{(2)}(\mathbf{y}_e, \mathbf{s})\Bigg) \,\prod_{a \in \mathcal{V}_2(\mathfrak{P})} \mathcal{W}_\tau^{a}(y_{a}-y_{a^*})\,.
\end{align}
We now conclude by using Remark \ref{Product_of_subgraphs_Remark_endpoint} and arguing as in the proof of Lemma \ref{Approximation_result_Q1}. More precisely, if $n \deq |\mathcal{V}(\mathfrak{P})|=2$, then we argue as in \eqref{Product_of_subgraphs_Remark_n=2a_endpoint}--\eqref{Product_of_subgraphs_Remark_n=2b_endpoint} and deduce that $\mathfrak{I}_{e_0}(\mathfrak {P}) =\mathcal{O}(\tau^{-\epsilon_0})$ for some $\epsilon_0>0$. If $\mathfrak{P}$ is an open path of length $2$, then $\mathfrak{I}_{e_0}(\mathfrak {P}) =0$. 
Furthermore, if $n \geq 3$, then we obtain the analogue of \eqref{I_{e_0}_bound} by arguing as for \eqref{Product_of_subgraphs_Remark_n>=3_endpoint}.
Throughout the proof, it is important to use the fact that one is not taking absolute values of the interaction in \eqref{Approximation_result_Q1_endpoint_proof1}--\eqref{Approximation_result_Q1_endpoint_proof2}, since the interaction is pointwise nonnegative. This allows us to use the estimate \eqref{W_bound_bold1} (which is in general not true if we add absolute values). See Remark \ref{absolute_value_unbounded_negative_Sobolev} below.
\end{proof}

Recalling the definition \eqref{classical_Green_function} of the classical Green function, we let 
\begin{equation}
\label{G_endpoint}
\mathbf{G}(x) \;\deq\; G(x;0)
\end{equation}
for $x \in \Lambda$. Note that by \eqref{G_positive} we have that $\mathbf{G}$ is pointwise nonnegative.
Recall the quantity $\mathcal{I}_{\Pi}^{\xi}$ given by Definition \ref{I_infinity_unbounded} (i). We now note the following analogue of Proposition \ref{Approximation_result_2_unbounded}.

\begin{proposition}
\label{Approximation_result_2_unbounded_endpoint}
Let $m,r \in \mathbb{N}$, $\Pi \in \mathfrak{R}(d,m,r)$, and $\mathbf{t} \in \mathfrak{A}(m)$ be given. Then \eqref{Approximation_result_2_unbounded_convergence} holds uniformly in $\xi \in \mathbf{B}_r$.
\end{proposition}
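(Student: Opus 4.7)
The plan is to mirror the three-step argument used in the proof of Proposition \ref{Approximation_result_2_unbounded}, but with the $L^p$-based estimates replaced by the $H^{-1+\delta}$-based estimates developed in Section \ref{Endpoint-admissible interaction potentials_2}. First I would apply Lemma \ref{Approximation_result_Q1_endpoint} to reduce matters to showing that $\mathcal{I}^{1,\xi}_{\tau,\Pi}(\mathbf{t}) \to \mathcal{I}^{\xi}_{\Pi}$, where $\mathcal{I}^{1,\xi}_{\tau,\Pi}(\mathbf{t})$ is defined as in \eqref{I1_representation_unbounded} with the $\mathcal{J}^{(1)}_{\tau,e}$ built from $\mathbf{Q}^{(1)}_{\tau,\cdot}$. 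As in the proof of Proposition \ref{Approximation_result_2_unbounded}, I would introduce the intermediate quantity $\mathcal{I}^{2,\xi}_{\tau,\Pi}(\mathbf{t})$ obtained from $\mathcal{I}^{1,\xi}_{\tau,\Pi}(\mathbf{t})$ by replacing $w_\tau$ with $w$ in every interaction factor, and split the convergence into $\mathcal{I}^{1,\xi}_{\tau,\Pi} \to \mathcal{I}^{2,\xi}_{\tau,\Pi}$ and $\mathcal{I}^{2,\xi}_{\tau,\Pi} \to \mathcal{I}^{\xi}_{\Pi}$.

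For the first of these, I would telescope $\prod_{i=1}^m w_\tau - \prod_{i=1}^m w$ as in \eqref{Approximation_result_2_unbounded_I1-I2_telescoping} and, at the isolated $n$-th vertex, introduce the modified interaction $\widetilde{\mathcal{W}}^a_\tau$ of \eqref{cal W_tilde}, which now takes values in $\{1, w, w_\tau, w_\tau - w\}$. The analogue of \eqref{2.125} is $\|w_\tau - w\|_{H^{-1+\delta}(\Lambda)} \to 0$, which is Lemma \ref{w_endpoint_admissible_approximation} (vii), while at all other vertices $\widetilde{\mathcal{W}}^a_\tau$ still satisfies the bounds \eqref{W_bound_bold1}, \eqref{W_bound_bold2}, and \eqref{W_bound_bold4}. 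Applying the leading-order estimates of Remarks \ref{Product_of_subgraphs_Remark_endpoint} and \ref{Product_of_subgraphs_Remark2} to $\mathfrak{I}'(\mathfrak{P})$ as in \eqref{I'(P)} but with $\widetilde{\mathcal{W}}$'s, the path containing the distinguished vertex $a_0$ produces a factor $\|w_\tau-w\|_{H^{-1+\delta}(\Lambda)} \to 0$, while the remaining paths are bounded uniformly in $\tau$. Crucially, these leading-order estimates were derived without invoking the $L^\infty$ bound \eqref{W_bound_bold3} (which blows up as $\tau^\beta$), exactly as recorded in Remark \ref{Product_of_subgraphs_Remark2}.

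For the second step I would telescope the product $\prod_{e \in \mathcal{E}} \mathcal{J}^{(1)}_{\tau,e} - \prod_{e \in \mathcal{E}} \mathcal{J}_e$ over a fixed strict total order $\prec$ on $\mathcal{E}$, as in \eqref{Telescoping_2_application}, and define an auxiliary edge weight $\tilde{\mathcal{J}}_{\tau,e}$ as in \eqref{J_tilde}. The path containing the distinguished edge $e_0$ is then estimated using the leading-order bounds of Remark \ref{Product_of_subgraphs_Remark_endpoint} with the $H^{1-\delta/2}(\Lambda)$ norm of $\mathbf{Q}^{(1)}_{\tau,\zeta}$ or $\mathbf{G}$ on that edge replaced by $\|\mathbf{Q}^{(1)}_{\tau,\zeta} - \mathbf{G}\|_{H^{1-\delta/2}(\Lambda)}$, which I expect to tend to zero by the analogue of Lemma \ref{Q1_convergence_lemma}: pointwise convergence of the Fourier coefficients as in \eqref{Q1_convergence_lemma4}, the uniform domination \eqref{Q1_convergence_lemma5}, and dominated convergence applied to the $H^{1-\delta/2}$ norm on the torus. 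All other paths contribute uniformly bounded factors via the same leading-order bounds, with $\mathcal{W}^a$ defined from $w$ (still satisfying \eqref{W_bound_bold1}, \eqref{W_bound_bold2}, \eqref{W_bound_bold4}). The main obstacle is the second telescoping step: verifying the convergence $\mathbf{Q}^{(1)}_{\tau,t} \to \mathbf{G}$ in the correct Sobolev norm $H^{1-\delta/2}(\Lambda)$ (rather than merely the $L^q$ norm used in Lemma \ref{Q1_convergence_lemma}) and organising the bookkeeping so that only the leading-order estimates from Remark \ref{Product_of_subgraphs_Remark_endpoint}, which avoid the $L^\infty$ bound and crucially do not require taking absolute values of $w$, are invoked along the telescoped path.
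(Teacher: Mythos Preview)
Your outline matches the paper's proof in its overall structure (Lemma \ref{Approximation_result_Q1_endpoint}, then the two telescoping steps $\mathcal{I}^{1,\xi}_{\tau,\Pi}\to\mathcal{I}^{2,\xi}_{\tau,\Pi}\to\mathcal{I}^{\xi}_{\Pi}$), and you correctly identify that the leading-order estimates of Remarks \ref{Product_of_subgraphs_Remark_endpoint}--\ref{Product_of_subgraphs_Remark2} are what drive both steps. But there are two sign/absolute-value issues that you gloss over, and these are precisely the places where the endpoint argument differs from the $d$-admissible one.

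\textbf{Step 1.} You say the path through $a_0$ ``produces a factor $\|w_\tau-w\|_{H^{-1+\delta}}$'' by applying the leading-order estimates. But those estimates (and the inductive proof of Proposition \ref{Product of subgraphs_endpoint}) were derived for a \emph{nonnegative} integrand; here $\widetilde{\mathcal W}^{a_0}_\tau=w_\tau-w$ can change sign, and since the absolute-value map is unbounded on $H^{-1+\delta}$ (Remark \ref{absolute_value_unbounded_negative_Sobolev}) you cannot simply insert $|w_\tau-w|$ and proceed. The fix, which the paper carries out explicitly, is to integrate the vertex $a_0$ \emph{first}: apply duality in $y_{a_0}$ to the raw (signed) integrand to extract $\|w_\tau-w\|_{H^{-1+\delta}}$ times an $H^{1-\delta}$ norm of a product of two $\mathbf{Q}^{(1)}$ factors (controlled via Lemma \ref{Sobolev_product_estimate}). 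Only \emph{after} this step is the remaining integrand pointwise nonnegative (here one uses $w,w_\tau\geq 0$ pointwise, i.e.\ Definition \ref{endpoint_admissible_w} (iii) and Lemma \ref{w_endpoint_admissible_approximation} (iv)), and the rest of the induction on $\hat{\mathfrak{P}}$ can run as in Proposition \ref{Product of subgraphs_endpoint}. You should make the order of operations explicit, and also treat the $n=2$ base cases for the path through $a_0$ separately.

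\textbf{Step 2.} After telescoping over edges and taking absolute values, the distinguished edge carries $\bigl|\mathbf{Q}^{(1)}_{\tau,\zeta}-\mathbf{G}\bigr|$, not $\mathbf{Q}^{(1)}_{\tau,\zeta}-\mathbf{G}$. To plug this into the duality/Sobolev-product bounds of Remark \ref{Product_of_subgraphs_Remark_endpoint} you need $\bigl\|\,|\mathbf{Q}^{(1)}_{\tau,\zeta}-\mathbf{G}|\,\bigr\|_{H^{s}(\Lambda)}\to 0$ for some $s\in(0,1)$, not merely $\|\mathbf{Q}^{(1)}_{\tau,\zeta}-\mathbf{G}\|_{H^{s}(\Lambda)}\to 0$. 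The paper closes this gap via the Sobolev--Slobodeckij characterisation, which gives $\|\,|f|\,\|_{H^s(\Lambda)}\lesssim_s\|f\|_{H^s(\Lambda)}$ for $s\in(0,1)$; combined with your stated $H^{1-\delta/2}$ convergence (Lemma \ref{Q1_convergence_lemma_endpoint}) this yields the required bound. This point should be stated, since it is exactly the asymmetry---absolute values are harmless on the positive-order Sobolev side (the Green functions) but not on the negative-order side (the interaction)---that forces the extra hypotheses in Definition \ref{endpoint_admissible_w}.
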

In the proof of Proposition \ref{Approximation_result_2_unbounded_endpoint}, we use a modification of Lemma \ref{Q1_convergence_lemma}.
\begin{lemma}
\label{Q1_convergence_lemma_endpoint}
Let $t \in (-1,1)$ and $\alpha \in (0,1)$ be given. Then, we have
\begin{equation*}
\|\mathbf{Q}^{(1)}_{\tau,t}-\mathbf{G}\|_{H^{1-\alpha}(\Lambda)} \rightarrow 0 \quad \mbox{as}\quad \tau \rightarrow \infty\,.
\end{equation*}
\end{lemma}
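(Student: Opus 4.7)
The plan is to reduce the claim to a dominated convergence argument on the Fourier side, following the strategy already used to prove Lemma \ref{Q1_convergence_lemma}, but now with a Sobolev-type weighted $\ell^2$ norm in place of an $L^q$ norm on the physical side. By translation invariance of both $Q^{(1)}_{\tau,t}$ and $G$ on the torus, the Fourier coefficients of $\mathbf{Q}^{(1)}_{\tau,t}$ and of $\mathbf{G}$ are the sequences
\begin{equation*}
\big(\mathbf{Q}^{(1)}_{\tau,t}\big){\,\widehat{}\,}(k) \;=\; \frac{\ee^{-\{t\}\lambda_k/\tau}}{\tau(\ee^{\lambda_k/\tau}-1)}\,,\qquad \hat{\mathbf{G}}(k) \;=\; \frac{1}{\lambda_k}\,,
\end{equation*}
as one verifies from \eqref{G}, \eqref{Q^12}, \eqref{Q^12_bold}, and \eqref{G_endpoint}. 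Thus
\begin{equation*}
\|\mathbf{Q}^{(1)}_{\tau,t}-\mathbf{G}\|_{H^{1-\alpha}(\Lambda)}^{2} \;\sim_\alpha\; \sum_{k \in \mathbb{Z}^2} \langle k \rangle^{2(1-\alpha)}\,\bigg|\frac{\ee^{-\{t\}\lambda_k/\tau}}{\tau(\ee^{\lambda_k/\tau}-1)} - \frac{1}{\lambda_k}\bigg|^{2}\,,
\end{equation*}
and we seek to show that the right-hand side tends to zero as $\tau\to\infty$.

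The pointwise convergence of each summand to zero follows from the elementary limit
\begin{equation*}
\frac{\ee^{-\{t\}\lambda_k/\tau}}{\tau(\ee^{\lambda_k/\tau}-1)} \;\to\; \frac{1}{\lambda_k} \quad \text{as } \tau \to \infty\,,
\end{equation*}
for fixed $k\in\mathbb{Z}^2$, exactly as in \eqref{Q1_convergence_lemma4}. For the dominating sequence, we use the bound
\begin{equation*}
\bigg|\frac{\ee^{-\{t\}\lambda_k/\tau}}{\tau(\ee^{\lambda_k/\tau}-1)} - \frac{1}{\lambda_k}\bigg| \;\leq\; \frac{C}{\lambda_k} \;\leq\; \frac{C}{|k|^2+1}\,,
\end{equation*}
arguing as in \eqref{Q1_convergence_lemma5}, which combined with the weight gives the summable majorant
\begin{equation*}
\langle k \rangle^{2(1-\alpha)}\,\frac{C^2}{(|k|^2+1)^2} \;\leq\; \frac{C^2}{\langle k \rangle^{2+2\alpha}}\,.
\end{equation*}
Since $2+2\alpha>2=d$, this sequence is summable over $\mathbb{Z}^2$.

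An application of the dominated convergence theorem for counting measure on $\mathbb{Z}^2$ then yields $\|\mathbf{Q}^{(1)}_{\tau,t}-\mathbf{G}\|_{H^{1-\alpha}(\Lambda)}^2\to 0$. I do not expect any serious obstacle: the one point to watch is that the decay rate $\langle k \rangle^{-2}$ of the Fourier coefficients of $\mathbf{G}$ in two dimensions is just enough to absorb the weight $\langle k \rangle^{2(1-\alpha)}$ for any $\alpha\in(0,1)$, which is precisely the regularity range for which the lemma is stated.
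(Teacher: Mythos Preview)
Your proof is correct and follows essentially the same mechanism as the paper: pointwise convergence of the Fourier coefficients together with the uniform majorant $C/\lambda_k$, yielding $H^{1-\alpha}$ convergence by dominated convergence. The paper phrases this as a reduction to Lemma~\ref{Q1_convergence_lemma} (whose proof already passes through the $H^s$ estimate before applying Sobolev embedding to reach $L^q$), but the underlying argument is the direct Fourier-side computation you wrote out.
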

\begin{proof}[Proof of Lemma \ref{Q1_convergence_lemma_endpoint}]
The claim follows from Lemma \ref{Q1_convergence_lemma} by setting $x=0$ in \eqref{Q1_convergence_lemma1}, recalling \eqref{Q^12_bold}, \eqref{G_endpoint}, and using the Sobolev embedding $H^{1-\alpha}(\Lambda) \hookrightarrow L^{q}(\Lambda)$ for $q=\frac{2}{\alpha} \in [1,\infty) = \mathcal{Q}_2$.
\end{proof}

\begin{proof}[Proof of Proposition \ref{Approximation_result_2_unbounded_endpoint}]
With $\mathcal{I}_{\tau,\Pi}^{2,\xi}(\mathbf{t})$ defined as in \eqref{I_{tau,Pi}^{2,xi}_definition}, we show that 
\begin{equation}
\label{Approximation_result_2_unbounded_I1-I2_endpoint}
\big|\mathcal{I}^{1,\xi}_{\tau,\Pi}(\mathbf{t}) - \mathcal{I}^{2,\xi}_{\tau,\Pi} (\mathbf{t})\big| \rightarrow 0 \quad \mbox{as}\quad \tau \rightarrow \infty\quad \mbox{uniformly in} \,\, \xi \in \mathbf{B}_r\,,
\end{equation}
as in \eqref{Approximation_result_2_unbounded_I1-I2}. In order to do this, we use a telescoping argument as in \eqref{Approximation_result_2_unbounded_I1-I2_telescoping} and we define $\widetilde{\mathcal{W}}_{\tau}^a$ as in \eqref{cal W_tilde}. By Definition \ref{endpoint_admissible_w}, we have that $\widetilde{\mathcal{W}}_{\tau}^a$ satisfies  \eqref{W_bound_bold1} and \eqref{W_bound_bold4} (albeit with a different value of $C$).
Furthermore, \eqref{W_bound_bold2} gets replaced by
\begin{equation}
\label{W_bound_bold2'} 
\big|\big(\widetilde{\mathcal{W}}_{\tau}^a\big)\,\,\widehat{}\,\,\big|\;\leq\;1+ L\,,
\end{equation}
which implies 
\begin{equation}
\label{Induction_Base_Case_2B_bold_term1B'}
|(f*\widetilde{\mathcal{W}}_{\tau}^{a_2}){\,\,\widehat{}\,\,}|\;\leq\; (1+L)|\hat{f}|\,,
\end{equation}
as in \eqref{Induction_Base_Case_2B_bold_term1B}.
Finally, \eqref{2.125}
gets replaced by
\begin{equation}
\label{2.125'}
\|\widetilde{\mathcal{W}}_\tau^{a_0}\|_{H^{-1+\delta}(\Lambda)} \rightarrow 0 \quad \mbox{as}\quad \tau \rightarrow \infty\quad \mbox{whenever} \quad \widetilde{\mathcal{W}}_\tau^{a_0}=w_\tau-w\,.
\end{equation}
As in \eqref{2.125}, a \emph{unique} such $a_0 \in \mathcal{V}_2$ exists.

Given $\mathfrak{P} \in \conn (\mathcal{E})$, we define
\begin{equation}
\label{I'(P)_endpoint}
\mathfrak{I}'(\mathfrak {P}) \;\deq\; \int \dd \mathbf{y}_{\mathcal{V}_2(\mathfrak{P})} \, \prod_{e \in \mathfrak{P}} \mathcal{J}_{\tau, e}^{(1)}(\mathbf{y}_e, \mathbf{s})\, \prod_{a \in \mathcal{V}_2(\mathfrak{P})} \widetilde{\mathcal{W}}_\tau^{a}(y_{a}-y_{a^*})\,.
\end{equation}
Note that \eqref{I'(P)_endpoint} differs from \eqref{I'(P)} in the sense that we are not taking absolute values of the interactions. This affects only the \emph{unique} factor corresponding to $a=a_0$ as in \eqref{2.125'}, since by Definition \ref{endpoint_admissible_w}, Lemma \ref{w_positive_approximation} (iv), and \eqref{cal W_tilde} all the other factors $\widetilde{\mathcal{W}}_\tau^{a}, a \neq a_0$ are pointwise nonnegative.

Let us first show that if $\mathfrak{P} \in \conn(\mathcal{E})$ is such that $a_0 \in \mathcal{V}_2(\mathfrak{P})$, then we have
\begin{equation}
\label{a_0_vertex_factor}
\|\mathfrak{I}'(\mathfrak{P})\|_{L^{2}_{\mathbf{y_1}} L^{\infty}_{\mathcal{V}^{*}(\mathfrak{P})}} \rightarrow 0 \quad \mbox{as}\quad \tau \rightarrow \infty\,, 
\end{equation}
as in \eqref{a_0_vertex_factor1}.
In order to show \eqref{a_0_vertex_factor}, we argue as in the the proof of Proposition \ref{Product of subgraphs_endpoint}.

We need to consider two cases. The first case is when $n \deq |\mathcal{V}(\mathfrak{P})|=2$.
Here we need to consider two subcases\footnote{This is unlike the three possible subcases in the proof of Proposition \ref{Product of subgraphs_endpoint}, since now $\mathfrak{P}$ cannot be an open path.}.

\begin{itemize}
\item[(1)] $\mathfrak{P}$ is a closed path connecting $a_0$ and $a_0^*$. 

We argue as in \eqref{Base_n=2_(1)_endpoint}--\eqref{Base_n=2_(1)_endpoint_2} and use duality, Lemma \ref{Sobolev_product_estimate} with $s=1$ and $\alpha=\frac{\delta}{2}$, and Lemma \ref{Q_bounds_endpoint} (ii) to deduce that 
\begin{align}
\notag
&|\mathfrak{I}'(\mathfrak{P})| \;=\; \bigg|\int \dd x \,\big(w_{\tau}-w\big)(x)\,\mathbf{G}_{\tau}^2(x)\bigg| 
\\
&
\label{I'_endpoint_bound1}
\;\leq\; 
\|w_\tau-w\|_{H^{-1+\delta}(\Lambda)} \, \|G_\tau^2\|_{H^{1-\delta}(\Lambda)} \;\lesssim\; \|w_\tau-w\|_{H^{-1+\delta}(\Lambda)}\,.
\end{align}
\item[(2)] $\mathfrak{P}$ is a closed path connecting $a_0 \equiv a_1$ and $a_2 \in \mathcal{V}_2$ satisfying $a_2 \neq a_1^{*}$.
(Here we write $a_1 \equiv a_0$ to be consistent with the earlier notation).

We argue as in \eqref{Induction_Base_Case_2B_bold} and use the fact that $\mathbf{Q}^{(1)}_{\tau,\zeta}$ is even (by \eqref{positivity} and \eqref{Q^12_bold}) in order to write 
\begin{align}
\notag
\mathfrak{I}'(\mathfrak{P})  &\;=\;\int \dd y_{a_1}\,\dd y_{a_2}\, \widetilde{\mathcal{W}}_\tau^{a_1}(y_{a_1}-y_{a_1^*})\,\widetilde{\mathcal{W}}_\tau^{a_2}(y_{a_2}-y_{a_2^*})\,\mathbf{Q}_{\tau,\zeta_1}^{(1)}(y_{a_1}-y_{a_2})\,\mathbf{Q}_{\tau,-\zeta_1}^{(1)}(y_{a_1}-y_{a_2})
\\
\label{Induction_Base_Case_2B_bold'}
&\;=\;\int \dd y_{a_1}\, \widetilde{\mathcal{W}}_\tau^{a_1}(y_{a_1}-y_{a_1^*})\,
\bigg[\widetilde{\mathcal{W}}_\tau^{a_2}\,*\, \Big(\mathbf{Q}_{\tau,\zeta_1}^{(1)}\,\mathbf{Q}_{\tau,-\zeta_1}^{(1)}\Big)\bigg](y_{a_1}-y_{a_2^*})\,.
\end{align}
By applying duality in \eqref{Induction_Base_Case_2B_bold'}, we deduce that 
\begin{equation*}
|\mathfrak{I}'(\mathfrak{P})| \;\leq\;\big\|\widetilde{\mathcal{W}}_\tau^{a_1}(\cdot-y_{a_1^*})\big\|_{H^{-1+\delta}(\Lambda)} \,\Big\| \widetilde{\mathcal{W}}_\tau^{a_2}\,*\, \Big(\mathbf{Q}_{\tau,\zeta_1}^{(1)}\,\mathbf{Q}_{\tau,-\zeta_1}^{(1)}\Big)\Big\|_{H^{1-\delta}(\Lambda)}\,,
\end{equation*}
which by translation invariance of Sobolev norms, the choice of $a_1 \equiv a_0$, \eqref{Induction_Base_Case_2B_bold_term1B'}, Lemma \ref{Sobolev_product_estimate} with $s=1$ and $\alpha=\frac{\delta}{2}$ and Lemma \ref{Q_bounds_endpoint} (ii) is
\begin{equation}
\label{I'_endpoint_bound2}
\;\lesssim\; (1+L) \|w_\tau-w\|_{H^{-1+\delta}(\Lambda)}\,.
\end{equation}
Note that in the proof of \eqref{I'_endpoint_bound2}, we are using Lemma \ref{Sobolev_product_estimate} with with $f=\mathbf{Q}_{\tau,\zeta_1}^{(1)}$ and $g=\mathbf{Q}_{\tau,-\zeta_1}^{(1)}$ and the fact that $\widetilde{\mathcal{W}}_\tau^{a_1}=w_\tau-w$.
\end{itemize}

When $n \geq 3$, our goal is to integrate the vertex $a_0$ first. After integrating this vertex, by Definition \ref{endpoint_admissible_w}, Lemma \ref{w_endpoint_admissible_approximation} (iv), and \eqref{positivity}, we are left with a nonnegative integrand.
With analogous notation for the vertices of the path $\mathfrak{P}$ as in the proof of Proposition \ref{Product of subgraphs_endpoint}, we want to consider the case when $a_2 \equiv a_0$. Note that, by \eqref{cal W_tilde}, this is consistent with \eqref{integration_vertex_inductive_step}, so we can indeed arrange for this to be the first vertex which we integrate.

Let us observe that the only dependence on $y_{a_0} \equiv y_{a_2}$ in $\mathfrak{I}'(\mathfrak{P})$ is given by
\begin{equation}
\label{a_0_vertex_factor1}
\int \dd y_{a_2}\,\big(w_\tau-w\big)(y_{a_2}-y_{a_2^*})\, \mathbf{Q}_{\tau,\zeta_1}^{(1)}(y_{a_2}-y_{a_1})\,\mathbf{Q}_{\tau,\zeta_2}^{(1)}(y_{a_2}-y_{a_3})\,.
\end{equation}
By duality in $y_{a_2}$, the expression in \eqref{a_0_vertex_factor1} is in absolute value
\begin{equation*}
\;\leq\;\big\|\big(w_\tau-w\big)(\cdot-y_{a_2^*})\big\|_{H^{-1+\delta}(\Lambda)} \, \big\| \mathbf{Q}_{\tau,\zeta_1}^{(1)}(\cdot-y_{a_1})\,\mathbf{Q}_{\tau,\zeta_2}^{(1)}(\cdot-y_{a_3})\big\|_{H^{1-\delta}(\Lambda)}\,,
\end{equation*}
which by Lemma \ref{Sobolev_product_estimate}  with $s=1-\delta$ and $\alpha=\frac{\delta}{2}$ is
\begin{align*}
&\;\lesssim\; 
\big\|\big(w_\tau-w\big)(\cdot-y_{a_2^*})\big\|_{H^{-1+\delta}(\Lambda)} \, \big\| \mathbf{Q}_{\tau,\zeta_1}^{(1)}(\cdot-y_{a_1})\big\|_{H^{1-\delta/2}(\Lambda)}\,\big\|\mathbf{Q}_{\tau,\zeta_2}^{(1)}(\cdot-y_{a_3})\big\|_{H^{1-\delta/2}(\Lambda)}
\\
&\;=\;\|w_\tau-w\|_{H^{-1+\delta}(\Lambda)} \, \big\| \mathbf{Q}_{\tau,\zeta_1}^{(1)}\big\|_{H^{1-\delta/2}(\Lambda)}\,\big\|\mathbf{Q}_{\tau,\zeta_2}^{(1)}\big\|_{H^{1-\delta/2}(\Lambda)}\,,
\end{align*}
which by Lemma \ref{Q_bounds_endpoint} (ii), (v) is 
\begin{equation}
\label{I'_endpoint_bound3}
\;\lesssim\; \|w_\tau-w\|_{H^{-1+\delta}(\Lambda)}\,\mathbf{Q}^{(1)}_{\tau,\zeta_1+\zeta_2}(y_{a_1}-y_{a_3})\,.
\end{equation}
Using \eqref{I'_endpoint_bound3}, the observation that all the factors in the integrand of \eqref{I'(P)_endpoint} except $\big(w_\tau-w\big)(y_{a_2}-y_{a_2^*})$ are nonnegative, and the triangle inequality, we obtain that 
\begin{equation}
\label{I'_endpoint_bound4}
|\mathfrak{I}'(\mathfrak{P})|\;\leq\;C_0 \,\|w_\tau-w\|_{H^{-1+\delta}(\Lambda)} \,\mathfrak{I}'(\hat{\mathfrak{P}})\,,
\end{equation}
for $\hat{\mathfrak{P}}$ as defined in \eqref{I(P)_induction_unbounded}.
We estimate $\mathfrak{I}'(\hat{\mathfrak{P}})$ by arguing inductively as in the proof of Proposition 
\ref{Product of subgraphs_endpoint}. In doing so, we keep in mind Remark \ref{Product_of_subgraphs_Remark2}, the fact that $\widetilde{\mathcal{W}}_{\tau}^a$ satisfies \eqref{W_bound_bold1}, \eqref{W_bound_bold4}, and \eqref{W_bound_bold2'}, and the recall the base cases \eqref{Case_3_Induction_Base}, \eqref{I'_endpoint_bound1}, \eqref{I'_endpoint_bound2}, 
Substituting the obtained bound on $\mathfrak{I}'(\hat{\mathfrak{P}})$ into \eqref{I'_endpoint_bound4}, we deduce that 
\begin{equation}
\label{I'_endpoint_bound5}
\|\mathfrak{I}'(\mathfrak{P})\|_{L^{2}_{\mathbf{y_1}} L^{\infty}_{\mathcal{V}^{*}(\mathfrak{P})}}\;\leq\; C_0^{|\mathcal{V}(\mathfrak{P})|}\,\big(1+\|w\|_{L^1(\Lambda)}+L\big)^{|\mathcal{V}_2(\mathfrak{P)}|-1}\,\|w_\tau-w\|_{H^{-1+\delta}(\Lambda)}\,.
\end{equation}
We hence obtain \eqref{a_0_vertex_factor} from \eqref{I'_endpoint_bound5} and Lemma \ref{w_1D_approximation} (iv).

Suppose that now $\mathfrak{P} \in \conn(\mathcal{E})$ is such that $a_0 \notin \mathcal{V}_2(\mathfrak{P})$. Then, arguing as in the proof of Proposition \ref{Product of subgraphs_endpoint}, we get that
\begin{equation}
\label{not_a_0_vertex_factor}
\|\mathfrak{I}'(\mathfrak{P})\|_{L^{2}_{\mathbf{y_1}} L^{\infty}_{\mathcal{V}^{*}(\mathfrak{P})}}\;\leq\; C_0^{|\mathcal{V}(\mathfrak{P})|}\,\big(1+\|w\|_{L^1(\Lambda)}+L\big)^{|\mathcal{V}_2(\mathfrak{P)}|}\,. 
\end{equation}
Putting \eqref{a_0_vertex_factor} and \eqref{not_a_0_vertex_factor} together, using a cycle decomposition and arguing as in the proof of \eqref{Approximation_result_2_unbounded_I1-I2}, we deduce \eqref{Approximation_result_2_unbounded_I1-I2_endpoint}.

We now show that 
\begin{equation}
\label{Approximation_result_2_unbounded_I2-I_endpoint}
\mathcal{I}^{2,\xi}_{\tau,\Pi}(\mathbf{t}) \rightarrow \mathcal{I}^{\xi}_{\Pi} \quad \mbox{as}\quad \tau \rightarrow \infty\quad \mbox{uniformly in} \,\, \xi \in \mathbf{B}_r\,,
\end{equation}
as in \eqref{Approximation_result_2_unbounded_I2-I}. 
This is done by another telescoping argument. Namely, we have \eqref{Telescoping_2_application} (now without absolute values on $w$).

We fix $e_0 \in \mathcal{E}$ and recall the definition of $\tilde{\mathcal{J}}_{\tau,e}(\mathbf{y}_e,\mathbf{s})$ given by \eqref{J_tilde}. As in the proof of Proposition \ref{Approximation_result_2_unbounded}, given $a \in \mathcal{V}_2$, we define $\mathcal{W}^a$ by \eqref{cal W_unbounded} with $w_\tau$ replaced by $w$. Note that $\mathcal{W}a$ is then pointwise nonnegative and it satisfies the bounds in \eqref{W_bound_bold1}, \eqref{W_bound_bold2}, and \eqref{W_bound_bold4}.
With $\mathfrak{I}''(\mathfrak {P})$ as given by \eqref{I''_definition}, we have
\begin{equation*}
\mathfrak{I}''(\mathfrak {P}) \;=\; \int \dd \mathbf{y}_{\mathcal{V}_2(\mathfrak{P})} \, \prod_{e \in \mathfrak{P}} \tilde{\mathcal{J}}_{\tau, e}(\mathbf{y}_e, \mathbf{s})\, \prod_{a \in \mathcal{V}_2(\mathfrak{P})} \mathcal{W}^{a}(y_{a}-y_{a^*})\,.
\end{equation*}

We first consider $\mathfrak{P} \in \mathrm{conn} (\mathcal{E})$ which is the connected component with edge $e_0$. We want to show that 
\begin{equation}
\label{e_0_edge_factor}
\|\mathfrak{I}''(\mathfrak{P})\|_{L^{2}_{\mathbf{y_1}} L^{\infty}_{\mathcal{V}^{*}(\mathfrak{P})}} \rightarrow 0 \quad \mbox{as}\quad \tau \rightarrow \infty\,.
\end{equation}
In order to prove \eqref{e_0_edge_factor}, we use the fact that for all  $t \in (-1,1)$ and $s \in (0,1)$, we have
\begin{equation}
\label{J_tilde_e0_term}
\big\||\mathbf{Q}^{(1)}_{\tau,t}-\mathbf{G}|\big\|_{H^s(\Lambda)} \rightarrow 0 \quad \mbox{as}\quad \tau \rightarrow \infty\,.
\end{equation}
Note that  \eqref{J_tilde_e0_term} follows Lemma \ref{Q1_convergence_lemma_endpoint} and the observation that for $s \in (0,1)$ we have
\begin{equation}
\label{absolute_value_Sobolev_bounded}
\| |f| \|_{H^s(\Lambda)} \;\lesssim_s\; \|f\|_{H^s(\Lambda)}\,,
\end{equation}
In order to prove \eqref{absolute_value_Sobolev_bounded}, we use the \emph{Sobolev-Slobodeckij (physical space)} characterisation of homogeneous Sobolev spaces. More precisely, if $[x]$ denotes the unique element of the set $(x + \mathbb{Z}^2) \cap [-1/2,1/2)^2$, we have
\begin{equation}
\label{Sobolev_Slobodeckij_characterisation}
\|f\|_{\dot{H}^s(\Lambda)}^2 \;\sim_s\; \int \dd x \, \int \dd y\, \frac{|f(x)-f(y)|^2}{[x-y]^{n+2s}}
\end{equation}
For a self-contained proof of \eqref{Sobolev_Slobodeckij_characterisation} in the periodic setting, we refer the reader to \cite[Proposition 1.3]{Benyi_Oh}.
We deduce \eqref{J_tilde_e0_term} by using \eqref{Sobolev_Slobodeckij_characterisation} and the triangle inequality. 
We now deduce \eqref{e_0_edge_factor} by applying \eqref{J_tilde}, \eqref{J_tilde_e0_term}, Lemma \ref{Q_bounds_endpoint} (ii), the fact that $\|\mathbf{G}\|_{H^s(\Lambda)} \leq C(s)$ for all $s \in (0,1)$, as well as \eqref{Product_of_subgraphs_Remark_n=2a_endpoint}--\eqref{Product_of_subgraphs_Remark_n>=3_endpoint} and Remark \ref{Product_of_subgraphs_Remark2} with appropriate modifications to the context of $\mathfrak{I}''$.

Furthermore, if $\mathfrak{P} \in \mathrm{conn} (\mathcal{E})$ does not contain $e_0$ as an edge, the same arguments as above (as in the proof of Proposition \ref{Product of subgraphs_endpoint}) show that we have
\begin{equation}
\label{not_e_0_edge_factor}
\|\mathfrak{I}''(\mathfrak{P})\|_{L^{2}_{\mathbf{y_1}} L^{\infty}_{\mathcal{V}^{*}(\mathfrak{P})}}\;\leq\; C_0^{|\mathcal{V}(\mathfrak{P})|}\,\big(1+\|w\|_{L^1(\Lambda)}+L\big)^{|\mathcal{V}_2(\mathfrak{P)}|}\,. 
\end{equation}
Here we do not take absolute values on any of the factors $\tilde{\mathcal{J}}_{\tau,e}(\mathbf{y}_e,\mathbf{s})$ by \eqref{J_tilde}.
Using a cycle decomposition and applying \eqref{e_0_edge_factor} and \eqref{not_e_0_edge_factor}, we deduce \eqref{Approximation_result_2_unbounded_I2-I_endpoint}. The claim now follows from \eqref{Approximation_result_2_unbounded_I1-I2_endpoint} and \eqref{Approximation_result_2_unbounded_I2-I_endpoint}.
\end{proof}

\begin{remark}
\label{absolute_value_unbounded_negative_Sobolev}
We note that \eqref{absolute_value_Sobolev_bounded} does not hold for negative $s$. This is the reason why we assume that $w \geq 0$ and $w_\tau \geq 0$ pointwise.
See \eqref{I'_endpoint_bound4} and the part of the proof of Proposition \ref{Product of subgraphs_endpoint} that follows.
In order to see that \eqref{absolute_value_Sobolev_bounded} does not hold for negative $s$, we give a counterexample\footnote{This counterexample was shown to the author by Sebastian Herr.}. Consider the sequence of functions $(f_n)$ given by $f_n(x)= \langle n \rangle^{-s} \,\ee^{2\pi \ii n \cdot x}$. Then $\|f_n\|_{H^s(\Lambda)}$ is bounded uniformly in $n$, but
$\| |f_n| \|_{H^s(\Lambda)} \sim \langle n \rangle^{-s}$ diverges as $n \rightarrow \infty$.
\end{remark}

Given $m,r \in \mathbb{N}$ and $\xi \in \mathbf{B}_r$, we recall $a^{\xi}_{\infty,m}$ given by \eqref{a_infty}. We have an analogue of Proposition \ref{a_convergence}.
\begin{proposition}
\label{a_convergence_endpoint}
Let $m,r \in \mathbb{N}$ be given. We have 
\begin{equation*}
a^{\xi}_{\tau,m} \rightarrow a^{\xi}_{\infty,m} \quad \mbox{as}\quad \tau \rightarrow \infty\quad \mbox{uniformly in} \,\, \xi \in \mathbf{B}_r\,.
\end{equation*}
\end{proposition}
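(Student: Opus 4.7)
The plan is to follow verbatim the strategy used to prove Proposition~\ref{a_convergence} in the $d$-admissible setting, replacing each ingredient by its endpoint-admissible analogue established in Section~\ref{Endpoint-admissible interaction potentials_2}. First I would recall from \eqref{Wick_application_identity_unbounded_all_D} that
\begin{equation*}
a^{\xi}_{\tau,m}\;=\;\frac{(-1)^m}{(1-2\eta)^m\,2^m}\int_{\mathfrak{A}(m)}\dd\mathbf{t}\,\sum_{\Pi\in\mathfrak{R}(2,m,r)}\mathcal{I}^{\xi}_{\tau,\Pi}(\mathbf{t})\,,
\end{equation*}
while by \eqref{a_infty} we have $a^{\xi}_{\infty,m}=\frac{(-1)^m}{m!\,2^m}\sum_{\Pi\in\mathfrak{R}(2,m,r)}\mathcal{I}^{\xi}_{\Pi}$. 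Since the sum is over a finite (in $m,r$) set of pairings, it suffices to show that for each fixed $\Pi\in\mathfrak{R}(2,m,r)$, the integral $\int_{\mathfrak{A}(m)}\dd\mathbf{t}\,\mathcal{I}^{\xi}_{\tau,\Pi}(\mathbf{t})$ converges as $\tau\to\infty$ to $\frac{1}{(1-2\eta)^{-m}\,m!/m!}\cdot\mathcal{I}^{\xi}_{\Pi}$ times the appropriate volume factor, uniformly in $\xi\in\mathbf{B}_r$. Here I use that $|\mathfrak{A}(m)|=\frac{(1-2\eta)^m}{m!}$ so that the constant $\mathcal{I}^{\xi}_{\Pi}$, being independent of $\mathbf{t}$, integrates to give exactly the factor that matches $a^{\xi}_{\infty,m}$.

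The convergence itself is a dominated convergence argument on the simplex $\mathfrak{A}(m)$. For each fixed $\mathbf{t}\in\mathfrak{A}(m)$, Proposition~\ref{Approximation_result_2_unbounded_endpoint} gives
\begin{equation*}
\mathcal{I}^{\xi}_{\tau,\Pi}(\mathbf{t})\;\longrightarrow\;\mathcal{I}^{\xi}_{\Pi}\quad\text{as}\quad\tau\to\infty\,,\quad\text{uniformly in }\xi\in\mathbf{B}_r\,,
\end{equation*}
and Proposition~\ref{Product of subgraphs_endpoint}(i) provides the uniform $\mathbf{t}$- and $\tau$-independent bound
\begin{equation*}
\bigl|\mathcal{I}^{\xi}_{\tau,\Pi}(\mathbf{t})\bigr|\;\leq\;C_0^{m+r}\bigl(1+\|w\|_{L^1(\Lambda)}+L\bigr)^m\,,
\end{equation*}
whose right-hand side is integrable over the bounded simplex $\mathfrak{A}(m)$. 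Hence the dominated convergence theorem yields $\int_{\mathfrak{A}(m)}\dd\mathbf{t}\,\mathcal{I}^{\xi}_{\tau,\Pi}(\mathbf{t})\to|\mathfrak{A}(m)|\,\mathcal{I}^{\xi}_{\Pi}$, and combining the convergence for the finitely many $\Pi$ with the identities above (and the elementary equality $|\mathfrak{A}(m)|=(1-2\eta)^m/m!$) gives the claim.

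The only subtle point is the uniformity in $\xi\in\mathbf{B}_r$: since the dominating function in the previous display is independent of $\xi$, and Proposition~\ref{Approximation_result_2_unbounded_endpoint} already provides \emph{uniform-in-$\xi$} pointwise convergence of the integrand, the resulting convergence of the $\mathbf{t}$-integral is automatically uniform in $\xi$. There is no genuine obstacle here beyond checking that each of the three cited ingredients has been formulated with the necessary uniformity; the heavy analytic lifting has already been carried out in Propositions~\ref{Product of subgraphs_endpoint} and~\ref{Approximation_result_2_unbounded_endpoint}.
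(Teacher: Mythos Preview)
Your proposal is correct and follows exactly the paper's own argument: the paper's proof simply cites \eqref{Wick_application_identity_unbounded_all_D}, \eqref{a_infty}, Proposition~\ref{Approximation_result_2_unbounded_endpoint}, Proposition~\ref{Product of subgraphs_endpoint}(i), and the dominated convergence theorem, which is precisely the combination of ingredients you spell out. Your explicit computation of $|\mathfrak{A}(m)|=(1-2\eta)^m/m!$ and your remark on uniformity in $\xi$ fill in details the paper leaves implicit, but the route is identical.
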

\begin{proof}
The claim follows from  \eqref{Wick_application_identity_unbounded_all_D}, \eqref{a_infty}, Proposition \ref{Approximation_result_2_unbounded_endpoint}, Proposition \ref{Product of subgraphs_endpoint} (i), and the dominated convergence theorem.
\end{proof}

The remainder term \eqref{Remainder_term_R_unbounded} in this context is estimated as in Section \ref{Bounds on the remainder term}. As before, we are taking $\eta \in (0,1/4]$.
We have the following analogue of Proposition \ref{remainder_term}. 
\begin{proposition} 
\label{remainder_term_endpoint}
Let $r,M \in \mathbb{N}, \xi \in \mathbf{B}_r$, and $z \in \mathbb{C}$ with $\re z \geq 0$ be given. Then $R^{\xi}_{\tau,M}(z)$ given by \eqref{Remainder_term_R_unbounded} satisfies 
\begin{equation*}
|R_{\tau,M}^{\xi}(z)|\;\leq\; \bigg(\frac{Cr}{\eta^2}\bigg)^r \, \Bigg[\frac{C\big(1+\|w\|_{L^1(\Lambda)}+L\big)}{\eta^2}\Bigg]^M\,|z|^M\,M!\,.
\end{equation*}
\end{proposition}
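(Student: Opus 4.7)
The plan is to follow the scheme of Proposition \ref{remainder_term} (i) essentially verbatim, with Proposition \ref{Product of subgraphs_endpoint} (iii) playing the role of Proposition \ref{Product of subgraphs} (iii). Recall that for $d=2$ we fix $\eta \in (0,1/4]$, so the modified grand canonical ensemble is in force. First, I would perform the same change of variables $u_1 \deq 1-2\eta-t_1, u_j \deq t_{j-1}-t_j$ for $j \geq 2$ in the remainder term \eqref{Remainder_term_R_unbounded} to obtain the representation
\begin{equation*}
R^{\xi}_{\tau,M}(z) \;=\; (-1)^M\,\frac{z^M}{(1-2\eta)^{M}} \, \int_{(0,1-2\eta)^M} \dd \mathbf{u}\, \mathbf{1}_{|\mathbf{u}|<1-2\eta} \, g_{\tau,M}^{\xi}(z,\mathbf{u})\,,
\end{equation*}
with $g_{\tau,M}^\xi(z,\mathbf{u})$ defined exactly as in the proof of Proposition \ref{remainder_term}. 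Applying cyclicity of the trace and the Schatten-space Hölder inequality, as justified by \cite[Lemma 2.28]{FrKnScSo1}, yields the factorisation \eqref{estimate_on_g} into three rescaled Schatten norms.

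Second, the first factor on the right-hand side of \eqref{estimate_on_g}, namely $\|\ee^{-\eta H_{\tau,0}}\,\Theta_\tau(\xi)\,\ee^{-\eta/2\,H_{\tau,0}}\|_{\tilde {\mathfrak{S}}^{2/(3\eta)}(\mathcal{F})}$, is independent of $W_\tau$ and is bounded by $(Cr\eta^{-2})^r$ via \cite[Lemma 2.32]{FrKnScSo1}. For the second factor I would verify that $W_\tau$ is a positive operator: writing out \eqref{W_renormalised_unbounded} in Fourier series as in \eqref{W_tau_positivity}, positivity follows from Lemma \ref{w_endpoint_admissible_approximation} (ii) (i.e.\ $w_\tau$ is of positive type). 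Then the argument of \cite[Lemma 2.30]{FrKnScSo1} applies unchanged to give $\|\ee^{-(1-2\eta-|\mathbf{u}|)(H_{\tau,0}+\frac{z}{1-2\eta}W_\tau)}\|_{\tilde{\mathfrak S}^{1/(1-2\eta-|\mathbf u|)}(\mathcal F)} \leq 1$.

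The third factor, which is the only place where the interaction potential enters quantitatively, is handled by the proof of \cite[Lemma 2.29]{FrKnScSo1}. That proof reduces the bound on a product of heat semigroups and $W_\tau$'s in Schatten norm to an estimate on $\sup_{\mathbf{t} \in \mathfrak{A}(M)} |f^{\xi}_{\tau, M}(\mathbf{t})|$ for $\xi = \emptyset$ and $r = 0$ (see \cite[(2.88)]{FrKnScSo1}). Here I would simply substitute Proposition \ref{Product of subgraphs_endpoint} (iii) in place of \cite[Proposition 2.20]{FrKnScSo1}, which produces an upper bound of $\big(CM^2 \eta^{-2}(1+\|w\|_{L^1(\Lambda)}+L)\big)^M$ on the third factor.

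Combining the three factor bounds into \eqref{estimate_on_g}, using $M^{2M} \leq (CM!)^2$ via Stirling, and integrating the resulting $\mathbf{u}$-independent bound over $(0,1-2\eta)^M \cap \{|\mathbf{u}|<1-2\eta\}$ (whose volume is at most $1/M!$) yields the asserted estimate. I do not expect any genuine obstacle: the only novelty relative to Proposition \ref{remainder_term} (i) is the substitution of Proposition \ref{Product of subgraphs_endpoint} (iii) for Proposition \ref{Product of subgraphs} (iii), and the verification that the positivity argument for $W_\tau$ is preserved under the endpoint-admissible approximation scheme of Lemma \ref{w_endpoint_admissible_approximation}.
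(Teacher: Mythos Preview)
Your proposal is correct and follows essentially the same approach as the paper's proof, which simply says to argue as in Proposition \ref{remainder_term} (i), substituting Proposition \ref{Product of subgraphs_endpoint} (iii) for Proposition \ref{Product of subgraphs} (iii) and noting that positivity of $W_\tau$ follows from \eqref{W_tau_positivity} together with Lemma \ref{w_endpoint_admissible_approximation} (ii). Your account is more detailed than the paper's, but the logic is identical.
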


\begin{proof}
The proof of Proposition \ref{remainder_term_endpoint} proceeds analogously as the proof of Proposition \ref{remainder_term}. We apply Proposition \ref{Product of subgraphs_endpoint} (iii) instead of Proposition \ref{Product of subgraphs} (iii). Note that by \eqref{W_tau_positivity} and Lemma \ref{w_endpoint_admissible_approximation} (ii), the interaction $W_\tau$ is indeed a positive operator.
\end{proof}
We adapt Lemma \ref{A^xi_{tau,z}_analytic} to this setting.
\begin{lemma} 
\label{A^xi_{tau,z}_analytic_endpoint}
Let $r \in \mathbb{N}$ and $\xi \in \mathbf{B}_r$ be given.
The function $A^{\xi}_{\tau}$ given in \eqref{Duhamel_expansion_unbounded_A}  is analytic in the right-half plane.
\end{lemma}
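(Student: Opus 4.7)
The plan is to verify that the two structural ingredients used in the proof of Lemma \ref{A^xi_{tau,z}_analytic} (and ultimately of \cite[Lemma 2.34]{FrKnScSo1}) carry over unchanged to the endpoint-admissible setting, after which that argument applies verbatim. Those ingredients are: (i) $W_\tau$ is bounded on the range of the cutoff projection $P^{(\leq n)}$ onto $\bigoplus_{n'=0}^{n} \mathfrak{H}^{(n')} \subseteq \mathcal{F}$ for each $n \in \mathbb{N}$, and (ii) $W_\tau$ is a positive operator.

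For (i), Lemma \ref{w_endpoint_admissible_approximation} (v) provides $w_\tau \in L^\infty(\Lambda)$ together with the explicit bound $\|w_\tau\|_{L^\infty(\Lambda)} \leq \tau^\beta$, exactly as does Lemma \ref{w_positive_approximation} (ii) in the $d$-admissible case. Since the quantum density $\varrho_\tau$ in \eqref{W_renormalised_unbounded} is a finite constant (depending on $\tau$), the operator $W_\tau$ is a degree-four polynomial in the rescaled creation/annihilation operators with an $L^\infty$ kernel, and therefore maps each finite-particle sector $\mathfrak{H}^{(n')}$ into itself as a bounded operator. For (ii), we invoke the identity \eqref{W_tau_positivity}, which writes $W_\tau$ as a sum over $k \in \mathbb{Z}^2$ of manifestly nonnegative operators of the form $A_k A_k^*$ with weights $\hat{w}_\tau(k)$. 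By Lemma \ref{w_endpoint_admissible_approximation} (ii), $w_\tau$ is of positive type, equivalently $\hat{w}_\tau(k) \geq 0$ for all $k \in \mathbb{Z}^2$, and hence $W_\tau \geq 0$.

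With (i) and (ii) in hand, we follow the argument of \cite[Lemma 2.34]{FrKnScSo1}: we approximate $A^\xi_\tau(z)$ by its truncations obtained by inserting $P^{(\leq n)}$ on either side of the interacting propagator in \eqref{rho_tau_hat_unbounded}. Each truncation is entire in $z$ by holomorphic functional calculus applied to the bounded operator $(1-2\eta) H_{\tau,0} + z W_\tau$ restricted to $\ran P^{(\leq n)}$. The positivity of $W_\tau$ ensures that on the right half-plane $\{\re z \geq 0\}$ we have $\|\ee^{-t(H_{\tau,0}+(1-2\eta)^{-1} z W_\tau)}\| \leq \|\ee^{-t H_{\tau,0}}\|$, which provides the uniform control needed to pass to the $n \rightarrow \infty$ limit and preserve analyticity via a standard normal-families argument. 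There is no genuine obstacle in the endpoint-admissible regime; the content of the lemma is precisely to record that replacing Lemma \ref{w_positive_approximation} by Lemma \ref{w_endpoint_admissible_approximation} does not disturb this analyticity argument, since only the qualitative properties $\|w_\tau\|_{L^\infty(\Lambda)}<\infty$ and $\hat{w}_\tau \geq 0$ are used, both of which remain available.
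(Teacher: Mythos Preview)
Your proposal is correct and follows essentially the same approach as the paper: the paper's proof simply refers back to Lemma~\ref{A^xi_{tau,z}_analytic} and notes that the single required change is to invoke Lemma~\ref{w_endpoint_admissible_approximation}~(v) for the boundedness of $W_\tau$ on $\ran P^{(\leq n)}$, with positivity of $W_\tau$ again coming from \eqref{W_tau_positivity} and $\hat{w}_\tau \geq 0$. You have identified exactly these two ingredients and, in addition, spelled out more of the underlying argument from \cite[Lemma~2.34]{FrKnScSo1} than the paper does.
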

\begin{proof}
The proof is analogous to that of Lemma \ref{A^xi_{tau,z}_analytic}. The only difference is that the boundedness of $W_{\tau}$ on the range of $P^{(\leq n)}$ now follows from Lemma \ref{w_endpoint_admissible_approximation} (v).
\end{proof}

\subsection{The classical system} 
\label{Endpoint-admissible interaction potentials_3}

We now study the classical system with endpoint-admissible interaction potentials. The analysis is quite similar to that given in Section \ref{Analysis of the classical system}. We hence just emphasise the main differences. 

Given $K \in \mathbb{N}$ and recalling \eqref{G_K}, we define the function $\mathbf{G}_{[K]} : \Lambda \rightarrow \mathbb{C}$ 
by
\begin{equation}
\label{G_K_endpoint}
\mathbf{G}_{[K]}(x) \;\deq\; G_{[K]}(x;0)\,.
\end{equation}
Instead of Lemma \ref{GK_convergence_lemma}, we use a convergence result in Sobolev spaces.

\begin{lemma}
\label{GK_convergence_lemma_endpoint}
Recalling \eqref{G_endpoint}, we have
\begin{equation*}
\|\mathbf{G}_{[K]}-\mathbf{G}\|_{H^{-1+\delta}(\Lambda)} \rightarrow 0 \quad \mbox{as}\quad K \rightarrow \infty\,.
\end{equation*}
\end{lemma}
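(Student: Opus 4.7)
The plan is to compute the $H^{-1+\delta}(\Lambda)$ norm of the difference $\mathbf{G}_{[K]}-\mathbf{G}$ directly via a Fourier series expansion, since both functions are defined through explicit Fourier expansions coming from \eqref{G_K} and \eqref{classical_Green_function}. Recalling that $h = -\Delta + \kappa$ has eigenvalues $\lambda_k = 4\pi^2|k|^2 + \kappa$ with eigenfunctions $e_k(x) = \ee^{2\pi \ii k \cdot x}$ (see \eqref{lambda_k}--\eqref{e_k}), the definitions \eqref{G_K} and \eqref{G_endpoint}--\eqref{G_K_endpoint} give the Fourier expansions
\begin{equation*}
\mathbf{G}(x) \;=\; \sum_{k \in \mathbb{Z}^2} \frac{1}{\lambda_k}\, \ee^{2\pi \ii k \cdot x}\,,\qquad \mathbf{G}_{[K]}(x) \;=\; \sum_{|k| \leq K} \frac{1}{\lambda_k}\, \ee^{2\pi \ii k \cdot x}\,.
\end{equation*}
Subtracting, the Fourier coefficient of $\mathbf{G}_{[K]} - \mathbf{G}$ at $k \in \mathbb{Z}^2$ equals $-\frac{1}{\lambda_k}\,\mathbf{1}_{|k|>K}$.

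By Plancherel's theorem for Fourier series on $\mathbb{T}^2$, we then have
\begin{equation*}
\|\mathbf{G}_{[K]}-\mathbf{G}\|_{H^{-1+\delta}(\Lambda)}^2 \;\sim\; \sum_{|k|>K} \frac{\langle k \rangle^{-2+2\delta}}{\lambda_k^2} \;\lesssim\; \sum_{|k|>K} \langle k \rangle^{-6+2\delta}\,,
\end{equation*}
where in the last step we used $\lambda_k \sim \langle k \rangle^2$, which follows from \eqref{lambda_k}. Since $\delta = \epsilon/2$ is small (in any case $\delta < 2$), the exponent satisfies $-6+2\delta < -2 = -d$, so the full series $\sum_{k \in \mathbb{Z}^2} \langle k \rangle^{-6+2\delta}$ converges. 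Hence its tail over $\{|k|>K\}$ tends to $0$ as $K \rightarrow \infty$, which proves the claim.

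The argument is a one-step Fourier computation and presents no obstacle; the only thing to verify is that the exponent $-6+2\delta$ is strictly less than $-2$, which is immediate from the assumption that $\delta > 0$ is small as specified in Section \ref{Endpoint-admissible interaction potentials_1}.
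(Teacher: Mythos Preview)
Your proof is correct and takes essentially the same approach as the paper: the paper computes the Fourier coefficients of the difference via \eqref{GK_convergence_lemma_proof} with $y=0$ and then invokes the argument of Proposition~\ref{G_{tau,0}_bound}~(i), which amounts to exactly the Plancherel computation you spelled out. You simply made the implicit summability check explicit.
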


\begin{proof}
The claim follows by using \eqref{GK_convergence_lemma_proof} with $y=0$, \eqref{G_K_endpoint}, and by arguing as in the proof of Proposition \ref{G_{tau,0}_bound} (i).
\end{proof}

In this section, we use the same graphical setup as in Section \ref{Analysis of the classical system}.
We now give a proof of Lemma \ref{W_Wick-ordered} (ii) which justifies the construction of the Wick-ordered interaction.
\begin{proof}[Proof of Lemma \ref{W_Wick-ordered} (ii)]
The proof is very similar to that of Lemma \ref{W_Wick-ordered} (i). We adopt the same notation as in the proof of Lemma \ref{W_Wick-ordered} (i) and we just emphasise the required modifications in the arguments.
With $\mathbf{G}_{[K]}$ given by \eqref{G_K_endpoint}, \eqref{phi_K2} becomes
\begin{equation*}
\int \dd \mu \, \phi_{[L]}(x)\,\bar{\phi}_{[M]}(y)\;=\; \mathbf{G}_{[L \wedge M ]}(x-y)\,.
\end{equation*}
In particular, we can rewrite \eqref{I_{K,Pi}} as
\begin{equation}
\label{I_{K,Pi}_endpoint} 
\mathcal{I}_{\mathbf{K},\Pi}\;=\;\int_{\mathcal{V}} \dd \mathbf{y}\,\Bigg[\prod_{i=1}^{m}w(y_{i,1}-y_{i,2})\Bigg]\,
\prod_{\{a,b\} \in \mathcal{E}} \mathbf{G}_{[K_{i_a} \wedge K_{i_b}]}(y_a-y_b)\,.
\end{equation}
Likewise, we can rewrite \eqref{I_Pi} as
\begin{equation}
\label{I_Pi_endpoint}
\mathcal{I}_{\Pi}\;=\;\int_{\mathcal{V}} \dd \mathbf{y}\,\Bigg[\prod_{i=1}^{m}w(y_{i,1}-y_{i,2})\Bigg]\,
\prod_{\{a,b\} \in \mathcal{E}} \mathbf{G}(y_a-y_b)\,.
\end{equation}
We recall that $\mathcal{I}_{\Pi}$ is indeed well-defined and finite by Proposition \ref{Product of subgraphs_endpoint} (i) and Proposition \ref{Approximation_result_2_unbounded_endpoint}.
Finally, we note that \eqref{I_K_convergence} holds by applying the telescoping argument from the proof of \eqref{Approximation_result_2_unbounded_I2-I_endpoint} and by using Lemma \ref{GK_convergence_lemma_endpoint}. We now obtain the claim as in Lemma \ref{W_Wick-ordered} (i).
\end{proof}

We recall the perturbative expansion \eqref{A^xi} with explicit terms given by \eqref{a^xi_m} and the definition of $a^{\xi}_{\infty,m}$ given by \eqref{a_infty}. An analogue of Lemma \ref{a^xi_m_identity} holds in this setting.
\begin{lemma} 
\label{a^xi_m_identity_endpoint}
Given $m,r \in \mathbb{N}$ and $\xi \in \mathbf{B}_r$, we have $a^{\xi}_m=a^{\xi}_{\infty,m}$.
\end{lemma}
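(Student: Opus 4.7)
The plan is to follow the strategy of Lemma \ref{a^xi_m_identity}, replacing each tool from the $d$-admissible setting with its endpoint-admissible counterpart developed in Sections \ref{Endpoint-admissible interaction potentials_1}--\ref{Endpoint-admissible interaction potentials_3}. First, by Lemma \ref{W_Wick-ordered} (ii), we have $W = \lim_{K \to \infty} W_{[K]}$ in $\bigcap_{m \geq 1} L^m(\mu)$. Substituting this into \eqref{a^xi_m} and applying the classical Wick theorem (arguing as in the proof of \cite[Lemma 3.1]{FrKnScSo1}), I would rewrite
\begin{equation*}
a^\xi_m \;=\; \frac{(-1)^m}{m!\, 2^m}\,\lim_{K \to \infty} \sum_{\Pi \in \mathfrak{R}(2,m,r)} \mathcal{I}^\xi_{[K],\Pi}\,,
\end{equation*}
where $\mathcal{I}^\xi_{[K],\Pi}$ is given by \eqref{a^xi_m_identity_proof_1}--\eqref{a^xi_m_identity_proof_2}, but expressed in terms of the translation-invariant kernels $\mathbf{G}_{[K]}$ and $\mathbf{G}$ from \eqref{G_K_endpoint} and \eqref{G_endpoint}.

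Next, I would show
\begin{equation*}
\mathcal{I}^\xi_{[K],\Pi} \rightarrow \mathcal{I}^\xi_\Pi \quad \mbox{as}\,\, K \to \infty\,,
\end{equation*}
for $\mathcal{I}^\xi_\Pi$ as in Definition \ref{I_infinity_unbounded} (i). This is done by a telescoping argument in $K$ over the edges of $\mathcal{E}$, in the same spirit as the proof of \eqref{Approximation_result_2_unbounded_I2-I_endpoint}: for each edge $e_0 \in \mathcal{E}$, one extracts a factor $\mathbf{G}_{[K_{i_a} \wedge K_{i_b}]} - \mathbf{G}$ on the edge $e_0$ and estimates the remaining graph by the endpoint-admissible bounds from Proposition \ref{Product of subgraphs_endpoint}. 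The key convergence input is Lemma \ref{GK_convergence_lemma_endpoint}, which gives $\|\mathbf{G}_{[K]} - \mathbf{G}\|_{H^{-1+\delta}(\Lambda)} \to 0$; this is paired with duality against a factor of $w$ (which lies in $H^{-1+\delta}(\Lambda)$ by Definition \ref{endpoint_admissible_w}) and with a Sobolev product estimate (Lemma \ref{Sobolev_product_estimate}) to control the remaining kernels, exactly as in the proof of Proposition \ref{Approximation_result_2_unbounded_endpoint}. Crucially, since $w \geq 0$ pointwise and $\mathbf{G}, \mathbf{G}_{[K]} \geq 0$, we can take absolute values on every factor except the one involving $\mathbf{G}_{[K]} - \mathbf{G}$ and thus avoid the failure of \eqref{absolute_value_Sobolev_bounded} in negative Sobolev spaces, as noted in Remark \ref{absolute_value_unbounded_negative_Sobolev}.

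Finally, combining this pointwise convergence with the uniform bound on $|\mathcal{I}^\xi_{[K],\Pi}|$ provided by (the proof of) Proposition \ref{Product of subgraphs_endpoint} (i) applied to the truncated Green functions, the sum over the finite set $\mathfrak{R}(2,m,r)$ can be passed to the limit termwise, yielding
\begin{equation*}
a^\xi_m \;=\; \frac{(-1)^m}{m!\,2^m} \sum_{\Pi \in \mathfrak{R}(2,m,r)} \mathcal{I}^\xi_\Pi \;=\; a^\xi_{\infty,m}\,,
\end{equation*}
by the definition \eqref{a_infty}. The main subtlety, as in the analogous classical lemma, will be justifying the interchange of the Wick-theorem expansion with the limit $K \to \infty$; this uses the $L^m(\mu)$ convergence from Lemma \ref{W_Wick-ordered} (ii) together with the fact that $\Theta(\xi) \in L^2(\mu)$ for $\xi \in \mathbf{B}_r$, so that $\int \Theta(\xi)\, W^m_{[K]}\, d\mu \to \int \Theta(\xi)\, W^m\, d\mu$ via H\"older's inequality and the uniform $L^{m'}(\mu)$ bounds on $W_{[K]}$. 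No new estimates beyond those of Sections \ref{Endpoint-admissible interaction potentials_2}--\ref{Endpoint-admissible interaction potentials_3} are required.
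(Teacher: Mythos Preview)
Your proposal is correct and follows essentially the same route as the paper: invoke Lemma \ref{W_Wick-ordered} (ii) to pass to the truncated Wick expansion \eqref{a^xi_m2}, then prove $\mathcal{I}^\xi_{[K],\Pi} \to \mathcal{I}^\xi_\Pi$ by an edge-by-edge telescoping argument modelled on the proof of \eqref{Approximation_result_2_unbounded_I2-I_endpoint}, with Lemma \ref{GK_convergence_lemma_endpoint} playing the role of Lemma \ref{Q1_convergence_lemma_endpoint}. One small caveat: the pointwise bound $\mathbf{G}_{[K]} \geq 0$ that you invoke is not established anywhere in the paper (Fourier truncation does not in general preserve pointwise nonnegativity, unlike the Feynman--Kac argument for $\mathbf{G}$ itself), so your justification for freely taking absolute values on the remaining edges needs more care; the paper's own proof is equally terse at this point and does not spell out how the $G_{[K]}$ factors are handled either.
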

\begin{proof}
The proof is very similar to that of Lemma \ref{a^xi_m_identity}. By using Lemma \ref{W_Wick-ordered} (ii), we have that \eqref{a^xi_m2} holds with $\mathcal{I}^{\xi}_{[K],\Pi}$ given by \eqref{a^xi_m_identity_proof_1}, where, for $e \in \mathcal{E} \equiv \mathcal{E}_{\Pi}$, we rewrite \eqref{a^xi_m_identity_proof_2} as
\begin{equation*}
\mathcal{J}_{[K],e}(\mathbf{y}_e) \;=\;
\begin{cases}
\mathbf{G}(y_{a}-y_{b}) & \mbox{if } e=\{a,b\}\,\,\mbox{for}\,\,a,b \in \mathcal{V}_1
\\
\mathbf{G}_{[K]}(y_{a}-y_{b}) & \mbox{if } e=\{a,b\}\,\,\mbox{for}\,\,a \in \mathcal{V}_2\,\,\mbox{or}\,\,a \in \mathcal{V}_2\,.
\end{cases}
\end{equation*}
We now apply a telescoping argument as in the proof of \eqref{Approximation_result_2_unbounded_I2-I_endpoint} and use Lemma \ref{GK_convergence_lemma_endpoint} to deduce that \eqref{I^xi_K_convergence} holds. The claim now follows as in Lemma \ref{a^xi_m_identity}.
\end{proof}

We can now obtain upper bounds on the explicit terms in the expansion \eqref{A^xi} analogously as in Corollary \eqref{a^xi_m_corollary}.
\begin{corollary} 
\label{a^xi_m_corollary_endpoint}
Given $m,r \in \mathbb{N}$ and $\xi \in \mathbf{B}_r$, we have 
\begin{equation*}
|a^{\xi}_{m}|\;\leq\;(Cr)^{r} \,C^m  \,(1+\|w\|_{L^1(\Lambda)}+L)^{m}\,m!\,.
\end{equation*}
\end{corollary}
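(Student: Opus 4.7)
The plan is to follow exactly the template of the proof of Corollary \ref{a^xi_m_corollary}, which handled the $d$-admissible case, now using the endpoint-admissible analogues of each of the three ingredients.

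First, I would invoke Lemma \ref{a^xi_m_identity_endpoint} to rewrite $a^{\xi}_m = a^{\xi}_{\infty,m}$, so that the problem reduces to bounding the quantity defined in \eqref{a_infty}. Next, Proposition \ref{a_convergence_endpoint} gives
\begin{equation*}
a^{\xi}_{\tau,m} \;\longrightarrow\; a^{\xi}_{\infty,m} \quad \text{as } \tau \to \infty, \text{ uniformly in } \xi \in \mathbf{B}_r.
\end{equation*}
Therefore, it suffices to establish a uniform-in-$\tau$ bound on $a^{\xi}_{\tau,m}$ and pass to the limit $\tau \to \infty$.

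The required uniform bound is precisely the content of Proposition \ref{Product of subgraphs_endpoint} (ii), which states that
\begin{equation*}
|a^{\xi}_{\tau,m}| \;\leq\; (Cr)^{r} \, C^m \, (1+\|w\|_{L^1(\Lambda)}+L)^{m} \, m!
\end{equation*}
uniformly in $\xi \in \mathbf{B}_r$ and $\tau \geq 1$. Letting $\tau \to \infty$ and combining with the identification $a^{\xi}_m = a^{\xi}_{\infty,m}$ yields the stated estimate.

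Since all three ingredients (the graphical bound for endpoint-admissible $w$, the convergence of the explicit terms, and the identification of limits with the classical coefficients) have already been established, no further work is needed here; there is no technical obstacle — the difficulty was entirely absorbed into the preceding propositions. The role of this corollary is simply to record the consequence of these three results for later use in the classical remainder estimate (analogous to Proposition \ref{R^xi_M_bound_proposition}) and in the Borel summation argument applied in the proof of Theorem \ref{endpoint_admissible_result}.
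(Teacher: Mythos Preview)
Your proposal is correct and follows exactly the same approach as the paper, which simply cites Proposition \ref{Product of subgraphs_endpoint} (ii), Proposition \ref{a_convergence_endpoint}, and Lemma \ref{a^xi_m_identity_endpoint}. You have merely spelled out how these three ingredients combine, which is precisely the intended argument.
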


\begin{proof}
The claim follows from Proposition \ref{Product of subgraphs_endpoint} (ii), Proposition \ref{a_convergence_endpoint}, and Lemma \ref{a^xi_m_identity_endpoint}.
\end{proof}

Furthermore, we can estimate the remainder term \eqref{R^xi_m} analogously as in Proposition \ref{R^xi_M_bound_proposition}.
\begin{proposition} 
\label{R^xi_M_bound_proposition_endpoint}
Given $M,r \in \mathbb{N}$, $\xi \in \mathbf{B}_r$, and $z \in \mathbb{C}$ with $\re z \geq 0$, we have
\begin{equation*}
|R^{\xi}_{M}(z)|\;\leq\;(Cr)^r \, C^M\,(1+\|w\|_{L^1(\Lambda)}+L)^M\,|z|^M\,M!\,.
\end{equation*}
\end{proposition}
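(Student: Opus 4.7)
The plan is to follow the strategy of Proposition \ref{R^xi_M_bound_proposition} essentially verbatim, with the endpoint-admissible ingredients substituted in. First I would establish that $W \geq 0$ almost surely. This follows as in \eqref{R^xi_M_bound_proposition_1}: for each $K \in \mathbb{N}$, expanding $w$ into a Fourier series and using the positive type assumption from Definition \ref{endpoint_admissible_w} (ii) yields
\begin{equation*}
W_{[K]} \;=\; \frac{1}{2} \sum_{k \in \mathbb{Z}^2} \hat{w}(k) \, \bigg|\int \dd x\, \ee^{2 \pi \ii k \cdot x} \big(|\phi_{[K]}(x)|^2- \varrho_{[K]} \big)\bigg|^2 \;\geq\;0\,,
\end{equation*}
and the nonnegativity of $W$ is then inherited from the $L^m(\mu)$ convergence of $W_{[K]}$ established in Lemma \ref{W_Wick-ordered} (ii) (by passing to an almost sure subsequence).

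With $W \geq 0$ in hand, and using that $|\ee^{-z'W}| \leq 1$ when $\re z' \geq 0$, I would take absolute values in \eqref{R^xi_m} and apply the Cauchy--Schwarz inequality with respect to $\mu$ to obtain, as in \eqref{R^xi_M_bound},
\begin{equation*}
|R^{\xi}_{M}(z)|\;\leq\; \frac{|z|^M}{M!}\,\bigg(\int |\Theta(\xi)|^{2}\,\dd \mu\bigg)^{1/2}\,\bigg(\int W^{2M}\,\dd \mu\bigg)^{1/2}\,.
\end{equation*}

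For the first factor, I would apply the classical Wick theorem to the polynomial $|\Theta(\xi)|^2$ in $\phi,\bar\phi$: the pairings produce a sum, indexed by the corresponding graphs (with $m=0$ interaction vertices and $r$ observable vertices), of products of classical Green functions $G$ contracted through the kernel of $\xi$. Using $\|\xi\|_{\mathfrak{S}^2} \leq 1$ together with the standard combinatorial count gives a bound of the form $(Cr)^{2r}$, which is the factor $(Cr)^r$ after taking the square root. For the second factor, note that by definition \eqref{a^xi_m} applied to $r=0$, $\xi = \emptyset$, and $m = 2M$, we have $\int W^{2M}\,\dd\mu = (2M)!\,|a^{\emptyset}_{2M}|$, so Corollary \ref{a^xi_m_corollary_endpoint} gives
\begin{equation*}
\int W^{2M}\,\dd\mu \;\leq\; C^{2M}\,\big(1+\|w\|_{L^1(\Lambda)}+L\big)^{2M}\,((2M)!)^2\,.
\end{equation*}
Plugging these bounds back in and using $((2M)!)^{1/2} \leq C^M M!$ (Stirling) then yields the claimed estimate
\begin{equation*}
|R^{\xi}_{M}(z)|\;\leq\;(Cr)^r\,C^M\,(1+\|w\|_{L^1(\Lambda)}+L)^M\,|z|^M\,M!\,.
\end{equation*}

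There is essentially no obstacle: the two substantive inputs (the positivity of $W$ and the factorial bound on $|a^{\emptyset}_{2M}|$) have already been established by Lemma \ref{W_Wick-ordered} (ii) and Corollary \ref{a^xi_m_corollary_endpoint} respectively. The only point requiring a tiny bit of care is the combinatorial bound on $\int |\Theta(\xi)|^2 \,\dd\mu$, but this is exactly the same Wick-theorem argument as in the $d$-admissible case (it does not use the integrability of $w$ at all, only that $\xi$ is Hilbert--Schmidt of rank $r$).
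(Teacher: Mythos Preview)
Your proposal is correct and follows essentially the same route as the paper: establish $W\geq 0$ from the positive-type assumption via \eqref{R^xi_M_bound_proposition_1} and Lemma~\ref{W_Wick-ordered}~(ii), then apply Cauchy--Schwarz as in \eqref{R^xi_M_bound}, bound the observable factor by Wick's theorem, and bound $\int W^{2M}\,\dd\mu$ using Corollary~\ref{a^xi_m_corollary_endpoint} with $m=2M$, $r=0$. The only (harmless) slip is in the Stirling step: after taking the square root you have a factor $(2M)!$, so what you actually need is $(2M)!/M!\leq C^M M!$ rather than $((2M)!)^{1/2}\leq C^M M!$, though both inequalities are true.
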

\begin{proof}
We argue as in the proof of Proposition \ref{R^xi_M_bound_proposition}.
Note that \eqref{R^xi_M_bound_proposition_1} still holds, where we now use Definition \ref{endpoint_admissible_w} (ii). We estimate the third term in \eqref{R^xi_M_bound} by using Corollary \ref{a^xi_m_corollary_endpoint} with $m=2M$ and $r=0$.
Hence the upper bound in \eqref{R^xi_M_bound_proposition_2} gets replaced by
\begin{equation*}
\;\leq\; (Cr)^r \, C^M\,(1+\|w\|_{L^1(\Lambda)}+L)^M\,|z|^M\,M!
\end{equation*}
which implies the claim.
\end{proof}
Finally, we note that the analyticity result given by Lemma \ref{A^xi_analytic} holds in this setting.

\begin{lemma} 
\label{A^xi_analytic_endpoint}
Let $r \in \mathbb{N}$ and $\xi \in \mathbf{B}_r$ be given.
The function $A^{\xi}$ given by \eqref{A^xi} is analytic in the right-half plane. 
\end{lemma}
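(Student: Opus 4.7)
The strategy is to justify differentiation of $A^{\xi}(z) = \int \Theta(\xi)\, \ee^{-zW}\, \dd \mu$ under the integral sign by dominated convergence, exactly as in the proof of Lemma \ref{A^xi_analytic}. The crucial structural inputs, both of which have already been established, are: (i) $W \geq 0$ almost surely, which follows from \eqref{R^xi_M_bound_proposition_1} together with Lemma \ref{W_Wick-ordered} (ii), so that $|\ee^{-zW}| = \ee^{-(\re z) W} \leq 1$ for every $z$ in the right-half plane; and (ii) moment bounds on $W$ of the form $\int W^{2k}\, \dd \mu \leq (Ck)^{2k}\,(1+\|w\|_{L^1(\Lambda)}+L)^{2k}$, which are precisely the content of Corollary \ref{a^xi_m_corollary_endpoint} applied with $r=0$ and $m=2k$.

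First I would verify formally that the $k$-th $z$-derivative of the integrand equals $(-W)^k \Theta(\xi)\, \ee^{-zW}$. Next, for any $z_0$ in the open right-half plane, I would fix a closed disk $\ol{D(z_0, \varrho)}$ contained in the open right-half plane and produce, for each $k$, an integrable dominating function on $\ol{D(z_0, \varrho)}$. By Cauchy--Schwarz,
\begin{equation*}
\int |W|^k\, |\Theta(\xi)|\, \ee^{-(\re z) W}\, \dd \mu \;\leq\; \pB{\int \Theta(\xi)^2\, \dd \mu}^{1/2} \pB{\int W^{2k}\, \dd \mu}^{1/2}\,,
\end{equation*}
and both factors are finite by Wick's theorem (for the $\Theta(\xi)$ factor) and by Corollary \ref{a^xi_m_corollary_endpoint} (for the $W$ factor). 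This gives a $k$-independent bound (in $z$) on $\ol{D(z_0, \varrho)}$, legitimising differentiation under the integral sign and showing that $A^\xi$ is complex-differentiable at $z_0$, hence analytic on the open right-half plane.

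A slight variant would be to observe that the same Cauchy--Schwarz bound, combined with Stirling, shows that the Taylor series of $A^\xi$ around any interior $z_0$ has a positive radius of convergence (in fact one can get a uniform radius depending on $\dist(z_0, \ii \R)$), yielding analyticity directly. The main—and only real—obstacle is ensuring that the moment bound on $W$ is available in the endpoint-admissible regime; but that is exactly what Corollary \ref{a^xi_m_corollary_endpoint} provides, so the argument from Lemma \ref{A^xi_analytic} transfers with only the cosmetic change of replacing $\|w\|_{L^p(\Lambda)}$ by $\|w\|_{L^1(\Lambda)} + L$ throughout.
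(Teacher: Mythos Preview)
Your proposal is correct and follows essentially the same approach as the paper: differentiate under the integral sign and justify this via dominated convergence, using $W \geq 0$ (from \eqref{R^xi_M_bound_proposition_1} and Lemma \ref{W_Wick-ordered} (ii)) together with the Cauchy--Schwarz estimate and the moment bounds on $W$ furnished by Corollary \ref{a^xi_m_corollary_endpoint} with $r=0$. The paper's proof is terser --- it simply refers back to Lemma \ref{A^xi_analytic} and Proposition \ref{R^xi_M_bound_proposition_endpoint} --- but the underlying mechanism is identical.
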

\begin{proof}
We argue as in the proof of Lemma \ref{A^xi_analytic}. Taking derivatives in $z$ by differentiating under the integral sign in \eqref{rho_hat_unbounded} with $X=\Theta(\xi)$ is now justified by using the arguments from the proof of Proposition \ref{R^xi_M_bound_proposition_endpoint}.
\end{proof}

\subsection{Proof of Theorem \ref{endpoint_admissible_result}}
\label{Endpoint-admissible interaction potentials_4}
We now have all the necessary ingredients to prove Theorem \ref{endpoint_admissible_result}.

\begin{proof}[Proof of Theorem \ref{endpoint_admissible_result}]
We argue as in the proof of Theorem \ref{main_result} (ii) and reduce the claim to showing \eqref{main_result_proof_2}. In order to prove \eqref{main_result_proof_2}, we again work with the functions $A^{\xi}_{\tau}$ and $A^{\xi}$ defined in \eqref{Duhamel_expansion_unbounded_A} and \eqref{A^xi} respectively and apply Proposition \ref{Borel_summation_unbounded}. 
By Lemma \ref{A^xi_{tau,z}_analytic_endpoint} and Lemma \ref{A^xi_analytic_endpoint}, these functions are analytic in the right-half plane.

In $\mathcal{C}_2$, we perform the expansion \eqref{Borel_summation_unbounded1} according to \eqref{Duhamel_expansion_unbounded_A} and \eqref{A^xi}.
By Proposition \ref{Product of subgraphs_endpoint} (ii), Corollary \ref{a^xi_m_corollary_endpoint}, Proposition \ref{remainder_term_endpoint}, and Proposition \ref{R^xi_M_bound_proposition_endpoint}, we know that \eqref{Borel_summation_unbounded2}--\eqref{Borel_summation_unbounded3} hold with $\nu$ as in \eqref{nu_sigma_choice} and with 
\begin{equation*}
\sigma\;=\;\frac{C(1+\|w\|_{L^1(\Lambda)}+L)}{\eta^2}\,.
\end{equation*}
We now deduce the claim as in the proof of Theorem \ref{main_result} (ii).
\end{proof}

\appendix

\section{Proofs of auxiliary results for the interaction potential}
\label{The_interaction_potential}
In this appendix, we give the proofs of Lemma \ref{admissible_w_nonempty}, Lemma \ref{w_positive_approximation}, and  Lemma \ref{w_endpoint_admissible_approximation} concerning the interaction potential.
In order to prove all of these results, we refer to the \emph{transference principle}, which allows us to analyse the mapping properties of Fourier multiplier operators in the periodic setting. Let us first recall the precise statement of this principle.

\begin{proposition}(The transference principle \cite[Theorem VII.3.8]{Stein_Weiss})
\label{transference_principle}
\\
Let $1 \leq p \leq \infty$ be fixed.
Let $T: L^p(\mathbb{R}^d) \rightarrow  L^p(\mathbb{R}^d)$ be
a bounded operator satisfying the following properties.
\begin{itemize}
\item[(i)] $T$ is a Fourier multiplier operator,  i.e.\  there exists $u \in \mathcal{S}'(\mathbb{R}^d)$ such that 
$Tf=u * f$ for all $f \in \mathcal{S}(\mathbb{R}^d)$.
\item[(ii)] The convolution kernel $u$ has the property that its Fourier transform $\hat{u} \in \mathcal{S}'(\mathbb{R}^d)$ is continuous at each point of $\mathbb{Z}^d$.
\end{itemize}
Then the \textbf{periodisation} of $T$
\begin{equation}
\label{T_tilde}
\tilde{T} f (x) \;\deq\; \sum_{k \in \mathbb{Z}^d} \hat{u}(k) \hat{f}(k) \,\ee^{2 \pi \ii k \cdot x}
\end{equation}
defines a bounded operator $\tilde{T}:  L^p(\Lambda) \rightarrow  L^p(\Lambda)$ with 
\begin{equation}
\label{T_tilde_T}
\|\tilde{T}\|_{L^p(\Lambda) \rightarrow  L^p(\Lambda)} \;\leq\;\|T\|_{L^p(\mathbb{R}^d) \rightarrow L^p(\mathbb{R}^d)}\,.
\end{equation}
\end{proposition}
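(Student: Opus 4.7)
The plan is to establish \eqref{T_tilde_T} by a de Leeuw-style dilation argument: I would realise $\tilde{T}$ as a limit of restrictions of $T$ acting on Schwartz extensions of trigonometric polynomials, and transfer the norm bound by a Riemann-sum approximation.

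First I would reduce by density to the case where $f(x)=\sum_{|k|\leq N}\hat{f}(k)\,\ee^{2\pi\ii k \cdot x}$ is a trigonometric polynomial (for $p<\infty$; the endpoint $p=\infty$ is treated separately by duality or direct approximation). In that case $\tilde{T}f$ is again a trigonometric polynomial with coefficients $\hat{u}(k)\hat{f}(k)$, whose value at every $x$ depends only on the finitely many numbers $\{\hat{u}(k):|k|\leq N\}$, which are well-defined by hypothesis~(ii). Fix $\phi\in\mathcal{S}(\mathbb{R}^d)$ with $\|\phi\|_{L^p(\mathbb{R}^d)}=1$ and, for each $\epsilon>0$, form the Schwartz extension $F_\epsilon(x)\deq f(x)\,\phi(\epsilon x)$ on $\mathbb{R}^d$. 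Using $\widehat{\ee^{2\pi\ii k\cdot}\phi(\epsilon\cdot)}(\xi)=\epsilon^{-d}\hat\phi((\xi-k)/\epsilon)$, the change of variables $\xi=k+\epsilon\eta$, and the Fourier inversion identity $\phi(\epsilon x)=\int\hat\phi(\eta)\,\ee^{2\pi\ii\epsilon\eta\cdot x}\,\dd\eta$, a direct computation produces
\begin{equation*}
TF_\epsilon(x)-\tilde{T}f(x)\,\phi(\epsilon x)\;=\;\sum_{|k|\leq N}\hat{f}(k)\,\ee^{2\pi\ii k \cdot x}\,\psi_\epsilon^{(k)}(\epsilon x)\;=:\;R_\epsilon(x),
\end{equation*}
where $\psi_\epsilon^{(k)}(y)\deq\int_{\mathbb{R}^d}\bigl[\hat{u}(k+\epsilon\eta)-\hat{u}(k)\bigr]\hat\phi(\eta)\,\ee^{2\pi\ii\eta\cdot y}\,\dd\eta$.

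The $L^p(\mathbb{R}^d)$-boundedness of $T$ then gives $\|TF_\epsilon\|_{L^p(\mathbb{R}^d)}\leq\|T\|_{L^p(\mathbb{R}^d)\to L^p(\mathbb{R}^d)}\,\|F_\epsilon\|_{L^p(\mathbb{R}^d)}$, and the plan is to pass to the limit $\epsilon\to 0$ after multiplying every norm by $\epsilon^{d/p}$. For the pure-product terms, decomposing $\mathbb{R}^d=\bigsqcup_{n\in\mathbb{Z}^d}(\Lambda+n)$ and invoking periodicity of $|f|^p$ yields
\begin{equation*}
\epsilon^d\int_{\mathbb{R}^d}|f(x)|^p|\phi(\epsilon x)|^p\,\dd x\;=\;\int_{\Lambda}|f(y)|^p\,\Bigl[\epsilon^d\!\sum_{n\in\mathbb{Z}^d}|\phi(\epsilon(y+n))|^p\Bigr]\dd y,
\end{equation*}
and the bracketed Riemann sum converges pointwise in $y$ to $\int_{\mathbb{R}^d}|\phi|^p=1$, dominated uniformly in $(y,\epsilon)$ by the rapid decay of $\phi$ (for instance through $|\phi(z)|\leq C\langle z\rangle^{-d-1}$). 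Dominated convergence then gives $\epsilon^d\|F_\epsilon\|_{L^p(\mathbb{R}^d)}^p\to\|f\|_{L^p(\Lambda)}^p$, and the identical computation yields $\epsilon^d\|\tilde Tf\cdot\phi(\epsilon\cdot)\|_{L^p(\mathbb{R}^d)}^p\to\|\tilde Tf\|_{L^p(\Lambda)}^p$.

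The hard part will be controlling the remainder, i.e.\ showing $\epsilon^{d/p}\|R_\epsilon\|_{L^p(\mathbb{R}^d)}\to 0$. The starting pointwise estimate $\sup_y|\psi_\epsilon^{(k)}(y)|\leq\int|\hat u(k+\epsilon\eta)-\hat u(k)||\hat\phi(\eta)|\,\dd\eta=:\omega_k(\epsilon)\to 0$ uses continuity of $\hat{u}$ at $k$ together with local $L^\infty$-control of $\hat u$ (obtained by mollifying $T$ against a Schwartz bump, a standard consequence of hypothesis~(i)) and dominated convergence in $\eta$. Since $\psi_\epsilon^{(k)}$ is the Fourier transform of an $L^1$ function with uniformly (in $\epsilon$) bounded $L^1$-norm, it is uniformly bounded and the spatial localisation $\phi(\epsilon\cdot)$ already present in $F_\epsilon$ can be used to factor $R_\epsilon$ as a bounded perturbation of $\omega(\epsilon)\sum_k|\hat{f}(k)|\cdot\phi(\epsilon\cdot)$ in the $L^p$-sense, for $\omega(\epsilon)\deq\max_k\omega_k(\epsilon)\to 0$; this yields $\epsilon^{d/p}\|R_\epsilon\|_{L^p(\mathbb{R}^d)}=O(\omega(\epsilon))\to 0$ by the same Riemann-sum analysis as above. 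Passing to the limit in $\epsilon^{d/p}\|TF_\epsilon\|_{L^p}\leq\|T\|\,\epsilon^{d/p}\|F_\epsilon\|_{L^p}$ and dividing by the (fixed positive) factor $\|\phi\|_{L^p(\mathbb{R}^d)}=1$ yields the claimed bound \eqref{T_tilde_T}.
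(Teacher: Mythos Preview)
The paper does not give its own proof of this proposition; it is quoted verbatim as \cite[Theorem VII.3.8]{Stein_Weiss} and used as a black box in Appendix~\ref{The_interaction_potential}. Your de Leeuw-style dilation argument is exactly the classical route to this result and is in the same spirit as the proof in Stein--Weiss, so there is no discrepancy of approach to report.

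That said, one step in your sketch is not fully justified. After rescaling, the remainder estimate $\epsilon^{d/p}\|R_\epsilon\|_{L^p(\mathbb{R}^d)}\to 0$ reduces to $\|\psi_\epsilon^{(k)}\|_{L^p(\mathbb{R}^d)}\to 0$ for each $k$. Your pointwise bound $\sup_y|\psi_\epsilon^{(k)}(y)|\leq\omega_k(\epsilon)\to 0$ is correct, but sup-norm smallness alone does not yield $L^p(\mathbb{R}^d)$-smallness without some spatial decay, and the sentence ``the spatial localisation $\phi(\epsilon\cdot)$ already present in $F_\epsilon$ can be used to factor $R_\epsilon$'' does not hold as written: $R_\epsilon$ carries $\psi_\epsilon^{(k)}(\epsilon\cdot)$, not $\phi(\epsilon\cdot)$, and $\psi_\epsilon^{(k)}$ inherits no decay from $\phi$ since $\hat u$ is merely bounded and continuous at $k$. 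A clean fix is to take $\hat\phi$ compactly supported; then $\psi_\epsilon^{(k)}=\mathcal{F}^{-1}\bigl([\hat u(k+\epsilon\,\cdot)-\hat u(k)]\hat\phi\bigr)$ has Fourier support in a fixed ball, and since $\hat u\in L^\infty$ (an $L^p$ multiplier is automatically an $L^2$ multiplier by duality and interpolation, and an $L^1$ or $L^\infty$ multiplier is the Fourier transform of a finite measure), dominated convergence gives $[\hat u(k+\epsilon\,\cdot)-\hat u(k)]\hat\phi\to 0$ in every $L^q$. Hausdorff--Young then handles $p\geq 2$; for $1\leq p<2$ one can instead appeal directly to the fact that $u$ is (or can be approximated in multiplier norm by) a finite measure, or first smooth $\hat u$ by convolving with an approximate identity --- a step that does not increase the $L^p\to L^p$ norm --- so that $\psi_\epsilon^{(k)}$ becomes Schwartz with seminorms controlled by $\omega_k(\epsilon)$. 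With this repair the argument goes through.
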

\begin{remark}
\label{transference_principle_remark}
For $T$ as in the assumptions of Proposition \ref{transference_principle}, we have 
\begin{equation*}
(T f)\,\,\widehat{}\,\,(\xi) \;=\; \hat{u}(\xi)\,\hat{f}(\xi) \in \mathcal{S}'(\mathbb{R}^d) \quad \mbox{for all} \quad f \in L^p(\mathbb{R}^d)\,,\,\xi\in \mathbb{R}^d\,,
\end{equation*}
and the periodisation $\tilde{T}$ satisfies
\begin{equation*}
(\tilde{T} f)\,\,\widehat{}\,\,(k) \;=\; \hat{u}(k)\,\hat{f} (k) \quad \mbox{for all} \quad f \in L^p(\Lambda)\,, \,k \in \mathbb{Z}^d\,.
\end{equation*}
\end{remark}

We now have the necessary tools to prove Lemma \ref{admissible_w_nonempty}.

\begin{proof}[Proof of Lemma \ref{admissible_w_nonempty}]
We first prove (i). Note that the claim is immediate when $d=1$, so we need to consider the case when $d=2,3$.
Let us recall that given $p \in \mathcal{P}_d$ as in \eqref{P_d}, we need to find $w \in L^p(\Lambda) \setminus L^{\infty}(\Lambda)$ which is even and of positive type. We need to consider two cases depending on the size of $p$.
\begin{itemize}
\item[(a)] $2 \leq p<\infty$.

We take $q \in (1,p')$ and let $w : \Lambda \rightarrow \mathbb{C}$ be such that 
\begin{equation}
\label{admissible_w_nonempty_1}
\hat{w}(k) \;\deq\; \frac{1}{\langle k \rangle^{\frac{d}{q}}}\,\,\, \mbox{for} \,\, k \in \mathbb{Z}^d\,.
\end{equation}
From \eqref{admissible_w_nonempty_1}, we obtain that $\hat{w} \in \ell^{p'}(\mathbb{Z}^d)$. By the Hausdorff-Young inequality, it follows that $w \in L^p(\Lambda)$. Furthermore, $w$ is even and of positive type. 

We just need to show that $w \notin L^{\infty}(\Lambda)$.  In order to do this, we argue by contradiction.
Given $\delta>0$, we consider the Fourier multiplier $T_\delta$ on $\mathcal{S}(\mathbb{R}^d)$ given by 
\begin{equation}
\label{T_delta}
(T_\delta f)\,\,\widehat{}\,\,(\xi) \;=\;\ee^{- \delta \pi |\xi|^2} \, \hat{f}(\xi)\quad \mbox{for all} \quad f \in \mathcal{S}(\mathbb{R}^d), \,\xi \in \mathbb{R}^d\,.
\end{equation}
Note that \eqref{T_delta} can be rewritten as
\begin{equation*}
T_\delta f \;=\; \frac{1}{\delta^{d/2}} \, \ee^{-\pi |x|^2 /\delta} *f
\end{equation*}
and hence by Young's inequality we have
\begin{equation}
\label{T_delta2}
\|T_\delta f\|_{L^{\infty}(\mathbb{R}^d)} \;\leq\; \|f\|_{L^{\infty}(\mathbb{R}^d)}\,.
\end{equation}
In particular, $T_\delta$ extends to an operator on $L^{\infty}(\mathbb{R}^d)$.
We let $S_\delta$ be the periodisation of $T_\delta$, in other words,  
\begin{equation}
\label{S_delta}
S_\delta f(x)\;=\;\sum_{k \in \mathbb{Z}^d} \ee^{-\delta \pi |k|^2}\,\hat{f}(k)\,\ee^{2\pi \ii k \cdot x}\,,
\end{equation}
for $f \in L^{\infty}(\Lambda)$.
Using \eqref{T_delta2} and Proposition \ref{transference_principle}, we deduce that
\begin{equation}
\label{S_delta_bound}
\|S_\delta f\|_{L^{\infty}(\Lambda)} \;\leq\; \|f\|_{L^{\infty}(\Lambda)} \quad \mbox{for all} \quad f \in L^{\infty}(\Lambda)\,.
\end{equation}
In particular, \eqref{S_delta_bound} implies that $\|S_\delta w\|_{L^{\infty}(\Lambda)}$ is bounded uniformly in $\delta$.

We now substitute $f=w$ in \eqref{S_delta}. Using \eqref{admissible_w_nonempty_1}, \eqref{S_delta}, and the dominated convergence theorem it follows that $S_\delta w$ is a continuous function on $\Lambda$. Furthermore 
\begin{equation}
\label{S_delta_2}
S_\delta w(0) \;=\; \sum_{k \in \mathbb{Z}^d} \frac{\ee^{-\delta \pi |k|^2}}{\langle k \rangle^{\frac{d}{q}}} \rightarrow \infty \quad \mbox{as } \quad \delta \rightarrow 0\,,
\end{equation}
by using the monotone convergence theorem.
Hence, \eqref{S_delta_2} implies that $\|S_\delta w\|_{L^{\infty}(\Lambda)} \rightarrow \infty$ as $\delta \rightarrow 0$, thus giving a contradiction. We deduce that $w \notin L^{\infty}(\Lambda)$.
\item[(b)] $1 \leq p <2$.

Here, we recall that functions on $\mathbb{T}^d$ can be identified with $\mathbb{Z}^d$-periodic functions on $\mathbb{R}^d$. We use this identification throughout. Let us consider a radial function $\chi \in C_c^\infty(\mathbb{R}^d)$ which satisfies the following properties.
\begin{itemize}
\item[(1)] $0 \leq \chi \leq 1$.
\item[(2)] $\chi(\xi)=1$ for $|\xi| \leq \frac{1}{3}$.
\item[(3)] $\chi(\xi)=0$ for $|\xi|>\frac{1}{2}$.
\end{itemize}
We take $q \in (p,2)$ and let 
\begin{equation}
\label{admissible_w_nonempty_2}
f(x) \;\deq\; \frac{1}{|x|^{\frac{d}{q}}}\,\chi(x) \quad \mbox{for} \quad x \in [-1/2,1/2]^d\,.
\end{equation}
We extend $f$ by periodicity to a function on $\mathbb{R}^d$ (and we interpret the result as being a function on $\mathbb{T}^d$). Note that the resulting function $f$ then belongs to $L^p(\Lambda)$ and is even. Therefore, its Fourier coefficients are real-valued.
We let $w \deq f *_\Lambda f = \int_\Lambda \dd y\, f(\cdot-y)\, f(y)$. Then $w \in L^p(\Lambda)$ by Young's inequality. Furthermore, $w$ is even and $\hat{w} = (\hat{f})^2 \geq 0$, so $w$ is of positive type.

We need to show that $w \notin L^{\infty}(\Lambda)$. In order to show this, we use \eqref{admissible_w_nonempty_2} and the support properties of $\chi$ to deduce that for $x \in \Lambda$ with $|x| \leq 1/8$, we have
\begin{equation}
\label{admissible_w_nonempty_3}
w(x)\;=\; f*_\Lambda f (x) \;\geq\; \mathop{\int}_{|y| \leq |x|/2} \frac{1}{|x-y|^{\frac{d}{q}}}\,\frac{1}{|y|^{\frac{d}{q}}}\,\chi(x-y)\,\chi(y)\,\dd y\,.
\end{equation}
Note that in the integrand in \eqref{admissible_w_nonempty_3}, we have $\chi(x-y)=\chi(y)=1$. Furthermore, $|x-y|\sim|x|$. Hence the expression in \eqref{admissible_w_nonempty_3} is 
\begin{equation}
\label{admissible_w_nonempty_4}
\gtrsim \frac{1}{|x|^{\frac{d}{q}}}\,\mathop{\int} _{|y|\leq|x|/2} \frac{1}{|y|^{\frac{d}{q}}}\,\dd y \sim |x|^{\,d(1-\frac{2}{q})}\,.
\end{equation}
Since $q<2$, \eqref{admissible_w_nonempty_4} implies that $w \notin L^{\infty}(\Lambda)$. This finishes the proof of (i).
\end{itemize}
We now prove (ii). For $\epsilon>0$, we consider $f$
\begin{equation}
\label{admissible_w_nonempty_5}
\hat{f}(k) \;\deq\; \frac{1}{\langle k \rangle^{1+\epsilon}} \quad \mbox{for} \quad k \in \mathbb{Z}^2\,.
\end{equation}
Note that then $f \in L^2(\Lambda)$ is even and real-valued.
We let $w \deq f^2$. Then $w \in L^1(\Lambda)$ is even, and $w \geq 0$ pointwise. Furthermore, by \eqref{admissible_w_nonempty_5}, we have that for all $k \in \mathbb{Z}^2$
\begin{equation}
\label{admissible_w_nonempty_6}
\hat{w}(k)\;=\;\sum_{k' \in \mathbb{Z}^2} \hat{f}(k')\,\hat{f}(k-k')\;\geq\;0\,.
\end{equation}
It remains to check that $w$ satisfies conditions (iv) and (v) of Definition \ref{endpoint_admissible_w}. Let $k \in \mathbb{Z}^2$ be given. We consider the contributions when $|k'| < \frac{|k|}{2}$, $\frac{|k|}{2} \leq |k'| \leq 2|k|$ and $|k'| >2|k|$ in \eqref{admissible_w_nonempty_6} respectively.

By \eqref{admissible_w_nonempty_5}, we have 
\begin{equation*}
\sum_{|k'|<\frac{|k|}{2}}\hat{f}(k')\,\hat{f}(k-k') \sim \Bigg(\sum_{|k'|<\frac{|k|}{2}} \hat{f}(k')\Bigg)\,\hat{f}(k)\,,
\end{equation*}
which, by applying the Cauchy-Schwarz inequality and \eqref{admissible_w_nonempty_5} once more is
\begin{equation}
\label{admissible_w_nonempty_7A}
\;\lesssim\; \langle k \rangle \,\Bigg(\sum_{|k'|<\frac{|k|}{2}} |\hat{f}(k')|^2\Bigg)^{1/2}\,\hat{f}(k)\;\leq\;\langle k \rangle \,\|f\|_{L^2(\Lambda)}\,\hat{f}(k)\;\lesssim\; \langle k \rangle^{-\epsilon}\,.
\end{equation}
Furthermore, by \eqref{admissible_w_nonempty_5}, we have
\begin{equation*}
\sum_{\frac{|k|}{2} \leq |k'| \leq 2|k|}\hat{f}(k')\,\hat{f}(k-k') \sim \Bigg(\sum_{\frac{|k|}{2} \leq |k'| \leq 2|k|} \hat{f}(k-k')\Bigg)\,\hat{f}(k)\,,
\end{equation*}
which by the Cauchy-Schwarz inequality and \eqref{admissible_w_nonempty_5} is
\begin{equation}
\label{admissible_w_nonempty_7B}
\;\lesssim\; \langle k \rangle \,\|f\|_{L^2(\Lambda)}\,\hat{f}(k)\;\lesssim\; \langle k \rangle^{-\epsilon}\,.
\end{equation}
Finally, by  \eqref{admissible_w_nonempty_5}, we have
\begin{equation}
\label{admissible_w_nonempty_7C}
\sum_{|k'| > 2|k|}\hat{f}(k')\,\hat{f}(k-k') \sim \sum_{|k'| > 2|k|}\big(\hat{f}(k')\big)^2 \sim \sum_{|k'| > 2|k|} \langle k' \rangle^{-2-2\epsilon} \sim \langle k \rangle^{-2\epsilon}\,.
\end{equation}
From \eqref{admissible_w_nonempty_7A}--\eqref{admissible_w_nonempty_7C}, we deduce that 
\begin{equation}
\label{admissible_w_nonempty_7}
\langle k \rangle^{-2\epsilon} \lesssim \hat{w}(k) \lesssim \langle k \rangle^{-\epsilon}\,.
\end{equation}
The upper bound in \eqref{admissible_w_nonempty_7} implies that $w$ satisfies condition (iv) of Definition \ref{endpoint_admissible_w}. The lower bound in \eqref{admissible_w_nonempty_7} implies that $w$ satisfies condition (v) of Definition \ref{endpoint_admissible_w}, provided that we choose $\epsilon \leq 1$. Namely, then $w \notin L^2(\Lambda)$ by Plancherel's theorem and hence $w \notin L^{\infty}(\Lambda)$. This finishes the proof of (ii).
\end{proof}

We now prove Lemma \ref{w_positive_approximation}.

\begin{proof}[Proof of Lemma \ref{w_positive_approximation}]
Let us consider $\chi \in C_c^\infty(\mathbb{R}^d)$ satisfying the following properties.
\begin{itemize}
\item[(1)] $0 \leq \chi \leq 1$.
\item[(2)] $\chi(\xi)=1$ for $|\xi| \leq \frac{1}{2}$.
\item[(3)] $\chi(\xi)=0$ for $|\xi|>1$.
\end{itemize}
Given $M>0$, we consider the Fourier multiplier $T_M$ on $\mathcal{S}(\mathbb{R}^d)$ such that
\begin{equation}
\label{T_M1}
(T_M f)\,\,\widehat{}\,\,(\xi) \;=\; \chi(\xi/M) \,\hat{f}(\xi)\quad \mbox{for all} \quad f \in \mathcal{S}(\mathbb{R}^d), \,\xi \in \mathbb{R}^d\,.
\end{equation}
In particular, we can rewrite \eqref{T_M1} as
\begin{equation}
\label{T_M2}
T_M f \;=\; M^d \,\check{\chi}(M \,\cdot) * f\,,
\end{equation}
where $\check{\chi}$ denotes the inverse Fourier transform of $\chi$. By Young's inequality, \eqref{T_M2} implies that $T$ extends to $L^p(\mathbb{R}^d)$ and satisfies
\begin{equation}
\label{T_M3}
\|T_M f\|_{L^p(\mathbb{R}^d)} \;\leq\; C(\chi) \|f\|_{L^p(\mathbb{R}^d)} \quad \mbox{for all} \quad f \in L^p(\mathbb{R}^d)\,.
\end{equation}
Let $S_M$ denote the periodisation of $T_M$, i.e.\ 
\begin{equation}
\label{S_M1}
S_M f(x)\;=\;\sum_{k \in \mathbb{Z}^d} \chi(k/M)\,\hat{f}(k)\,\ee^{2\pi \ii k \cdot x}
\end{equation}
\label{S_M_bound}
Since $\chi \in C_c^{\infty}(\mathbb{R}^d)$, by Proposition \ref{transference_principle} and \eqref{T_M3}, we have
\begin{equation}
\label{S_M_bound}
\|S_M f\|_{L^p(\Lambda)} \;\leq\; C(p,\chi) \|f\|_{L^p(\Lambda)} \quad \mbox{for all} \quad f \in L^p(\Lambda)\,.
\end{equation}
Given $\tau \geq 1$, we define 
\begin{equation}
\label{w_tau_definition}
w_\tau \;\deq\; S_{M(\tau)}w\,.
\end{equation}
We now determine $M(\tau)>0$ so that $w_\tau$ defined in \eqref{w_tau_definition} satisfies the wanted properties (i)--(iii) stated in Lemma \ref{w_positive_approximation}.

Since $w$ is of positive type, we obtain that $w_\tau$ is also of positive type by using \eqref{S_M1}, \eqref{w_tau_definition}, and property (1) of $\chi$. Therefore $w_\tau$ satisfies property (i). By applying \eqref{S_M1} and properties (1) and (3) of $\chi$, it follows that $w_\tau \in L^{\infty}(\Lambda)$ and
\begin{equation*}
\|w_\tau\|_{L^{\infty}(\Lambda)} \;\leq\; \mathop{\sum_{k \in \mathbb{Z}^d}}_{|k| \leq M(\tau)} |\hat{w}(k)| \;\leq\; C M(\tau)^d \|w\|_{L^1(\Lambda)} \;\leq\; C M(\tau)^d \|w\|_{L^p(\Lambda)}\,. 
\end{equation*}
Hence, $w_\tau$ satisfies condition (ii) if we take 
\begin{equation}
\label{M_tau_choice}
M(\tau) \sim \bigg(\frac{\tau^{\beta}}{\|w\|_{L^p(\Lambda)}}\bigg)^{1/d}\,.
\end{equation}
By \eqref{S_M_bound} and \eqref{w_tau_definition}, it follows that $w_\tau$ satisfies condition (iii).
In order to prove that $w_\tau$ satisfies condition (iv), we recall that trigonometric polynomials are dense in $L^p(\Lambda)$ \cite[Corollary 1.1]{Duoandikoetxea}.  
We now deduce the claim by arguing analogously as in the proof of \cite[Lemma 1.8]{Duoandikoetxea}. Namely, given $\epsilon>0$, we can find a trigonometric polynomial $g$ such that 
\begin{equation}
\label{trigonometric_polynomial1}
\|w-g\|_{L^p(\Lambda)}<\epsilon\,.
\end{equation}
By \eqref{S_M1}, \eqref{M_tau_choice}, and property (2) of $\chi$, it follows that there exists $\tau_0 \equiv \tau_0(g) \geq 1$ such that 
\begin{equation}
\label{trigonometric_polynomial2}
S_{M(\tau)}g\;=\;g \,\,\, \mbox{for} \,\, \tau\;\geq\;\tau_0\,.
\end{equation}
In particular, for $\tau \geq \tau_0$, we have
\begin{equation*}
\|w_\tau-w\|_{L^p(\Lambda)} \;\leq\; \|S_{M(\tau)}w-S_{M(\tau)}g\|_{L^p(\Lambda)}+ \|S_{M(\tau)}g-g\|_{L^p(\Lambda)}+ \|g-w\|_{L^p(\Lambda)}\;<\;(C(p,\chi)+1) \epsilon\,.
\end{equation*}
Here, we used  \eqref{S_M_bound}, \eqref{w_tau_definition}, \eqref{trigonometric_polynomial1}, and \eqref{trigonometric_polynomial2}. Therefore, $w_\tau$ satisfies condition (iii).
\end{proof}

We now prove Lemma \ref{w_endpoint_admissible_approximation}.

\begin{proof}[Proof of Lemma \ref{w_endpoint_admissible_approximation}]
We note that, by Definition \ref{endpoint_admissible_w} (iv) and by the construction of $\delta$, we indeed have that 
\begin{equation}
\label{w_H^{-1+delta}}
\|w\|_{H^{-1+\delta}(\Lambda)} \leq C L\,,
\end{equation}
for some $C \equiv C(\epsilon)>0$.
Let $g: \mathbb{R}^2 \rightarrow \mathbb{R}$ be given by $g(x)=\ee^{-\pi |x|^2}$. We define $g_{\tau}: \mathbb{R}^2 \rightarrow \mathbb{R}$ by
\begin{equation}
\label{endpoint_admissible_g_tau}
g_\tau (x) \;\deq\; g\bigg(\frac{x}{\tau^{\beta/2}}\bigg)\,.
\end{equation}
With $g_\tau$ as in \eqref{endpoint_admissible_g_tau}, we define $w_\tau$ in terms of its Fourier coefficients by
\begin{equation}
\label{endpoint_admissible_w_tau1}
\hat{w}_\tau (k) \;\deq\; g_{\tau}(k) \, \hat{w}(k)\,,
\end{equation}
for $k \in \mathbb{Z}^2$. Since by \eqref{endpoint_admissible_g_tau} we have $0 \leq g_\tau \leq 1$, 
we deduce claims (i), (ii), (iii) by applying \eqref{endpoint_admissible_w_tau1} and recalling \eqref{w_H^{-1+delta}}, Definition \ref{endpoint_admissible_w} (ii), and Definition \ref{endpoint_admissible_w} (iv) respectively.

In order to show (iv), we use \eqref{endpoint_admissible_w_tau1} to deduce that we can write 
\begin{equation}
\label{endpoint_admissible_w_tau2}
w_\tau \;=\;f_\tau *_{\Lambda} w \;=\; \int_{\Lambda} \,\dd y \, f_\tau(x-y)\,w(y) \,,
\end{equation}
where for $x \in \mathbb{R}^2$, we let
\begin{equation}
\label{f_tau}
f_\tau(x)\;\deq\; \sum_{k \in \mathbb{Z}^2} g_\tau(k) \,\ee^{2\pi \ii k \cdot x}\;=\; \sum_{k \in \mathbb{Z}^2} \check{g}_\tau (x+k) \;\geq\;0\,.
\end{equation}
In \eqref{f_tau}, we used the Poisson summation formula and let $\check{g}_\tau (y) \deq \tau^{\beta} g(\tau^{\beta/2}y) \geq 0$.
We hence deduce (iv) from \eqref{endpoint_admissible_w_tau2}, \eqref{f_tau} and Definition \ref{endpoint_admissible_w} (iii).

In order to show (v), we use \eqref{endpoint_admissible_g_tau}--\eqref{endpoint_admissible_w_tau1} and Definition \ref{endpoint_admissible_w} (ii) to deduce that 
\begin{equation*}
\|w_\tau\|_{L^{\infty}(\Lambda)} \;\leq\; \sum_{k \in \mathbb{Z}^2} g\bigg(\frac{k}{\tau^{\beta/2}}\bigg) \,\hat{w}(k)\,,
\end{equation*}
which by using $g(x) \leq \frac{C}{\langle x \rangle^{2}}$ and Definition \ref{endpoint_admissible_w} (iv) is 
\begin{equation*}
\;\leq\; C \sum_{k \in \mathbb{Z}^2} \frac{\tau^{\beta}}{\langle k \rangle^{2}}\,\frac{1}{\langle k \rangle^{\epsilon}} \;\leq\; C \tau^{\beta}\,.
\end{equation*}
The claim (v) now follows. In order to prove (vi), we use Proposition \ref{transference_principle} and argue as in the proof of \eqref{S_delta_bound} with $p=1$ instead of $p=\infty$ and the claim follows.

Finally, by \eqref{endpoint_admissible_w_tau1} we note that 
\begin{equation*}
\|w_\tau-w\|_{H^{-1+\delta}(\Lambda)} \sim \Bigg(\sum_{k \in \mathbb{Z}^2} |1-g_\tau(k)|^2 \, |\hat{w}(k)|^2 \, \langle k \rangle^{-2+2\delta}\Bigg)^{1/2} \rightarrow 0 \quad \mbox{as}\quad \tau \rightarrow \infty\,,
\end{equation*}
by recalling that $\delta=\epsilon/2$ and using Definition \ref{endpoint_admissible_w} (iv) as well as the dominated convergence theorem. This proves claim (vii).
\end{proof}


\begin{thebibliography}{References}
\bibitem{AFP} Z.~{Ammari}, M.~{Falconi}, B.~{Pawilowski},  \emph{On the rate of convergence for the mean-field approximation of many-body quantum dynamics},
Comm. Math. Sci. \textbf{14} (2016) 1417--1442. 
\bibitem{AN} Z.~{Ammari}, F.~{Nier}, \emph{Mean-field limit for bosons and propagation of Wigner measures}, J. Math. Phys. \textbf{50} (2009), 042107.
\bibitem{Bach} V.~{Bach}, \emph{Ionization energies of bosonic Coulomb systems}, Lett. Math. Phys. {\bf 21} (1991), 139--149.
\bibitem{BKS}
G.~{Ben Arous}, K.~{Kirkpatrick}, B.~{Schlein}, \emph{A central limit
  theorem in many-body quantum dynamics}, Comm. Math. Phys. \textbf{321} (2013), 371--417.
\bibitem{BenPorSch_Review} N.~{Benedikter}, M.~{Porta}, B.~{Schlein}, \emph{Effective Evolution Equations from Quantum Dynamics}, Springer Briefs in Mathematical Physics, Springer, 2016.
\bibitem{BL} R.~{Benguria}, E. H.~{Lieb}, \emph{Proof of the Stability of Highly Negative Ions in the
Absence of the Pauli Principle}, Phys. Rev. Lett. {\bf 50} (1983), 1771--1774.
\bibitem{Benyi_Oh} A. ~{B\'{e}nyi}, T. ~{Oh}, \emph{The Sobolev inequality on the torus revisited}, Publ. Math. Debrecen \textbf{83} (2013), no. 3, 359--374.
\bibitem{BBCS1}C.~{Boccato}, C.~{Brennecke}, S.~{Cenatiempo}, B.~{Schlein}, \emph{Complete Bose-Einstein condensation in the Gross-Pitaevskii regime}, Comm. Math. Phys. \textbf{359} (2018), no. 3, 975--1026.
\bibitem{BBCS2}C.~{Boccato}, C.~{Brennecke}, S.~{Cenatiempo}, B.~{Schlein}, \emph{Optimal Rate for Bose-Einstein Condensation in the Gross-Pitaevskii Regime}, Preprint arXiv: 1812.03086.
\bibitem{BCS} C.~{Boccato}, S.~{Cenatiempo}, B.~{Schlein}, \emph{Quantum many-body fluctuations around nonlinear Schr\"odinger dynamics}, Ann. Inst. H. Poincar\'{e} Anal. Non Lin\'{e}aire \textbf{18} (2017), 113--191. 
\bibitem{B} J. ~{Bourgain}, \emph{Periodic nonlinear Schr\"{o}dinger equation and invariant measures}, Comm. Math. Phys. {\bf 166} (1994), 1--26.
\bibitem{Bourgain_ZS} J. ~{Bourgain}, \emph{On the Cauchy problem and invariant measure problem for the periodic Zakharov system}, Duke Math. J. {\bf 76} (1994), 175--202.
\bibitem{B1} J. ~{Bourgain}, \emph{Invariant measures for the 2D-defocusing nonlinear Schr\"odinger equation}, Comm. Math. Phys. {\bf 176} (1996), 421-445. 
\bibitem{Bourgain_1997} J. ~{Bourgain}, \emph{Invariant measures for the Gross-Pitaevskii equation}, J. Math. Pures 
Appl. {\bf 76} (1997), 649--702.
\bibitem{B3} J. ~{Bourgain}, \emph{Invariant measures for {NLS} in infinite volume}, Comm. Math. Phys.  {\bf 210} (2000), 605--620. 
\bibitem{BourgainBulut} J.~{Bourgain}, A.~{Bulut}, \emph{Gibbs measure evolution in radial nonlinear wave and Schr\"{o}dinger equations on the ball}, C. R. Math. Acad. Sci. Paris {\bf 350} (2012), 571--575.
\bibitem{BourgainBulut2} J. ~{Bourgain}, A.~{Bulut}. 
\emph{Almost sure global well posedness for the radial nonlinear Schr\"{o}dinger equation on the unit ball II: the 3D case}, J. Eur. Math. Soc. (JEMS).  {\bf 16} (2014), 1289--1325.
\bibitem{BourgainBulut4} J.~{Bourgain}, A. ~{Bulut}, \emph{Almost sure global well posedness for the radial nonlinear Schr\"{o}dinger equation on the unit ball I: the 2D case},  Ann. Inst. H. Poincar\'{e} Anal. Non Lin\'{e}aire {\bf31} (2014), 1267--1288. 
\bibitem{BNNS} C. ~{Brennecke}, P.T. ~{Nam}, M. ~{Napiorkowski}, B.~{Schlein}, \emph{Fluctuations of $N$-particle quantum dynamics around the nonlinear Schr\"{o}dinger equation}, Preprint arXiv: 1710.09743.
\bibitem{Brennecke_Schlein} C. ~{Brennecke}, B.~{Schlein}, \emph{Gross-Pitaevskii dynamics for Bose-Einstein condensates}, 
Anal. PDE \textbf{12} (2019), no. 6, 1513--1596. 
\bibitem{BrydgesSlade} D. ~{Brydges}, G. ~{Slade}, \emph{Statistical mechanics of the $2D$ focusing nonlinear Schr\"{o}dinger equation}, Comm. Math. Phys. {\bf 182} (1996), 485--504.
\bibitem{BSS} S.~{Buchholz}, C.~{Saffirio}, B.~{Schlein}, \emph{Multivariate central limit theorem in quantum dynamics}, J. Stat. Phys. \textbf{154} (2014), 113--152.
\bibitem{BurqThomannTzvetkov} N.~{Burq}, L.~{Thomann}, N.~{Tzvetkov},  \emph{Long time dynamics for the one dimensional non linear Schr\"{o}dinger equation}, Ann. Inst. Fourier (Grenoble) {\bf 63} (2013), 2137--2198. 
\bibitem{Cacciafesta_deSuzzoni1}
F. ~{Cacciafesta}, A.-S. ~{de Suzzoni},
\emph{Invariant measure for the Schr\"{o}dinger equation on the real line}, J. Funct. Anal. {\bf 269} (2015), 271--324.
\bibitem{CLS}
L.~{Chen}, J.~{Oon Lee}, B.~{Schlein}, \emph{Rate of convergence towards
  {H}artree dynamics}, J. Stat. Phys. \textbf{144} (2011), no.4, 872--903.
\bibitem{CP} T.~{Chen}, N.~{Pavlovi\'{c}}, \emph{The quintic NLS as the mean-field limit of a boson gas with three-body interactions}. \emph{J. Funct. Anal.} \textbf{260} (2011), 959--997.
\bibitem{XC} X.~{Chen}, \emph{Second order corrections to mean-field evolution for weakly interacting
bosons in the case of three-body interactions}, Arch. Rational Mech. Anal. \textbf{203} (2012), 455--497.
\bibitem{CH} X.~{Chen}, J.~{Holmer}, \emph{Focusing quantum many-body dynamics: the rigorous derivation of the $1D$ focusing cubic nonlinear Schr\"odinger equation},
Arch. Rational Mech. Anal. \textbf{221} (2016), 631--676.
\bibitem{CH2} X.~{Chen}, J.~{Holmer}, \emph{Focusing quantum many-body dynamics II: the rigorous derivation of the $1D$ focusing cubic nonlinear Schr\"odinger equation from 3D},
\emph{Anal.  PDE} \textbf{10--3} (2017), 589--633.
\bibitem{CH_2018} X.~{Chen}, J.~{Holmer}, \emph{The derivation of the $\mathbb{T}^3$ Energy-critical NLS from Quantum Many-body Dynamics}, Preprint arXiv: 1803.08082.
\bibitem{DengTzvetkovVisciglia} Y.~{Deng}, N.~{Tzvetkov}, N.~{Visciglia}, \emph{Invariant Measures and Long Time Behaviour for the Benjamin-Ono Equation III}, Comm. Math. Phys. {\bf 339} (2015), 815--857.
\bibitem{Duoandikoetxea} J. ~{Duoandikoetxea}, \emph{Fourier Analysis}, AMS Graduate Studies in Mathematics, Volume \textbf{29} (2001). Translated and revised by D. ~{Cruz-Uribe}, SFO.
\bibitem{EESY} A.~{Elgart}, L.~{Erd\H{o}s}, B.~{Schlein}, H.-T.~{Yau}, \emph{Gross-Pitaevskii Equation as the Mean Field Limit of Weakly Coupled Bosons}, Arch. Rational Mech. Anal. \textbf{179} (2006), 265--283. 
\bibitem{ES} A.~{Elgart}, B.~{Schlein},  \emph{Mean-field dynamics for boson
stars}, Comm. Pure Applied Math. \textbf{60} (2007), no. 4, 500--545.
\bibitem{ESY1} L.~{Erd\H{o}s}, B.~{Schlein}, H.-T.~{Yau}, \emph{Derivation of the Gross-Pitaevskii hierarchy for the dynamics of Bose-Einstein condensate}, Comm. Pure Appl. Math. \textbf{59} (2006), no. 12, 1659--1741.
\bibitem{ESY2} L.~{Erd\H{o}s}, B.~{Schlein}, H.-T.~{Yau}, \emph{Derivation of the cubic non-linear Schr\"{o}dinger equation from quantum dynamics of many-body systems}, Invent. Math. \textbf{167} (2007), no. 3, 515--614.
\bibitem{ESY3} L.~{Erd\H{o}s}, B.~{Schlein}, H.-T.~{Yau}, \emph{Rigorous derivation of the Gross-Pitaevskii equation}, Phys. Rev. Lett. \textbf{98} (2007), no.4, 040404.
\bibitem{ESY4} L.~{Erd\H{o}s}, B.~{Schlein}, H.-T.~{Yau}, \emph{Rigorous derivation of the Gross-Pitaevskii equation with a large interaction potential}, J. Amer. Math. Soc. \textbf{22} (2009), no. 4, 1099--1156.
\bibitem{ESY5} L.~{Erd\H{o}s}, B.~{Schlein}, H.-T.~{Yau}, \emph{Derivation of the Gross-Pitaevskii equation for the dynamics of Bose-Einstein condensate}, Ann. of Math. (2). \textbf{172} (2010), no. 1, 291--370.
\bibitem{EY}
L.~{Erd{\H{o}}s} and H.-T.~{Yau}, \emph{Derivation of the nonlinear {S}chr\"{o}dinger equation from a many-body {C}oulomb system}, Adv. Theor. Math. Phys. \textbf{5} (2001), no.~6, 1169--1205.
\bibitem{FSV} M. ~{Fannes}, H.~{Spohn}, A. ~{Verbeure}, \emph{Equilibrium states for mean field models}, J.
Math. Phys. \textbf{21} (1980), 355--358.
\bibitem{FKP} J.~{Fr\"ohlich}, A.~{Knowles}, A.~{Pizzo}, \emph{Atomism and quantization}, J.\ Phys.\ A: Math.\ Theor. \textbf{40} (2007), 3033--3045.
\bibitem{FrKnScSo1} J.~{Fr\"{o}hlich}, A.~{Knowles}, B.~{Schlein}, V.~{Sohinger},  \emph{Gibbs measures of nonlinear Schr\"{o}dinger equations as limits of many-body quantum states in dimensions $d \leq 3$}, Comm. Math. Phys. \textbf{356} (2017), no. 3, 883--980.
\bibitem{FrKnScSo2} J.~{Fr\"{o}hlich}, A.~{Knowles}, B.~{Schlein}, V.~{Sohinger}, \emph{A microscopic derivation of time-dependent correlation functions of the $1D$ cubic nonlinear Schr\"{o}dinger equation}, Preprint arXiv: 1703.04465.
\bibitem{FKS}
J.~{Fr\"{o}hlich}, A.~{Knowles}, S.~{Schwarz}, \emph{On the  mean-field limit of bosons with {C}oulomb two-body interaction}, Comm. Math. Phys. \textbf{288} (2009), no. 3, 1023--1059.
\bibitem{GLV1} G. ~{Genovese}, R. ~{Luc{\`{a}}}, D. ~{Valeri}, \emph{Gibbs measures associated to the integrals of motion of the periodic {dNLS}}, Sel. Math. New Ser. \textbf{22} (2016), no. 3, 1663--1702.
\bibitem{GLV2} G. ~{Genovese}, R. ~{Luc{\`{a}}}, D. ~{Valeri}, \emph{Invariant measures for the periodic derivative nonlinear Schr\"{o}dinger equation}, Preprint arXiv: 1801.03152.
\bibitem{GV}
J. ~{Ginibre}, G. ~{Velo}, \emph{The classical field limit of scattering theory
  for nonrelativistic many-boson systems. {I} and {II}}, Comm. Math. Phys. \textbf{66}
  (1979), no. 1, 37--76, and \textbf{68} (1979), no.~1, 45--68.
\bibitem{Glimm_Jaffe} J.~{Glimm}, A.~{Jaffe}, \emph{Quantum Physics. A Functional Integral Point of View}, Springer-Verlag, Second edition, 1987.
\bibitem{Golse_Review} F.~{Golse}, \emph{On the Dynamics of Large Particle Systems in the Mean Field Limit}, in 
Macroscopic and Large Scale Phenomena: Coarse Graining, Mean Field Limits and Ergodicity, 1-144, Springer , 2016.
\bibitem{GS}  P.~{Grech}, R.~{Seiringer}, \emph{The excitation spectrum for weakly interacting bosons in
a trap}, Comm. Math. Phys. \textbf{322} (2013),  559--591.
\bibitem{GM} M. ~{Grillakis}, M. ~{Machedon}, \emph{Beyond mean field: on the role of pair excitations in the evolution of condensates}, J. Fixed Point Theory Appl. \textbf{14} (2013), no.1, 91--111.
\bibitem{GMM1}
M.~{Grillakis}, M.~{Machedon}, D.~{Margetis}, \emph{Second-order corrections to 
mean-field evolution of weakly interacting bosons {I}}, Comm. Math. Phys. \textbf{294} 
(2010), no.1, 273--301.
\bibitem{GMM2}
M.~{Grillakis}, M.~{Machedon}, D.~{Margetis}, \emph{Second-order corrections to mean-field evolution of weakly interacting bosons {II}}, Adv. Math. \textbf{228} (2011), no. 3, 1788--1815.
\bibitem{Hardy} G. H. ~{Hardy}, \emph{Divergent Series}, Oxford at the Clarendon Press, 1949. 
\bibitem{H}
K.~{Hepp}, \emph{The classical limit for quantum mechanical correlation
  functions}, Comm. Math. Phys. \textbf{35} (1974), 265--277.
\bibitem{HS} S.~{Herr}, V. ~{Sohinger}, \emph{The Gross-Pitaevskii hierarchy on general rectangular tori}, Arch. Rational Mech Anal. \textbf{220} (2016), no. 3, 1119--1158. 
\bibitem{K} M. K.-H. ~{Kiessling}, \emph{The Hartree limit of Born's ensemble for the ground state of a bosonic atom or ion}, J. 
Math. Phys. {\bf 53} (2012), 095223.
\bibitem{KSS}
K.~{Kirkpatrick}, B.~{Schlein} and G.~{Staffilani}, \emph{Derivation of the two dimensional nonlinear
{S}chr\"odinger equation from many-body quantum dynamics}, Amer. J. Math. \textbf{133} (2011), no. 1, 91--130.
\bibitem{Knowles_Thesis} A.~{Knowles}, \emph{Limiting dynamics in large quantum systems}, ETH Z\"{u}rich Doctoral Thesis (2009). ETHZ e-collection 18517. 
\bibitem{KP}
A.~{Knowles}, P.~{Pickl}, \emph{Mean-field dynamics: singular potentials and rate of convergence}, Comm. Math. Phys. \textbf{298} (2010), no.1, 101--138.
\bibitem{LRS} J. ~{Lebowitz}, H. ~{Rose}, E. ~{Speer}, \emph{Statistical mechanics of the nonlinear Schr\"{o}dinger equation}, J. Stat. Phys. {\bf 50} (1988), 657--687.
\bibitem{LNR0} M. ~{Lewin}, P.T. ~{Nam}, N. ~{Rougerie}, \emph{Derivation of Hartree's theory for generic mean-field Bose systems}, Adv. Math. {\bf 254} (2014), 570-621.
\bibitem{Lewin_Nam_Rougerie} M. ~{Lewin}, P. T. ~{Nam}, N. ~{Rougerie}, \emph{Derivation of nonlinear Gibbs measures from many-body quantum mechanics}, J. \'{E}c. Polytech. Math. \textbf{2} (2015), 65--115.
\bibitem{Lewin_Nam_Rougerie_Survey1} M. ~{Lewin}, P. T. ~{Nam}, N. ~{Rougerie},
\emph{Bose Gases at Positive Temperature and Non-Linear Gibbs Measures}, Proceedings of the 18th ICMP, Santiago de Chile, July 2015, Preprint arXiv: 1602.05166.
\bibitem{Lewin_Nam_Rougerie2} M. ~{Lewin}, P. T. ~{Nam}, N. ~{Rougerie}, \emph{Gibbs measures based on 1D (an)harmonic oscillators as mean-field limits}, J. Math. Phys. \textbf{59} (2018), no.4, 041901.
\bibitem{Lewin_Nam_Rougerie3}  M. ~{Lewin}, P.T. ~{Nam}, N. ~{Rougerie}, \emph{Classical field theory limit of 2D many-body quantum Gibbs states}, Preprint arXiv: 1805.08370.
\bibitem{Lewin_Nam_Rougerie_Survey2} M. ~{Lewin}, P. T. ~{Nam}, N. ~{Rougerie}, \emph{The interacting 2D Bose gas and nonlinear Gibbs measures}, Oberwolfach Abstract, Preprint arXiv:1805.03506.
\bibitem{Lewin_Nam_Rougerie_Survey3} M. ~{Lewin}, P. T. ~{Nam}, N. ~{Rougerie}, \emph{Derivation of renormalized Gibbs measures from equilibrium many-body quantum Bose gases}, Contribution to Proceedings of the International Congress of Mathematical Physics, Montreal, Canada, July 23--28, 2018, Preprint arXiv: 1903.01271.
\bibitem{LNS} M.~{Lewin}, P.~T.~{Nam} and B.~{Schlein}, \emph{Fluctuations around Hartree states in the mean-field regime}, Amer. J. Math \textbf{137} (2015), 1613--1650. 
\bibitem{LNSS}  M.~{Lewin}, P.~T.~{Nam}, S. ~{Serfaty}, J.P. ~{Solovej}, \emph{Bogoliubov spectrum of interacting Bose gases}, Comm. Pure Appl. Math. \textbf{68} (2012), 413-471.
\bibitem{LS} E.H.~{Lieb}, R.~{Seiringer}.
\emph{Proof of {B}ose-{E}instein condensation for dilute trapped gases},
Phys. Rev. Lett. \textbf{88} (2002), 170409-1-4.
\bibitem{LSY} E.H.~{Lieb}, R. ~{Seiringer}, J. ~{Yngvason}, \emph{Bosons in a trap:
a rigorous derivation of the {G}ross-{P}itaevskii energy functional},
Phys. Rev A \textbf{61} (2000), 043602.
\bibitem{LY} E. H.~{Lieb}, H.-T.~{Yau}, \emph{The Chandrasekhar theory of stellar collapse as the limit of quantum mechanics}, Comm. Math. Phys. \textbf{112} (1987), 147--174.
\bibitem{McKean_Vaninsky1} H. P.~{McKean}, K. L.~{Vaninsky}, \emph{Action-angle variables for the cubic Schr\"{o}dinger equation}, Comm. Pure Appl. Math. \textbf{50} (1997), no. 6, 489--562.
\bibitem{McKean_Vaninsky2} H. P.~{McKean}, K. L.~{Vaninsky}, \emph{Cubic Schr\"{o}dinger: the petit canonical ensemble in action-angle variables}, Comm. Pure Appl. Math \textbf{50} (1997), no. 7, 593--622.
\bibitem{NORBS} A.~{Nahmod}, T.~{Oh}, L.~{Rey-Bellet}, G.~{Staffilani}. 
\emph{Invariant weighted Wiener measures and almost sure global well-posedness for the periodic derivative NLS}, J. Eur. Math. Soc. {\bf 14} (2012), 1275--1330.
\bibitem{NRBSS} A.~{Nahmod}, L.~{Rey-Bellet}, S.~{Sheffield}, G.~{Staffilani}, \emph{Absolute continuity of Brownian bridges under certain gauge transformations}, Math. Res. Lett. {\bf 18} (2011), 875--887.
\bibitem{NN} P.T. ~{Nam}, M. ~{Napiorkowski}, \emph{Bogoliubov correction to the mean-field dynamics of interacting bosons}, 
Adv. Theor. Math. Phys. \textbf{21} (2017), no. 3, 683--738. 
\bibitem{Nelson1} E. ~{Nelson}, \emph{The free Markoff field}, J. Funct. Anal. \textbf{12} (1973), 211--227.
\bibitem{Nevanlinna} F. ~{Nevanlinna}, \emph{Zur Theorie der asymptotischen Potenzreihen}, Ann. Acad. Sci. Fen. Ser. A \textbf{12}, no. 3, 1918--1919.
\bibitem{OQ}
T.~{Oh}, J.~{Quastel}, \emph{On invariant Gibbs measures conditioned on mass and
momentum}, J. Math. Soc. Japan \textbf{65} (2013), 13--35.
\bibitem{Rademacher_Schlein} S. ~{Rademacher}, B ~{Schlein}, \emph{Central Limit Theorem for Bose-Einstein Condensates}, Preprint arXiv: 1903.00365.
\bibitem{RW} G. A.~{Raggio}, R. F.~{Werner}, \emph{Quantum statistical mechanics of general mean field
systems}, Helv. Phys. Acta \textbf{62} (1989), 980--1003.
\bibitem{RS}
I.~{Rodnianski}, B.~{Schlein}, \emph{Quantum fluctuations and rate of convergence towards mean-field dynamics}, Comm. Math. Phys. \textbf{291} (2009), no.1, 31--61.
\bibitem{Sch_Review} B.~{Schlein}, \emph{Derivation of Effective Evolution Equations from Microscopic Quantum Dynamics}, Lecture notes, CMI 2008 Summer School on Evolution Equations. 
\bibitem{Simon74} B.~{Simon}, \emph{The $P(\Phi)_2$ Euclidean (Quantum) Field Theory}, Princeton Univ. Press, 1974.
\bibitem{Simon05} B. ~{Simon}, \emph{Trace ideals and their applications}, Amer. Math. Soc., Second edition, 2005.
\bibitem{S2} V.~{Sohinger}, \emph{A rigorous derivation of the defocusing nonlinear Schr\"{o}dinger equation on $\mathbb{T}^3$ from the dynamics of many-body quantum systems}, Ann. Inst. H. Poincar\'{e} Anal. Non Lin\'{e}aire. \textbf{32} (2015), no. 6, 1337--1365.
\bibitem{Sokal} A.D.~{Sokal}, \emph{An improvement of Watson's theorem on Borel summability}, J. Math. Phys. \textbf{21} (1980), no. 2, 261--263.
\bibitem{S} J. P. ~{Solovej}, \emph{Asymptotics for bosonic atoms}, Lett. Math. Phys. {\bf 20} (1990), 165--172.
\bibitem{2S} H. Spohn, \emph{Kinetic equations from Hamiltonian Dynamics}, Rev. Mod. Phys. \textbf{52} (1980), no. 3, 569--615.
\bibitem{Stein_Weiss} E. ~{Stein}, G. ~{Weiss}, \emph{Introduction to Fourier Analysis on Euclidean Spaces}, Princeton University Press, Second edition, 1975.
\bibitem{TTz}
L. ~{Thomann}, N. ~{Tzvetkov}, \emph{Gibbs measure for the periodic derivative nonlinear {S}chr\"odinger equation}, Nonlinearity \textbf{23} (2010), 2771--2791.
\bibitem{Tz1}
N.~{Tzvetkov},
\emph{Invariant measures for the Nonlinear Schr\"{o}dinger equation on the disc},
Dynamics of PDE, {\bf 2} (2006) 111--160.
\bibitem{Tz}
N.~{Tzvetkov}, \emph{Invariant measures for the defocusing nonlinear Schr\"odinger equation}, Ann. Inst. Fourier (Grenoble) \textbf{58} (2008), 2543--2604.
\bibitem{Watson} G.N. ~{Watson}, \emph{A Theory of Asymptotic Series}, Philos. Trans. Soc. London, Ser. A \textbf{211} (1912).
\bibitem{Zhidkov} P.E. ~{Zhidkov}, \emph{An invariant measure for the nonlinear Schr\"{o}dinger equation} (Russian) Dokl. Akad. Nauk SSSR {\bf 317} (1991) 543--546; translation in Soviet Math. Dokl. {\bf 43}, 431--434.
\end{thebibliography}
\end{document}